\numberwithin{equation}{section}
\newcommand{\R}{\mathbb{R}}
\newcommand{\C}{\mathbb{C}}
\newcommand{\Z}{\mathbb{Z}}
\newcommand{\N}{\mathbb{N}}
\newcommand{\A}{\mathbb{A}}
\newcommand{\diam}{{\mbox{diam}}}
\newcommand{\Cov}{\mathop{\bf Cov{}}}
\newcommand{\dist}{\mathop{\bf dist{}}}
\newcommand{\eg}{{\it e.g.}}
\newcommand{\ie}{{\it i.e.}}
\newtheorem{theorem}{Theorem}[section]
\newtheorem{prop}[theorem]{Proposition}
\newtheorem{fact}[theorem]{Fact}
\newtheorem{lemma}[theorem]{Lemma}
\newtheorem{definition}[theorem]{Definition}
\newtheorem{problem}{Problem}
\renewcommand*{\P}{\mathbb P}
\renewcommand*{\Re}{\mathrm{Re}}
\renewcommand*{\Im}{\mathrm{Im}}
\newcommand{\mclS}[2]{\hyperref[eq:super-solutions]{\mathcal{S}^{#1}_{#2}}}
\newcommand{\mclSshift}[2]{\hyperref[eq:shifted-supersolutions]{\mathcal{\tilde{S}}^{#1}_{#2}}}
\newcommand{\lss}[2]{\hyperref[eq:least-super-solution]{w^{#1}_{#2}}}
\newcommand{\lssshift}[2]{\hyperref[eq:appendix-least-supersolution]{\tilde{w}^{#1}_{#2}}}
\newcommand{\cluster}[2]{\hyperref[eq:non-co-set]{\mathit{\Lambda}^{#1}_{#2}}}
\newcommand{\regcluster}[2]{\hyperref[eq:regular-cluster]{\Lambda^{#1}_{#2}}}
\newcommand{\odometer}[2]{\hyperref[eq:limit-odometer]{v^{#1}_{#2}}}
\newcommand{\Mrho}[2]{\hyperref[eq:alternate-size]{M_{#1}(#2)}}
\newcommand{\SGrho}[2]{\hyperref[eq:alternate-size-2]{SG_{#1}(#2)}}
\newcommand{\SGrhofull}[4]{\hyperref[eq:alternate-size-2]{SG_{#3 #1, #4 #1}(#2)}}
\title{Harmonic balls in Liouville quantum gravity}
\author{Ahmed Bou-Rabee}
\author{Ewain Gwynne}
\begin{document}
	
	\begin{abstract}
		Harmonic balls are domains which satisfy the mean-value property for harmonic functions. 
		We establish the existence and uniqueness of harmonic balls on Liouville quantum gravity (LQG) surfaces using the obstacle problem formulation of Hele-Shaw flow. We show that LQG harmonic balls are neither Lipschitz domains nor LQG metric balls, and that the boundaries of their complementary connected components are Jordan curves.
		
		We conjecture that LQG harmonic balls are the scaling limit of internal diffusion limited aggregation (IDLA) on random planar maps. 
		In a companion paper, we prove this in the special case of mated-CRT maps.
	\end{abstract}
	\maketitle
	\section{Introduction} \label{sec:intro}
	
	\subsection{Overview}
	\begin{figure}
		\includegraphics[width=0.25\textwidth]{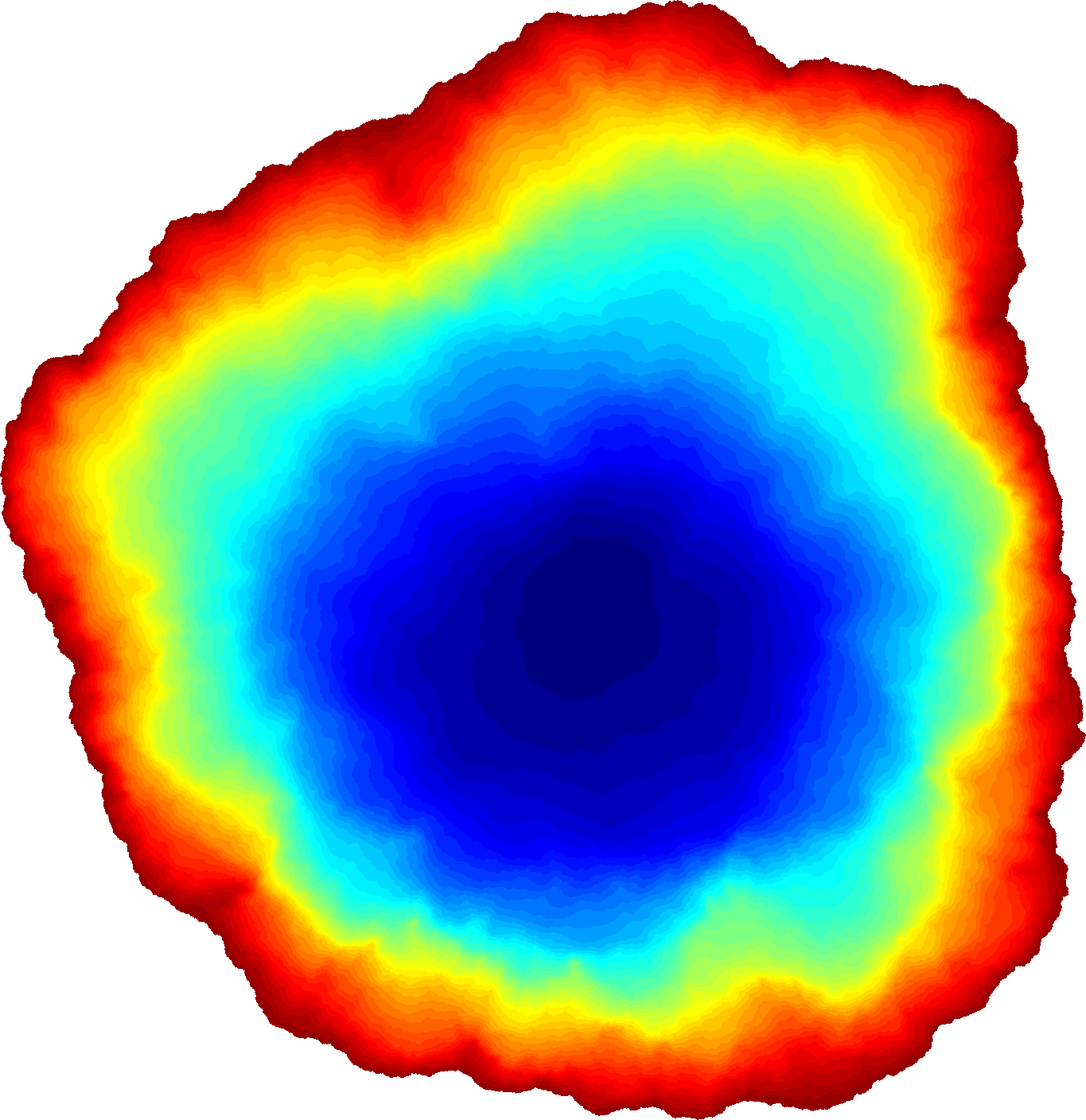} \quad
		\includegraphics[width=0.25\textwidth]{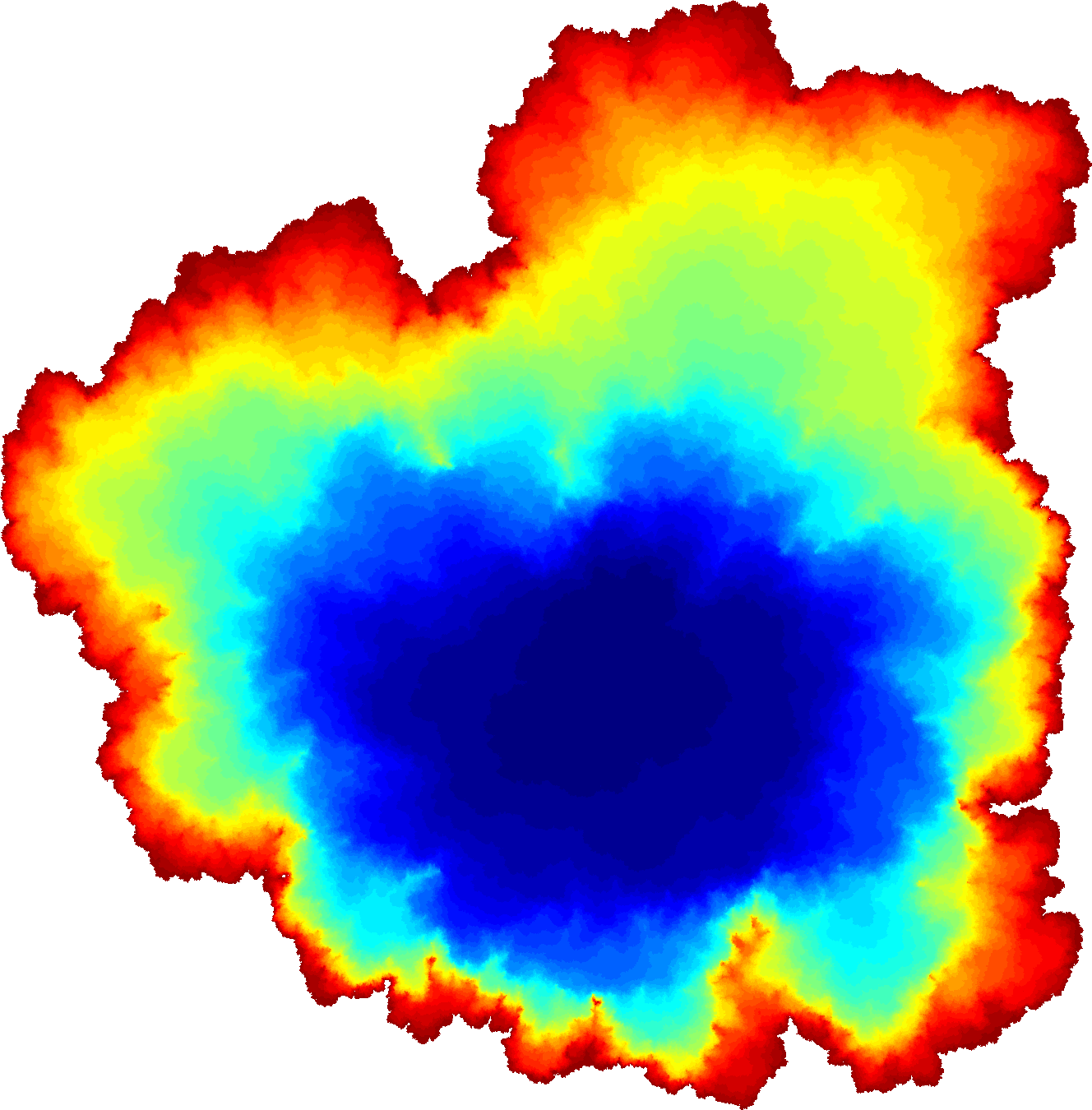} \quad
		\includegraphics[width=0.25\textwidth]{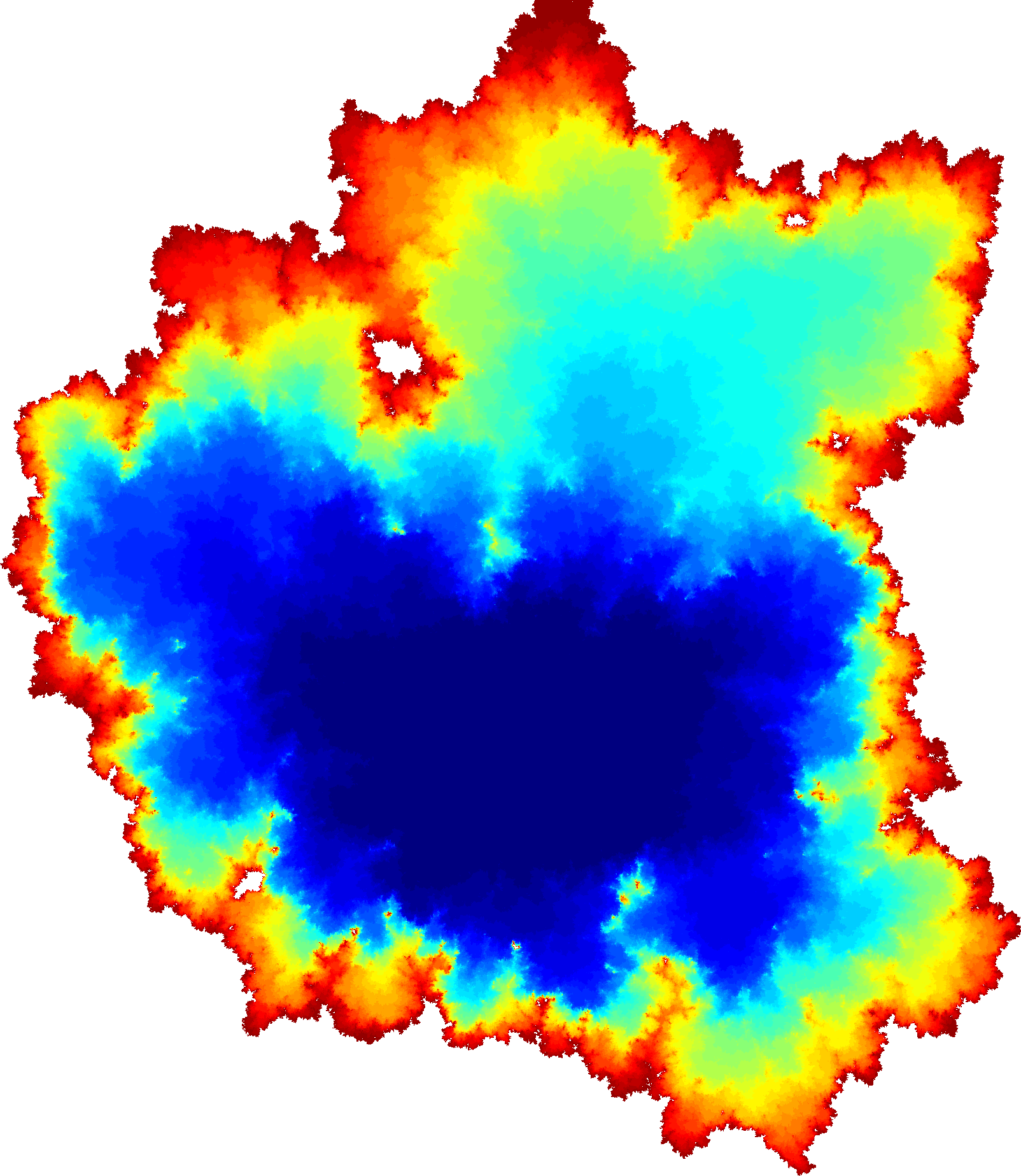}
		\caption{Simulations of $\gamma$-LQG harmonic balls with respect to the same GFF instance for $\gamma = 0.5, 1.0, 1.8$. The colors distinguish harmonic balls of different mass. From the figures, it appears that the complement of an LQG harmonic ball is not necessarily connected. We expect that this is the case for a `typical' LQG harmonic ball. }  \label{fig:lqg-harmonic-balls}
	\end{figure}

	Let $\mu$ be a locally finite Radon measure on $\C$ . A {\it harmonic ball for $\mu$ centered at~$z \in \C$} is an open set $\Lambda(z) \subset \C$ containing~$z$ which satisfies the mean-value property 
	\begin{equation} \label{eq:harmonic-ball}
	f(z) = \frac{1}{\mu(\Lambda(z))} \int_{\Lambda(z)} f(x) \mu(d x),
	\end{equation}
	for all functions $f$ which are harmonic in a neighborhood of the closure of $\Lambda(z)$. 
	
	\subsubsection{Liouville quantum gravity}
	In this article, we will construct and study harmonic balls in the setting of \textit{Liouville quantum gravity (LQG)}. LQG is a canonical one-parameter family of random fractal surfaces which were introduced by Polyakov in the 1980s in the context of bosonic string theory~\cite{polyakov-qg1}. One sense in which these surfaces are canonical is that they are known or conjectured to describe the scaling limits of various types of random planar maps (see Section~\ref{subsec:background} for more details).  
	
	Heuristically, for $\gamma \in (0,2)$ and a domain $U\subset\C$, a $\gamma$-LQG surface parameterized by $U$ is the two-dimensional Riemannian manifold with Riemannian metric tensor $e^{\gamma h} \, (dx^2+dy^2)$, where $dx^2+dy^2$ is the Euclidean metric tensor and $h$ is a variant of the Gaussian free field (GFF) on $U$. This metric tensor does not make literal sense since $h$ is a random generalized function, not a true function. Nevertheless, it is still possible to define the associated volume form, a.k.a.\ the \textit{$\gamma$-Liouville measure}. This is a random, locally finite Radon measure on $U$ which is informally given by
	\begin{equation} \label{eq:lqg-measure}
	\mu_h(dz) = e^{\gamma h(z)} \, dz  ,
	\end{equation}
	where $dz$ denotes Lebesgue measure. 
	
	The expression \eqref{eq:lqg-measure} can be made rigorous as part of a general theory 
	of regularized random measures called {\it Gaussian multiplicative chaos} \cite{kahane1985chaos, robert2010gaussian, rhodes2014gaussian}. This theory shows that the Liouville measure is a well-defined (random) Radon measure which is measurable with 
	respect to $h$ (the converse is also true \cite{berestycki2014equivalence}). However, the Liouville measure is quite irregular: $\mu_h$ is supported on the `thick' points of the GFF, a dense fractal set of Hausdorff dimension $2- \frac{\gamma^2}{2}$, and hence is mutually singular with respect to Lebesgue measure \cite{duplantier2011liouville, hu2010thick}.
	
	There is a vast literature on LQG: see~\cite{gwynne2020random,sheffield-icm} for introductory survey articles and~\cite{berestycki2021gaussian} for a more detailed introduction. However, only minimal prior knowledge of this literature is needed to read this paper. 
	The necessary background will be reviewed in Section~\ref{sec:gff-and-lqg}.
	
	\subsubsection{LQG harmonic balls}
	We will be interested in harmonic balls, as defined in~\eqref{eq:harmonic-ball}, in the case when $\mu$ is the $\gamma$-Liouville measure, $\mu_h$, for some $\gamma\in (0,2)$. 
	We call these domains {\it $\gamma$-LQG harmonic balls}. 
	One of our main motivations for studying LQG harmonic balls is that we expect them to be the scaling limits of internal diffusion limited aggregation (IDLA)  \cite{lawler1992internal} on random planar maps. 
	See Section~\ref{subsec:background} for further discussion. 
	
	We will construct $\gamma$-LQG harmonic balls via a certain partial differential equation involving $\mu_h$. In particular, this PDE {\it Hele-Shaw flow}\footnote{The family of $\gamma$-LQG harmonic balls we construct are weak solutions to a Hele-Shaw problem, see~\cite[Section 3.5]{gustafsson2006conformal} and~\cite{roos2016partial}.} (defined in Section~\ref{subsec:obstacle-def} below)
	describes the movement of a Newtonian fluid on a $\gamma$-LQG surface.
	The irregularity of $\mu_h$ precludes applying the classical theory of existence and uniqueness of harmonic balls \cite{sakai1984solutions, friedman1988free, gustafsson1990onquadraturedomains, caffarelli1998obstacle} in the LQG setting. 
	Much of the existing technology requires $\mu$ to be bounded from above and below by a multiple of the Lebesgue measure --- a constraint 
	too strict to be satisfied, even approximately, by $\mu_h$. Consequently, even the existence of $\gamma$-LQG harmonic balls is far from obvious.
	Indeed, nontrivial harmonic balls do not exist in general --- take, for instance,  $\mu$ to be a Dirac measure. 
	
	We address this difficulty by following a different path, inspired by arguments from discrete Laplacian growth \cite{duminil2013containing, jerison2013internal}. Roughly speaking, we show that it is unlikely for Brownian motion to avoid regions of large $\mu_h$-mass
	and then use this to show harmonic balls exist. This argument also leads to
	geometric requirements harmonic balls satisfy. We later use these requirements to show that `typical' $\gamma$-LQG harmonic balls are neither Lipschitz domains nor LQG metric balls.

	In contrast to the constructions of other objects associated with LQG, such as the LQG measure, the LQG metric, and Liouville Brownian motion, our construction of LQG harmonic balls does not use any approximation or regularization procedure. Rather, LQG harmonic balls are constructed directly as the solutions of an optimization problem involving the LQG measure (see Section~\ref{subsec:obstacle-def}).

	An LQG surface is a certain type of random fractal (albeit not one defined as a subset of $\mathbb R^d$ for some $d$). Analysis on fractals is a well-studied topic, see, \eg, \cite{kigami2001analysis, strichartz2006differential, strichartz2020analysis}.
	One program of research in this area is to construct the Laplacian on a fractal and then use this to develop a theory of elliptic PDE on the fractal. 
	As discussed further in Section \ref{sec:gff-and-lqg}, Brownian motion, and hence the Laplacian, has been constructed on LQG surfaces \cite{garban2016liouville, berestycki2015diffusion}.
	The present paper may be thought of as an initial step in the study of PDE on LQG surfaces.

	\subsection{Statement of the main result}

	Fix the LQG parameter $\gamma\in (0,2)$. Let $h$ be a whole-plane Gaussian free field, or more generally a whole-plane Gaussian free field plus the function $-\boldsymbol{\alpha}_0 \log|\cdot|$, where $\boldsymbol{\alpha}_0 < Q := 2/\gamma+\gamma/2$. Let $\mu_h$ be the $\gamma$-LQG area measure associated with $h$.
	The precise definitions of $h$ and $\mu_h$ will be reviewed in Section \ref{sec:gff-and-lqg}.
	For now, the unfamiliar reader can think of $\mu_h$ as a random, non-atomic, locally finite Borel measure on $\C$ which assigns positive
	mass to every open subset of $\C$.

	Our main result concerns existence and uniqueness of a family of $\gamma$-LQG harmonic balls. To prove uniqueness, we enlarge the class of harmonic functions in the definition to include those of the form,
	\begin{equation} \label{eq:harmonic-fns}
	\tilde H_O(D) = \{ \int_O G_O(\cdot,y) \,\nu(dy) : \mbox{$\nu$ is a signed Radon measure with support in $O \backslash D$}\},
	\end{equation}
	where $O, D \subset \C$ are bounded open sets and $G_O$ is the Green's function for Brownian motion in the domain $O$
	(defined in Section \ref{subsec:green} below). 
	That is, $\Lambda(z)$ is a harmonic ball centered at~$z \in \C$, if \eqref{eq:harmonic-ball} is
	satisfied for all functions $f = f_1 + f_2$ where $f_1$ is harmonic in a neighborhood of the closure of $\Lambda(z)$
	and $f_2 \in \tilde H_O(\Lambda(z))$ for some $O$ with~$\Lambda(z) \subset \overline{O}$.

	We are now ready to state our main existence and uniqueness result. 
	
	\begin{theorem}[Existence and uniqueness of harmonic balls] \label{theorem:harmonic-balls}
		On an event of probability one, for each~$z \in \C$, there exists a unique family of harmonic balls $\{\Lambda_t(z)\}_{t > 0}$
		satisfying the following properties:
		\begin{enumerate}[label=(\alph*)]
			\item For each $t > 0$ and~$z\in\C$, $\mu_h(\Lambda_t(z)) = t$, $\mu_h(\partial \Lambda_t(x)) = 0$, and $\Lambda_t(z)$ is equal to the interior of its closure.
			\item The domains $\Lambda_t(z)$ are bounded, connected, contain~$\{z\}$, increase continuously in $t$ (in the Hausdorff topology), and satisfy $\cap_{t > 0} \Lambda_t(z) = \{z\}$.
		\end{enumerate}
	\end{theorem}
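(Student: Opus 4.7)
The plan is to construct $\Lambda_t$ via the obstacle-problem formulation of Hele-Shaw flow. Fix $t > 0$ and an open disk $D \Supset \{0\}$ (to be sent to $\C$ in the end). Let $G_D$ denote the Dirichlet Green's function on $D$ and consider the family of supersolutions
\[
\mathcal{S}^t_D := \Bigl\{\, v \in C(\overline{D}) : v \geq 0,\ v|_{\partial D} = 0,\ v(\cdot) - \int G_D(\cdot, y)\, \mu_h(dy) + t G_D(\cdot, 0)\ \text{is superharmonic in } D\, \Bigr\}.
\]
Equivalently, $\mathcal{S}^t_D$ is the set of nonnegative continuous functions with zero boundary values whose negative distributional Laplacian dominates $\mu_h - t\delta_0$. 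Set $w^t_D(z) := \inf\{v(z) : v \in \mathcal{S}^t_D\}$; standard Perron-family arguments (lower-semicontinuous envelope of a countable infimum) produce $w^t_D \in \mathcal{S}^t_D$, and the candidate harmonic ball is $\Lambda_t := \{w^t_D > 0\}$. The usual obstacle-problem calculus then yields, provided $\Lambda_t \Subset D$, the identity $-\Delta w^t_D = \mu_h - t\delta_0$ on $\Lambda_t$, the Stefan condition $w^t_D = |\nabla w^t_D| = 0$ on $\partial \Lambda_t$, and independence of $\Lambda_t$ from the choice of $D$.

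\textbf{The main obstacle} is showing $\Lambda_t \Subset \C$ almost surely. Classical proofs of boundedness, e.g.\ \cite{sakai1984solutions}, compare $\mu$ with Lebesgue measure from above and below --- a hypothesis badly violated by the Lebesgue-singular $\mu_h$. Following the strategy alluded to in the overview and inspired by the discrete-Laplacian-growth techniques of \cite{duminil2013containing, jerison2013internal}, I would argue probabilistically: interpret $w^t_D$ via Brownian motion killed at $\partial D$ together with the Liouville time change underlying Liouville Brownian motion \cite{garban2016liouville, berestycki2015diffusion}, and use GFF moment estimates to show that Brownian motion starting far from $0$ accumulates a large amount of $\mu_h$-mass with high probability; this forces $w^t_D$ to vanish outside a random but almost surely finite radius.

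\textbf{Properties (a) and (b) then follow.} Connectedness of $\Lambda_t$ is by minimality: any connected component not containing $0$ could be excised by setting $w^t_D \equiv 0$ there, contradicting the least-supersolution property. The mass identity $\mu_h(\Lambda_t) = t$ comes from integrating $-\Delta w^t_D = \mu_h - t\delta_0$ on $\Lambda_t$ against the constant $1$ and applying the Stefan condition; the facts $\mu_h(\partial \Lambda_t) = 0$ and $\Lambda_t = \mathrm{int}(\overline{\Lambda_t})$ are standard free-boundary regularity statements, obtained from continuity of $w^t_D$ combined with $\mu_h$ being non-atomic and positive on every open set. The mean-value property \eqref{eq:harmonic-ball} is Green's theorem on $\Lambda_t$: for any $f$ harmonic in a neighborhood of $\overline{\Lambda_t}$,
\[
\int_{\Lambda_t} f\, d\mu_h - t f(0) \;=\; \int_{\Lambda_t} f\, (-\Delta w^t_D)\, dz \;=\; \oint_{\partial \Lambda_t}\bigl(f\, \partial_n w^t_D - w^t_D\, \partial_n f\bigr)\, dS \;=\; 0,
\]
the boundary integral vanishing by the Stefan condition. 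Monotonicity and Hausdorff-continuity of $t \mapsto \Lambda_t$, and $\bigcap_{t > 0} \Lambda_t = \{0\}$, follow from monotone continuous dependence of $w^t_D$ on $t$ together with $w^t_D \to 0$ locally uniformly off $0$ as $t \downarrow 0$.

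\textbf{Uniqueness} is where the enlarged test-function class $\tilde H_O(\Lambda)$ becomes essential. Given any other candidate $\Lambda'$ satisfying (a) and (b), I apply \eqref{eq:harmonic-ball} to both $\Lambda_t$ and $\Lambda'$ with $f(z) = G_O(z, y_0)$ for an arbitrary $y_0 \in O \setminus (\Lambda_t \cup \Lambda')$, which is legitimate because $G_O(\cdot, y_0) \in \tilde H_O(\Lambda)$ for each such $\Lambda$. Subtracting yields
\[
\int_{\Lambda_t} G_O(z, y_0)\, \mu_h(dz) \;=\; t\, G_O(0, y_0) \;=\; \int_{\Lambda'} G_O(z, y_0)\, \mu_h(dz),
\]
so the Newtonian potentials of $\mu_h \mathbf{1}_{\Lambda_t}$ and $\mu_h \mathbf{1}_{\Lambda'}$ agree on $O \setminus (\Lambda_t \cup \Lambda')$. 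Standard potential-theoretic rigidity --- a compactly supported finite measure is determined by its potential on its exterior --- combined with the regularity conditions $\mu_h(\partial \Lambda) = 0$ and $\Lambda = \mathrm{int}(\overline{\Lambda})$ then forces $\Lambda_t = \Lambda'$.
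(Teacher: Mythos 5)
Your construction via the obstacle problem, the Perron envelope, the connectedness-by-minimality argument, and the scaling limit in the domain $D$ all match the paper's strategy. However, three of the steps you treat as routine are precisely the places where the paper's main technical content lives, and one of them as stated is actually wrong.

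\textbf{Boundedness of $\Lambda_t$.} Your sketch — ``Brownian motion started far from $0$ accumulates a large amount of $\mu_h$-mass, which forces $w^t_D$ to vanish'' — is too loose to close. The Liouville clock controls hitting \emph{times}, but the non-coincidence set is governed by the odometer, which is a potential of $\mu_h|_{\Lambda_t}$ rather than of all of $\mu_h$; one must use the structure of the obstacle problem, not just mass accumulation. The paper's mechanism is a deterministic Harnack-type implication (Proposition~\ref{prop:harnack-type-property}): on annuli where a multi-scale ``very good'' event holds, if the cluster's $\mu_h$-mass in the annulus is a small fraction of the annulus's $\mu_h$-mass, the cluster cannot reach the inner disk. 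Proving this requires the shell decomposition, a harmonic comparison between the odometer and crossing probabilities of Brownian motion (Lemmas~\ref{lemma:odometer-annuli}--\ref{lemma:odometer-bound-lqg-mass}), and a geometric iteration forcing the odometer to zero. This is the paper's central innovation; your probabilistic heuristic gestures at it but does not supply it.

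\textbf{The identity $\mu_h(\Lambda_t) = t$ and $\mu_h(\partial\Lambda_t) = 0$.} You call these ``standard free-boundary regularity.'' They are not in this setting. Because $\mu_h$ is Lebesgue-singular, the distributional Laplacian of the odometer may a priori have a nontrivial piece $\nu|_{\partial\Lambda_t}$ supported on the free boundary (the paper's Lemma~\ref{lemma:non-coincidence-open-harmonic}), and the mass identity as well as the mean-value property both hinge on showing $\nu = 0$, i.e.\ $\mu_h(\partial\Lambda_t) = 0$. The paper spends all of Section~\ref{sec:boundary-measure-zero} on this, combining the Harnack estimate with the Lebesgue density theorem for $\mu_h$ and an ergodic argument for a doubling property. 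Also, you take $\Lambda_t = \mathrm{int}(\overline{\Lambda_t})$ as a consequence; the paper instead \emph{redefines} the harmonic ball as the regularization $\mathrm{int}(\overline{\Lambda_t})$, which is what makes exact uniqueness possible. Your Green's-theorem derivation also presumes enough boundary regularity to integrate by parts; the paper's Lemma~\ref{lemma:harmonic-ball} circumvents this by extending to a smooth superdomain on which the odometer has already vanished.

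\textbf{Uniqueness.} Your rigidity step is false as stated. Newton's shell theorem gives a counterexample: a uniform mass on a ball and a point mass at its center have identical potentials outside the ball, yet the measures differ. So ``the potentials of $\mu_h \mathbf{1}_{\Lambda_t}$ and $\mu_h \mathbf{1}_{\Lambda'}$ agree on the common exterior'' does \emph{not} force the two measures, let alone the two sets, to coincide. What the potential identity actually yields is the weaker containment $\partial A \subset \overline{\Lambda_t}$ for any harmonic ball $A$ of mass $t$ (the paper's Lemma~\ref{lemma:contains-boundary}, using the auxiliary function in~\eqref{eq:auxiallary-function} and the strong minimum principle). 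Closing the gap to $A = \Lambda_t$ requires the \emph{whole family} $\{A_t\}$: the paper's Proposition~\ref{prop:uniqueness} assumes a ball $B \subset A_{t_0} \setminus \overline{\Lambda_{t_0}}$ exists, and iterates along a fine partition of $[s_0, t_0]$ using the boundary containment, monotonicity in $t$, and the mass normalization $\mu_h(A_t) = t$ to show $B$ stays disjoint from every $A_{s_n}$, contradicting $B \subset A_{t_0}$. Your argument makes no use of continuity or monotonicity of the family in $t$, which is exactly where the contradiction comes from.

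In short: the construction skeleton is right, but the three properties you dispatch as classical are the main theorems of Sections~\ref{sec:harnack-type-estimate}--\ref{sec:uniqueness}, and your uniqueness step relies on a false rigidity statement.
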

	In some literature on harmonic balls, \eg, \cite{sakai1984solutions} or \cite{hedenmalm2002hele}, uniqueness is only proven up to sets of zero mass. In our setting, we get exact uniqueness thanks to the requirement that $\Lambda_t(z)$ is equal to the interior of its closure.
	
	We also show that typical harmonic balls are `novel'; that is, they are 
	neither Euclidean balls nor LQG metric balls. We also show that the boundaries of their complementary connected components are Jordan curves. Compare Figures \ref{fig:lqg-harmonic-balls} and \ref{fig:lqg-metric-balls}. 
	The precise definition of the LQG metric ball will be given in Section \ref{sec:gff-and-lqg}; for now the reader may think of 
	it as the natural notion of metric ball on an LQG surface.

	\begin{theorem}[Novelty of harmonic balls] \label{theorem:harmonic-balls-properties}
		The following is true on an event of probability 1. For Lebesgue-a.e.\ t, $\Lambda_t(0)$, constructed in Theorem \ref{theorem:harmonic-balls}, 
		is neither a Lipschitz domain nor an LQG-metric ball. Moreover, for each $t>0$ the boundaries of the connected components of $\C\setminus \overline{\Lambda}_t(0)$ are Jordan curves. 
	\end{theorem}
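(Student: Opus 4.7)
I would prove the three assertions separately, all using the obstacle-problem characterization of $\Lambda_t$ and the uniqueness in Theorem~\ref{theorem:harmonic-balls}. For non-Lipschitzness, the plan is to exploit the density of $\gamma$-thick points of $h$. A Lipschitz domain has an exterior Euclidean cone at every boundary point. I would show that, at every $z \in \partial \Lambda_t$, in every direction and at every scale, one can find a thick point of $h$ of mass so large that the least-super-solution characterization forces $\Lambda_t$ to extend a finger containing it by time $t$, preventing any exterior cone. The restriction to Lebesgue-a.e. $t$ is needed to avoid an exceptional measure-zero set of times at which the free boundary might momentarily be atypically regular.

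For not being an LQG metric ball, the cleanest distinguishing property is that an LQG metric ball $\{z : D_h(0,z) < r\}$ has connected complement (being a sublevel set of the continuous distance function $D_h(0,\cdot)$), whereas, as illustrated in Figure~\ref{fig:lqg-harmonic-balls}, the complement of $\Lambda_t$ is generically disconnected. I would exhibit, for a set of $t$ of positive Lebesgue measure, an explicit GFF configuration in which the flow surrounds a region of small $\mu_h$-mass before absorbing it; monotonicity of $t \mapsto \Lambda_t$ then propagates this disconnectedness to a full interval of $t$-values, giving a positive-measure set of times on which $\Lambda_t$ cannot be of the form $\{D_h(0,\cdot) < r\}$.

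For the Jordan curve statement, let $U$ be a connected component of $\C\setminus\overline\Lambda_t$. First, $U$ is simply connected by a standard planar topology argument: since $\Lambda_t$ is bounded and connected, $\overline\Lambda_t$ is a connected compact subset of $\C$ whose complement has only simply connected components. By Carath\'eodory's theorem, $\partial U$ is then a Jordan curve if and only if it is locally connected. To establish local connectedness I would argue by contradiction: assuming a pinch at some $p\in\partial U$ (two sequences in $\partial U$ tending to $p$ that cannot be joined by short arcs in $\partial U \cap B_\varepsilon(p)$), I would derive a contradiction using two inputs: the condition $\Lambda_t = \mathrm{int}\,\overline\Lambda_t$ rules out pinches formed by slits of $\overline\Lambda_t$ into $U$, while the minimality of the super-solution rules out pinches formed by slits of $U$ into $\Lambda_t$, since such slits would permit the construction of a strictly smaller super-solution, contradicting uniqueness.

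I expect the Jordan curve part to be the main technical difficulty, because the irregularity of $\mu_h$ prevents use of the standard local regularity theory of free boundaries, so the argument must instead convert a hypothetical topological pinch into a competing super-solution via global potential-theoretic estimates on the (possibly very irregular) boundary, rather than by appealing to pointwise regularity as in the classical Lebesgue-measure setting.
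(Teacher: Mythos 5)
Your proposal misses the paper's central mechanism and, in one place, rests on a false geometric claim.

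\textbf{The unifying tool you are missing.} All three parts of the paper's proof of Theorem~\ref{theorem:harmonic-balls-properties} rest on the Harnack-type estimate of Section~\ref{sec:harnack-type-estimate} and its corollaries (Lemma~\ref{lemma:cannot-cross} for the Jordan-curve part; Lemma~\ref{lemma:harmonic-comparison-chain} for the novelty parts). The estimate says, roughly, that if the cluster crosses a ``very good'' annulus it must occupy a definite $\mu_h$-fraction of it. You never mention this, and your three sketches each substitute a vaguer mechanism for it.

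\textbf{Not Lipschitz.} Your idea is that a thick point near $\partial\Lambda_t$ would force a ``finger'' to protrude past it. But thick points alone do not force the cluster to grow: they only control $\mu_h$. The paper instead argues that the \emph{pair} of interior and exterior cone conditions is jointly impossible: the interior cone gives a lower bound $\mu_h(\A_{r,2r}(z)\cap\Lambda_t) \geq \mu_h(\A_{r,2r}(z)\cap\mathcal Q(z))$; on a $\mu_h$-positive-density set of scales this is large relative to $\SGrho{3r,6r}{z}$ (Lemma~\ref{lemma:mu_h-bad-and-harnack}), and by the contrapositive of Lemma~\ref{lemma:harmonic-comparison-chain} large mass in $\A_{r,2r}(z)\cap\Lambda_t$ forces $\A_{4r,5r}(z)\subset\Lambda_t$, which contradicts the exterior cone. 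Without the harmonic comparison lemma your ``finger'' argument has no engine.

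\textbf{Not an LQG metric ball.} This is a concrete error: the complement of an LQG metric ball $\{z:D_h(0,z)<r\}$ is \emph{not} generically connected. For a length metric on a fractal surface it is entirely typical for a region to be surrounded by a thin ring of points at $D_h$-distance $<r$ from the origin while the region's interior remains at distance $\geq r$; indeed the complicated topology of metric balls is a well-known feature of LQG surfaces and the Brownian map. So ``$\Lambda_t^c$ disconnected, metric ball complement connected'' is not a distinguishing criterion. The paper instead exhibits an event $\hat G_r(z)$ (a ``keyhole'' configuration, Proposition~\ref{prop:mass-diam-gen} plus Lemma~\ref{lemma:geometric-event-consequence}) on which any metric ball with $z$ on its boundary must contain a large-mass set $U_r(z)$ while omitting a nearby set $Q'_r(z)$, and this configuration violates Lemma~\ref{lemma:harmonic-comparison-chain}.

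\textbf{Jordan curves.} Your first observation (each complementary component is simply connected because $\overline\Lambda_t$ is a connected compactum) is correct, and invoking Carath\'eodory via local connectedness and absence of cut points is also the paper's route. But your mechanism for ruling out pinches --- ``converting a pinch into a competing super-solution'' --- is not developed and is not how the paper proceeds. The paper's quantitative input is Lemma~\ref{lemma:cannot-cross}: at every point and at most scales there is an annulus the cluster cannot cross more than $K=\lceil 1/\alpha\rceil$ times (a direct consequence of Proposition~\ref{prop:harnack-type-stronger-property}). A failure of local connectedness would produce infinitely many crossing components of a small annulus, contradicting this. The no-cut-point step is then a separate homotopy argument on the conformal map of $U$. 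It is unclear how a competing super-solution would control the number of annulus crossings, which is the actual content needed.

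In short: the proposal is not a viable alternative route. You need to bring in (or reprove) the Harnack-type crossing estimate for clusters, and the metric-ball criterion you rely on is false.
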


	We remark that the harmonic balls given by Theorem \ref{theorem:harmonic-balls} are locally determined by $\mu_h$, in the sense of the following statement. 	
	\begin{prop} \label{prop:locally-determined}
		For each fixed $t\geq 0$ and~$x \in \C$, the closed LQG harmonic ball $\overline{\Lambda}_t(x)$ is a local set for $h$ in the sense of~\cite[Lemma 3.9]{ss-contour}, \ie, for each deterministic open set $U\subset\C$, the event $\{\overline{\Lambda}_t(x) \subset U\}$ is measurable with respect to $\sigma(h|_U)$.
	\end{prop}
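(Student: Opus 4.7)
The plan is to exploit the least-supersolution / obstacle-problem construction of $\overline{\Lambda}_t$, which is intrinsically a local operation. By the construction sketched earlier, $\overline{\Lambda}_t$ is the closure of the set on which the least supersolution $w_t = \lss{t}{}$ (to the obstacle problem involving $\mu_h$ and the point source $t\delta_0$) is strictly positive, and $w_t \equiv 0$ on $\C \setminus \overline{\Lambda}_t$. Thus $w_t$ is automatically ``compactly supported'' on $\overline{\Lambda}_t$, which opens the door to localizing the problem to any bounded open set containing $\overline{\Lambda}_t$.

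For each bounded open set $V\subset\C$ with $0 \in V$, I would introduce the analogous problem restricted to $V$: define $w_t^V$ as the least supersolution in the class of admissible functions on $V$ with zero Dirichlet boundary data on $\partial V$, and set $\Lambda_t^V := \{w_t^V > 0\}$. Since this problem only uses $\mu_h|_V$ as input, both $w_t^V$ and $\overline{\Lambda_t^V}$ are measurable with respect to $\sigma(\mu_h|_V) \subset \sigma(h|_V)$.

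The crux is a two-sided comparison showing that, on the event $\{\overline{\Lambda}_t \subset V\}$, we have $w_t^V = w_t|_V$ and therefore $\Lambda_t^V = \Lambda_t$. In one direction, on this event $w_t|_V$ vanishes on $\partial V$ (because $w_t \equiv 0$ off $\overline{\Lambda}_t$) and satisfies the supersolution inequality inside $V$, so it is admissible for the $V$-problem, giving $w_t^V \leq w_t|_V$. In the other direction, extending $w_t^V$ by zero outside $V$ yields a nonnegative function on $\C$; since $w_t^V$ already vanishes on $\partial V$, this extension satisfies the global supersolution inequality in the distributional sense (the $t\delta_0$ source is absorbed near $0 \in V$, and no new obstructions appear across $\partial V$), so $w_t \leq w_t^V$ on $V$. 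I expect this locality step to be the main obstacle, since verifying the extension-by-zero is admissible requires care at $\partial V$ given the singular nature of $\mu_h$; it should follow from the concrete form of the supersolution class $\mclS{t}{}$ together with standard pasting arguments for the obstacle problem.

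Finally, fix a deterministic open $U\subset\C$ and choose an exhaustion $V_1 \Subset V_2 \Subset \cdots \subset U$ with $\bigcup_n V_n = U$ and each $V_n$ a bounded open neighborhood of $0$. The locality identity then yields
\[
\{\overline{\Lambda}_t \subset U\} \;=\; \bigcup_{n} \{\overline{\Lambda}_t \subset V_n\} \;=\; \bigcup_n \{\overline{\Lambda_t^{V_n}} \subset V_n\}.
\]
Each event on the right-hand side is $\sigma(h|_{V_n})$-measurable by the construction of $w_t^{V_n}$, and hence lies in $\sigma(h|_U)$. The countable union is therefore $\sigma(h|_U)$-measurable, establishing the local-set property in the sense of~\cite[Lemma 3.9]{ss-contour}.
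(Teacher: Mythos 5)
Your proposal is correct and follows essentially the same strategy as the paper: reduce to a bounded domain via exhaustion, then invoke the compatibility of the obstacle problem across domains (the paper cites its Lemma~\ref{lemma:strong-compatibility}, proved via Lemma~\ref{lemma:appendix-cluster-compatibility}, whereas you re-derive it directly). One small point worth being explicit about: the last equality of events also requires the \emph{backward} implication, namely that $\overline{\Lambda_t^{V_n}} \subset V_n$ forces $\overline{\Lambda}_t \subset V_n$; this follows by running your extension-by-zero argument under the hypothesis $\overline{\Lambda_t^{V_n}} \Subset V_n$ (so the odometer vanishes near $\partial V_n$ and the extension is genuinely superharmonic), rather than under $\{\overline{\Lambda}_t \subset V_n\}$, which is what your write-up literally conditions on.
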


	\begin{figure}
		\includegraphics[width=0.25\textwidth]{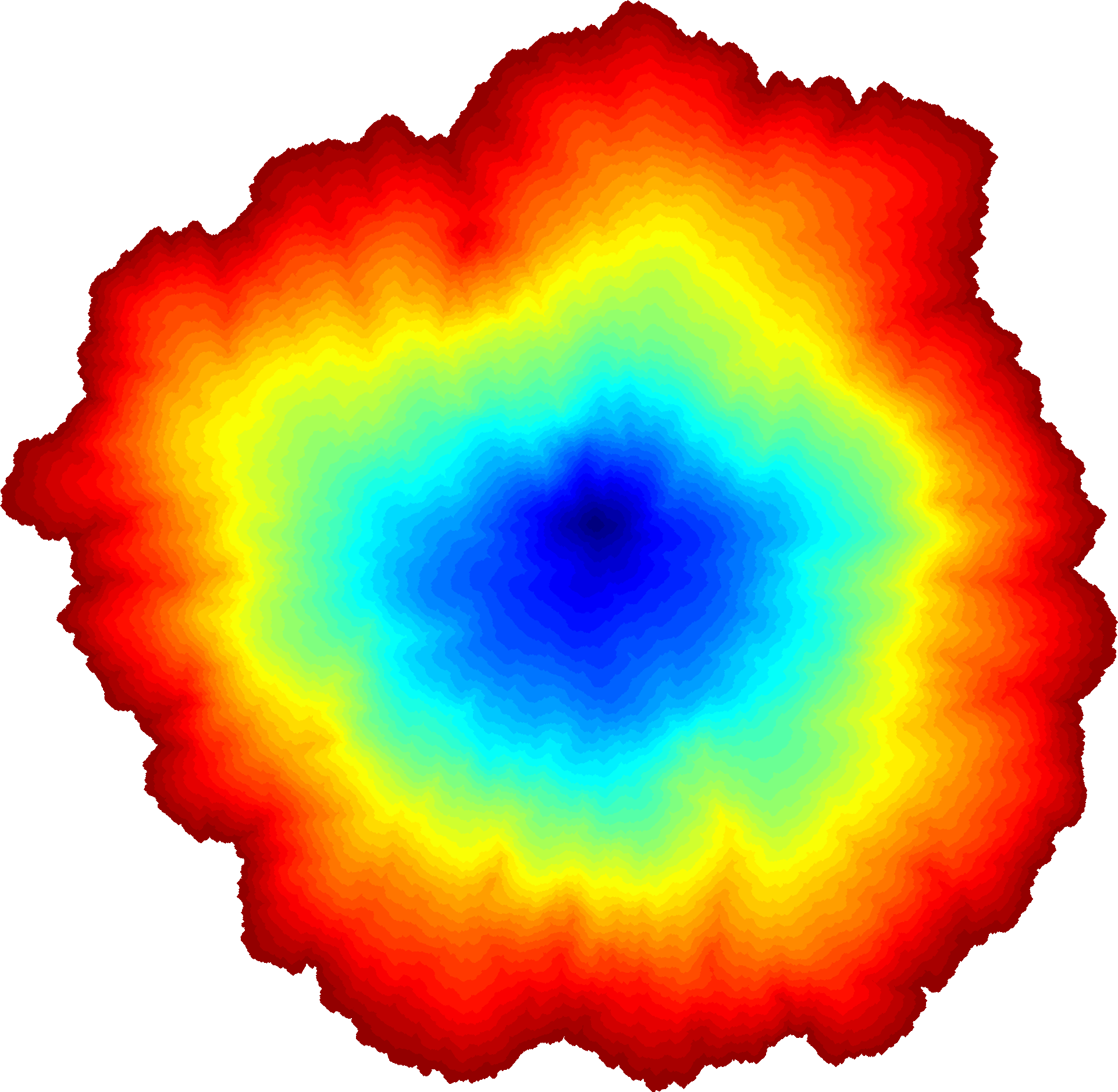} \quad
		\includegraphics[width=0.25\textwidth]{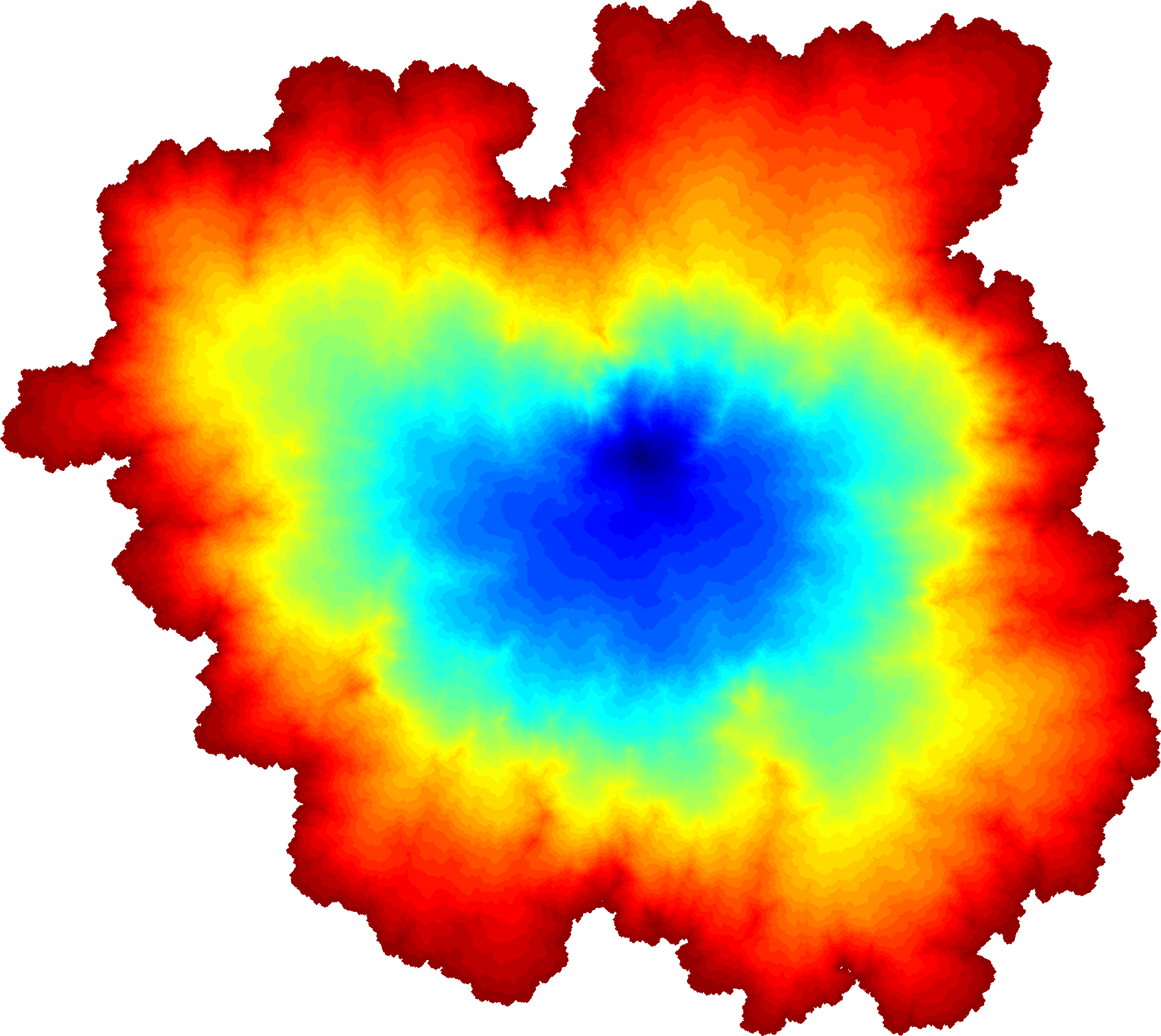} \quad
		\includegraphics[width=0.25\textwidth]{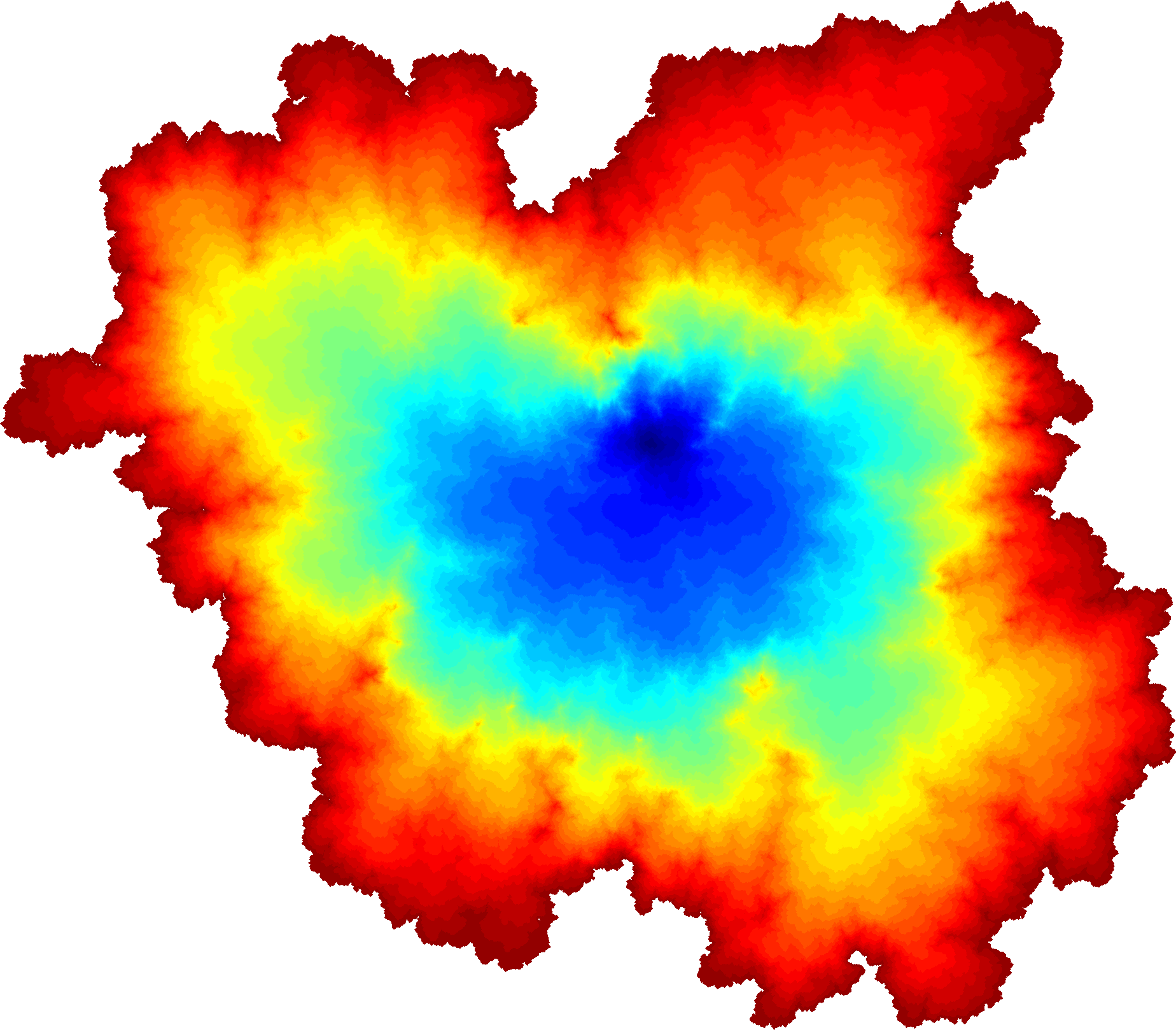}
		\caption{Simulations of $\gamma$-LQG metric balls with respect to the same GFF instance as Figure \ref{fig:lqg-harmonic-balls} for $\gamma = 0.5, 1.0, 1.8$.} \label{fig:lqg-metric-balls}
	\end{figure}

	\subsection{Background} \label{subsec:background}
	We provide some context and motivation for the study of harmonic balls on LQG surfaces. 
	
	\subsubsection{Harmonic balls}
	The term `harmonic ball' was coined by Shahgholian-Sj{\"o}din in \cite{shahgholian2013harmonic}
	and is a special case of {\it quadrature domains} for harmonic functions.
	A quadrature domain is a subset of $\mathbb{C}$ for which the integral of a harmonic function can be expressed 
	as a sum of simpler functionals (such as point evaluations). 
	Quadrature domains have a long history and are closely related to classical balayage (sweeping) \cite{doob1984classical}, Hele-Shaw 
	flow \cite{gustafsson2006conformal}, obstacle problems \cite{sakai1984solutions}, and Laplacian growth \cite{zidarov1990inverse, levine2017laplacian}.
	We direct the interested reader to \cite{gustafsson2005quadrature,gustafsson2004lectures}
	and the introductions of the theses of Roos \cite{roos2016partial} and Sj{\"o}din \cite{sjodin2005topics} for
	excellent expositions.
	
	Of particular relevance to our work are the papers of Hedenmalm-Shimorin \cite{hedenmalm2002hele} and Gustafsson-Roos \cite{gustafsson2018partial} which construct and analyze harmonic balls on Riemannian manifolds. Hedenmalm-Shimorin, building upon the work of Sakai \cite{sakai1991regularity}, show that harmonic balls on sufficiently smooth hyperbolic surfaces have boundaries which are the unions of a finite number of real analytic simple curves.
	Gustafsson-Roos show that harmonic balls and geodesic balls coincide on Riemannian surfaces 
	if and only if the Gaussian curvature of the manifold is constant. We emphasize that while some of the 
	basic constructions in these works may be adapted to our setting, an LQG surface is not a Riemannian
	manifold in the literal sense and so the results do not apply.

	\subsubsection{Internal DLA}
	One of our main motivations for studying LQG harmonic balls stems from a connection with internal diffusion limited aggregation (IDLA) on random planar maps. 
	IDLA was introduced as a toy model for chemical corrosion in \cite{meakin1986formation} and is a special case of a growth model studied by Diaconis-Fulton in \cite{diaconis1991growth}. IDLA is a random aggregation model defined as follows: start with $n$ walkers at the origin in $\Z^2$ and let each walker evolve according to a simple random walk until it reach a site in $\Z^2$ not occupied by any previous walker. This rule
	generates a growing sequence of sets $A_n \subset \Z^2$ indexed by the number of walkers $n \in \N$. 
	
	In a foundational work, Lawler-Bramson-Griffeath proved that $A_n$, suitably rescaled, converges to a Euclidean ball in $\R^2$
	as $n$ goes to infinity \cite{lawler1992internal, lawler1995subdiffusive}. Later, Asselah-Gaudillère \cite{asselah2013logarithmic, asselah2013sublogarithmic} and independently Jerison-Levine-Sheffield established logarithmic fluctuations of $A_n$ around its limit \cite{jerison2012logarithmic, jerison2013internal, jerison2014internal}. 
	
	Implicit in the proof of Lawler-Bramson-Griffeath is that harmonic balls are Euclidean balls when $\mu$ is the Lebesgue measure ---
	a simple proof of this was given by {\"U}lk{\"u} Kuran in 1972 \cite{kuran1972mean}. Interestingly, the connection between IDLA and quadrature domains generalizes. Levine-Peres showed in \cite{levine2010scaling} that the scaling limit of IDLA for any initial condition (for example multiple point sources) is given by a corresponding quadrature domain. 
	
	IDLA has also been studied on several other graphs including: Cayley graphs of groups with polynomial \cite{blachere2001internal} and  exponential \cite{blachere2007internal, huss2008internal} growth, supercritical percolation clusters \cite{shellef2010idla, duminil2013containing}, Sierpinski gasket graphs \cite{chen2017internal}, and cylinders \cite{jerison2014chapter, levine2019long, silvestri2020internal}--- see \cite{sava2021fractals} for a thorough survey of IDLA. In each of these cases the limit shape is either a Euclidean ball or a metric ball. On the other hand, Asselah-Rahmani showed that IDLA on the comb lattice has a limit shape which is neither a Euclidean ball nor a metric ball but rather a domain which satisfies a certain mean-value property \cite{asselah2016fluctuations} (see also \cite{huss-sava-comb}). 
	In a similar vein, Lucas has shown that IDLA with biased random walkers on $\Z^d$ converges under parabolic scaling to a domain which satisfies the mean-value property for caloric functions \cite{lucas2014limiting}.  
	
	\subsubsection{Random planar maps}
	A \textit{planar map} is a graph embedded in $\C$ in such a way that no two edges cross, viewed modulo orientation preserving homeomorphisms $\C \rightarrow \C$. 
	Various types of random planar maps are expected, and in some cases proven, to converge to $\gamma$-LQG surfaces. For example, uniform random planar maps (including uniform triangulations, quadrangulations, etc.) converge to $\sqrt{8/3}$-LQG surfaces in the Gromov-Hausdorff sense~\cite{legall-uniqueness,miermont-brownian-map,lqg-tbm1,lqg-tbm2} and, at least in the case of triangulations, when embedded into $\C$ via the so-called \textit{Cardy embedding}~\cite{hs-cardy-embedding}. Similar convergence results are expected to hold for various types of non-uniform random planar maps toward $\gamma$-LQG with $\gamma\not=\sqrt{8/3}$. For example, random planar maps sampled with probability proportional to the number of spanning trees they admit are expected to converge to $\sqrt 2$-LQG. We refer to~\cite{ghs-mating-survey} for a survey of work relating random planar maps and LQG.

	The aforementioned results on IDLA suggest that the scaling limits of IDLA on random planar maps are described by harmonic balls on LQG surfaces. 
	Random walks on (reasonably embedded) random planar maps are also expected to converge to (time changes of) Brownian motions on LQG surfaces
	--- this has recently been proven for a one-parameter family of random planar maps called mated-CRT maps in \cite{gms-tutte,berestycki2020random}.   
	In a companion work \cite{bou2022idla}, we verify that the scaling limit of IDLA on mated-CRT maps is given by LQG harmonic balls. It is still an open problem to prove this for other random planar map models, e.g., uniform random planar maps (see Problem~\ref{prob:planar-map}).

	\subsection{Open problems} \label{subsec:open-problems}
	We collect some questions suggested by this work. The first has been mentioned previously
	and is arguably the most important question here. 
	\begin{problem} \label{prob:planar-map}
		Show that the scaling limit of IDLA on random planar maps in the appropriate $\gamma$-LQG universality class, other than mated-CRT maps, is described by $\gamma$-LQG harmonic balls.
		For example, on a uniform planar map show that the scaling limit of IDLA is a $\sqrt{8/3}$-LQG harmonic ball.
	\end{problem}
	
	Possible topologies of convergence in Problem~\ref{prob:planar-map} include a version of the Gromov-Hausdorff distance for metric spaces decorated by compact sets; or convergence of the IDLA clusters w.r.t.\ the Hausdorff distance when the random planar map is embedded into $\C$ appropriately.
	There are also some purely continuum directions one could pursue --- the following is an example.

	\begin{problem} \label{prob:boundary-dim}
		Compute the Hausdorff dimension of the boundary of an LQG harmonic ball, with respect to the Euclidean metric and with respect to the LQG metric.
	\end{problem}
	
	We expect that the Euclidean and LQG dimensions of the harmonic ball boundary are each strictly greater than one. We note that the Hausdorff dimensions of the boundary of an LQG metric ball with respect to the Euclidean and LQG metrics have been computed in \cite{gwynne-dimension-metric-ball, MR4408506}. 
	
	It is also of interest to determine the analogue of LQG metric geodesics in 
	the setting of harmonic balls. In particular, we are interested in extending the theory of `Hele-Shaw geodesics', in the sense of 
	\cite{hedenmalm2002hele}, to our setting. As mentioned previously, Hedenmalm-Shimorin in \cite{hedenmalm2002hele} investigated harmonic balls on smooth Riemannian surfaces
	and showed that their boundaries are piece-wise smooth curves. Because of this smoothness, they were able to define
	{\it Hele-Shaw geodesics} as a family of curves originating from a fixed point which are orthogonal to the boundary of a harmonic ball at any point. 
	One may think of these geodesics as describing the trajectory of a single fluid particle started at a fixed point on a Riemannian surface. 
	As LQG harmonic balls do not have smooth boundaries, it is unclear how to adapt this to our setting, but 
	a weaker version of these objects may exist. 
	\begin{problem}
		Construct and analyze the analogue of `Hele-Shaw geodesics' \cite{hedenmalm2002hele} on LQG surfaces. 
	\end{problem}

	A helpful intermediate step would be to show some additional regularity of harmonic balls. For example, the harmonic balls
	we construct are monotone in $t$ but we are unable to show {\it strict} monotonicity in $t$. 
	\begin{problem}
		Prove or disprove that the family of harmonic balls $\{\Lambda_t(0)\}_{t>0}$, given by Theorem \ref{theorem:harmonic-balls},  is strictly monotone in $t$, \ie, $\overline{\Lambda}_s(0) \subset \Lambda_t(0)$ whenever $s < t$. 
	\end{problem}

	\subsection{Paper and proof outline}

	We start in Section \ref{sec:gff-and-lqg} by reviewing the definition of the GFF, LQG, and some results about Liouville Brownian motion.  We then introduce the fundamental obstacle problem  which we use to construct candidate harmonic balls,  {\it clusters}, in Section \ref{sec:harmonic-ball-construction}
	and establish some basic properties of the constructed clusters in Section \ref{sec:basic-properties-cluster}.	Roughly, for each~$t > 0$, the cluster~$\Lambda_t := \Lambda_t(0)$ is defined as the support of the solution to an obstacle problem,~$v_t : B_1 \to \R$, specifically, 
	\[
	\Lambda_t = \{x \in B_1 : v_t > 0 \} \, \, \mbox{where} \, \,  v_t = \inf\{ w \in C(\overline{B_1}) :  \Delta w \leq \mu_h - t \delta_0 \mbox{ in $B_1$}  \mbox{ and }  w \geq  0 \mbox{ in $\overline{B_1}$}\} \, . 
	\]
	Since the obstacle problem is restricted to the unit ball, it is easy to show existence and uniqueness of solutions. In particular, 
	the results of Sections \ref{sec:harmonic-ball-construction} and \ref{sec:basic-properties-cluster} are straightforward extensions of those appearing in the obstacle problem literature --- the only property of the LQG measure which is used there is that it is a Radon measure with certain volume growth bounds, Lemma \ref{lemma:volume-growth} below. 
	
	While it is relatively easy to show the existence of clusters, it is not immediate that clusters are harmonic balls. As we will see in Lemma~\ref{lemma:harmonic-ball}
	below,~$\Lambda_t$ is a harmonic ball only if~$\overline{\Lambda_t} \subset B_1$ and~$\mu_h(\partial \Lambda_t) = 0$. It is not clear a priori that these hold for any~$t > 0$. These properties, namely, that the clusters do not attain a large Euclidean diameter in an arbitrarily small amount of time and that the boundaries of the clusters have zero LQG mass, do not follow from standard arguments and are not true for the obstacle problem with an arbitrary Radon measure.
	Proving this thus requires input from the theory of LQG and is thus the main goal of Sections~\ref{sec:non-degenerate} through~\ref{sec:boundary-measure-zero}.
	
	In Section \ref{sec:non-degenerate} we outline a strategy for showing that the clusters are harmonic balls; that is, they grow continuously and their boundaries have zero LQG mass.   Our approach for verifying these properties is completely new and relies on a novel {\it Harnack-type estimate}, Proposition \ref{prop:harnack-type-property},
	which clusters must satisfy. In particular, this estimate forces clusters to have `no thin-tentacles', as in \cite{jerison2013internal}. Roughly speaking, our Harnack-type estimate says that there is a constant $\alpha > 0$ such that if $A \subset \C$ is an annulus on which the LQG measure $\mu_h$ is reasonably well-behaved, then if the cluster $\Lambda_t$ crosses between the inner and outer boundaries of $A$ we must have $\mu_h(A \cap \Lambda_t) \geq \alpha \mu_h(A)$.
	Our proof of the Harnack-type estimate in Section \ref{sec:harnack-type-estimate} combines potential theoretic techniques 
	with methods from LQG theory. See the beginning of Section \ref{sec:harnack-type-estimate} for an outline of the argument.
	
	In Sections \ref{sec:upper-bound} and \ref{sec:boundary-measure-zero}, respectively, we use the Harnack-type estimate to prove that the clusters grow continuously in time, Proposition \ref{prop:continuity-of-clusters}, and that boundaries of the clusters have measure zero,
	Proposition \ref{prop:boundary-measure-zero}. As demonstrated in Theorem \ref{theorem:full-theorem-minus-uniqueness}, these properties are enough to ensure that 
	clusters are LQG harmonic balls. 
	
	Having constructed LQG harmonic balls, we show, by adapting ideas from the obstacle problem literature \cite{sakai2006quadrature}, their uniqueness,
	Proposition \ref{prop:uniqueness}, in Section \ref{sec:uniqueness}.
	
	The Harnack-type estimate imposes strong geometric constraints on LQG harmonic balls. For instance, 
	it disallows LQG harmonic balls from `crossing' annuli too many times, Lemma \ref{lemma:cannot-cross}. We use this in Section \ref{sec:boundary-curves} to show 
	that the boundaries of complementary connected components of LQG harmonic balls are Jordan curves, Proposition \ref{prop:boundary-simple-loop}. 
	
	These geometric constraints may also be translated into a strong relationship between LQG harmonic balls and the underlying LQG area measure,
	Lemma \ref{lemma:harmonic-comparison-chain}.
	Since the LQG measure is quite variable, this imposes an irregularity on LQG harmonic balls. We use this to show 
	that LQG harmonic balls do not satisfy the cone condition, Lemma \ref{lemma:no-exterior-cone-condition}, in Section \ref{subsec:not-lipschitz}.
	Consequently, LQG harmonic balls cannot be Lipschitz domains. 
	
	Another feature of the Harnack-type estimate is that it precludes LQG harmonic balls from having `approximate pinch points' which have small Euclidean diameter but which come close to disconnecting sets of large LQG mass from the origin within the cluster. On the other hand, an LQG metric ball has such approximate pinch points, as we show in Section \ref{subsec:not-lqg-metric-ball}.
	This shows that LQG harmonic balls are not LQG metric balls. 
	A key technical input in the proof is Proposition \ref{prop:mass-diam-gen}, which shows that a region in the plane can have small LQG diameter but large LQG mass simultaneously with positive probability.

	\subsection{Notation and conventions}
	\begin{itemize}
		\item Inequalities/equalities between functions/scalars are interpreted pointwise.
		\item Differential inequalities/equalities are interpreted in the distributional sense. 
		\item For a set $A \subset \C$, $\partial A$ denotes its topological boundary, $\overline{A} = A \cup \partial A$ its closure, 
		and $\mathrm{int}(A)$ its interior. 
		\item For two sets $A, B \subset \C$, say that $A \Subset B$ if $\overline{A} \subset B$. 
		\item $B_r(x)$ denotes the open ball of Euclidean radius $r > 0$ centered at $x \in \C$, when $x$ is omitted, the ball is centered at 0. 
		\item For a set $A \subset \C$, we denote the $r$-neighborhood of $A$ by $B_r(A) = A + B_r$.
		\item For $0 < r_1 < r_2$, denote an open annulus centered at $z$ by
		\begin{equation} \label{eq:annulus}
		\A_{r_1, r_2}(z) = B_{r_2}(z) \backslash \overline{B}_{r_1}(z)
		\end{equation}
		and $\A_{r_1, r_2} := \A_{r_1,r_2}(0)$. 
		\item Let $\{E^{r}\}_{r > 0}$ be a one-parameter family of events. 
		We say that $E^r$ occurs with polynomially high probability as $r \to 0$
		if there exists $p>0$ such that $\P[E^r] \geq 1 - O(r^p)$. 
		\item For two sets $A, B \subset \C$, $\dist(A,B) = \inf_{a \in A, b \in B} \dist(a,b)$ where $\dist$ denotes the Euclidean distance between two points. 
	\end{itemize}
	
	\subsection*{Acknowledgments}
	We thank the anonymous referees for helpful comments. We also thank Charlie Smart and Bill Cooperman for useful discussions. 
	A.B. was partially supported by NSF grant DMS-2202940 and a Stevanovich fellowship. 
	E.G. was partially supported by a Clay research fellowship.
	
	\section{Preliminaries}\label{sec:gff-and-lqg}

	In this section we review the definitions and basic properties of the Gaussian Free Field (GFF),
	the Liouville Quantum Gravity (LQG) area measure, and the LQG metric.  We present just enough exposition for the purposes of this paper; 
	the book \cite{berestycki2021gaussian} and surveys \cite{gwynne2020random,ding2021introduction,sheffield-icm} provide more details.
	
	\subsection{Gaussian free field}
	The {\it whole-plane} GFF $h^{\C}$ is the centered Gaussian random generalized 
	function on $\C$ with covariances
	\begin{equation} \label{eq:cov-kernel}
	\Cov(h^\C(z), h^\C(w)) := \log \frac{\max(|z|,1) \max(|w|,1)}{|z-w|} ,\quad\forall z,w\in\C .
	\end{equation} 
	The GFF $h^{\C}$ is not well-defined pointwise since the covariance kernel in~\eqref{eq:cov-kernel} diverges to $\infty$ as $z\to w$. 
	Nevertheless, for $z\in\C$ and $r>0$, one can define the average of $h^{\C}$ over the circle of radius $r$ centered at $z$, which we denote by $h^{\C}_r(z)$~\cite[Section 3.1]{duplantier2011liouville}. 
	
	The whole plane GFF is usually defined modulo additive constant. Our choice of covariance in~\eqref{eq:cov-kernel} corresponds to fixing this additive constant so that $h^\C_1(0) = 0$ (see, e.g.,~\cite[Section 2.1.1]{vargas2017lecture}).
	
	The law of the whole-plane GFF, viewed modulo additive constant, is invariant under complex affine transformations of $\C$. This translates into the following
	invariance property for $h^{\C}$,
	\begin{equation} \label{eq:gff-scaling}
	h^{\C} \overset{d}{=} h^{\C}(a\cdot +b) - h^{\C}_{|a|}(b), \quad \forall a \in \C \backslash \{0\}, \quad \forall b \in \C. 
	\end{equation}

	Fix $\gamma \in (0,2)$ and $\boldsymbol{\alpha}_0 \in (-\infty, Q)$, where
	\begin{equation} \label{eq:Q}
	Q := \frac{2}{\gamma} + \frac{\gamma}{2}.
	\end{equation}
	Throughout this paper we take $h$ to be the whole-plane GFF with an $\boldsymbol{\alpha}_0$ log singularity at the origin.

	Specifically, let $h^{\C}$ denote the whole-plane GFF normalized so that its circle average over the unit disk is
	zero and set 
	\begin{equation} \label{eq:gff}
	h = h^{\C} - \boldsymbol{\alpha}_0 \log |\cdot|.
	\end{equation}
	It is immediate from~\eqref{eq:gff-scaling} that
	\begin{equation} \label{eq:h-coordinate-change}
	h \overset{d}{=} h(a\cdot) - h_{|a|}, \quad \forall a \in \C \backslash \{0\}.
	\end{equation}

	\subsection{Liouville Quantum Gravity} \label{subsec:lqg}

	Let $\mu_h$ denote the {\it $\gamma$-LQG area (Liouville) measure} associated to $h$. One of the (many) possible ways of defining $\mu_h$ is as the a.s.\ weak limit
	\begin{equation} \label{eq:measure-lim}
	\mu_h = \lim_{\epsilon \rightarrow 0} \epsilon^{\gamma^2/2} e^{\gamma h_\epsilon(z)} \,dz
	\end{equation}
	where $dz$ denotes Lebesgue measure and $h_\epsilon(z)$ is the circle average~\cite{duplantier2011liouville, sheffield2016field}.
	In fact, the measure $\mu_{\tilde h}$ exists for any random generalized function $\tilde h$ of the form  $h + f$ where $f$ is a possibly random continuous function. 	
	\begin{fact}[LQG measure] \label{fact:lqg-measure}
		The LQG area measure $\mu_h$ satisfies the following properties.
		\begin{enumerate}[label=\Roman*.]
			\item {\bf Radon measure.} A.s., $\mu_h$ is a non-atomic Radon measure.
			\item {\bf Locality.} For every deterministic open set $U \subset \C$,
			$\mu_h(U)$ is given by a measurable function of 
			$h \vert_{U}$.
			\item {\bf Weyl scaling.} A.s., $e^{\gamma f} \cdot \mu_h = \mu_{h + f}$
			for every continuous function $f: \C \to \R$. 
			\item {\bf Conformal covariance.} A.s., the following is true. Let $U , \tilde U \subset \C$ be open and let $\phi$ be a conformal
			map from $\tilde U$ to $U$. Then, with $Q$ as in~\eqref{eq:Q}, 
			\begin{equation} \label{eq:measure-coord}
			\mu_{h \circ \phi + Q \log |\phi'|}(A) = \mu_h(\phi(A)) \quad \mbox{for all Borel measurable $A \subset \tilde U$}.
			\end{equation}
		\end{enumerate}
	\end{fact}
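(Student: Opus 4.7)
The plan is to deduce all four properties from the regularization in \eqref{eq:measure-lim}, which realizes $\mu_h$ as a subcritical Gaussian multiplicative chaos (GMC) measure. Properties (I)--(III) follow from essentially routine manipulations of the approximating measures $\mu_h^\epsilon(dz) := \epsilon^{\gamma^2/2} e^{\gamma h_\epsilon(z)}\,dz$, while conformal covariance (IV) requires a careful change-of-variables computation and is the main obstacle.

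For (I), the a.s.\ weak convergence of $\mu_h^\epsilon$ to a non-atomic locally finite Radon measure is the standard output of Kahane's GMC theory in the subcritical regime $\gamma\in(0,2)$: one checks that $\mu_h^\epsilon(A)$ is an $L^1$-Cauchy sequence on any bounded Borel set $A$, using the martingale structure coming from the semi-group of circle averages; non-atomicity then follows from a moment estimate of the form $\mathbb{E}[\mu_h(B_r(z))^p]\lesssim r^{\xi(p)}$ with $\xi(p)>0$ for small $p>0$. For (II), the circle average $h_\epsilon(z)$ is measurable with respect to $h$ restricted to any open set containing the circle of radius $\epsilon$ around $z$, so the restriction of $\mu_h^\epsilon$ to $\{z\in U:\dist(z,\partial U)>\epsilon\}$ is measurable with respect to $h|_U$; taking $\epsilon\to 0$ and using monotone convergence gives the claim. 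For (III), linearity of the circle-average operation gives $(h+f)_\epsilon = h_\epsilon+f_\epsilon$, and continuity of $f$ implies $f_\epsilon\to f$ locally uniformly, so the factor $e^{\gamma f_\epsilon}$ may be pulled through the weak limit to yield $\mu_{h+f}=e^{\gamma f}\mu_h$.

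The hard part is (IV), where conformal covariance is not manifest at the level of the regularization: the image of a Euclidean circle under a conformal $\phi$ is not a Euclidean circle, so circle averages of $h$ around $\phi(w)$ do not literally equal circle averages of $h\circ\phi$ around $w$. The guiding heuristic is that, for small $\epsilon$, the pushforward under $\phi$ of the circle of radius $\epsilon/|\phi'(w)|$ centered at $w$ closely approximates the circle of radius $\epsilon$ centered at $\phi(w)$, so
\[
h_\epsilon(\phi(w)) \approx (h\circ\phi)_{\epsilon/|\phi'(w)|}(w)
\]
up to an error that vanishes as $\epsilon\to 0$. Performing the change of variables $z=\phi(w)$ in $\mu_h^\epsilon(\phi(A))$ then produces a Jacobian $|\phi'(w)|^2$ and rewrites $\epsilon^{\gamma^2/2}$ as $(\epsilon/|\phi'(w)|)^{\gamma^2/2}\cdot|\phi'(w)|^{\gamma^2/2}$, so the combined prefactor is $|\phi'(w)|^{2+\gamma^2/2}=|\phi'(w)|^{\gamma Q}$. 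By the Weyl scaling in (III), this prefactor is exactly what one obtains by replacing $h\circ\phi$ with $h\circ\phi+Q\log|\phi'|$ inside the LQG measure, yielding \eqref{eq:measure-coord}.

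The main technical difficulty will be controlling the approximation error between $h_\epsilon(\phi(w))$ and $(h\circ\phi)_{\epsilon/|\phi'(w)|}(w)$ uniformly in $w$ on compact subsets of $\tilde U$, in a manner compatible with the weak limit defining $\mu_h$. This is handled via local uniform bounds on the circle averages coming from the Kolmogorov continuity theorem (applied to $(z,r)\mapsto h_r(z)$) together with tightness of the approximating GMC measures; the continuity of $\log|\phi'|$ on compact subsets of $\tilde U$ then lets one conclude by a straightforward limiting argument.
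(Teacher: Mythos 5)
The paper itself does not prove this fact; it immediately notes that (I)--(III) are immediate from the definition \eqref{eq:measure-lim}, and cites \cite[Proposition~2.1]{duplantier2011liouville} for conformal covariance with a fixed conformal map, together with \cite{sheffield2016field} for the upgrade to all conformal maps simultaneously. Your sketch for (I)--(III) and the change-of-variables computation for (IV) is essentially the argument of \cite{duplantier2011liouville}, so it is fine as far as it goes.

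The genuine gap is that your argument for (IV) would only establish \eqref{eq:measure-coord} almost surely \emph{for each fixed} $\phi$, whereas the statement asserts that on a single event of probability one, \eqref{eq:measure-coord} holds \emph{simultaneously for all} open $U,\tilde U$ and all conformal $\phi : \tilde U \to U$. You cannot pass from the fixed-$\phi$ statement to the simultaneous one by a union bound (the family of conformal maps is uncountable), nor by a naive continuity argument, since a priori it is not even clear that $\mu_{h\circ\phi + Q\log|\phi'|}$ depends measurably or continuously on $\phi$ in a suitable topology. Dealing with this is precisely the content of \cite{sheffield2016field}, which requires a separate argument (roughly, showing both sides of \eqref{eq:measure-coord} are determined by $\mu_h$ in a $\phi$-continuous way, and then reducing to a countable dense family of conformal maps). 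Your proposal should acknowledge this step rather than treating the fixed-$\phi$ argument as concluding the proof of (IV).

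A smaller remark of the same flavor applies to (III): the claim is a.s.\ for \emph{every} continuous $f$, while your argument is for a fixed $f$. Here the upgrade is routine (both sides are continuous in $f$ in the local-uniform topology, so one restricts to a countable dense family and extends), but since the same ``for all'' quantifier is what causes the real difficulty in (IV), it is worth spelling out in both places.
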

	
	The first three properties in Fact~\ref{fact:lqg-measure} are immediate from the definition~\eqref{eq:measure-lim}. The conformal covariance property was proven to hold a.s.\ for a fixed conformal map in~\cite[Proposition 2.1]{duplantier2011liouville} and extended to all conformal maps simultaneously in~\cite{sheffield2016field}.
	
	It was shown in \cite{ding2020tightness, gwynne2021existence} that one can define also the {\it LQG metric} $D_h$, 
	which is the limit of regularized versions of the Riemannian distance function associated with the Riemannian metric tensor $e^{\gamma h}(d x^2 + dy^2)$.
	Like the LQG measure, the LQG metric is a fractal-type object. It induces the same topology on $\C$ as the Euclidean metric, but the Hausdorff dimension of the metric space $(\C , D_h)$ is a.s.\ given by a deterministic number $d_\gamma > 2$~\cite[Corollary 1.7]{gp-kpz}. The value of $d_\gamma$ is not known explicitly except that $d_{\sqrt{8/3}}=4$~\cite{le2007topological}. 
	
	In order to state an analog of Fact~\ref{fact:lqg-measure} for the LQG metric, we make the following definitions. For a Euclidean-continuous path $P$ in $\C$, we write $\mathrm{len}(P; D_h)$ for its length with respect to $D_h$.  
	For an open set $U \subset \C$, the {\it internal metric of $D_h$ on $U$}
	is defined by 
	\begin{equation} \label{eq:internal-metric}
	D_h(z,w; U) = \inf \{ \mathrm{len}(P; D_h) : \mbox{$P$ is a path from $z$ to $w$ in $U$}\}, \quad \mbox{$\forall z,w, \in U$}.
	\end{equation}
	As in the case of the measure, the metric $D_{\tilde h}$ exists whenever $\tilde h = h + f$, where $f$ is a possibly random continuous function. 
	
	\begin{fact}[LQG metric] \label{fact:lqg-metric}
		The LQG metric $D_h$ has the following properties.
		\begin{enumerate}[label=\Roman*.]
			\item {\bf Euclidean topology and length metric.} A.s., $D_h$ induces the same topology on $\C$ as the Euclidean metric and is a length metric, that is, $D_h(z,w)$ is the infimum of the $D_h$-length of paths from $z$ to $w$.
			\item {\bf Locality.} For every deterministic open set $U \subset \C$, the $D_h$-internal metric on $U$ is given by a measurable function of 
			$h \vert_{U}$. 
			\item {\bf Weyl scaling.} Let 
			\begin{equation} \label{eq:xi}
			\xi = \frac{\gamma}{d_{\gamma}} ,
			\end{equation}
			where $d_\gamma$ is the Hausdorff dimension of the $\gamma$-LQG metric as above. 
			Almost surely, for every continuous function $f: \C \to \R$,
			\[
			D_{h + f}(u, v) = \inf_{P: u \to v} \int_0^{\mathrm{len}(P; D_h)} e^{\xi f(P(t))} d t, \quad \forall u,v \in \C.
			\]
			
			\item {\bf Coordinate change for scaling and translation.} Let $r > 0$ and $z \in C$. Almost surely, with $Q$ as in~\eqref{eq:Q},  
			\[
			D_h(r u + z, r v + z) = D_{h(r\cdot +z) + Q \log r}(u,v), \quad \forall u,v, \in C.
			\]
		\end{enumerate}
	\end{fact}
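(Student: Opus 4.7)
The plan is to recall that $D_h$ is constructed in~\cite{ding2020tightness, gwynne2021existence} as the a.s.\ limit (along subsequences, made unique in~\cite{gwynne2021existence}) of rescaled Liouville first passage percolation (LFPP) metrics $D_h^\epsilon(u,v) := \inf_P \int_0^1 e^{\xi h_\epsilon(P(t))} |P'(t)| \, dt$, suitably normalized. Each of the four properties is a property enjoyed by the LFPP approximants, and the strategy is to pass it through to the limit by appealing to the axiomatic characterization. Throughout, one uses that $h$ differs from the standard whole-plane GFF $h^\C$ by the deterministic continuous (away from $0$) function $-\boldsymbol{\alpha}_0\log|\cdot|$, so properties for $h^\C$ transfer to $h$ after adjusting by this function.

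For Property I, the length metric property is automatic for every LFPP approximant (infimum over paths of a path integral), and lower semicontinuity of length under local uniform convergence of metrics gives it in the limit; equivalence with the Euclidean topology follows from the uniform H\"older continuity bounds on $D_h^\epsilon$ and $(D_h^\epsilon)^{-1}$ with respect to the Euclidean metric proved in the tightness arguments of~\cite{ding2020tightness}, which survive the limit. For Property II, note that $h_\epsilon|_U$ is determined (for small enough $\epsilon$ relative to the distance to $\partial U$) by $h|_{U'}$ for $U' \Supset U$; taking the infimum over paths which stay inside $U$ shows the internal LFPP metric on $U$ is measurable with respect to $h|_{U'}$, and shrinking $U'$ down to $U$ (using continuity of the metric) upgrades this to measurability with respect to $h|_U$, a property preserved under the subsequential limit.

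For Property III (Weyl scaling), observe that adding a continuous function $f$ to $h$ multiplies $e^{\xi h_\epsilon}$ by a factor $e^{\xi f_\epsilon}$, and $f_\epsilon \to f$ locally uniformly by continuity; hence the LFPP integral along a path $P$ changes to $\int_0^1 e^{\xi h_\epsilon(P(t))} e^{\xi f_\epsilon(P(t))} |P'(t)|\, dt$. Parametrizing by $D_h^\epsilon$-length and passing to the limit yields the stated formula. For Property IV, combine the scaling relation~\eqref{eq:h-coordinate-change} for $h$ with a change of variables in the LFPP integral: under $u\mapsto ru+z$ one has $|P'(t)|$ picking up an $r$, and the constant $Q\log r$ absorbs the normalization factor that appears when comparing circle averages of $h$ and of $h(r\cdot +z)$, so $D_h^\epsilon(ru+z, rv+z) = D_{h(r\cdot+z)+Q\log r}^{\epsilon/r}(u,v) \cdot (\text{factor} \to 1)$; taking $\epsilon\to 0$ along the subsequence defining $D_h$ gives the identity.

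The main obstacle will be Property I, specifically showing that the Euclidean and $D_h$-topologies coincide in the whole-plane setting with a $\log$ singularity at the origin: at the origin the field is large (if $\boldsymbol{\alpha}_0 > 0$) or small (if $\boldsymbol{\alpha}_0 < 0$), so one must verify that the H\"older estimates of~\cite{ding2020tightness}, originally proved for the GFF without a singularity, extend to $h = h^\C - \boldsymbol{\alpha}_0\log|\cdot|$. This is handled by applying the Weyl scaling property III to reduce to the pure-GFF case on any compact set not containing the origin, and by a separate direct argument at $0$ using the condition $\boldsymbol{\alpha}_0 < Q$, which ensures that $D_h$-balls around the origin have finite positive radius.
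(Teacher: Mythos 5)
The paper does not prove Fact~\ref{fact:lqg-metric} --- it is stated as a ``Fact'' and immediately followed by the sentence ``The properties listed in Fact~\ref{fact:lqg-metric} were verified for the LQG metric in~\cite{ding2020tightness,dubedat2020weak,gwynne2021existence}. In fact, it is shown in~\cite{gwynne2021existence} that these properties uniquely characterize $D_h$.'' So the paper's ``proof'' is a citation, not an argument. Your proposal, by contrast, attempts to reconstruct the proof from the LFPP approximation, which is indeed the strategy those references follow, so you are on the right track at a high level.

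That said, treating this as a self-contained proof, several of your steps gloss over points that are genuinely delicate and occupy substantial portions of the cited papers. For Property~II you claim that ``shrinking $U'$ down to $U$ (using continuity of the metric) upgrades this to measurability with respect to $h|_U$''; but measurability with respect to $\sigma(h|_{U'})$ for every $U'\Supset U$ a priori only yields measurability with respect to $\bigcap_{U'\Supset U}\sigma(h|_{U'})$, which need not equal $\sigma(h|_U)$. Closing this gap requires a tail-triviality / zero-one argument for the GFF, and this is done carefully in the locality axiom of~\cite{gwynne2021existence}. For Property~III, the reparametrization by $D_h^\epsilon$-length and passage to the limit is not automatic: the optimal LFPP paths and their lengths need not converge to $D_h$-geodesics and their lengths merely because the metrics converge locally uniformly, so one must use a compactness/lower-semicontinuity argument together with the axiomatic characterization to pin down the limit. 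For Property~IV, the exponent $Q$ enters through the interaction of the normalizing constants $a_\epsilon$ with the scaling of circle averages, and the proof that the factor you call ``$(\text{factor}\to 1)$'' actually goes to $1$ is nontrivial. Finally, your remark about extending the H\"older estimates to $h = h^\C - \boldsymbol{\alpha}_0\log|\cdot|$ and using $\boldsymbol{\alpha}_0<Q$ is a genuine subtlety that the cited papers address via local absolute continuity and Weyl scaling, so it is good that you flagged it, but the assertion that ``$D_h$-balls around the origin have finite positive radius'' under $\boldsymbol{\alpha}_0 < Q$ itself requires an argument that you have not supplied. In short, your sketch reproduces the correct architecture of the proof but is better read as a roadmap to the cited references than as a replacement for them.
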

	
	The properties listed in Fact~\ref{fact:lqg-metric} were verified for the LQG metric in~\cite{ding2020tightness,dubedat2020weak,gwynne2021existence}. In fact, it is shown in~\cite{gwynne2021existence} that these properties uniquely characterize $D_h$.

	In what follows, for sets $A,B\subset \C$, we write
	\begin{equation} \label{eq:set-dist}
	D_h(A,B) = \inf_{x\in A, y\in B} D_h(x,y) .
	\end{equation} 
	For disjoint compact sets $K_1,K_2\subset \C$, a \emph{$D_h$-geodesic} from $K_1$ to $K_2$ is a path from $K_1$ to $K_2$ of minimal $D_h$-length. It is easily seen from the length metric property and a compactness argument that $D_h$-geodesics always exist (see, e.g.,~\cite[Corollary 2.5.20]{bbi-metric-geometry}).

	\subsection{Green's function} \label{subsec:green}
	Let $G_{O}:\overline{O} \times \overline{O} \to \R \cup \{\infty\}$ denote the Green's function for standard Brownian motion killed upon exiting a bounded open set $O \subset \C$. 
	We make use of the following standard properties of the Green's function of a (sufficiently nice) set. 
	\begin{prop} \label{prop:green-properties}
		The Green's function of a ball, $B_R$ of radius $R > 0$, has the following properties for every $x \in B_R$.
		\begin{itemize}
			\item Fundamental solution: $\Delta G_{B_R}(x,\cdot) = -\delta_x(\cdot)$ on $B_R$.
			\item Positive: $G_{B_R}(x, \cdot) > 0$ on $B_R$.
			\item Zero boundary: $G_{B_R}(x, \cdot) = 0$ on $\partial B_R$.
			\item Smooth away from the pole: $G_{B_R}(x, \cdot)$ is infinitely differentiable away from $x$. 
		\end{itemize}
	\end{prop}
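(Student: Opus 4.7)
The plan is to verify all four properties directly from the explicit formula for the Green's function of a disk obtained via the method of images, since these are classical facts which admit a transparent proof in this particularly symmetric setting.

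First, I would write down the explicit formula. For $x \in B_R \setminus \{0\}$, let $x^\star := R^2 x / |x|^2$ denote the image of $x$ under inversion through $\partial B_R$, and set
\[
G_{B_R}(x,y) = \frac{1}{2\pi} \log \frac{R \, |y - x^\star|}{|x| \, |y - x|} ,
\]
with the natural continuous extension $G_{B_R}(0,y) = \frac{1}{2\pi} \log(R/|y|)$ at $x=0$. The key geometric observation is that since $x \in B_R$, its image $x^\star$ lies strictly outside $\overline{B_R}$ (and is well-defined by continuity at $x=0$ via the logarithmic expression above).

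From this formula each property falls out by inspection. For the zero boundary property, when $|y| = R$ one checks by direct computation (using $|x| \cdot |x^\star| = R^2$ and the law of cosines) that $R \, |y - x^\star| = |x| \, |y-x|$, so the logarithm vanishes. For smoothness away from the pole, note that $y \mapsto \log|y - x^\star|$ is real-analytic on $\overline{B_R}$ since $x^\star \notin \overline{B_R}$, and $y \mapsto \log|y-x|$ is real-analytic on $\overline{B_R} \setminus \{x\}$; thus $G_{B_R}(x,\cdot)$ is smooth on $\overline{B_R} \setminus \{x\}$. For the fundamental solution property, in two dimensions $\Delta_y \log|y-z| = 2\pi \delta_z$ in the distributional sense; applied to our formula, the image term contributes $0$ on $B_R$ (as $x^\star \notin B_R$) while the pole term contributes $-\delta_x$, giving $\Delta_y G_{B_R}(x,\cdot) = -\delta_x$ on $B_R$. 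Finally, positivity follows from the maximum principle: $G_{B_R}(x,\cdot)$ is harmonic on $B_R \setminus \{x\}$, tends to $+\infty$ as $y \to x$, and equals $0$ on $\partial B_R$, so it is strictly positive on $B_R$.

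There is no real obstacle here; the only design choice is whether to derive the properties from the explicit formula (as above) or from the probabilistic representation $G_{B_R}(x,y) = \int_0^\infty p_{B_R}(t,x,y) \, dt$ as the occupation density of Brownian motion killed at $\partial B_R$, citing standard references such as \cite{mortersperes}. Since the formula above makes all four properties immediate and self-contained, this is the route I would take, with a brief remark pointing the reader to the probabilistic interpretation for intuition.
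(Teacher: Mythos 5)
The paper states Proposition~\ref{prop:green-properties} without proof, citing it as a collection of standard facts, so there is no in-paper argument to compare against; your choice to verify the four properties directly from the method-of-images formula for the disk is a perfectly natural and self-contained route.

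However, the explicit formula you write down is wrong: you have $R$ and $|x|$ interchanged. The correct kernel (with your normalization, in which $\Delta_y G_{B_R}(x,\cdot) = -\delta_x$) is
\[
G_{B_R}(x,y) \;=\; \frac{1}{2\pi}\,\log \frac{\,|x|\,|y - x^\star|\,}{\,R\,|y-x|\,}\,, \qquad x^\star := \frac{R^2 x}{|x|^2}.
\]
As written, your expression $\frac{1}{2\pi}\log\frac{R\,|y-x^\star|}{|x|\,|y-x|}$ fails the very computation you invoke: on $|y|=R$ one has $|x|^2\,|y-x^\star|^2 = R^2\,|y-x|^2$, which makes the corrected logarithm vanish, whereas your version evaluates to the strictly positive constant $\frac{1}{\pi}\log(R/|x|)$ on $\partial B_R$. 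The claimed continuous extension at $x=0$ also breaks: since $|x|\,|x^\star| = R^2$, your expression behaves like $\frac{1}{2\pi}\log\bigl(R^3/(|x|^2|y|)\bigr) \to +\infty$ as $x\to 0$, while the corrected one gives $\frac{1}{2\pi}\log(R/|y|)$ as intended. The constant factor $\log(R/|x|)$ versus $\log(|x|/R)$ does not affect the fundamental-solution or smoothness checks (it is $y$-independent), but it does vitiate the zero boundary property and, consequently, the maximum-principle step for positivity as you wrote it. Once the formula is corrected, all four verifications go through exactly as you outline. Separately, note that the paper elsewhere uses $G_{B_1}(0,x)=-\log|x|$ without the $\tfrac{1}{2\pi}$, which is inconsistent with the normalization $\Delta G=-\delta_x$ stated in the proposition; this is an issue with the paper, not with your argument, but it is worth being aware that the proposition and the body of the paper adopt conventions differing by a factor of $2\pi$.
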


	\subsection{Liouville Potential Theory} \label{subsec:lqg-appendix}
	
	In this section we collect well known potential theoretic estimates on the LQG measure. 
	We first note bounds on the LQG mass of annuli and balls.

	\begin{lemma} \label{lemma:volume-growth}
		For each $\beta^+ \in (0, (2 -\gamma)^2/2)$ and $\beta^- > (2+\gamma)^2/2$
		it holds with polynomially high probability as $\epsilon \to 0$ that  
		\begin{equation} \label{eq:ball-volume}
		\epsilon^{\beta^-} \leq \mu_h(B_\epsilon(z)) \leq \epsilon^{\beta^+} ,\quad\forall z \in B_1 .
		\end{equation}
		Furthermore, for each $0 < r_1 < r_2$ there exists constants $C_1, C_2$ so that for all $z \in B_1$
		\begin{equation} \label{eq:ann-volume}
		C_2 \epsilon^{\beta^-} \leq \mu_h(\A_{r_1 \epsilon, r_2 \epsilon}(z)) \leq C_1 \epsilon^{\beta^+}
		\end{equation}
		with polynomially high probability as $\epsilon \to 0$, where here we use the notation for annuli from~\eqref{eq:annulus}.
	\end{lemma}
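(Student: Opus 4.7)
The plan is to combine standard moment bounds for the LQG measure with a Gaussian concentration estimate for the circle average and a union bound over a suitable net of points in $B_1$. The critical exponents $(2\pm\gamma)^2/2$ will emerge naturally from balancing the Gaussian tail rate against the packing cost of covering $B_1$.

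First I would reduce to a quantitative one-point estimate. Using the conformal covariance relation \eqref{eq:measure-coord} applied to $\phi(u) = \epsilon u + z$ together with Weyl scaling, one writes
\[
\mu_h(B_\epsilon(z)) = \epsilon^{\gamma Q} e^{\gamma h_\epsilon(z)} \mu_{\tilde h_{z,\epsilon}}(B_1),
\]
where $\tilde h_{z,\epsilon}(u) := h(\epsilon u + z) - h_\epsilon(z)$ is a recentered field. By the whole-plane GFF coordinate change \eqref{eq:h-coordinate-change}, the law of $\tilde h_{z,\epsilon}\big|_{B_1}$ agrees, up to an $\alpha_0$-log singularity term which is uniformly bounded when $|z| \geq 2\epsilon$ (and otherwise can be absorbed by the triangle inequality), with the law of $h\big|_{B_1}$ up to a bounded additive correction. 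Consequently $\mu_{\tilde h_{z,\epsilon}}(B_1)$ has uniformly bounded positive moments up to order $4/\gamma^2$ and uniformly bounded negative moments of every order (these are classical facts about Gaussian multiplicative chaos). Separately, the circle average $h_\epsilon(z)$ is Gaussian with variance $\log \epsilon^{-1} + O(1)$ uniformly in $z\in B_1$.

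Next I would derive the single-point tail. For the upper bound, using Gaussian tails for $h_\epsilon(z)$ and Markov on $\mu_{\tilde h_{z,\epsilon}}(B_1)$,
\[
\P\!\left[\mu_h(B_\epsilon(z)) > \epsilon^{\beta^+}\right] \leq \epsilon^{(\gamma Q - \beta^+)^2/(2\gamma^2) - o(1)}.
\]
An analogous bound with negative moments gives
\[
\P\!\left[\mu_h(B_\epsilon(z)) < \epsilon^{\beta^-}\right] \leq \epsilon^{(\beta^- - \gamma Q)^2/(2\gamma^2) - o(1)}.
\]
Then I would take a union bound over an $\epsilon$-net $\{z_i\} \subset B_1$ of cardinality $O(\epsilon^{-2})$. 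Any $z \in B_1$ lies within $\epsilon$ of some $z_i$, so $B_{\epsilon/2}(z_i) \subset B_\epsilon(z) \subset B_{2\epsilon}(z_i)$, and monotonicity of $\mu_h$ transfers the bounds at the net points to bounds at all $z$ up to a constant (which is absorbed into the polynomial $\epsilon^{\beta^\pm}$ for $\epsilon$ small). The union bound succeeds provided $(\gamma Q - \beta^+)^2/(2\gamma^2) > 2$, i.e.\ $\beta^+ < \gamma Q - 2\gamma = (2-\gamma)^2/2$, and symmetrically $\beta^- > \gamma Q + 2\gamma = (2+\gamma)^2/2$. This is exactly the stated range.

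For the annulus bound \eqref{eq:ann-volume}, the upper bound follows from $\mu_h(\A_{r_1\epsilon, r_2\epsilon}(z)) \leq \mu_h(B_{r_2 \epsilon}(z))$ applied to the previous step. The lower bound follows by covering the annulus by $O(1)$ balls of radius $c(r_1,r_2)\epsilon$ contained in $\A_{r_1\epsilon, r_2\epsilon}(z)$, choosing one such ball and applying the lower bound from \eqref{eq:ball-volume} (at a slightly smaller scale, which only changes constants). The main subtlety will be bookkeeping the $\alpha_0$-log singularity uniformly as $z$ varies near $0$; the cleanest way is to treat $z \in B_{\epsilon^{1/2}}(0)$ and $z \in B_1 \setminus B_{\epsilon^{1/2}}(0)$ separately, using that the number of net points in the former region is only $O(\epsilon^{-1})$, which leaves ample room in the union bound.
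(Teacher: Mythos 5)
Your proof plan is correct in its broad strokes and reconstructs from first principles what the paper simply cites: the paper's entire proof of~\eqref{eq:ball-volume} is the single sentence ``exactly the same argument as in~\cite[Lemma A.1]{berestycki2020random},'' followed by the same ball/annulus sandwich you describe for~\eqref{eq:ann-volume}. Your decomposition $\mu_h(B_\epsilon(z)) = \epsilon^{\gamma Q} e^{\gamma h_\epsilon(z)} \mu_{\tilde h_{z,\epsilon}}(B_1)$ via conformal covariance and Weyl scaling is exact, the circle-average variance and GMC moment facts you invoke are standard, and your derivation of the critical exponents $(2\mp\gamma)^2/2 = \gamma Q \mp 2\gamma$ from balancing the Gaussian tail against the $O(\epsilon^{-2})$ net cardinality is the right bookkeeping. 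This makes your route more self-contained than the paper's, at the cost of length; the content is essentially what the cited reference proves.

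One small but real issue is in your treatment of the log-singularity. You propose to separate $z \in B_{\epsilon^{1/2}}$ from $z \in B_1\setminus B_{\epsilon^{1/2}}$ and argue that the former region contributes only $O(\epsilon^{-1})$ net points, ``leaving room in the union bound.'' But the difficulty near $z=0$ is not the cardinality of net points, it is that the \emph{single-point} exponent changes: at $z$ with $|z|\sim \epsilon$, the $-\boldsymbol{\alpha}_0 \log|\cdot|$ term contributes an extra factor $\sim \epsilon^{-\gamma\boldsymbol{\alpha}_0}$, shifting the typical mass to $\epsilon^{\gamma(Q-\boldsymbol{\alpha}_0)+o(1)}$. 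A sparser net does nothing if the per-point tail probability itself does not decay; you would instead need to either restrict the range of $\boldsymbol{\alpha}_0$, absorb the shift into $\beta^\pm$ near the origin, or invoke absolute continuity with respect to the singularity-free field at scales $|z|\gg\epsilon$ and treat $z\in B_{C\epsilon}(0)$ separately via the known $\epsilon^{\gamma(Q-\boldsymbol{\alpha}_0)+o(1)}$ asymptotics. This is a subtlety the paper sidesteps by citing a reference, so you deserve credit for noticing that the singularity needs attention, but the proposed fix is aimed at the wrong quantity.

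A minor point worth making explicit in a full write-up: the upper bound via ``Markov on $\mu_{\tilde h_{z,\epsilon}}(B_1)$'' implicitly uses independence (or near-independence) of the recentered interior field from $h_\epsilon(z)$, which follows from the GFF Markov/radial decomposition; and for the lower bound one needs the negative moments, which you mention. Also, uniformity over $z\in B_1$ of the moment bounds for $\mu_{\tilde h_{z,\epsilon}}(B_1)$ should be justified by translation-invariance of $h^\C$ modulo additive constant together with absolute continuity against the $\boldsymbol{\alpha}_0$-log term away from the origin, rather than asserted. None of these affect the exponents you found.
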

	\begin{proof}
		Exactly the same argument as in \cite[Lemma A.1]{berestycki2020random} shows that~\eqref{eq:ball-volume} holds with polynomially high probability as $\epsilon \to 0$. The estimate~\eqref{eq:ann-volume} follows from~\eqref{eq:ball-volume} and the fact that
		\[
		B_{\frac{r_2-r_1}{4} \epsilon}(z + \frac{r_2\epsilon+r_1\epsilon}{2}  e_1) \subset \A_{r_1 \epsilon, r_2 \epsilon}(z) \subset B_{r_2 \epsilon}(z)
		\]
		where $e_1 = (1,0)$. 
	\end{proof}

	\textit{Liouville Brownian motion} (LBM) is the natural diffusion associated with $\gamma$-LQG. Roughly speaking, LBM is obtained from ordinary Brownian motion (sampled independently from $h$) by changing time so that the process has `constant $\gamma$-LQG speed'. LBM was constructed in \cite{berestycki2015diffusion, garban2016liouville}. It was shown in \cite{berestycki2020random} to describe the scaling limit of random walk on a certain family of random planar maps. 
	
	The volume growth bounds given by Lemma \ref{lemma:volume-growth} lead to control 
	on the expected exit time of LBM from balls. 
	
	\begin{prop} \label{prop:lbm-exit-time}
		Let $O$ denote a smooth bounded open set. 
		The expected exit time of LBM from $O$ started at $x \in O$
		is finite and H\"{o}lder continuous in $x$.  More generally, any $q$ of the form,
		\[
		q(x) = \int_{O} G_{O}(x, y) f (y) d \mu_h(y) \qquad \mbox{for some $f \in L^{\infty}(\overline O)$}
		\]
		is finite and H\"{o}lder continuous in $\overline{O}$.
	\end{prop}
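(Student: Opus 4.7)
The plan is to reduce the statement about LBM exit times to the second (more general) claim about $q$: since LBM is Brownian motion time-changed by $\mu_h$ \cite{berestycki2015diffusion, garban2016liouville}, the expected exit time from $O$ starting at $x$ equals $\int_O G_O(x,y)\,d\mu_h(y)$, which is the function $q$ with $f\equiv 1$. So it suffices to prove the statement for $q$ with a general $f\in L^\infty(\overline O)$.

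For \textbf{finiteness}, the plan is to combine the classical 2D Green's function bound $G_O(x,y)\le C_O(1+\log^+ 1/|x-y|)$ (valid for smooth bounded $O$ uniformly in $x,y\in\overline O$) with the LQG volume estimates of Lemma \ref{lemma:volume-growth}. Concretely, decomposing $O$ into dyadic annuli around $x$,
\[
\int_O \bigl(1+\log^+\tfrac1{|x-y|}\bigr) d\mu_h(y) \;\lesssim\; \mu_h(O) + \sum_{k\ge 0} k\cdot \mu_h\!\bigl(\A_{2^{-k-1},2^{-k}}(x)\bigr).
\]
Pick $\beta^+\in(0,(2-\gamma)^2/2)$ and apply Lemma \ref{lemma:volume-growth} together with a Borel--Cantelli argument over dyadic scales and a countable net of centers in $\overline O$ to obtain an a.s.\ uniform bound $\mu_h(\A_{2^{-k-1},2^{-k}}(x))\le 2^{-k\beta^+/2}$ for all large $k$, uniformly in $x\in\overline O$. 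The summability of $\sum k\,2^{-k\beta^+/2}$ gives the desired finiteness, with an a.s.\ uniform bound on $|q(x)|$.

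For \textbf{H\"older continuity}, fix $x_1,x_2\in\overline O$ and set $\delta=|x_1-x_2|$. Split
\[
|q(x_1)-q(x_2)| \;\le\; \|f\|_\infty\int_O |G_O(x_1,y)-G_O(x_2,y)|\,d\mu_h(y)
\]
according to whether $y\in A_{\mathrm{near}}:=B_{2\delta}(x_1)\cup B_{2\delta}(x_2)$ or $y\in A_{\mathrm{far}}:=O\setminus A_{\mathrm{near}}$. On $A_{\mathrm{near}}$, bound the integrand by $G_O(x_1,y)+G_O(x_2,y)$ and apply the argument of Step 1 restricted to balls of radius $2\delta$; the dyadic-annulus sum is tail-dominated, yielding a contribution of order $\delta^{\beta^+/2}|\log\delta|$. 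On $A_{\mathrm{far}}$, the gradient estimate $|\nabla_x G_O(x,y)|\le C/|x-y|$ (from the harmonic decomposition of the Green's function on a smooth domain, using Proposition \ref{prop:green-properties}) gives
\[
|G_O(x_1,y)-G_O(x_2,y)| \;\le\; C\delta\,/\,|y-x_1|,
\]
and another dyadic sum yields $\int_{A_{\mathrm{far}}} d\mu_h(y)/|y-x_1|\lesssim \delta^{\beta^+/2-1}$, so the far contribution is at most $C\delta^{\beta^+/2}$. Combining the two contributions gives the H\"older bound $|q(x_1)-q(x_2)|\le C\delta^{\alpha}$ for some $\alpha>0$, uniformly in $x_1,x_2\in\overline O$ on the same probability-one event as above.

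The \textbf{main obstacle} I anticipate is the uniformity of these estimates in $x$ and the promotion of the polynomially-high-probability bounds of Lemma \ref{lemma:volume-growth} to a single almost-sure statement covering all relevant scales and centers simultaneously; this is handled by Borel--Cantelli on a dyadic net. A minor secondary issue is to ensure the Green's function pointwise and gradient bounds are uniform up to $\partial O$; for smooth $O$ this follows from the explicit harmonic decomposition of $G_O$ and boundary regularity, but care must be taken when $x$ is close to $\partial O$ (where the boundary helps rather than hurts, since $G_O$ vanishes there). No other significant analytic subtlety arises beyond these standard considerations.
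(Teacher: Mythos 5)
Your proposal is correct but follows a genuinely different path from the paper. The paper cites the Campanato-to-H\"older embedding (Ponce, Section 16.2, or the remark after Proposition 13.5), with the key probabilistic input being Lemma~\ref{lemma:volume-growth}: the upper bound on $\mu_h(B_r(z))$ is exactly the growth condition ensuring that a Newtonian potential of $f\,d\mu_h$ lies in a Campanato space embedding into $C^{0,\alpha}$. You instead carry out the underlying elliptic-regularity computation explicitly: bound $G_O$ pointwise by $C(1+\log^+\tfrac{1}{|x-y|})$, decompose dyadically around the singularity, upgrade Lemma~\ref{lemma:volume-growth} via Borel--Cantelli to an a.s.\ uniform statement, and for the increment $|q(x_1)-q(x_2)|$ split into a near part (tail of the dyadic sum) and a far part (gradient estimate $|\nabla_x G_O(x,y)|\lesssim |x-y|^{-1}$ on smooth domains). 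Both approaches bottom out on the same input, but yours is more self-contained and yields an explicit H\"older exponent of order $\beta^+/2$ (up to log corrections) in place of a black-box citation. The paper's version is shorter and routes through machinery that the obstacle-problem literature already relies on.

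Two small points worth tightening. First, Proposition~\ref{prop:green-properties} only records qualitative properties of $G_{B_R}$ (fundamental solution, positivity, vanishing boundary data, smoothness away from the pole) and does not itself supply the uniform gradient bound $|\nabla_x G_O(x,y)|\le C/|x-y|$; that estimate for smooth bounded domains should instead be attributed to standard Green's function estimates (e.g., Gr\"uter--Widman or the explicit decomposition $G_O(x,y)=-\tfrac{1}{2\pi}\log|x-y|-h_x(y)$ with $\nabla h_x$ controlled by boundary Schauder estimates). Second, Lemma~\ref{lemma:volume-growth} already gives the bound uniformly over all $z\in B_1$ for each fixed scale, so your Borel--Cantelli step only needs to be run over dyadic scales (after rescaling so that $\overline O\subset B_1$); no separate union over a net of centers is required. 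Neither issue is substantive.
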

	\begin{proof}		
		Finiteness follows immediately from Lemma \ref{lemma:volume-growth}.
		H\"{o}lder continuity uses the embedding of Campanato spaces into H\"{o}lder spaces together with Lemma \ref{lemma:volume-growth}. 
		See, for example, Section 16.2 (or the remark after Proposition 13.5) in \cite{ponce2016elliptic}. 
	\end{proof}
	
	The bounds also lead to continuity of the LBM heat kernel using the main result of \cite{kigami2019}. 
	Continuity of the LBM heat kernel (for other versions of the GFF) was previously established by \cite{andres2016continuity}	and \cite{maillard2016liouville}.

	Let $K$ be a square in $\C$ and for $x \in \C$ let $\{ \mathcal{B}_t^x\}_{t > 0}$ denote $\gamma$-LBM with respect to the field $h$ started from $x$ with Neumann (reflecting) boundary conditions on $K$. The heat kernel $p^K_t$ of reflected LBM in $K$ is the 
	function $p^K_t(x,y) : (0,\infty) \times K \times K \to [0,\infty)$ such that 
	\begin{equation} \label{eq:defining-property-of-kernel}
	\P[ \mathcal{B}_t^x \in dy | h] = p^K_t(x,y) dy.
	\end{equation}

	\begin{prop} \label{prop:lbm-continuity}
		Let $K$ be a square in $\C$. Almost surely, the heat kernel $p^K_t(x,y)$ associated to $\gamma$-LBM with Neumann boundary conditions on $K$ exists, is finite, jointly continuous, and strictly positive for all $(t,x,y) \in (0,\infty) \times K \times K$. 
	\end{prop}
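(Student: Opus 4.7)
The plan is to invoke the general heat-kernel framework from the cited work of Kigami to obtain existence and joint continuity, and then upgrade to strict positivity via a standard irreducibility argument.

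First, I would set up reflecting $\gamma$-LBM on the square $K$ as the $\mu_h$-symmetric diffusion associated with the reflecting Dirichlet form on $K$ (the time-change of reflecting Brownian motion by the continuous additive functional whose Revuz measure is $\mu_h$). The existence of the unkilled LBM as a Feller diffusion is provided by the construction of Garban-Rhodes-Vargas and Berestycki cited in the paper; the reflecting version on a square is obtained by the same time-change applied to reflecting Brownian motion on $K$. The key analytic input is the volume growth estimate from Lemma \ref{lemma:volume-growth}: on a probability-one event (obtained by applying Lemma \ref{lemma:volume-growth} along a sequence $\epsilon_n \to 0$ and Borel--Cantelli) one has, for every compact subset of $K$ and for all sufficiently small $\epsilon$, the two-sided polynomial bounds $\epsilon^{\beta^-} \lesssim \mu_h(B_\epsilon(z)) \lesssim \epsilon^{\beta^+}$ uniformly in $z$. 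This supplies the volume regularity hypotheses in Kigami's framework, which then yields the existence of a jointly continuous, finite transition density $p^K_t(x,y)$ on $(0,\infty) \times K \times K$ satisfying \eqref{eq:defining-property-of-kernel}.

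Second, for strict positivity, I would argue by irreducibility. The continuity of LBM paths, together with the fact that $\mu_h$ assigns positive mass to every open subset of $\C$, implies that for every $x \in K$, every open $U \subset K$, and every $t > 0$, the reflecting LBM started from $x$ satisfies $\mathbb{P}[\mathcal{B}^x_t \in U \mid h] > 0$; indeed, one concatenates short excursions of reflecting LBM that remain inside a chain of overlapping small balls connecting $x$ to a point of $U$, and uses positivity of $\mu_h$ on each ball to lower-bound the relevant hitting and exit probabilities (alternatively, one cites the parabolic Harnack principle that follows from Kigami's estimates). Combined with the already-established joint continuity of $p^K_t$, this forces $p^K_t(x,y) > 0$ pointwise on $(0,\infty) \times K \times K$.

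The main obstacle is the translation step between the random, sample-path volume bounds from Lemma \ref{lemma:volume-growth} (which hold with polynomially high probability and random constants) and the essentially deterministic hypotheses required by Kigami's theorem. This is handled by working on the full-probability event on which $\mu_h$ satisfies the two-sided bounds uniformly on $K$ down to every scale (with random multiplicative constants depending on $h$), and then applying Kigami's result for each such realization. The remaining structural hypotheses -- that reflecting LBM on $K$ is a $\mu_h$-symmetric conservative diffusion with a regular Dirichlet form and continuous paths -- are either in the LBM literature or are inherited directly from reflecting planar Brownian motion via the time-change construction.
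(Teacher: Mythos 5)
Your proposal is essentially the paper's argument: the proof simply cites Kigami's Theorem 13.1 with Lemma~\ref{lemma:volume-growth} supplying the required volume-growth hypotheses, and that theorem already delivers existence, finiteness, joint continuity, \emph{and} strict positivity in one stroke, so your separate irreducibility argument for positivity is unnecessary (though not wrong). The one detail the paper flags that you omit is that Kigami's theorem is stated for the unit square, so a scaling argument is needed to pass to an arbitrary fixed square $K$; this is a minor gap, since the scale-covariance of $h$ and the time-change construction make the reduction routine.
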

	\begin{proof}
		This is \cite[Theorem 13.1]{kigami2019}   with input given by Lemma \ref{lemma:volume-growth}.
		Strictly speaking, \cite[Theorem 13.1]{kigami2019}   concerns the transition density of reflecting $\gamma$-LBM in the unit square. A scaling argument shows that \cite[Theorem 13.1]{kigami2019} applies
		to the transition density of reflecting $\gamma$-LBM in any fixed square.
	\end{proof}

	\section{Construction of candidate harmonic balls via Hele-Shaw flow} \label{sec:harmonic-ball-construction}

	In this section we construct domains which we will later show are $\gamma$-LQG harmonic balls. Specifically, we construct a family of sets $\{{\cluster{}{t}(z)}\}_{t > 0}$ via an obstacle problem involving the Green's function for the ball. This family of sets models the flow of a Newtonian fluid injected at a constant rate into an LQG surface, restricted to a ball on the surface. The movement of this fluid is called~\emph{Hele-Shaw flow}. As exposited in Chapter 3 of \cite{gustafsson2006conformal}, 
	one way of defining Hele-Shaw flow mathematically is via the obstacle problem construction below.  The construction itself is fairly standard see, \eg, \cite{gustafsson1990onquadraturedomains, hedenmalm2002hele,shahgholian2013harmonic} and originates from the work of Sakai \cite{sakai1984solutions}. 
	
	While the construction is standard, since the obstacle problem is restricted to a ball, it is not obvious that the construction gives $\gamma$-LQG harmonic balls. We will later show, using LQG specific arguments, the existence of $T > 0$ so that $\{{\cluster{}{t}}\}_{0 < t < T}$ are a family of harmonic balls satisfying the conditions in Theorem 
	\ref{theorem:harmonic-balls}. We then use scale invariance and compatibility to extend this construction to all $t > 0$.

	\subsection{Definition of the obstacle problem} \label{subsec:obstacle-def} 
	We construct candidate harmonic balls via a technique similar to the Perron method involving the measure $\mu_h$ and the Green's function for the ball.
	For each $t, r > 0$ and~$z \in B_r$,  the set of {\it supersolutions}
	is 
	\begin{equation} \label{eq:super-solutions}
	\mclS{B_r; z}{t}  = \{ w \in C(\overline{B_r}) :  \Delta w \leq \mu_h \mbox{ in $B_r$}  \mbox{ and }  w \geq  -t G_{B_r}(z, \cdot) \mbox{ in $\overline{B_r}$}\},
	\end{equation}
	where $C(\overline{B_r})$ denotes the set of continuous functions on the closed ball.
	The {\it least supersolution} is defined as the pointwise infimum of all functions in $\mclS{B_r}{t}$, 
	\begin{equation} \label{eq:least-super-solution}
	\lss{B_r; z}{t} = \inf\{ w \in \mclS{B_r;z }{t} \}
	\end{equation}
	and the {\it cluster} as
	\begin{equation} \label{eq:non-co-set}
	\cluster{B_r; z}{t} = \{ x \in B_r : \lss{B_r; z}{t}(x) > -t G_{B_r; z}(z, x) \}.
	\end{equation}
	We also consider the {\it odometer}	
	\begin{equation} \label{eq:limit-odometer}
	\odometer{B_r; z}{t} = \lss{B_r; z}{t}  + t G_{B_r; z}(0, \cdot).
	\end{equation}
	Note that $\mclS{B_r; z}{t}$ is non-empty as it contains the zero function --- thus $\lss{B_r; z}{t}$ always exists.
	This equation in \eqref{eq:super-solutions} is known as an {\it obstacle problem} with obstacle given by the Green's function.
	When $B_r$ is the unit ball, we write, for example,~$\mclS{z}{t}$, and if additionally~$z = 0$, we write, for example,~$\mclS{}{t}$.

	We think of the above obstacle problem as modeling the flow of liquid on a rough surface. 
	A mass $t$ of fluid is injected at~$\{z\}$ and its growth is dictated by the infinitesimal capacity of the surface, namely, the measure $\mu_h$. The cluster ${\cluster{z}{t}}$ represents the settled fluid and $\odometer{z}{t}$ captures the `work' needed to spread the fluid. Specifically, the family of sets~$\{{\cluster{z}{t}}\}_{t > 0}$ is a weak solution 
	to a~\emph{restricted Hele-Shaw problem} involving the measure~$\mu_h$. The Hele-Shaw problem is restricted because 
	of the fact that~\eqref{eq:super-solutions} is only defined in the ball~$B_r$. Physically what this means is that the flow is stopped upon exiting~$B_r$. The obstacle problem~\eqref{eq:super-solutions}
	is a variational formulation of this restricted Hele-Shaw problem.  See~\cite[Section 3.5]{gustafsson2006conformal} and~\cite{roos2016partial} for an explicit description of the Hele-Shaw equation and how the obstacle problem relates to it. 
	
	From the physical picture described in the previous paragraph, one expects that if $t$ is larger than $\mu_h(B_r)$, then ${\cluster{}{t}}$ should fill the entire ball. Moreover, if $\mu_h$ is regular enough, then for $t$ small the clusters should be strictly contained in $B_r$. Further, clusters with closures which do not intersect the boundary of~$B_r$ should be compatible with clusters restricted to~$B_{r'}$ for~$r' < r$. We provide rigorous statements of these heuristics below.

	\subsection{Basic properties of the obstacle problem} \label{subsec:basic-properties}
	We assert existence and basic regularity of solutions to the obstacle problem. These results are standard but for completeness
	are proved in Appendix \ref{sec:obstacle-appendix}.

	We first note that the least supersolution is indeed a supersolution. 
	\begin{lemma}\label{lemma:basic-properties}
		On an event of probability 1, for all $t, r > 0$ and~$z \in B_r$, $\lss{B_r;z}{t}$ is finite, continuous, and an element of $\mclS{B_r;z}{t}$. 
	\end{lemma}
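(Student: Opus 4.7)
The plan is to verify three properties of $\lss{}{t}$: pointwise finiteness, the supersolution conditions $\Delta \lss{}{t} \leq \mu_h$ and $\lss{}{t} \geq -t G_{B_1}(0,\cdot)$, and continuity. Throughout, the key auxiliary object is the Liouville Green potential
\[
u_h(x) := \int_{B_1} G_{B_1}(x,y) \, \mu_h(dy),
\]
which is finite and H\"older continuous on $\overline{B_1}$ by Proposition \ref{prop:lbm-exit-time}, satisfies $-\Delta u_h = \mu_h$ in $B_1$, and vanishes on $\partial B_1$.

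For finiteness, I would first note that the zero function belongs to $\mclS{}{t}$ (since $\Delta 0 \leq \mu_h$ and $0 \geq -t G_{B_1}(0,\cdot)$), giving $\lss{}{t} \leq 0$. For the lower bound, if $w \in \mclS{}{t}$, then $w + u_h$ is continuous on $\overline{B_1}$, superharmonic in $B_1$, and nonnegative on $\partial B_1$ (since $u_h$ and $G_{B_1}(0,\cdot)$ both vanish there). The minimum principle then yields $w \geq -u_h$, so $\lss{}{t} \geq -u_h > -\infty$ on all of $\overline{B_1}$.

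The supersolution inequalities would follow by approximation. First, $\mclS{}{t}$ is closed under pointwise minima of pairs: if $w_1, w_2 \in \mclS{}{t}$, then $\min(w_1,w_2) + u_h = \min(w_1 + u_h, w_2 + u_h)$ is superharmonic, so $\min(w_1,w_2)$ is continuous, dominates the obstacle, and satisfies $\Delta \min(w_1,w_2) \leq \mu_h$ distributionally. A standard Perron-type construction then yields a pointwise decreasing sequence $w_n \in \mclS{}{t}$ converging to $\lss{}{t}$. The obstacle inequality $\lss{}{t} \geq -t G_{B_1}(0,\cdot)$ is immediate from the definition of the infimum, and the uniform bound $-u_h \leq w_n \leq 0$ together with dominated convergence gives $w_n \to \lss{}{t}$ in $L^1_{\mathrm{loc}}$, so passage to the distributional limit in $\Delta w_n \leq \mu_h$ yields $\Delta \lss{}{t} \leq \mu_h$.

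The main obstacle is establishing continuity of $\lss{}{t}$. I would split into two regimes. Near the origin, the obstacle $-tG_{B_1}(0,\cdot)$ diverges to $-\infty$ while the uniform lower bound $\lss{}{t} \geq -u_h$ stays bounded, so the obstacle is locally inactive; on a small ball around $0$ the variational inequality then forces $\Delta \lss{}{t} = \mu_h$, whence $\lss{}{t}$ differs from $-u_h$ by a harmonic function and inherits H\"older continuity from $u_h$. Away from the origin the obstacle is smooth, and the volume growth bound of Lemma \ref{lemma:volume-growth} combined with standard obstacle problem regularity in the spirit of \cite{caffarelli1998obstacle} should yield a uniform modulus of continuity for the approximating sequence $w_n$, which passes to the limit $\lss{}{t}$. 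Gluing the two regimes gives continuity on $\overline{B_1}$, completing the argument.
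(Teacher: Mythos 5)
Your finiteness argument and the lower bound $\lss{}{t}\geq -u_h$ via the minimum principle match the paper's Step~1 (with $u_h$ playing the role of the paper's $q_{B_1}$). The passage to the distributional limit to get $\Delta\lss{}{t}\leq\mu_h$ is also essentially sound once continuity is in hand, though the existence of a decreasing sequence in $\mclS{}{t}$ converging pointwise to $\lss{}{t}$ deserves a word (it follows from a Choquet/separability argument and closure of $\mclS{}{t}$ under pairwise minima).

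The real gap is in the continuity argument, and it is significant. In the exterior regime you invoke ``standard obstacle problem regularity in the spirit of \cite{caffarelli1998obstacle}.'' That theory is developed for obstacle problems driven by measures that are comparable (above and below) to Lebesgue measure. The Liouville measure $\mu_h$ is \emph{mutually singular} with respect to Lebesgue measure, and the introduction of the paper states explicitly that this is precisely the reason the classical obstacle-problem machinery does not apply here. So this citation cannot carry the weight you put on it. The interior regime has a circularity problem: you conclude ``the variational inequality then forces $\Delta\lss{}{t}=\mu_h$'' on a ball around the origin, but this harmonicity-on-the-non-coincidence-set statement is Lemma~\ref{lemma:non-coincidence-open-harmonic}, which is \emph{proved after and from} Lemma~\ref{lemma:basic-properties}; its proof uses both the continuity and the supersolution property you are in the middle of establishing.

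What the paper does instead (Appendix, Lemma~\ref{lemma:appendix-basic-properties}, Step~3) is a direct Perron-type harmonic replacement argument that bypasses any obstacle-problem regularity theory: lower semicontinuity comes for free from the fact that the lsc regularization of an infimum of superharmonic functions is superharmonic (and dominates the continuous obstacle); upper semicontinuity at a point $x_0$ is obtained by replacing $\epsilon+f_t$ on a small ball $B_\delta(x_0)$ with its harmonic extension $g_1$ and checking that the glued function is an admissible supersolution, which forces $\limsup_{x\to x_0}f_t(x)\leq g_1(x_0)\leq \epsilon + f_t(x_0)$. This argument needs only continuity of the obstacle and elementary potential theory; no structure of $\mu_h$ beyond being a Radon measure with the volume growth bound of Lemma~\ref{lemma:volume-growth} is used. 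You should replace the Caffarelli-regularity step and the near-origin harmonicity step with this kind of harmonic-modification argument; as written, the proposal does not close.
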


	The next lemma is a consequence of being the least supersolution. 
	\begin{lemma} \label{lemma:non-coincidence-open-harmonic}
		On an event of probability 1, for all $t,r > 0$, and~$z \in B_r$,  the cluster $\cluster{B_r;z}{t}$ is open and connected and
		\[
		\Delta \lss{B_r;z}{t} = \mu_h \vert_{\cluster{B_r;z}{t}} + \nu \vert_{\partial {\cluster{B_r;z}{t}}} \quad \mbox{on $B_r$},
		\]
		where $\nu$ is a Radon measure which is absolutely continuous with respect to $\mu_h$ on~$B_r$ and satisfies $0 \leq \nu \leq \mu_h$ on~$B_r$.
		In particular, 
		\[
		\Delta 	\odometer{B_r;z}{t} = -t \delta_z + \mu_h \vert_{{\cluster{B_r;z}{t}}} + \nu \vert_{\partial {\cluster{B_r;z}{t}}} \quad \mbox{on $B_r$}.
		\]
	\end{lemma}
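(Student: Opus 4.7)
The plan is to establish, in turn: (a) openness of $\cluster{}{t}$; (b) the interior identity $\Delta \lss{}{t} = \mu_h$ on $\cluster{}{t}$; (c) connectedness of $\cluster{}{t}$; and (d) the global decomposition with the stated bounds on $\nu$. The identity for $\odometer{}{t}$ then follows from $\Delta G_{B_1}(0,\cdot) = -\delta_0$.

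For (a): the odometer $\odometer{}{t} = \lss{}{t} + tG_{B_1}(0,\cdot)$ is the sum of a continuous function (Lemma~\ref{lemma:basic-properties}) and a lower semicontinuous function with value $+\infty$ at the origin, hence itself lsc, so $\cluster{}{t} = \{\odometer{}{t} > 0\}$ is open and contains $0$. For (b), view $\lss{}{t}$ as the minimizer over $\mclS{}{t}$ of the Dirichlet-type functional $J(u) := \tfrac{1}{2}\int_{B_1}|\nabla u|^2\,dx + \int_{B_1} u\, d\mu_h$, a standard variational reformulation of the obstacle problem. For any smooth $\phi$ with $\mathrm{supp}\,\phi \Subset \cluster{}{t}$ and $|\phi| \leq \odometer{}{t}$, both perturbations $\lss{}{t} \pm \epsilon \phi$ remain in $\mclS{}{t}$ for small $\epsilon > 0$, and the first-variation condition forces $\int \phi\, d(\mu_h - \Delta \lss{}{t}) = 0$; since such $\phi$ are dense, $\Delta \lss{}{t} = \mu_h$ on $\cluster{}{t}$. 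For (c): suppose $U$ is a connected component of $\cluster{}{t}$ with $0 \notin U$. By (b), $\Delta \odometer{}{t} = \mu_h \geq 0$ on $U$ (the Dirac mass at the origin is absent), so $\odometer{}{t}$ is distributionally subharmonic on $U$. Since $\odometer{}{t}$ is continuous, nonnegative, and vanishes on $\partial U \subset \{\odometer{}{t} = 0\}$, the maximum principle forces $\odometer{}{t} \leq 0$ on $U$, contradicting $\odometer{}{t} > 0$ on $\cluster{}{t}$.

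For (d): on the interior of the coincidence set, $\lss{}{t} = -tG_{B_1}(0,\cdot)$ is harmonic (since $0 \in \cluster{}{t}$), so $\Delta \lss{}{t} = 0$ there. Decomposing $B_1$ as $\cluster{}{t} \sqcup \partial \cluster{}{t} \sqcup \mathrm{int}(B_1 \setminus \overline{\cluster{}{t}})$ yields $\Delta \lss{}{t} = \mu_h|_{\cluster{}{t}} + \nu$, with $\nu$ the singular part supported on $\partial \cluster{}{t}$. The global inequality $\Delta \lss{}{t} \leq \mu_h$ gives $\nu \leq \mu_h|_{\partial \cluster{}{t}}$, which in particular yields absolute continuity. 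For $\nu \geq 0$: observe that $\lss{}{t}$ and hence $\odometer{}{t}$ vanishes on $\partial B_1$ (since $0 \in \mclS{}{t}$ forces $\lss{}{t} \leq 0$, and $\lss{}{t} \geq -tG_{B_1}(0,\cdot) = 0$ on $\partial B_1$), so $\odometer{}{t}$ is the $G_{B_1}$-Green potential of the signed measure $-\Delta \odometer{}{t} = t\delta_0 - \mu_h|_{\cluster{}{t}} - \nu$. Because $\odometer{}{t} \geq 0$ and vanishes on $B_1 \setminus \cluster{}{t}$, the measure $\mu_h|_{\cluster{}{t}} + \nu$ is exactly the balayage of $t\delta_0$ onto $\overline{\cluster{}{t}}$ relative to $G_{B_1}$; since balayage of a positive measure is positive and the supports of $\mu_h|_{\cluster{}{t}}$ and $\nu$ are disjoint, $\nu \geq 0$. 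The most delicate step is this positivity: the irregularity of $\mu_h$ precludes the classical $C^{1,1}$-regularity proof (under which $\nu \equiv 0$ when $\mu_h$ is comparable to Lebesgue measure), so the Green-potential / balayage viewpoint of $\odometer{}{t}$ is the essential replacement.
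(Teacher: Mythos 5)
Your steps (a) and (c) are sound and match the paper's Appendix~\ref{sec:obstacle-appendix} (openness from continuity, connectedness via the strong maximum principle on any component missing the origin). The trouble is concentrated in steps (b) and (d).

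For (b), you replace the paper's Perron-type argument with a variational one, asserting that $\lss{}{t}$ minimizes $J(u)=\tfrac12\int|\nabla u|^2+\int u\,d\mu_h$ over $\mclS{}{t}$, and then perturbing by $\pm\epsilon\phi$. Two things break here. First, the perturbation argument is circular: to conclude that $\lss{}{t}\pm\epsilon\phi$ remain admissible you must verify $\Delta(\lss{}{t}\pm\epsilon\phi)\le\mu_h$, which for the sign making $\Delta\phi>0$ is exactly the statement that $\mu_h-\Delta\lss{}{t}$ is strictly positive on $\mathrm{supp}\,\phi$ --- the negation of what you are trying to prove. The classical variational formulation of the obstacle problem drops the PDE constraint and keeps only the obstacle constraint $u\ge -tG_{B_1}(0,\cdot)$; with that class the perturbation argument does work, but then you owe a separate proof that the pointwise Perron infimum in~\eqref{eq:least-super-solution} coincides with the variational minimizer, and also that $\mu_h\in H^{-1}(B_1)$ a.s.\ so that $J$ is even well defined (finite logarithmic energy of GMC is true but nontrivial). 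The paper avoids all of this: working with $\lssshift{}{t}=\lss{}{t}+q_{B_1}$ and the class $\mclSshift{}{t}$ in Appendix~\ref{sec:obstacle-appendix}, it shows $\lssshift{}{t}$ is superharmonic via lower-semicontinuous regularization and then proves harmonicity on $\cluster{}{t}$ by a harmonic-replacement competitor (if $\lssshift{}{t}$ fails to be subharmonic on a small ball in $\cluster{}{t}$, replacing it by its harmonic extension gives a strictly smaller admissible function). That route only uses that $\mu_h$ is Radon with the volume-growth bound of Lemma~\ref{lemma:volume-growth}.

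For (d), the inequality $\nu\le\mu_h$ from $\Delta\lss{}{t}\le\mu_h$ is fine, but your justification of $\nu\ge 0$ by ``balayage of a positive measure is positive'' misidentifies the operation: classical balayage of $t\delta_0$ onto $\overline{\cluster{}{t}}$ is carried to $\partial\cluster{}{t}$ and would not reproduce the interior term $\mu_h|_{\cluster{}{t}}$; what you mean is \emph{partial} balayage of $t\delta_0$ against the density ceiling $\mu_h$, and the positivity of the resulting boundary term is exactly the assertion, not a consequence of a black-box sign rule. The argument the paper actually relies on (spelled out in the proof of Lemma~\ref{lemma:odometer-annuli}, cf.\ equation~\eqref{eq:laplacian-of-indicator}) is that $\odometer{}{t}\,1_{\cluster{}{t}}$ is subharmonic on $B_1\setminus\{0\}$: since $\odometer{}{t}$ is continuous, nonnegative, and vanishes on $\partial\cluster{}{t}$, its extension by zero satisfies the sub-mean-value inequality there, and this is what forces $\nu=\Delta\odometer{}{t}\big|_{\partial\cluster{}{t}}\ge 0$. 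You should replace the balayage sentence with that observation.
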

	We will eventually show that on an event of probability one, $\mu_h(\partial {\cluster{B_r;z}{t}}) = 0$ for all~$t, r >0$ and~$z \in B_r$, which implies that $\nu = 0$. However, for the time being we need to allow for the possibility that there is some mass on $\partial {\cluster{B_r;z}{t}}$.

	We also have monotonicity of the clusters in $t$. 
	\begin{lemma} \label{lemma:monotonicity}
		On an event of probability 1, for all $t_1 \leq t_2$ and~$r > 0$,~$z \in B_r$, we have $\cluster{B_r}{t_1} \subseteq \cluster{B_r}{t_2}$.
	\end{lemma}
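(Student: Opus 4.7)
The plan is to compare the least supersolutions $\lss{}{t_1}$ and $\lss{}{t_2}$ for $t_1 \leq t_2$ by constructing, from $\lss{}{t_2}$, candidate supersolutions for the $t_1$-obstacle problem. The immediate observation $\mclS{}{t_1} \subseteq \mclS{}{t_2}$ (which follows from $G_{B_1}(0,\cdot) \geq 0$, so $-t_1 G_{B_1}(0,\cdot) \geq -t_2 G_{B_1}(0,\cdot)$) yields $\lss{}{t_1} \geq \lss{}{t_2}$ but does not directly control the clusters. What is actually needed is the reverse-flavored bound $\lss{}{t_1} \leq \lss{}{t_2} + (t_2-t_1)\,G_{B_1}(0,\cdot)$, equivalently $\odometer{}{t_1} \leq \odometer{}{t_2}$ on $B_1\setminus\{0\}$, which together with the identity $\cluster{}{t} = \{\odometer{}{t} > 0\}$ gives the desired set inclusion.

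To obtain this bound, I would introduce the truncation
\begin{equation*}
w_M := \min\bigl(\lss{}{t_2} + (t_2-t_1)\,G_{B_1}(0,\cdot),\; M\bigr), \qquad M \geq 0.
\end{equation*}
The unbounded summand $(t_2-t_1)\,G_{B_1}(0,\cdot)$ is precisely what prevents the natural candidate $\lss{}{t_2} + (t_2-t_1)\,G_{B_1}(0,\cdot)$ from lying in $\mclS{}{t_1}$, since it diverges at $0$; capping by $M$ restores continuity on $\overline{B_1}$ (in a neighborhood of the origin one has $w_M \equiv M$). I would then verify $w_M \in \mclS{}{t_1}$ piecewise. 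The obstacle inequality $w_M \geq -t_1 G_{B_1}(0,\cdot)$ is immediate: where $w_M$ equals the inner term, use $\lss{}{t_2} \geq -t_2 G_{B_1}(0,\cdot)$ to get $w_M \geq -t_2 G_{B_1}(0,\cdot) + (t_2-t_1)G_{B_1}(0,\cdot) = -t_1 G_{B_1}(0,\cdot)$; where $w_M \equiv M$, use $M \geq 0 \geq -t_1 G_{B_1}(0,\cdot)$. The supersolution inequality $\Delta w_M \leq \mu_h$ follows from the standard fact that a pointwise minimum of two functions with distributional Laplacians bounded above by $\mu_h$ inherits the bound; here the two functions are $f := \lss{}{t_2} + (t_2-t_1)\,G_{B_1}(0,\cdot)$, which satisfies $\Delta f = \Delta \lss{}{t_2} - (t_2-t_1)\,\delta_0 \leq \mu_h$ by Lemma~\ref{lemma:basic-properties}, and the constant $M$, with Laplacian $0$.

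Having verified $w_M \in \mclS{}{t_1}$, the definition of the least supersolution gives $\lss{}{t_1} \leq w_M$ on $\overline{B_1}$ for every $M \geq 0$. Sending $M \to \infty$ at each fixed $x \in B_1\setminus\{0\}$ yields $\lss{}{t_1}(x) \leq \lss{}{t_2}(x) + (t_2-t_1)\,G_{B_1}(0,x)$, i.e., $\odometer{}{t_1}(x) \leq \odometer{}{t_2}(x)$ there. Since $0 \in \cluster{}{t}$ for every $t > 0$ (the finite continuous value $\lss{}{t}(0)$ trivially exceeds $-t G_{B_1}(0,0) = -\infty$), one concludes $\cluster{}{t_1}\subseteq\cluster{}{t_2}$. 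The main technical hurdle is the distributional Laplacian inequality for $w_M$ at the truncation interface $\{f = M\}$; this is a standard but slightly delicate computation, and it is essential that one truncates by taking a \emph{minimum} rather than a maximum, since that is what makes the normal-derivative jump across the interface produce a nonpositive singular contribution to $\Delta w_M$.
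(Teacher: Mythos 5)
Your argument is correct, and at its core it follows the same comparison as the paper's appendix proof of Lemma~\ref{lemma:appendix-monotonicity}: both proofs manufacture a supersolution for the $t_1$-problem out of $\lss{}{t_2}$ by adding $(t_2-t_1)\,G_{B_1}(0,\cdot)$, and both then read off $\odometer{}{t_1} \leq \odometer{}{t_2}$ away from the origin. The difference lies in how the divergence at $0$ is treated. The paper works in the shifted formulation and asserts $\tilde s := \lssshift{}{t_2} + (t_2-t_1)\,G_{B_1}(0,\cdot) \in \mclSshift{}{t_1}$, which is not literally true since $\tilde s \notin C(\overline{B_1})$; the step is salvaged because the proof of Lemma~\ref{lemma:appendix-basic-properties} identifies $\lssshift{}{t_1}$ with $f_{t_1}$, the infimum over all merely lower semicontinuous superharmonic majorants of the obstacle, and $\tilde s$ is admissible for that broader variational problem even though it lies outside $\mclSshift{}{t_1}$ as stated. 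Your truncation $w_M = \min(f, M)$ instead produces a genuine element of $\mclS{}{t_1}$ --- the cap makes $w_M$ constant in a neighborhood of the origin, hence continuous on $\overline{B_1}$ --- and the fact that a pointwise minimum preserves the distributional upper bound $\Delta \leq \mu_h$ is the standard minimum principle for superharmonic functions (apply it to $f + q_{B_1}$ and $M + q_{B_1}$, where $q_{B_1}$ is the Green potential of $\mu_h$). Your observation that taking a minimum (not a maximum) is what gives the nonpositive singular term at the truncation interface is exactly right. Both routes are valid; yours is a bit more self-contained in that it stays entirely within the class of \emph{continuous} supersolutions and never invokes the equivalence between the two variational characterizations of $\lss{}{t}$.
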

	
	Clusters also have a conservation of mass property. 
	\begin{lemma} \label{lemma:conservation-of-mass}
		On an event of probability 1, for all $r,t > 0$ and~$z \in B_r$, we have $\mu_h({\cluster{B_r;z}{t}}) \leq t$ and $\odometer{B_r;z}{t} = 0$ on $\partial B_r$.  Moreover, if $\overline{{{\cluster{B_r;z}{t}}}} \subset B_r$ and $\mu(\partial {\cluster{B_r;z}{t}}) = 0$, then $\mu_h({\cluster{B_r;z}{t}}) = t$. 
	\end{lemma}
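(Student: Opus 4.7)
The plan is to extract everything from the distributional identity
\[
\Delta v_t = \mu_h\vert_{\Lambda_t} + \nu\vert_{\partial\Lambda_t} - t\delta_0
\]
established in Lemma \ref{lemma:non-coincidence-open-harmonic}, together with boundary information on $v_t$. First I observe $v_t \equiv 0$ on $\partial B_1$: the zero function lies in $\mathcal{S}_t$ (since $\Delta 0 = 0 \leq \mu_h$ and $0 \geq -t G_{B_1}(0,\cdot)$ as $G_{B_1}(0,\cdot) \geq 0$), so $w_t \leq 0$ on $\overline{B_1}$; the supersolution constraint $w_t \geq -t G_{B_1}(0,\cdot) = 0$ on $\partial B_1$ then forces $w_t \equiv 0$ there, hence $v_t \equiv 0$ on $\partial B_1$. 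By the definition of $\Lambda_t$, outside $\Lambda_t$ one has $w_t = -t G_{B_1}(0,\cdot)$, so $v_t \equiv 0$ on $B_1 \setminus \Lambda_t$, and $v_t \geq 0$ throughout $B_1$ by the supersolution constraint.

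To prove $\mu_h(\Lambda_t) + \nu(\partial\Lambda_t) \leq t$ (which immediately gives $\mu_h(\Lambda_t) \leq t$ and sets up the equality case), I would test the Laplacian identity against the continuous radial cutoff
\[
\chi_r(x) = \begin{cases} 1, & |x| \leq r, \\ \log|x|/\log r, & r < |x| \leq 1, \end{cases}
\]
for $r \in (0,1)$. This $\chi_r$ is harmonic on both pieces but has a jump of $1/(r\log r) < 0$ in its radial derivative across $\partial B_r$, so $\Delta\chi_r = \frac{1}{r\log r}\, d\sigma_{\partial B_r}$ as a signed measure. Integration by parts, justified by smoothing $\chi_r$ near $\partial B_r$ and $\partial B_1$ and passing to the limit using the continuity of $v_t$, yields
\[
\int \chi_r\, d\Delta v_t = \int v_t\, d\Delta\chi_r = \frac{1}{r\log r}\int_{\partial B_r} v_t\, d\sigma \leq 0,
\]
using $v_t \geq 0$ and $\log r < 0$. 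Since $0 \leq \chi_r \leq 1$ with $\chi_r \to 1$ pointwise on $B_1$ as $r \uparrow 1$, dominated convergence against the finite signed measure $\Delta v_t$ sends the left-hand side to $\Delta v_t(B_1) = -t + \mu_h(\Lambda_t) + \nu(\partial\Lambda_t)$, giving the desired inequality.

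For the \textit{moreover} statement, if $\overline{\Lambda_t} \subset B_1$ I pick $r < 1$ close enough to $1$ that $\overline{\Lambda_t} \subset B_r$. Then $v_t \equiv 0$ on $\partial B_r$ (since $\partial B_r \subset B_1 \setminus \Lambda_t$), so the right-hand side above is identically $0$; meanwhile $\chi_r \equiv 1$ on $\overline{\Lambda_t} \supseteq \mathrm{supp}(\Delta v_t)$, so the left-hand side equals $-t + \mu_h(\Lambda_t) + \nu(\partial\Lambda_t)$ exactly. This forces $\mu_h(\Lambda_t) + \nu(\partial\Lambda_t) = t$, and combined with $0 \leq \nu \leq \mu_h$ and the hypothesis $\mu_h(\partial\Lambda_t) = 0$ this yields $\mu_h(\Lambda_t) = t$. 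The main technical obstacle is making the integration by parts rigorous given that $v_t$ is only continuous and $\chi_r$ only piecewise smooth, but this is routine via mollification of $\chi_r$ using weak convergence of the mollified Laplacians paired against the continuous $v_t$ and the finite total variation of $\Delta v_t$.
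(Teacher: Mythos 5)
Your argument is correct and, while reaching the same conclusion, takes a genuinely different route from the paper's. The paper proves the vanishing of $v_t$ on $\partial B_1$ by noting that $q_{B_1}\in\tilde{\mathcal S}_t$, then derives the integrated identity $-t+\mu(\Lambda_t)+\nu(\partial\Lambda_t)=\int_{\partial B_1}\frac{\partial v_t}{\partial n}\,d\sigma$ by invoking the theory of weak normal derivatives for measure-data Dirichlet problems (\cite[Proposition 7.3, Lemma 12.15]{ponce2016elliptic}); the sign of that normal derivative is what gives the inequality, and its vanishing under $\overline{\Lambda_t}\subset B_1$ gives the equality. Your proof replaces this black-boxed machinery with an explicit radial barrier: pairing the measure $\Delta v_t$ against the superharmonic cutoff $\chi_r$ (whose distributional Laplacian is the negative multiple $\frac{1}{r\log r}\,d\sigma_{\partial B_r}$), using $v_t\geq 0$ to get the sign, and letting $r\uparrow 1$ by dominated (indeed monotone) convergence. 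This buys self-containedness at the cost of having to justify the Green identity for a merely continuous $v_t$ and piecewise-smooth $\chi_r$; you correctly flag this as the remaining technical step and it is indeed routine (mollify $\chi_r$, or more directly use that $\Delta v_t$ is a finite signed measure supported in $B_1$ with $v_t,\chi_r\in C(\overline{B_1})$ both vanishing on $\partial B_1$). The choice $\chi_r\equiv 1$ on $B_r\supset\overline{\Lambda_t}$ for the equality case is exactly what the paper's classical-normal-derivative-is-zero observation encodes, so the two proofs are morally equivalent there. Your observation that $w_t\leq 0$ because $0\in\mathcal S_t$, hence $w_t=0$ on $\partial B_1$, is also fine (it parallels Lemma~\ref{lemma:appendix-zero-boundary} with $0$ playing the role of $q_{B_1}$ after the shift).
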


	We conclude with a compatibility result for clusters across different domains. 
	\begin{lemma} \label{lemma:cluster-compatibility}
		The following holds for each $R > 0$ on an event of probability 1. 
		For all $s_1 \leq R$ and~$z \in B_{s_1}$ if, for some $s_2 \in [s_1, R]$, we have $\overline{\cluster{B_{s_2};z}{t}} \subset B_{s_1}$, 
		then  $\cluster{B_{s};z}{t} = \cluster{B_{s_2};z}{t}$ for all $s \in [s_1, R]$.
	\end{lemma}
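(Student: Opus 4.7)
The plan is to compare the odometers $\odometer{B_s}{t}$ across nested balls and deduce cluster equality from odometer equality via the description \eqref{eq:non-co-set}. Throughout, I work on the probability-one event on which the conclusions of Lemmas \ref{lemma:basic-properties}, \ref{lemma:non-coincidence-open-harmonic}, and \ref{lemma:conservation-of-mass} hold, and set $v:=\odometer{B_{s_2}}{t}$. On $\overline{B_{s_2}}$, the function $v$ is continuous, non-negative, vanishes on $\partial B_{s_2}$, and is harmonic on $B_{s_2}\setminus\overline{\cluster{B_{s_2}}{t}}$. The strong maximum principle, applied on this connected open set which touches $\partial B_{s_2}$, therefore forces $v\equiv 0$ on $B_{s_2}\setminus\overline{\cluster{B_{s_2}}{t}}$. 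Combined with the hypothesis $\overline{\cluster{B_{s_2}}{t}}\subset B_{s_1}$, this shows $v$ vanishes identically on the annular region $B_{s_2}\setminus B_{s_1}$, and in particular in a full neighborhood of $\partial B_{s_2}$ from inside.

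The main case is $s\geq s_2$. Extend $v$ by zero to $B_s$, producing a continuous function $\tilde v$ whose distributional Laplacian on $B_s$ has no singular part on $\partial B_{s_2}$ by the vanishing observed above. To obtain $\odometer{B_s}{t}\leq\tilde v$, I would verify $\tilde v-tG_{B_s}(0,\cdot)\in\mclS{B_s}{t}$: continuity on $\overline{B_s}$ uses that $G_{B_s}(0,\cdot)-G_{B_{s_2}}(0,\cdot)$ is harmonic (and hence continuous) on $B_{s_2}$; the obstacle condition reduces to $\tilde v\geq 0$; and the Laplacian bound $\Delta\tilde v+t\delta_0\leq\mu_h$ follows from Lemma \ref{lemma:non-coincidence-open-harmonic}. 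For the reverse inequality, set $g:=G_{B_s}(0,\cdot)-G_{B_{s_2}}(0,\cdot)$ on $B_{s_2}$; since $g$ is harmonic there with non-negative boundary values, $g\geq 0$. A parallel check confirms $\lss{B_s}{t}|_{B_{s_2}}+tg\in\mclS{B_{s_2}}{t}$, and comparing against $\lss{B_{s_2}}{t}$ yields $\odometer{B_{s_2}}{t}\leq\odometer{B_s}{t}$ on $B_{s_2}$; on $B_s\setminus B_{s_2}$ the corresponding inequality $\tilde v=0\leq\odometer{B_s}{t}$ is trivial. Combining, $\odometer{B_s}{t}=\tilde v$ on $B_s$, which via \eqref{eq:non-co-set} yields $\cluster{B_s}{t}=\cluster{B_{s_2}}{t}$.

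The remaining case $s\in[s_1,s_2)$ I would reduce to the previous one. The lower-bound supersolution argument of the preceding paragraph makes no use of the strict containment hypothesis and applies to any pair of nested balls, yielding the domain monotonicity $\odometer{B_{r_1}}{t}\leq\odometer{B_{r_2}}{t}$ on $B_{r_1}$ whenever $r_1\leq r_2$. Hence $\cluster{B_s}{t}\subset\cluster{B_{s_2}}{t}\subset B_{s_1}$, so $\overline{\cluster{B_s}{t}}\subset B_{s_1}$, and re-applying the $s\geq s_2$ case with $s$ in place of $s_2$ and $c=s_2\geq s$ then gives $\cluster{B_{s_2}}{t}=\cluster{B_s}{t}$. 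The main obstacle throughout is confirming that the zero extension $\tilde v$ introduces no singular contribution to its Laplacian on $\partial B_{s_2}$: this is precisely where the strict containment $\overline{\cluster{B_{s_2}}{t}}\subset B_{s_1}$ is essential, since a nonzero inward normal derivative of $v$ on $\partial B_{s_2}$ would produce a surface-measure term that $\mu_h$ cannot dominate.
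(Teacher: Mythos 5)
Your proof is correct and follows essentially the same strategy as the paper's: establish the one-sided domain monotonicity of the odometer via the supersolution characterization, then obtain the reverse inequality by extending the odometer by zero across the boundary — which is legitimate precisely because the strict containment hypothesis forces the odometer to vanish identically near $\partial B_{s_2}$. The only cosmetic difference is that you verify the zero-extension (shifted by $-tG_{B_s}(0,\cdot)$) lies in the supersolution class $\mclS{B_s}{t}$ and invoke minimality of $\lss{B_s}{t}$, whereas the paper applies the maximum principle to the difference $\odometer{B_R}{t} - \odometer{B_{s_1}}{t}$ on the non-coincidence set $\cluster{B_R}{t}$; both deliver the same bound, and everything else matches.
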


	\section{Basic properties of clusters} \label{sec:basic-properties-cluster}
	
	In this section we note some basic properties of the clusters $\{{\cluster{}{t}}\}_{t > 0}$  and 
	odometers $\{\odometer{}{t}\}_{t > 0}$. As in Section \ref{sec:harmonic-ball-construction}, these results are fairly standard, \eg, \cite{sakai1984solutions, gustafsson1990onquadraturedomains, hedenmalm2002hele,shahgholian2013harmonic}, but (short) proofs are included for completeness.

	\subsection{Lower bound}
	We first show that each cluster contains a Euclidean ball of sufficiently small radius and eventually the family 
	coincides with the unit ball. 
	\begin{prop} \label{prop:lower-bound}
		On an event of probability 1, for each $t > 0$ and~$z \in B_1$,  there exists a random $\epsilon = \epsilon(t) > 0$ so that 
		\begin{equation}
		B_{\epsilon}(z) \subset {\cluster{z}{t}}.
		\end{equation}
		Moreover, for each $\delta \in (|z|,1)$, there exists a random $t(\delta)>0$ such that for all $t \geq t(\delta)$,
		\begin{equation}
		B_{1-\delta}(z) \subset  {\cluster{z}{t}} 
		\end{equation}
		and there exists a random $t^+ > 0$ so that for all $t \geq t^+$
		\begin{equation}
		\overline{{{\cluster{z}{t}}}} \cap \partial B_1 \neq \emptyset.
		\end{equation}
	\end{prop}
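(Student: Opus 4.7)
The plan is to prove the three containments using a Green's function representation of the odometer $\odometer{}{t}$, together with the basic properties developed in Section~\ref{sec:basic-properties-cluster}. The first containment is an immediate consequence of continuity of the least supersolution and the logarithmic singularity of the Green's function; the second and third each fall out of a single pointwise estimate obtained by writing $\odometer{}{t}$ as a Green potential.

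For the first containment, I would invoke Lemma~\ref{lemma:basic-properties} to obtain $\lss{}{t}$ continuous on $\overline{B_1}$, so there exist $r_0, C > 0$ with $\lss{}{t}(x) \geq -C$ on $\overline{B_{r_0}}$. Since $-tG_{B_1}(0,x) = -\tfrac{t}{2\pi}\log(1/|x|) \to -\infty$ as $x \to 0$, choosing $\epsilon \in (0, r_0)$ small enough gives $-tG_{B_1}(0,x) < -C \leq \lss{}{t}(x)$ for all $x \in B_\epsilon$, and hence $B_\epsilon \subset \cluster{}{t}$ by the definition of the cluster.

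For the second and third containments, I would first derive a Green's function representation of the odometer. By Lemma~\ref{lemma:non-coincidence-open-harmonic},
\[
\Delta \odometer{}{t} = -t\delta_0 + \rho_t, \qquad \rho_t := \mu_h|_{\cluster{}{t}} + \nu|_{\partial \cluster{}{t}},
\]
and by Lemma~\ref{lemma:conservation-of-mass} one has $\odometer{}{t} = 0$ on $\partial B_1$. The Green potential $y \mapsto tG_{B_1}(0,y) - \int G_{B_1}(y,z)\,d\rho_t(z)$ satisfies the same distributional equation and boundary conditions, so by uniqueness for Poisson's equation,
\[
\odometer{}{t}(y) = tG_{B_1}(0,y) - \int G_{B_1}(y,z)\,d\rho_t(z), \quad y \in \overline{B_1}\setminus\{0\}.
\]
The bound $\nu \leq \mu_h|_{\partial \cluster{}{t}}$ from Lemma~\ref{lemma:non-coincidence-open-harmonic} gives $\rho_t \leq \mu_h$, and Proposition~\ref{prop:lbm-exit-time} ensures that $N_{\mu_h}(y) := \int G_{B_1}(y,z)\,d\mu_h(z)$ is a.s.\ bounded (in fact H\"older continuous) on $\overline{B_1}$. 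Evaluating the identity at any $y \in \overline{B_1}\setminus \cluster{}{t}$ (where $\odometer{}{t}(y)=0$) therefore yields the key estimate
\[
tG_{B_1}(0,y) \leq N_{\mu_h}(y) \leq \|N_{\mu_h}\|_{L^\infty(\overline{B_1})} =: M < \infty \quad \text{a.s.}
\]

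The second containment follows at once: on $\overline{B_{1-\delta}}$ we have $G_{B_1}(0,\cdot) \geq c_\delta := \tfrac{1}{2\pi}\log(1/(1-\delta))>0$, so whenever $t > M/c_\delta =: t(\delta)$ no point of $B_{1-\delta}$ can lie outside $\cluster{}{t}$. For the third containment, suppose for contradiction that $\overline{\cluster{}{t}}\cap\partial B_1 = \emptyset$; then $\odometer{}{t}$ vanishes on an open neighborhood of $\partial B_1$, and pairing $\Delta \odometer{}{t}$ with a smooth cutoff identically $1$ on $\overline{\cluster{}{t}}$ and compactly supported in $B_1$ gives $\mu_h(\cluster{}{t}) + \nu(\partial \cluster{}{t}) = t$, whence $t \leq \mu_h(\overline{\cluster{}{t}}) \leq \mu_h(\overline{B_1}) < \infty$ a.s. Taking $t^+ := \mu_h(\overline{B_1}) + 1$ and invoking monotonicity of the clusters (Lemma~\ref{lemma:monotonicity}) then gives $\overline{\cluster{}{t}} \cap \partial B_1 \neq \emptyset$ for every $t \geq t^+$. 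The only step requiring any genuine care is the Green potential representation, which is standard distributional analysis given that $\rho_t$ is a finite Radon measure supported in $\overline{\cluster{}{t}}$.
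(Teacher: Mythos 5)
Your proof is correct, and at its core it establishes the same key estimate as the paper, just derived via a different route. The paper defines $q_1(\cdot) = \int_{B_1} G_{B_1}(\cdot,y)\,\mu_h(dy)$ (the expected exit time of LBM from $B_1$), observes that $\lss{}{t} + q_1$ is superharmonic and nonnegative on $\partial B_1$, and applies the minimum principle to get the uniform-in-$t$ bound $\lss{}{t} \geq -q_1$ on $\overline{B_1}$. Your Green-potential representation of the odometer gives the identical conclusion: your $N_{\mu_h}$ is exactly the paper's $q_1$, and your estimate $t\,G_{B_1}(0,y) \leq N_{\mu_h}(y)$ at points $y \notin \cluster{}{t}$ is precisely a restatement of $\lss{}{t}(y) \geq -q_1(y)$ at those points. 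The paper's comparison-principle argument is slightly more economical (it handles the first and second containments in one stroke, and requires no discussion of uniqueness for the Poisson problem), but your distributional Green-potential derivation is sound, and your use of $\rho_t \leq \mu_h$ with Proposition~\ref{prop:lbm-exit-time} to bound the potential term is exactly the right ingredient. For the third containment, you re-derive the content of Lemma~\ref{lemma:conservation-of-mass} (the identity $\mu_h(\cluster{}{t}) + \nu(\partial\cluster{}{t}) = t$ when $\overline{\cluster{}{t}} \subset B_1$), whereas the paper simply cites that lemma with $t^+ = \mu_h(B_1)$; your more cautious choice $t^+ = \mu_h(\overline{B_1}) + 1$ sidesteps the (a.s.\ vacuous, since $\mu_h(\partial B_1) = 0$ a.s.) edge case $t = t^+$. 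One cosmetic remark: you write $G_{B_1}(0,x) = \frac{1}{2\pi}\log(1/|x|)$, but the paper's normalization, stated explicitly in the proof of Proposition~\ref{prop:lower-bound}, is $G_{B_1}(0,x) = -\log|x|$ with no $1/(2\pi)$ factor; this is harmless since all constants rescale consistently, but you should match the convention.
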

	
	Our proof uses the fact that the logarithm function blows up near the origin together with 
	the finiteness of the expected exit time of Liouville Brownian Motion from the unit ball. 
	
	\begin{proof}[Proof of Proposition \ref{prop:lower-bound}]
		
		Let $q_1(y)$ denote the expected exit time of Liouville Brownian Motion started at a point~$y$ from the unit ball, 		
		\[
		\begin{cases}
		\Delta q_1 = -\mu_h \quad &\mbox{in $B_1$} \\
		q_1 = 0 \quad &\mbox{on $\partial B_1$}.
		\end{cases}
		\]
		Let $t > 0$ and~$z \in B_1$ be given. 
		As $\Delta \lss{z}{t} \leq \mu_h$ (Lemma~\ref{lemma:basic-properties}), the function $\lss{z}{t} + q_{1}$ is superharmonic in $B_1$. Hence, as $\lss{z}{t} + q_1 \geq 0$ on $\partial B_1$, we have $\lss{z}{t} \geq -q_{1} $ 
		in $\overline{B_1}$. Since
		\[
		\lim_{\epsilon \to 0} \sup_{x \in B_{\epsilon}(z)} t \log |x-z| \to -\infty,
		\]
		we have that
		\begin{equation} \label{eq:lower-bound-green}
		\lss{z}{t}(x) \geq  -q_{1}(x) > t \log |x-z|, \quad \forall x \in B_{\epsilon}(z), \quad \forall \epsilon > 0 \mbox{ sufficiently small}.
		\end{equation}
		Indeed, by Proposition \ref{prop:lbm-exit-time}, $q_{1}$ is finite. By the definition~\eqref{eq:non-co-set} of ${\cluster{z}{t}}$ and the fact that $G_{B_1}(z,x) = O(-\log|z-x|)$, this shows that $B_{\epsilon}(z) \subset {\cluster{z}{t}}$. 	Similarly, for each $\delta \in (|z|,1)$, for all $t > t(\delta)$, \eqref{eq:lower-bound-green}
		is satisfied for all $x \in B_{1-\delta}(z)$. 	The last assertion follows by choosing $t^+ = \mu_h(B_1)$
		and using Lemma \ref{lemma:conservation-of-mass}.
	\end{proof}

	\subsection{H\"{o}lder continuity of the odometer}
	We observe that $\lss{}{t}$ is H\"{o}lder for a deterministic exponent depending only on $\gamma$.

	\begin{lemma} \label{odometer:cont}
		There exists a deterministic exponent $\alpha = \alpha(\gamma)$ so that on an event of probability 1, there
		exists a constant $C > 0$
		\[
		|\lss{z}{t}(x) - \lss{z}{t}(y)| \leq C |x-y|^{\alpha}
		\]
		for all~$t > 0$,~$z \in B_1$, and $x,y \in B_1$. 
	\end{lemma}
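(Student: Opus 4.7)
The plan is to represent $\lss{}{t}$ as a Green's potential of a measure that is dominated by $\mu_h$ uniformly in $t$, and then to deduce H\"older continuity from the volume growth of $\mu_h$ via the same Campanato embedding that underlies Proposition \ref{prop:lbm-exit-time}. Concretely, I would first establish the Green's representation
\[
\lss{}{t}(x) \;=\; -\int_{B_1} G_{B_1}(x,z)\, d\sigma_t(z), \qquad \sigma_t := \mu_h\vert_{\cluster{}{t}} + \nu\vert_{\partial \cluster{}{t}} .
\]
The Laplacian identity $\Delta \lss{}{t} = \sigma_t$ on $B_1$ is Lemma \ref{lemma:non-coincidence-open-harmonic}, while the boundary condition $\lss{}{t} \equiv 0$ on $\partial B_1$ follows from \eqref{eq:limit-odometer}, the identity $\odometer{}{t} \equiv 0$ on $\partial B_1$ in Lemma \ref{lemma:conservation-of-mass}, and the fact that $G_{B_1}(0,\cdot) \equiv 0$ on $\partial B_1$ by Proposition \ref{prop:green-properties}. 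Since $0 \leq \nu \leq \mu_h$, we obtain the crucial \emph{$t$-independent} domination $\sigma_t \leq 2\mu_h$, which yields
\[
|\lss{}{t}(x) - \lss{}{t}(y)| \;\leq\; 2\int_{B_1} \bigl|G_{B_1}(x,z) - G_{B_1}(y,z)\bigr|\, d\mu_h(z) .
\]

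The right-hand side is now a deterministic functional of $\mu_h$ alone, and it is precisely the type of Green's potential treated in Proposition \ref{prop:lbm-exit-time} (with density identically equal to $2$). The proof of that proposition proceeds via the embedding of Campanato spaces into H\"older spaces combined with the volume growth bound $\mu_h(B_\epsilon(z)) \leq C\epsilon^{\beta^+}$ from Lemma \ref{lemma:volume-growth}, and the resulting H\"older exponent depends only on $\beta^+$. Since $\beta^+$ may be taken arbitrarily close to $(2-\gamma)^2/2$, this produces a deterministic $\alpha = \alpha(\gamma) > 0$. Concretely, one splits the integral at the scale $r := |x-y|$: on $\{|z-x| \leq 2r\}$ a dyadic decomposition against the annular mass bound controls $\int |\log|z-x||\, d\mu_h(z)$, and on $\{|z-x|>2r\}$ one combines the mean-value estimate $|\log|x-z|-\log|y-z|| \lesssim r/|x-z|$ with the same dyadic decomposition; the remaining harmonic part of $G_{B_1}(x,z)$ is handled using the explicit disk formula.

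The main obstacle I anticipate is arranging for the H\"older constant to be uniform in $t$: the obstacle-problem estimates of Section \ref{sec:harmonic-ball-construction} only give qualitative control of $\lss{}{t}$ at each fixed $t$, and a direct PDE regularity argument applied to $\lss{}{t}$ would produce a constant depending on $t$. The $t$-independent domination $\sigma_t \leq 2\mu_h$ is precisely what removes this dependence, since after passing to the upper bound the remaining estimate is a deterministic functional of $\mu_h$. A secondary mild technical point is that Lemma \ref{lemma:volume-growth} is stated with polynomially high probability at each fixed scale; a standard Borel--Cantelli argument over a dyadic sequence of scales promotes this to a simultaneous bound on a single probability-one event, furnishing the random constant $C$ demanded by the statement.
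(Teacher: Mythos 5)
Your argument is essentially the paper's: you write $\Delta \lss{}{t}$ as a measure bounded uniformly in $t$ by (a constant times) $\mu_h$ (Lemma \ref{lemma:non-coincidence-open-harmonic}), recover $\lss{}{t}$ as the zero-boundary Green's potential of that measure, and then invoke the Campanato-embedding/volume-growth estimate that underlies Proposition \ref{prop:lbm-exit-time} to obtain a $t$-uniform H\"older modulus with exponent depending only on $\gamma$. The paper states this more tersely (citing Proposition \ref{prop:lbm-exit-time} directly, and using $\sigma_t\le\mu_h$ rather than $2\mu_h$ since $\mu_h|_{\Lambda_t}$ and $\nu|_{\partial\Lambda_t}$ have disjoint supports), but the reasoning is the same.
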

	
	\begin{proof}
		By Lemma \ref{lemma:non-coincidence-open-harmonic}, $\Delta \lss{z}{t} = \mu_h |_{{{\cluster{z}{t}}}} + \nu |_{\partial {\cluster{z}{t}}}$
		and $\nu$ is absolutely continuous with respect to $\mu_h$ in~$B_1$. Thus, the claim follows by Proposition  \ref{prop:lbm-exit-time}.
	\end{proof}
	
	\subsection{Non-degenerate clusters are subharmonic balls} \label{subsec:subharmonic}
	We prove that clusters which do not intersect the boundary of $B_1$ are harmonic balls. 
	In fact, we observe a stronger property --- each ${\cluster{z}{t}}$ strictly contained in $B_1$ 
	is a subharmonic ball. That is, subharmonic functions satisfy the sub-mean-value property 
	on such ${\cluster{z}{t}}$.
	
	Specifically, for~$z \in \C$, an open set $\Lambda(z)$ is a {\it subharmonic ball} centered at~$z \in \C$  with respect to a Radon measure $\mu$ if 
	\begin{equation} \label{eq:subharmonic-ball}
	\mu(\Lambda(z)) f(z)  \leq \int_{\Lambda(z)} f(x) \mu(d x)
	\end{equation}
	for all functions $f: \overline{O} \to \R$ of the form
	\begin{equation} \label{eq:sub-harmonic-potential}
	f(x) = \int_{O} G_{O}(x,y) d \nu(y) + q(x)
	\end{equation}
	where $O$ is an open set containing a neighborhood of the closure of $\Lambda(z)$,
	$\nu$ is a signed Radon measure, $\nu |_{\Lambda(z)} \leq 0$, with compact support in $O$,
	and $q:\overline{O} \to \R$ is a harmonic function on $O$.
	
	We note that every subharmonic ball is a harmonic ball in the sense described just above Theorem~\ref{theorem:harmonic-balls}. Indeed, the set of harmonic functions in the definition of a harmonic ball (as described above Theorem~\ref{theorem:harmonic-balls}) is the same as the set of functions $f$ of the form~\eqref{eq:sub-harmonic-potential} with $\nu|_{\Lambda(z)}=0$. Since this set of functions is closed under replacing $f$ with $-f$, the inequality~\eqref{eq:subharmonic-ball} gives both the sub-mean-value property and the super-mean-value property for functions in this set.
	
	\begin{lemma} \label{lemma:harmonic-ball}
		On an event of probability 1, for all $r,t > 0$ and~$z \in B_r$, if  $\overline{{{\cluster{B_r;z}{t}}}} \subset B_r$ and $\mu_h(\partial {\cluster{B_r;z}{t}}) = 0$, 
		then ${\cluster{B_r;z}{t}}$ is a subharmonic ball. 
	\end{lemma}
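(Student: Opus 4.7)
The plan is to use the odometer $v := \odometer{}{t}$ from~\eqref{eq:limit-odometer} as a test function in a Green's-identity-style integration by parts. Lemma~\ref{lemma:non-coincidence-open-harmonic} gives $\Delta v = -t\delta_0 + \mu_h|_{\cluster{}{t}} + \sigma|_{\partial \cluster{}{t}}$ for a Radon measure $\sigma$ with $0 \le \sigma \le \mu_h$; since $\mu_h(\partial \cluster{}{t}) = 0$, we conclude $\sigma = 0$ and hence $\Delta v = -t\delta_0 + \mu_h|_{\cluster{}{t}}$ on $B_1$. Moreover $v$ is continuous and non-negative on $\overline{B_1}$, vanishes on $B_1 \setminus \cluster{}{t}$ (and thus on $\partial \cluster{}{t}$ and on $\partial B_1$), and Lemma~\ref{lemma:conservation-of-mass} yields $\mu_h(\cluster{}{t}) = t$.

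Next I would localize: choose an intermediate ball $B_r$ with $\overline{\cluster{}{t}} \Subset B_r \Subset O \cap B_1$, which exists because $\overline{\cluster{}{t}} \Subset B_1$ and $O$ contains a neighborhood of $\overline{\cluster{}{t}}$. By Lemma~\ref{lemma:cluster-compatibility} the cluster is unchanged if computed in $B_r$, and the difference $\odometer{B_1}{t} - \odometer{B_r}{t}$ is harmonic on $B_r$ with zero boundary values on $\partial B_r \subset B_1 \setminus \cluster{}{t}$, so the two odometers agree on $B_r$. This lets me treat $v$ as a continuous non-negative function on $\overline{B_r}$ that vanishes in a neighborhood of $\partial B_r$.

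The crux of the proof is the integration-by-parts identity
\begin{equation*}
\int_{B_r} f \, d(\Delta v) \;=\; \int_{B_r} v \, d(\Delta f).
\end{equation*}
Granting this, the left-hand side equals $-t f(0) + \int_{\cluster{}{t}} f \, d\mu_h$ from the formula for $\Delta v$, while on $B_r$ we have $\Delta f = -\nu$, and extending $v$ by zero gives $v = 0$ on $\C \setminus \cluster{}{t}$, so the right-hand side equals $-\int_{\cluster{}{t}} v \, d\nu|_{\cluster{}{t}} \ge 0$ by the sign constraints $\nu|_{\cluster{}{t}} \le 0$ and $v \ge 0$. Combined with $\mu_h(\cluster{}{t}) = t$, this is exactly the subharmonic inequality~\eqref{eq:subharmonic-ball} for $f$ of the form~\eqref{eq:sub-harmonic-potential}.

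The main obstacle is justifying the integration by parts rigorously when $f$, a Green potential of a signed Radon measure, may be unbounded at atoms of $\nu$. I would mollify $v$: for $\epsilon < \dist(\overline{\cluster{}{t}}, \partial B_r)$, set $v_\epsilon := v * \rho_\epsilon$, which is smooth and compactly supported in $B_r$ with $v_\epsilon \to v$ uniformly. The standard distributional identity plus Fubini gives
\begin{equation*}
\int v_\epsilon \, d(\Delta f) \;=\; \int (\Delta v_\epsilon) f \, dx \;=\; \int (f * \rho_\epsilon) \, d(\Delta v).
\end{equation*}
Passing to $\epsilon \to 0$: the leftmost side tends to $\int v \, d(\Delta f) = -\int v \, d\nu$ by uniform convergence and $|\nu|(\overline{B_r}) < \infty$; for the rightmost side, one applies dominated convergence using that $f \in L^1(\mu_h|_{\cluster{}{t}})$ via a Fubini argument combined with Proposition~\ref{prop:lbm-exit-time} applied to the Green potential of $|\nu|$, that $\mu_h$ is non-atomic so $f * \rho_\epsilon \to f$ at $\mu_h|_{\cluster{}{t}}$-a.e.\ point, and that $f$ is continuous at $0$ in the only non-trivial case where $\nu$ has no negative atom at the origin (otherwise $f(0) = -\infty$ and the claimed inequality is automatic).
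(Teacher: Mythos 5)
Your proof has the right shape --- it is the same Green's-identity / integration-by-parts argument with the odometer that the paper uses --- but it contains a substantive error at a load-bearing step. You assert that the odometer $v := \odometer{}{t}$ is ``continuous and non-negative on $\overline{B_1}$'' and later that ``$v_\epsilon \to v$ uniformly.'' Neither holds. By \eqref{eq:limit-odometer}, $v = \lss{}{t} + t\,G_{B_1}(0,\cdot)$; the least supersolution $\lss{}{t}$ is bounded and continuous on $\overline{B_1}$ (Lemma~\ref{lemma:basic-properties}), but $G_{B_1}(0,\cdot)$ diverges logarithmically at the origin. So $v$ is finite and continuous only on $\overline{B_1}\setminus\{0\}$ and tends to $+\infty$ at $0$; this is forced in any case by the $-t\delta_0$ summand of $\Delta v$. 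Consequently $v_\epsilon = v * \rho_\epsilon$ does not converge to $v$ uniformly, and your passage to the limit $\int v_\epsilon\, d(\Delta f) \to -\int v\, d\nu$ is unjustified. Indeed, if $\nu$ has a negative atom at the origin, $\int v\, d\nu = -\infty$ and the approximants $\int v_\epsilon\,d\nu$ genuinely diverge.

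The inequality you are aiming for, $-\int_{\cluster{}{t}} v\, d\nu \ge 0$ as an element of $[0,\infty]$, is true because $v\ge 0$, $v = 0$ off $\cluster{}{t}$, and $\nu|_{\cluster{}{t}}\le 0$, but the integration by parts linking it to $-t f(0) + \int f\,d\mu_h$ must be justified without presuming $v$ bounded. The paper avoids the issue by expanding $(f-q)(y) = \int_O G_O(y,z)\,d\nu(z)$, swapping the order of integration by Tonelli (legitimate whenever $\int_O G_O(0,z)\,d\nu^-(z) < \infty$; otherwise $f(0) = -\infty$ and the target inequality is vacuous), and identifying the resulting inner integral $t\,G_O(0,z) - \int_{\cluster{}{t}} G_O(y,z)\,d\mu_h(y)$ with $v(z)$ via the characterization of $\Delta v$ and uniqueness for the Dirichlet problem. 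If you want to keep the mollification scheme, you should isolate and treat the singular part of $v$ near the origin explicitly (e.g.\ write $v = t\,\eta\, G_{B_1}(0,\cdot) + (\text{bounded remainder})$ for a cutoff $\eta$), or mollify $f$ rather than $v$. A secondary, smaller slip: your appeal to non-atomicity of $\mu_h$ to get $f * \rho_\epsilon \to f$ $\mu_h$-a.e.\ is not the right reason --- the Green potential of a non-atomic measure can still be discontinuous on a set of positive $\mu_h$-measure; what one should use is that the Green potentials of $\nu^{\pm}$ are finite $\mu_h$-a.e., which follows from their $\mu_h$-integrability.
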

	
	We do not know a priori that the hypotheses $\overline{{{\cluster{B_r;z}{t}}}} \subset B_r$ and $\mu_h(\partial{\cluster{B_r;z}{t}}) = 0$ are satisfied for any value of $t, r>0$ with~$z \in B_r$. We will prove that these hypotheses are satisfied, at least when $t$ depending on~$r, z$ is small, in Sections~\ref{sec:upper-bound} and~\ref{sec:boundary-measure-zero}, respectively. In fact, we will show some uniformity in~$z$ of how small~$t$ needs to be.
	By the Riesz decomposition theorem, see, \eg, Section 4 in \cite{armitage2000classical}, functions of the form \eqref{eq:sub-harmonic-potential}
	include functions which are subharmonic in a neighborhood of $\overline{O}$.
	
	\begin{proof}[Proof of Lemma~\ref{lemma:harmonic-ball}]
		Let $r,t > 0$,~$z \in B_r$, an open set $O \supset \overline{{{\cluster{B_r; z}{t}}}}$, and $f, q, \nu$ as in \eqref{eq:sub-harmonic-potential} be given. By Lemma \ref{lemma:conservation-of-mass}, Lemma \ref{lemma:non-coincidence-open-harmonic}, and Proposition \ref{prop:green-properties} and our assumption that $\mu_h(\partial {\cluster{B_r;z}{t}}) = 0$, ${\cluster{B_r;z}{t}}$ is open and 
		\begin{equation} \label{eq:v-characterization}
		\begin{cases}
		\odometer{B_r;z}{t} = 0  &\mbox{on $\partial B_r$} \\
		\Delta \odometer{B_r;z}{t} = -t \delta_z + \mu|_{{{\cluster{B_r;z}{t}}}} &\mbox{on $B_r$}.
		\end{cases}
		\end{equation}
		and $\mu_h({\cluster{B_r;z}{t}}) = t$. As $\overline{{{\cluster{B_r;z}{t}}}} \subset O$, we can find a smooth domain $\Lambda'_t$ with ${\cluster{B_r;z}{t}} \subset \Lambda'_t \subset O$
		so that
		\begin{align*}
		0 &= \int_{{{\cluster{B_r;z}{t}}}} \Delta q(x) \odometer{B_r;z}{t}(x) dx \\
		&= \int_{\Lambda_t'} \Delta q(z) \odometer{B_r;z}{t}(x) dx  \qquad \mbox{(since $\odometer{B_r;z}{t} = 0$ on $B_r \backslash {\cluster{B_r;z }{t}}$)} \\
		&= \int_{\Lambda_t'} q(x) \Delta \odometer{B_r;z}{t}(x)dx  \qquad \mbox{(integration by parts)} \\ 
		&= -t q(z) + \int_{{{\cluster{B_r;z}{t}}}} q(x) \mu_h(dx) \qquad \mbox{(by \eqref{eq:v-characterization}) } .
		\end{align*}
		Moreover, 
		\begin{align*}
		&t (f-q)(z) - \int_{{{\cluster{B_r;z}{t}}}}(f-q)(y) d \mu_h(y)  \\
		&=\int_{{{\cluster{B_r;z}{t}}}} \int_{{{\cluster{B_r;z}{t}}}} (G_{O}(z,x) - G_{O}(y,x)) d \nu(x) d \mu_h(y)  \qquad \mbox{(definition of $f$)} \\
		&= \int_{{{\cluster{B_r;z}{t}}}} \int_{{{\cluster{B_r;z}{t}}}} (G_{O}(z,x) - G_{O}(y,x)) d \mu_h(y) d \nu(x) \qquad \mbox{(by Fubini)} \\
		&= \int_{{{\cluster{B_r;z}{t}}}} \odometer{B_r;z}{t}(z) d \nu(x)  \qquad \mbox{(by \eqref{eq:v-characterization}) } \\
		&\leq 0 \qquad \mbox{(since $\odometer{B_r;z}{t} \geq 0$ and $\nu \leq 0$)}.
		\end{align*}		
		We conclude by combining the above two expressions.
	\end{proof}

	\section{Non-degeneracy of the flow} \label{sec:non-degenerate}

	In this section, we set up the proof of our main result Theorem \ref{theorem:harmonic-balls}
	by dividing it into several intermediate results which will be proven in Sections~\ref{sec:harnack-type-estimate} through~\ref{sec:uniqueness}. We then show how these intermediate results imply the claim. 
	In the last subsection we observe that the clusters which we construct are locally determined in the sense of Proposition \ref{prop:locally-determined}. 
	
	\subsection{Properties of the restricted flow}
	
	We first show in Section \ref{sec:upper-bound} that clusters do not immediately exit the unit ball.
	\begin{prop} \label{prop:upper-bound}
		On an event of probability 1, there exists a (random) $T = T(\gamma, h) > 0$ so that for each~$z \in B_{1/2}$ 
		and all~$0 < t < T$
		\begin{equation}
		\overline{{{\cluster{z}{t}}}} \subset B_1.
		\end{equation}
	\end{prop}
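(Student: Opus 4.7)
The strategy is to contradict the volume bound $\mu_h({\cluster{}{t}}) \leq t$ from Lemma~\ref{lemma:conservation-of-mass} with a lower bound on $\mu_h({\cluster{}{t}})$ that applies whenever $\overline{{{\cluster{}{t}}}}$ meets $\partial B_1$. The lower bound will come from the Harnack-type estimate Proposition~\ref{prop:harnack-type-property} previewed in the outline: there is a constant $\alpha > 0$ such that whenever $A$ is an annulus on which $\mu_h$ is well-behaved (a condition controlled by Lemma~\ref{lemma:volume-growth}) and ${\cluster{}{t}}$ crosses between the two boundary components of $A$, one has $\mu_h(A \cap {\cluster{}{t}}) \geq \alpha \, \mu_h(A)$.

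I would execute the plan as follows. Consider the pairwise disjoint dyadic annuli $A_k := \A_{2^{-(k+1)}, 2^{-k}}$ for $k \geq 1$. Using the scale invariance~\eqref{eq:h-coordinate-change} of $h$ together with the polynomial-probability estimates of Lemma~\ref{lemma:volume-growth}, the event that $A_k$ satisfies the hypothesis of the Harnack-type estimate has probability at least $1 - O(2^{-pk})$ for some $p > 0$, so by Borel--Cantelli there is, on an event of probability one, a random $k_0 = k_0(h)$ such that every $A_k$ with $k \geq k_0$ is ``good.'' Now suppose $\overline{{{\cluster{}{t}}}} \cap \partial B_1 \neq \emptyset$; by Proposition~\ref{prop:lower-bound} the open connected set ${\cluster{}{t}}$ contains a neighborhood of the origin, and by assumption it also contains points arbitrarily close to $\partial B_1$. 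Thus for every $k \geq 1$ it has points both at Euclidean distance less than $2^{-(k+1)}$ from $0$ and greater than $2^{-k}$ from $0$, so connectedness (Lemma~\ref{lemma:non-coincidence-open-harmonic}) forces ${\cluster{}{t}}$ to cross each $A_k$. Applying the Harnack-type estimate on every good $A_k$ and summing over the disjoint annuli,
\[
\mu_h({\cluster{}{t}}) \;\geq\; \alpha \sum_{k \geq k_0} \mu_h(A_k) \;=:\; T(\gamma, h) ,
\]
which is a.s.\ strictly positive since $\mu_h$ assigns positive mass to every open set (Fact~\ref{fact:lqg-measure}). For $0 < t < T(\gamma, h)$, the chain $t \geq \mu_h({\cluster{}{t}}) \geq T(\gamma, h)$ is impossible, forcing $\overline{{{\cluster{}{t}}}} \subset B_1$.

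The main obstacle is the Harnack-type estimate itself (Proposition~\ref{prop:harnack-type-property}), whose proof is deferred to Section~\ref{sec:harnack-type-estimate} and is the novel ingredient enabling this line of argument. Granted that estimate, the remaining subtleties are pinning down the precise ``goodness'' hypothesis required of each $A_k$, verifying it via Lemma~\ref{lemma:volume-growth} and the scaling relation~\eqref{eq:h-coordinate-change} with polynomial decay in $k$, and reducing the crossing condition of Proposition~\ref{prop:harnack-type-property} to the purely topological statement that a connected open set meeting two disjoint circles must contain a connected subset crossing the annulus between them.
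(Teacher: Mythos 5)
There is a genuine gap in the central step of your argument. You apply the Harnack-type estimate to the dyadic annuli $A_k = \A_{2^{-(k+1)}, 2^{-k}}$ \emph{centered at the origin}, but Proposition~\ref{prop:harnack-type-property} explicitly requires $\overline{B_\rho}(x_0) \subset B_1 \setminus \{0\}$. That hypothesis fails for every $A_k$: the closed ball $\overline{B_{2^{-k}}}(0)$ contains the origin. In fact, no version of the Harnack-type estimate can hold for annuli centered at $0$: since ${\cluster{}{t}}$ always contains a neighborhood of $0$ (Proposition~\ref{prop:lower-bound}), one has ${\cluster{}{t}} \cap B_{\rho/2}(0) \neq \emptyset$ for any $\rho$, so the estimate's conclusion ``${\cluster{}{t}} \cap B_{\rho/2}(x_0) = \emptyset$'' would be outright false. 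Consequently the lower bound $\mu_h({\cluster{}{t}} \cap A_k) \geq \alpha \mu_h(A_k)$ is never justified, and the sum defining your $T(\gamma,h)$ is vacuous. A secondary issue: even for an annulus $\A_{\rho/2, \rho}(x_0)$ with $x_0$ away from $0$, Proposition~\ref{prop:harnack-type-property} is not quite ``crossing implies large mass in the annulus''; its contrapositive says ``${\cluster{}{t}} \cap B_{\rho/2}(x_0) \neq \emptyset$ implies large mass,'' and a connected open set can cross a small annulus without entering its inner ball. (One can recover a crossing statement via Proposition~\ref{prop:harnack-type-stronger-property} applied to the crossing component, since that component meets $\overline{B_{\rho/2}(x_0)}$, but this requires saying so.)

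For comparison, the paper's actual proof (Proposition~\ref{prop:upper-bound-polynomially-high-prob}) works in the \emph{forward} direction of the Harnack estimate and keeps all annuli away from the origin: it covers the circle $\partial B_{r/2}$ with the inner balls $B_{\rho_x/2}(x)$ of very good annuli at small scales, uses the conservation of mass $\mu_h({\cluster{}{t}}) \leq t$ together with the volume lower bound $\mu_h(\A_{\rho_x/2, \rho_x}(x)) \geq \rho_x^{2\beta^-}$ to verify the smallness hypothesis $\mu_h({\cluster{}{t}} \cap \A_{\rho_x/2,\rho_x}(x)) \leq \alpha \mu_h(\A_{\rho_x/2,\rho_x}(x))$ uniformly over the cover once $t \leq \alpha \epsilon^{2\beta^-}$, and then concludes that ${\cluster{}{t}}$ avoids every $B_{\rho_x/2}(x)$, hence stays inside $B_{r/2}$. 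Your underlying intuition — mass conservation plus the Harnack estimate should trap the cluster for small $t$ — is sound, and the scaling/Borel–Cantelli flavor of your argument is in the right spirit, but the geometric setup must use annuli away from the origin and cover a separating circle, not sum over origin-centered shells.
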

	
	We show in the second part of Section \ref{sec:upper-bound} that the family is continuous. 
	\begin{prop} \label{prop:continuity}
		On an event of probability 1, for all~$z \in B_{1/2}$ the cluster centered at~$z$ decreases to~$\{z\}$,  
		\begin{equation} \label{eq:contains-origin}
		\bigcap_{t > 0} \overline{{{\cluster{z}{t}}}} = \{z\}
		\end{equation} 
		and continuously increase in $t$:  for each $t > 0$, for all $\epsilon > 0$ sufficiently small, there exists $\delta(z) > 0$ so that for all 
		$t' \in [t, t + \delta(z)]$, 
		\begin{equation} \label{eq:continuity}
		\overline{{\cluster{z}{t'}}} \subset {\cluster{z}{t}} + B_{\epsilon}(z).
		\end{equation}
	\end{prop}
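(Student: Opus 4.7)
The plan is to use the Harnack-type estimate (Proposition~\ref{prop:harnack-type-property}) in combination with the mass bound from Lemma~\ref{lemma:conservation-of-mass} to prove both parts; part (a) is a direct application, while part (b) additionally uses a weak-convergence argument for the Laplacian measures of the odometers.

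For the first claim $\bigcap_{t>0}\overline{\Lambda_t} = \{0\}$, the containment $\{0\} \subseteq \bigcap_{t>0}\overline{\Lambda_t}$ is immediate from Proposition~\ref{prop:lower-bound}. For the reverse, I would fix $r > 0$ and argue that $\overline{\Lambda_t} \subset B_{r/2}$ once $t$ is small enough. Indeed, if $\overline{\Lambda_t}$ is not contained in $B_{r/2}$, then since $\Lambda_t$ is connected and contains a neighborhood of $0$, it must cross the annulus $\mathcal{A}_{r/2,r}$. Applying the Harnack-type estimate together with the a.s.\ lower bound $\mu_h(\mathcal{A}_{r/2,r}) \geq c_r > 0$ from Lemma~\ref{lemma:volume-growth}, one obtains $\mu_h(\Lambda_t \cap \mathcal{A}_{r/2,r}) \geq \alpha c_r$. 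Combined with $\mu_h(\Lambda_t) \leq t$, this forces $t \geq \alpha c_r$, so $\overline{\Lambda_t} \subset B_{r/2}$ whenever $t < \alpha c_r$. Since $r > 0$ was arbitrary, this gives the reverse inclusion.

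For the continuity, I would argue by contradiction: suppose $t_n \downarrow t$ and $x_n \in \overline{\Lambda_{t_n}}$ satisfy $\dist(x_n, \Lambda_t) \geq \epsilon$. Since $\Lambda_{t_n} \subset B_1$, a subsequence satisfies $x_n \to x^*$ with $\dist(x^*, \overline{\Lambda_t}) \geq \epsilon$. As $\Lambda_{t_n}$ is connected and meets both a fixed neighborhood of $0$ and $B_{\epsilon/8}(x^*)$, the Harnack-type estimate applied to the annulus $A := \mathcal{A}_{\epsilon/8,\epsilon/4}(x^*)$---which is disjoint from $\overline{\Lambda_t}$---gives
\[
\mu_h(\Lambda_{t_n} \cap A) \geq \alpha\, \mu_h(A) \geq c > 0
\]
uniformly for large $n$. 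To contradict this, I would establish that the Laplacian measures of the odometers converge weakly on $B_1 \setminus \{0\}$. The family $\{w_s\}_s$ is equicontinuous by Lemma~\ref{odometer:cont}, and $w_{t_n}$ is monotone increasing in $n$ (because $w_s$ decreases in $s$). Any uniform subsequential limit $w^*$ is continuous, satisfies $\Delta w^* \leq \mu_h$ distributionally and $w^* \geq -tG_{B_1}(0,\cdot)$, hence lies in $\mathcal{S}_t$ so $w^* \geq w_t$; but $w^* \leq w_t$ by monotonicity, giving $w^* = w_t$. By Arzel\`a--Ascoli, $w_{t_n} \to w_t$ uniformly on compacts of $B_1$, so $v_{t_n} \to v_t$ uniformly on compacts of $B_1 \setminus \{0\}$. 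Then for any nonnegative $\phi \in C^\infty_c(B_1 \setminus (\{0\} \cup \overline{\Lambda_t}))$, Lemma~\ref{lemma:non-coincidence-open-harmonic} and integration by parts yield
\[
\int \phi \, d\!\left(\mu_h|_{\Lambda_{t_n}} + \nu_{t_n}|_{\partial \Lambda_{t_n}}\right) = \int \Delta \phi \cdot v_{t_n} \, dz \;\longrightarrow\; \int \Delta \phi \cdot v_t \, dz = 0,
\]
where the last equality uses $v_t \equiv 0$ on $\mathrm{supp}(\phi)$. Since both measures on the left are nonnegative, each integral tends to zero individually. Choosing such a $\phi$ with $\phi \equiv 1$ on $A$ forces $\mu_h(\Lambda_{t_n} \cap A) \to 0$, contradicting the Harnack lower bound.

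The hard part will be promoting equicontinuity and pointwise monotonicity of $w_{t_n}$ into the identification of the limit as $w_t$ and, relatedly, justifying the weak convergence of $\mu_h|_{\Lambda_{t_n}} + \nu_{t_n}|_{\partial \Lambda_{t_n}}$ on $B_1 \setminus \{0\}$ via integration by parts (care is needed because $v_{t_n}$ has a logarithmic singularity at $0$, which is handled by working with test functions supported away from the origin). Once this is in place, the Harnack-type estimate translates the alleged geometric discontinuity into a positive mass lower bound in a region where the weak limit assigns none, producing the contradiction.
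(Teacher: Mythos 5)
Your proposal has a genuine gap in how you invoke the Harnack-type estimate, and this gap occurs in both parts.

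Proposition~\ref{prop:harnack-type-property} is a \emph{conditional} statement: the implication from ``small mass in the annulus'' to ``the cluster avoids the inner ball'' holds only on the event $\overline E_\rho(x_0)$, and only when $\overline{B_\rho}(x_0)\subset B_1\setminus\{0\}$. You apply the contrapositive to a fixed, specific annulus without verifying either condition. In part (a) you take the annulus $\A_{r/2,r}$ centered at the origin; since $0\in\overline{B_r}(0)$, the hypothesis $\overline{B_\rho}(x_0)\subset B_1\setminus\{0\}$ fails outright, and the event $\overline E_r(0)$ is not even defined in a useful way there. In part (b) you take $\A_{\epsilon/8,\epsilon/4}(x^*)$ for a fixed $x^*$ and fixed radius; the proposition only guarantees that with high probability there is \emph{some} radius $\rho\in[\epsilon,\epsilon^{1/2}]$ and \emph{some} nearby grid point $x_0$ for which $\overline E_\rho(x_0)$ holds, not that it holds for your particular choice. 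To make either step rigorous you need the covering argument from the paper: pick $\delta$ small, cover $\partial B_{r/2}$ (resp.\ the shell around $\Lambda_t$) by balls $B_{\rho_x/2}(x)$ with $x$ in a grid avoiding a fixed neighborhood of the origin, choose radii $\rho_x\in[\delta,\delta^{1/2}]$ for which $\overline E_{\rho_x}(x)$ holds (guaranteed a.s.\ for all small dyadic $\delta$ by Borel--Cantelli), combine with the lower bound $\mu_h(\A_{\rho_x/2,\rho_x}(x))\geq\rho_x^{2\beta^-}$ from Lemma~\ref{lemma:volume-growth}, and then take $t$ or $\epsilon$ small enough relative to $\delta^{2\beta^-}$. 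As written, you have skipped the step of locating a good annulus, which is exactly what makes Propositions~\ref{prop:upper-bound-polynomially-high-prob} and~\ref{prop:continuity-of-clusters} nontrivial.

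Setting aside that gap, the structure of your part (b) is a genuinely different route from the paper and worth commenting on. The paper shows continuity directly: since $\mu_h(\Lambda_t)=t$ and $\mu_h(\Lambda_{t+\epsilon})\leq t+\epsilon$ (Lemma~\ref{lemma:conservation-of-mass} and monotonicity), one has $\mu_h(\Lambda_{t+\epsilon}\setminus\Lambda_t)\leq\epsilon$, which is fed directly into the Harnack-type estimate on annuli disjoint from $\Lambda_t$. You instead argue by contradiction and use a compactness/weak-$*$ argument: the least supersolutions $\lss{}{t_n}$ increase to $\lss{}{t}$ (monotonicity in $t$, Lemma~\ref{odometer:cont} for equicontinuity, Arzel\`a--Ascoli, and the identification of the limit as an element of $\mclS{}{t}$), hence the odometers converge uniformly on compacts of $B_1\setminus\{0\}$, hence $\Delta\odometer{}{t_n}\to\Delta\odometer{}{t}$ weakly away from the origin, forcing the mass in any fixed annulus away from $\overline\Lambda_t$ to vanish in the limit. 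The identification step ($w^*\in\mclS{}{t}$ and $w^*\leq w_t$ by monotonicity, hence $w^*=w_t$) is correct, and the integration-by-parts step is sound because the test function is supported away from both $0$ and $\overline\Lambda_t$. What the paper's route buys is brevity and no compactness argument; what your route buys is that it never invokes the exact mass identity $\mu_h(\Lambda_t)=t$ but only the distributional structure of $\Delta\odometer{}{t}$. If you repair the Harnack-estimate step with the covering argument, your part (b) is a valid alternative proof.
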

	In Section \ref{sec:boundary-measure-zero}, we show that each cluster has zero boundary area measure.   
	\begin{prop} \label{prop:boundary-measure}
		On an event of probability 1,  for all~$z \in B_1$ and~$t > 0$ such that $\overline{{{\cluster{z}{t}}}} \subset B_r(z)$. 
		\begin{equation}
		\mu_h (\partial {\cluster{z}{t}}) = 0 \, . 
		\end{equation}
	\end{prop}

	In order to ensure exact uniqueness, the family of harmonic balls appearing in our final theorem differ from the above clusters
	via a set of $\mu_h$-measure zero, 
	\begin{equation} \label{eq:regular-cluster}
	{\regcluster{z}{t}} := \mathrm{int}(\overline{{{\cluster{z}{t}}}}), \quad \forall t >0 \, , z \in B_1.
	\end{equation}
	Indeed, by definition, ${\cluster{}{t}} \subset \mathrm{int}(\overline{{{\cluster{}{t}}}})$ and as $\mu_h(\partial {\cluster{}{t}}) = 0$, $\mu_h(\mathrm{int}(\overline{{{\cluster{}{t}}}}) \backslash {\cluster{}{t}}) = 0$. This shows that ${\regcluster{}{t}}$ is a subharmonic ball and Propositions \ref{prop:upper-bound} through \ref{prop:boundary-measure} 
	hold with ${\regcluster{}{t}}$ in place of ${\cluster{}{t}}$. Thus, we may combine Proposition \ref{prop:lower-bound}, Lemma \ref{lemma:harmonic-ball}, and Propositions \ref{prop:upper-bound} through \ref{prop:boundary-measure} into the following statement.

	\begin{prop} \label{prop:conditional}
		On an event of probability 1, for all~$r > 0 $ and~$z \in B_{1/2}$, there exists a family of clusters $\{{\regcluster{z}{t}}\}_{0 < t < T}$ strictly contained in $\overline{B_1}$
		satisfying the conditions of Theorem \ref{theorem:harmonic-balls} for $0 < t<T$ and $\overline{{\regcluster{z}{T}}} \cap \partial B_1 \neq \emptyset$.
	\end{prop}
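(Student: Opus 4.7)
The statement is essentially an assembly result: each individual requirement has been isolated in the earlier propositions, and what remains is to verify that the mild regularization $\regcluster{}{t}=\mathrm{int}(\overline{\cluster{}{t}})$ carries these properties over without loss. I begin by choosing $T>0$ from Proposition \ref{prop:upper-bound}, so that on the relevant event of probability one we have $\overline{\cluster{}{t}}\subset B_1$ for every $0<t<T$. For each such $t$ I set $\regcluster{}{t}:=\mathrm{int}(\overline{\cluster{}{t}})$, and for the final clause I appeal directly to Proposition \ref{prop:lower-bound}, which supplies a (typically larger) time at which $\overline{\cluster{}{t}}\cap\partial B_1\neq\emptyset$.

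The heart of the argument is checking conditions (a) and (b) of Theorem \ref{theorem:harmonic-balls}. The two key set-theoretic facts I will exploit are that $\cluster{}{t}\subset\regcluster{}{t}\subset\overline{\cluster{}{t}}$ and that $\overline{\regcluster{}{t}}=\overline{\cluster{}{t}}$, so that $\partial\regcluster{}{t}\subset\partial\cluster{}{t}$. For (a): Proposition \ref{prop:boundary-measure} gives $\mu_h(\partial\cluster{}{t})=0$, and since $\regcluster{}{t}\setminus\cluster{}{t}\subset\partial\cluster{}{t}$ this yields both $\mu_h(\partial\regcluster{}{t})=0$ and $\mu_h(\regcluster{}{t})=\mu_h(\cluster{}{t})=t$, the last equality using Lemma \ref{lemma:conservation-of-mass} together with $\overline{\cluster{}{t}}\subset B_1$. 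The identity $\regcluster{}{t}=\mathrm{int}(\overline{\regcluster{}{t}})$ is then immediate from $\overline{\regcluster{}{t}}=\overline{\cluster{}{t}}$. For (b), boundedness is automatic; the inclusion $\cluster{}{t}\subset\regcluster{}{t}\subset\overline{\cluster{}{t}}$ together with connectedness of $\cluster{}{t}$ (Lemma \ref{lemma:non-coincidence-open-harmonic}) gives connectedness of $\regcluster{}{t}$; containment of the origin follows from Proposition \ref{prop:lower-bound}; monotonicity in $t$ follows from Lemma \ref{lemma:monotonicity} upon taking interiors of closures; the continuous-growth property \eqref{eq:continuity} transfers from Proposition \ref{prop:continuity} via $\overline{\regcluster{}{t'}}=\overline{\cluster{}{t'}}$; and $\bigcap_{t>0}\regcluster{}{t}=\{0\}$ follows from $\regcluster{}{t}\subset\overline{\cluster{}{t}}$ together with \eqref{eq:contains-origin} and the fact that $0\in\regcluster{}{t}$ for every $t$.

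It remains to verify the harmonic ball property \eqref{eq:harmonic-ball} for $\regcluster{}{t}$ with the enlarged class of test functions described above Theorem \ref{theorem:harmonic-balls}. Since this class of test functions is contained in the family of functions of the form \eqref{eq:sub-harmonic-potential} (taking $\nu|_\Lambda=0$, so that both $f$ and $-f$ are admissible in Lemma \ref{lemma:harmonic-ball}), the subharmonic ball inequality applied to $\pm f$ promotes to the equality \eqref{eq:harmonic-ball} for $\cluster{}{t}$. The hypotheses of Lemma \ref{lemma:harmonic-ball} are in force because $\overline{\cluster{}{t}}\subset B_1$ and $\mu_h(\partial\cluster{}{t})=0$ on the event we are working on. To transfer the identity from $\cluster{}{t}$ to $\regcluster{}{t}$, write
\begin{equation*}
\int_{\regcluster{}{t}} f\,d\mu_h \;=\; \int_{\cluster{}{t}} f\,d\mu_h + \int_{\regcluster{}{t}\setminus \cluster{}{t}} f\,d\mu_h \;=\; \int_{\cluster{}{t}} f\,d\mu_h,
\end{equation*}
where the second equality uses $\mu_h(\regcluster{}{t}\setminus\cluster{}{t})=0$ and the boundedness of $f$ on the compact set $\overline{\regcluster{}{t}}$. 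Combined with $\mu_h(\regcluster{}{t})=\mu_h(\cluster{}{t})=t$, this gives \eqref{eq:harmonic-ball} for $\regcluster{}{t}$.

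There is no single ``hard step'' here, since the conceptual content has already been deposited into Propositions \ref{prop:upper-bound}, \ref{prop:continuity}, and \ref{prop:boundary-measure} (each to be proved later via the Harnack-type estimate). The main thing to be careful about is the bookkeeping around the regularization, in particular the set-theoretic inclusion $\partial\regcluster{}{t}\subset\partial\cluster{}{t}$, which is what allows Proposition \ref{prop:boundary-measure} to propagate to $\regcluster{}{t}$ and in turn underlies every one of the quantitative properties in Theorem \ref{theorem:harmonic-balls}.
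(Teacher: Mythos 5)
Your proof is correct and follows the same approach as the paper, which disposes of this proposition in the single paragraph immediately preceding it: the paper simply observes that $\regcluster{}{t}=\mathrm{int}(\overline{\cluster{}{t}})$ differs from $\cluster{}{t}$ by a set of $\mu_h$-measure zero (thanks to Proposition \ref{prop:boundary-measure}), and then states that Propositions \ref{prop:upper-bound}--\ref{prop:boundary-measure} together with Proposition \ref{prop:lower-bound} and Lemma \ref{lemma:harmonic-ball} transfer to $\regcluster{}{t}$. You spell out the set-theoretic bookkeeping ($\overline{\regcluster{}{t}}=\overline{\cluster{}{t}}$, $\partial\regcluster{}{t}\subset\partial\cluster{}{t}$, $\regcluster{}{t}\setminus\cluster{}{t}\subset\partial\cluster{}{t}$) and the integral-transfer argument more carefully than the paper does, which is a strength.

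The one place your write-up is looser than it should be is the treatment of the terminal time $T$ and the clause $\overline{\regcluster{}{T}}\cap\partial B_1\neq\emptyset$. You take $T$ to be an arbitrary time supplied by Proposition \ref{prop:upper-bound}, and then separately invoke Proposition \ref{prop:lower-bound} to produce some (possibly much larger) time at which the closure of the cluster touches $\partial B_1$. These two times need not match, so the final clause of the proposition is not verified for the $T$ you have fixed. The fix is to define $T:=\sup\{t>0:\overline{\cluster{}{t}}\subset B_1\}$; then $T>0$ by Proposition \ref{prop:upper-bound}, $T<\infty$ by the last part of Proposition \ref{prop:lower-bound}, and $\overline{\cluster{}{T}}\cap\partial B_1\neq\emptyset$ follows from the Hausdorff continuity in Proposition \ref{prop:continuity}: if $\overline{\cluster{}{T}}\Subset B_1$ then $\overline{\cluster{}{T+\delta}}\subset\cluster{}{T}+B_\epsilon\Subset B_1$ for small $\delta$, contradicting maximality of $T$. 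This is exactly the $T$ that the later proof of Theorem \ref{theorem:full-theorem-minus-uniqueness} needs. The rest of your argument is complete and correct.
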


	In the next two subsections we use the compatibility property Lemma \ref{lemma:cluster-compatibility} together with a certain scale invariance of clusters
	to extend the construction in Proposition \ref{prop:conditional} to the entire plane. 
	\begin{theorem} \label{theorem:full-theorem-minus-uniqueness}
		On an event of probability 1, for all~$z \in \C$, there exists a family of clusters $\{{\cluster{}{t}}(z)\}_{t > 0}$ 
		satisfying the conditions of Theorem \ref{theorem:harmonic-balls}. 
		Moreover, each cluster is a subharmonic ball related to the clusters of \eqref{eq:non-co-set} in the following way: 
		if for some~$s > 0, z \in \C$, if ${\regcluster{}{t}(z)} \Subset B_s$, then ${\regcluster{}{t}(z)} = \mathrm{int}(\overline{\cluster{B_r;z}{t}})$ for all $r \geq s$.
	\end{theorem}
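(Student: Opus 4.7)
The plan is to extend the construction of Proposition \ref{prop:conditional} from $B_1$ to the whole plane by combining the scale invariance \eqref{eq:h-coordinate-change} of the whole-plane GFF with the compatibility property Lemma \ref{lemma:cluster-compatibility}. For each $R > 0$, consider the rescaled field $h^R(\zeta) := h(R\zeta) - h_R(0)$, which by \eqref{eq:h-coordinate-change} (together with the elementary computation of circle averages of the log singularity $-\boldsymbol{\alpha}_0\log|\cdot|$) has the same law as $h$. Conformal covariance and Weyl scaling (Fact \ref{fact:lqg-measure}) yield
\[
\mu_h(RA) = R^{\gamma Q} e^{\gamma h_R(0)} \mu_{h^R}(A),
\]
and rescaling the supersolution set \eqref{eq:super-solutions} accordingly gives the identity
\[
\cluster{B_R}{t} = R \cdot \Lambda^{B_1, h^R}_{t R^{-\gamma Q} e^{-\gamma h_R(0)}} ,
\]
where $\Lambda^{B_1, h^R}$ denotes the cluster built from the obstacle problem with the field $h^R$ in place of $h$. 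Applying Proposition \ref{prop:conditional} to $h^R$ therefore provides, for each fixed $R > 0$, an a.s.\ positive random threshold $T_R := T(h^R) R^{\gamma Q} e^{\gamma h_R(0)}$ such that the family $\{\cluster{B_R}{t}\}_{0 < t < T_R}$ satisfies the conditions of Proposition \ref{prop:conditional} inside $B_R$.

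Next I would show that, on an a.s.\ event, for every $t > 0$ there exists a (random) $R$ with $T_R > t$. For any fixed $t$, the event $\{T_R > t\}$ coincides with $\{T(h^R) > t R^{-\gamma Q} e^{-\gamma h_R(0)}\}$; since $T(h^R)$ is a.s.\ positive and $h_R(0)$ is a centered Gaussian with variance $\log R$ while $R^{\gamma Q}$ grows polynomially, the right-hand threshold tends to zero in probability, so $\P(T_R > t) \to 1$ as $R \to \infty$. Hence $\P(\exists R : T_R > t) = 1$ for each fixed $t > 0$. Intersecting over $t \in \Q^+$ and then upgrading through the monotonicity $\cluster{B_R}{t} \subset \cluster{B_R}{t'}$ for $t < t'$ (Lemma \ref{lemma:monotonicity}) extends the conclusion simultaneously to all $t > 0$.

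I would then define $\cluster{}{t} := \cluster{B_{R(t)}}{t}$ and $\regcluster{}{t} := \mathrm{int}(\overline{\cluster{}{t}})$, where $R(t)$ is any value with $\overline{\cluster{B_{R(t)}}{t}} \Subset B_{R(t)}$. Well-definedness and the final compatibility claim of the theorem both follow from Lemma \ref{lemma:cluster-compatibility}: taking $s_1$ strictly between the outer radius of $\overline{\cluster{B_{R(t)}}{t}}$ and $R(t)$, then applying the lemma with $s_2 = R(t)$ and an arbitrary $R \geq R(t)$, yields $\cluster{B_r}{t} = \cluster{B_{R(t)}}{t}$ for every $r \geq R(t)$. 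Conditions (a) and (b) of Theorem \ref{theorem:harmonic-balls} for the extended family transfer directly from the corresponding statements of Proposition \ref{prop:conditional} applied on $B_{R(t)}$, while the subharmonic ball property follows from Lemma \ref{lemma:harmonic-ball} (applicable since $\overline{\cluster{}{t}} \Subset B_{R(t)}$ and $\mu_h(\partial \cluster{}{t}) = 0$, both supplied by Proposition \ref{prop:conditional}).

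The main technical obstacle is the quantitative step $\P(T_R > t) \to 1$, which requires disentangling the randomness of the rescaled field $h^R$ from that of the circle average $h_R(0)$; for the whole-plane GFF these are independent modulo an affine correction arising from the log singularity, after which the argument reduces to exponential Gaussian tails dominating the polynomial factor $R^{\gamma Q}$. Once this is in hand, the rest of the proof is essentially bookkeeping with nested balls and the compatibility lemma.
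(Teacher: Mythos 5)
Your proposal is essentially the paper's argument: rescale the field by~\eqref{eq:h-coordinate-change}, pull back the obstacle problem via Weyl scaling and conformal covariance to reduce to Proposition~\ref{prop:conditional}, then use Lemma~\ref{lemma:cluster-compatibility} to patch the constructions on nested balls. Your scaling factor $T_R = T(h^R)R^{\gamma Q}e^{\gamma h_R(0)}$ is exactly the paper's $A_r T^{(r)}$, and Lemma~\ref{lemma:scale-invariance-flow} packages the identity you write out by hand.

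Where you genuinely diverge is the final limit step. The paper shows $A_r T^{(r)} \to \infty$ almost surely by observing that $A_{e^t}$ is a geometric Brownian motion with positive drift and then combining a Borel--Cantelli-style estimate for the events $\{T^{(r)} > c_p\}$ with the monotonicity of $r\mapsto A_rT^{(r)}$ (which itself comes from the compatibility lemma). You instead prove the weaker fact $\P(T_R > t)\to 1$ and then use $\P(\exists R: T_R>t)=1$ for each rational $t$; this suffices for the theorem and is arguably a leaner route since it entirely sidesteps the geometric-Brownian-motion and monotonicity arguments. Two small points. First, your claim that $h_R(0)$ is \emph{centered} with variance $\log R$ holds only when $\boldsymbol{\alpha}_0 = 0$; in general $h_R(0) = h^\C_R(0)-\boldsymbol{\alpha}_0\log R$, so the relevant exponent becomes $R^{-\gamma(Q-\boldsymbol{\alpha}_0)}e^{-\gamma h^\C_R(0)}$, which still tends to $0$ in probability precisely because $\boldsymbol{\alpha}_0 < Q$. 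You gesture at this, but should state it. Second, the ``disentangling'' you flag as the main technical obstacle is not actually needed: writing $X_R = T(h^R)$ and $Y_R = tR^{-\gamma Q}e^{-\gamma h_R(0)}$, the inequality $\P(X_R > Y_R) \geq 1 - \P(X_R \leq \epsilon) - \P(Y_R > \epsilon)$ gives the conclusion directly from equidistribution of $X_R$ and convergence of $Y_R$ to zero in probability, with no independence hypothesis required.
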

	
	In Section \ref{sec:uniqueness}, we prove that the family given by Theorem \ref{theorem:full-theorem-minus-uniqueness} is the unique such family,
	completing the proof of Theorem \ref{theorem:harmonic-balls}. 
	\begin{prop} \label{prop:uniqueness}
		Let $\{{\regcluster{}{t}(z)}\}_{t > 0, z \in \C}$ be given by Theorem \ref{theorem:full-theorem-minus-uniqueness}. 
		On an event of probability 1, if for some~$z \in \C$,  $\{A_t(z)\}_{t > 0}$ is a family 
		of harmonic balls satisfying the assumptions in Theorem \ref{theorem:harmonic-balls}, 
		then $A_t(z)={\regcluster{}{t}(z)}$ for all $t> 0$.
	\end{prop}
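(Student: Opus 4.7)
The plan is to adapt Sakai's obstacle-problem-based uniqueness argument for quadrature domains \cite{sakai2006quadrature} to our LQG setting by comparing any candidate harmonic ball $A=A_t$ to the constructed cluster $\regcluster{}{t}$ through their Green's-function potentials. Fix a Euclidean ball $O$ large enough that $\bar A\cup \overline{\regcluster{}{t}}\Subset O$; by Theorem~\ref{theorem:full-theorem-minus-uniqueness}, $\regcluster{}{t}=\mathrm{int}(\overline{\cluster{O}{t}})$ and $\overline{\cluster{O}{t}}\Subset O$. Set
\[
u_A(x):=t\,G_O(x,0)-\int_A G_O(x,y)\,\mu_h(dy), \qquad x\in \bar O.
\]
By Proposition~\ref{prop:lbm-exit-time}, $u_A$ is H\"older continuous on $\bar O\setminus\{0\}$, vanishes on $\partial O$, has a logarithmic singularity at the origin, and satisfies $\Delta u_A = \mu_h|_A-t\delta_0$ distributionally on $O$. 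Applying the enlarged mean-value property of $A$ to the test measure $\nu=\delta_\zeta$ for $\zeta\in O\setminus \bar A$ (so that $f_\zeta=G_O(\cdot,\zeta)\in\tilde H_O(A)$) gives $\int_A G_O(y,\zeta)\,\mu_h(dy)=t\,G_O(0,\zeta)$, i.e., $u_A\equiv 0$ on $O\setminus \bar A$; H\"older continuity together with $\mu_h(\partial A)=0$ extends this to $u_A=0$ on $O\setminus A$.

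The key step, and the main obstacle, is to upgrade this to $u_A\ge 0$ on all of $O$. Writing $\hat w:=u_A-t\,G_O(0,\cdot)=-\int_A G_O(\cdot,y)\,\mu_h(dy)\in C(\bar O)$, one has $\hat w=0$ on $\partial O$ and $\Delta \hat w=-\mu_h|_A\le \mu_h$ on $O$, so the only missing ingredient for membership of $\hat w$ in $\mclS{O}{t}$ is the obstacle inequality $\hat w\ge -t\,G_O(0,\cdot)$, equivalent to $u_A\ge 0$. The harmonic-ball property does not directly constrain $u_A$ inside $A$, and although $u_A$ is continuous and subharmonic on $A\setminus\{0\}$ with zero boundary values on $\partial A$ and a singularity $u_A\to+\infty$ at $0$, subharmonicity alone is insufficient for the desired lower bound. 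Following Sakai, I would establish positivity by identifying $A$ as a partial-balayage support for $t\delta_0$ onto $\mu_h$: given any connected component $E$ of $\{u_A<0\}\cap A$, the harmonic-ball property applied against potentials with sources converging to $\partial E$ from $O\setminus\bar A$, together with the extremal characterization of $\lss{O}{t}$ and Lemma~\ref{lemma:volume-growth}, will rule out $E$ being non-empty. The only nonstandard input is the LQG volume growth, which has already been deployed in the earlier sections, so the Sakai balayage step goes through.

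Granted $u_A\ge 0$, we have $\hat w\in \mclS{O}{t}$, and hence $\hat w\ge \lss{O}{t}$ by minimality; equivalently, $u_A\ge v_t$ on $O$, where $v_t:=\lss{O}{t}+t\,G_O(0,\cdot)\ge 0$ is the obstacle-problem odometer on $O$. Therefore
\[
\cluster{O}{t}=\{v_t>0\}\subseteq \{u_A>0\}\subseteq \bar A.
\]
For the reverse inclusion, mass conservation (Lemma~\ref{lemma:conservation-of-mass} applied on $O$, using $\overline{\cluster{O}{t}}\Subset O$ and $\mu_h(\partial\cluster{O}{t})=0$ from Proposition~\ref{prop:boundary-measure}) gives $\mu_h(\cluster{O}{t})=t=\mu_h(A)$. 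Combined with $\cluster{O}{t}\subseteq\bar A$ and $\mu_h(\partial A)=0$, the open set $A\setminus\overline{\cluster{O}{t}}$ has zero $\mu_h$-measure; since $\mu_h$ charges every nonempty open set, this set is empty, so $A\subseteq\overline{\cluster{O}{t}}$. Taking closures yields $\bar A=\overline{\cluster{O}{t}}$, and since $A$ and $\regcluster{}{t}=\mathrm{int}(\overline{\cluster{O}{t}})$ are each equal to the interior of their closures, $A=\regcluster{}{t}$.
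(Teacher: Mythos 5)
Your argument has a genuine gap at precisely the step you flag as "the key step and the main obstacle": upgrading $u_A = 0$ on $O\setminus A$ to $u_A \ge 0$ on all of $O$. This is equivalent to showing that the \emph{harmonic} ball $A$ is in fact a \emph{subharmonic} ball, i.e., that $A$ is a partial-balayage set for $t\delta_0$ onto $\mu_h$. In Sakai's classical theory this upgrade is available because the background measure is comparable to Lebesgue measure, and the proof crucially uses that regularity; the paper explicitly explains in the introduction that $\mu_h$ fails this comparability and that the classical machinery therefore does not apply. Your sketch ("apply the harmonic-ball property against potentials with sources converging to $\partial E$") is a gesture at the balayage argument, not a proof, and it is not at all clear that it can be made to work with only the volume-growth bounds of Lemma~\ref{lemma:volume-growth}. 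The positivity of $u_A$ is precisely what would make $\hat w\in\mclS{O}{t}$ and deliver the whole argument, so the gap is load-bearing.

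The paper circumvents exactly this difficulty with a different two-step strategy. First, Lemma~\ref{lemma:contains-boundary} compares $A$ not to the obstacle but to the \emph{known} subharmonic ball $\regcluster{}{t}$: the auxiliary function $u(x) = \int_A G_{B_R}(x,y)\,\mu_h(dy) - \int_{\regcluster{}{t}} G_{B_R}(x,y)\,\mu_h(dy)$ can be shown nonnegative because the needed inequality comes from the subharmonicity of $\regcluster{}{t}$ (Theorem~\ref{theorem:full-theorem-minus-uniqueness}), not from any unproved subharmonicity of $A$. The strong minimum principle applied to $u$ at a hypothetical point $x_0 \in \partial A \setminus\overline{\regcluster{}{t}}$ then yields $\partial A \subset \overline{\regcluster{}{t}}$. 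Second, this boundary inclusion is fed into a time-marching argument (after Sakai's Theorem~10.13): a ball $B\subset A_{t_0}\setminus\overline{\regcluster{}{t_0}}$ is shown inductively to stay outside $A_{s_n}$ along a sequence $s_n\uparrow t_0$ with steps of size $\mu_h(B)/2$, using that $\partial A_s\subset\overline{\regcluster{}{s}}$ forces $B$ to be either entirely inside or entirely outside $A_s$, and mass conservation rules out the former. This route uses only the properties of $\{A_t\}$ guaranteed by the assumptions of Theorem~\ref{theorem:harmonic-balls} (continuity in $t$, decrease to the origin, $\mu_h(A_t)=t$) and never needs $A_t$ to be a subharmonic ball. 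Once you know $\mu_h(A_{t_0}\setminus\regcluster{}{t_0})=0$, you land on Lemma~\ref{lemma:regular-open-ae} exactly as you do in your final paragraph, which is correct. In short: the closing mass-and-regularity step of your argument is sound, but the positivity step it rests on is not established and is the real content of the proposition; replacing it with the boundary-inclusion plus time-marching argument is what makes the paper's proof go through.
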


	\subsection{Scale invariance}

	We now give the relevant scale invariance property which we then use to prove Theorem \ref{theorem:full-theorem-minus-uniqueness}. 
	Specifically, we show that the law of a cluster stopped upon exiting a ball of arbitrary radius coincides with the law of a rescaled cluster which is stopped upon exiting the unit ball. 
	\begin{lemma} \label{lemma:scale-invariance-flow}
		For each $k > 0$ and~$z \in B_1$, the laws of $\{ \cluster{z}{t}\}_{t > 0}$ and $\{ k^{-1} \cluster{B_{k}; k z}{A_k t}\}_{t > 0}$ coincide, where  
		$A_k := e^{\gamma (Q \log k + h_k(0))}$.
	\end{lemma}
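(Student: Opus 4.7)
The plan is to exploit the scale invariance~\eqref{eq:h-coordinate-change} of the field together with the conformal covariance of the LQG measure (Fact~\ref{fact:lqg-measure}) to build a deterministic bijection between the obstacle problem on $B_r$ driven by $h$ and the obstacle problem on $B_1$ driven by an equidistributed copy of $h$. Define $\tilde h(z) := h(rz) - h_r(0)$, so that $\tilde h \overset{d}{=} h$ by~\eqref{eq:h-coordinate-change}. Writing $\tilde\Lambda_t$ for the cluster obtained by running the construction of Section~\ref{sec:harmonic-ball-construction} on $B_1$ with $\tilde h$ in place of $h$, my goal is to show $\tilde\Lambda_t = r^{-1}\cluster{B_r}{A_r t}$ almost surely for every $t > 0$; the equality in law of $h$ and $\tilde h$ then gives the lemma.

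First, I would compute how rescaling acts on the relevant data. Applying Fact~\ref{fact:lqg-measure}(IV) with $\phi(z) = rz$ and then Weyl scaling by the constant $-h_r(0)$ gives, for every Borel $A\subset B_1$,
\[
\mu_{\tilde h}(A) = e^{-\gamma h_r(0)}\mu_{h(r\cdot)}(A) = e^{-\gamma(h_r(0)+Q\log r)}\mu_h(rA) = A_r^{-1}\,\mu_h(rA).
\]
On the Green's function side, since $G_{B_r}(0,w) = \log(r/|w|)$ in the paper's convention (cf.\ the proof of Proposition~\ref{prop:lower-bound}), one has $G_{B_r}(0,rz) = G_{B_1}(0,z)$ for $z\in\overline{B_1}$.

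The key step is to verify that $g\mapsto\tilde g$, defined by $\tilde g(z) := A_r^{-1}g(rz)$, is a bijection from $\mclS{B_r}{s}$ onto the supersolution class on $B_1$ built with $\tilde h$ at mass $s/A_r$. Continuity is clear; the obstacle inequality $\tilde g(z)\geq -(s/A_r)G_{B_1}(0,z)$ is the Green's function identity multiplied by $A_r^{-1}$. For the distributional Laplacian inequality, the change of variables $w = rz$ shows that $\Delta\tilde g$, as a distribution on $B_1$, equals $A_r^{-1}$ times the pushforward of $\Delta g$ under $w\mapsto w/r$; combined with $\Delta g\leq \mu_h$ on $B_r$ and the measure identity above, this yields $\Delta\tilde g\leq \mu_{\tilde h}$ on $B_1$. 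Because the bijection is pointwise order-preserving, it sends least supersolution to least supersolution, and hence by~\eqref{eq:non-co-set} it sends $\cluster{B_r}{s}$ to $r\cdot\tilde\Lambda_{s/A_r}$.

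Setting $s = A_r t$ gives $\tilde\Lambda_t = r^{-1}\cluster{B_r}{A_r t}$ almost surely for every $t > 0$. Since $\tilde h\overset{d}{=} h$ and the whole cluster process is a deterministic measurable functional of the field, this process has the same law as $\{\cluster{B_1}{t}\}_{t>0}$, proving the claim. I expect the only delicate point to be the distributional verification that $\Delta\tilde g\leq \mu_{\tilde h}$ in the presence of the singular measure $\mu_h$, but this reduces to integration by parts once the measure identity is established.
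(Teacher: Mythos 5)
Your proof is correct and follows essentially the same route as the paper: you scale the field by $z\mapsto rz$ and subtract the circle average, compute the transformation of $\mu_h$ via conformal covariance plus Weyl scaling, use $G_{B_r}(0,rz)=G_{B_1}(0,z)$, and match supersolution classes before concluding via the scale invariance of the law of $h$. The only cosmetic difference is that you package the correspondence of supersolutions as an order-preserving bijection in one step, whereas the paper verifies the two inclusions ($\tilde w_t\in\mathcal S^{h'}_t$ and $A_r\tilde u_t(\cdot/r)\in\mathcal S^h_{A_r t}(B_r)$) separately — the content is identical.
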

	
	\begin{proof}
		Let $t, k > 0$ and~$z \in B_1$ be given.
		Consider the continuous function $\tilde{w}_t \in C(\overline{B_1})$ defined by 
		\begin{equation} \label{eq:definition-of-tildewt}
		\tilde{w}_{t} := A_k^{-1} \lss{B_{k}; k z}{A_k t}(k\cdot) \qquad \mbox{on $\overline{B_1}$}
		\end{equation}
		and the fields
		\begin{equation}
		\tilde{h} = h(k \cdot) + Q \log k, \quad \mbox{where} \quad Q = \frac{2}{\gamma} + \frac{\gamma}{2}.
		\end{equation}
		and
		\begin{equation}
		h' = h(k \cdot) - h_k(0)
		\end{equation}
		where $h_k(0)$ is the average of $h$ on the circle of radius $k$ around 0. By~\eqref{eq:h-coordinate-change}, we have $h' \overset{d}{=} h$. 
		
		We claim that
		\begin{equation}
		\tilde{w}_{t} = \tilde{u}_t := \inf\{ w \in \mathcal{S}^{h'}_t\}
		\end{equation}
		where $\tilde{u}_t$ is the pointwise infimum over the family
		\begin{equation} \label{eq:super-solution-other-h}
		\mathcal{S}^{h'}_t  = \{ w \in C(\overline{B_1}) :  \Delta w \leq \mu_{h'} \mbox{ in $B_1$}  \mbox{ and }  w \geq  -t G_{B_1}(z, \cdot) \mbox{ in $\overline{B_1}$}\},
		\end{equation}
		defined in the same manner as $\mclS{}{}$ but with $h'$ instead of $h$. Write $\mathcal{S}^h_{t}(B_{k})$ when $B_1$ in \eqref{eq:super-solution-other-h} is replaced by $B_{k}$ and $h'$ by $h$. 
		
		We first show that $\tilde{w}_t \in \mathcal{S}^{h'}_t$. By the fact that $\Delta \lss{}{t} \leq \mu_h$ (Lemma~\ref{lemma:basic-properties}), the LQG coordinate change formula, and Weyl scaling (Fact \ref{fact:lqg-measure}), 
		\begin{equation}
		\Delta \tilde{w}_{t} \leq A_k^{-1} \mu_h(k\cdot) = \mu_{h'}  \quad  \mbox{on $\overline{B_1}$}.
		\end{equation}
		Also, since $\lss{B_{k};k z}{A_k t} \in \mathcal{S}^h_{A_k t}(B_{k})$ and $G_{B_{k}}(k z,  k x) = G_{B_{1}}(z, x)$, 
		\begin{equation}
		\tilde{w}_{t}(x) \geq -t A^{-1}_{k} A_k G_{B_{r k}(z)}(k z, k x) = - t G_{B_1}(z, x) \quad  \mbox{on $\overline{B_1}$}.
		\end{equation}
		Hence, $\tilde{w}_t \in \mathcal{S}^{h'}_t$.
		Similarly, $A_k \tilde{u}_t(\cdot/k) \in \mathcal{S}^h_{A_k t}(B_k)$, which shows 
		$\tilde{u}_t = \tilde{w}_t$.
		Indeed, $A_k \tilde{u}_t(\cdot/k) \leq \lss{B_{k}}{A_k t}(\cdot)$ implies $\tilde{u}_t(\cdot/k) \leq A_k^{-1} \lss{B_{k}}{A_k t}(\cdot) = \tilde{w}_{t}(\cdot/k)$ by \eqref{eq:definition-of-tildewt}.
		Hence, as $h'$ has the same law as $h$,
		\begin{equation}
		\cluster{z}{t} \overset{d}{=} \{ x \in B_1 : \tilde{u}_t(x) > -t G_{B_1}(z, x) \} = k^{-1} \cluster{B_{k }; z k}{A_k t};
		\end{equation}
		the last equality uses $\tilde{u}_t = \tilde{w}_t$.
	\end{proof}
	
	%

	\subsection{Proof of Theorem \ref{theorem:full-theorem-minus-uniqueness} assuming Proposition \ref{prop:conditional}}
	By combining Lemma \ref{lemma:scale-invariance-flow} together with Proposition \ref{prop:conditional} and a union bound, 
	on an event of probability 1, for each $k \in \N$, there exists $T^{(k)} > 0$ so that for all~$x \in B_{1/2}$ the family of sets
	\begin{equation}
	\{\cluster{B_k; k x}{t}\}_{0 < t < A_k T^{(k)}}
	\end{equation}
	is compactly embedded in $\overline{B_{k/2}}$, and $\mathrm{int}(\overline{\cluster{B_k; k x}{t}})$ satisfies the properties of Theorem \ref{theorem:harmonic-balls} for $t < A_k T^{(k)}$,
	and $\overline{\cluster{B_k; k x}{A_k T^{(k)}}} \cap \partial B_k \neq \emptyset$. 
	
	Let~$z \in \C$ be given, select~$k_0 = 3 |z|$ (so that there exists~$x \in B_{1/2}$ with~$k_0 x = z$) and define 
	\begin{equation}
	{\cluster{}{t}}(z)  = \begin{cases}
	\cluster{B_{k_0}; z}{t} \qquad &\mbox{for $t < A_{k_0} T^{(k_0)}$} \\
	\cluster{B_k; z}{t} \qquad &\mbox{if $A_{k-1} T^{(k-1)} \leq t < A_{k} T^{(k)}$ for some $k \in [k_0 + 1, \infty) \cap \N$}.
	\end{cases}
	\end{equation}
	and ${\regcluster{}{t}(z)} = \mathrm{int}(\overline{\cluster{}{t}(z)})$.
	By the compatibility property, Lemma \ref{lemma:cluster-compatibility}, the times $A_k T^{(k)}$ are increasing in~$k$
	and hence the construction is well-defined. 
	Compatibility also implies that the family $\{{\regcluster{}{t}}\}_{t > 0}$ satisfies the properties of Theorem \ref{theorem:harmonic-balls}. 
	
	It remains to show that for each $t > 0$ there exists a $k \in \N$ so that $t < A_{k} T^{(k)}$.
	That is, we must show that
	\begin{equation} \label{eq:strictly-increasing-times}
	A_{k} T^{(k)} \to \infty \qquad \mbox{with probability 1}.
	\end{equation}
	Indeed, if this were the case, this would give us a complete family $\{\regcluster{}{}(z)\}_{t > 0}$ satisfying the properties of Theorem \ref{theorem:harmonic-balls}, and we have uniqueness of such a family by Proposition \ref{prop:uniqueness}.

	First note that by Lemma \ref{lemma:scale-invariance-flow}, $T^{(k)} {\buildrel d \over =} T^{(1)}$ for each $k$.
	Since $T^{(1)}$ is strictly positive, for each $p \in (0,1)$, there exists 
	$c_p > 0$ so that 
	\[
	\P[T^{(k)} > c_p] \geq p
	\]
	for all $k \in \N$. In particular, 
	\begin{equation} \label{eq:positive-io}
	\P[\cap_{m=1}^{\infty} \cup_{k=m}^{\infty} T^{(k)} > c_p] \geq p.
	\end{equation}
	We claim that also
	\begin{equation} \label{eq:geometric-bm}
	\lim_{k \to \infty} A_k = \infty. 
	\end{equation}
	Indeed, the process, 
	\[
	t \to h_{e^{t}}(0)
	\]
	has a continuous modification which is a standard two-sided Brownian motion \cite[Section 3.1]{duplantier2011liouville}. 
	Thus, 
	\[
	t \to A_{e^{t}} = e^{\gamma (Q t + h_{e^{t}}(0))} 
	\]
	is a geometric Brownian motion with percentage drift $\gamma Q + \gamma^2/2$
	and percentage volatility $\gamma$ --- this implies \eqref{eq:geometric-bm}. Combining \eqref{eq:geometric-bm} with \eqref{eq:positive-io} and using that $T^{(k)}$ is increasing in $k$ shows that with probability at least $p$, 
	\[
	A_k T^{(k)} \to \infty.
	\]
	Since this holds for any $p \in (0,1)$, we have \eqref{eq:strictly-increasing-times}, 
	completing the proof. \qed

	\subsection{Harmonic balls are local}
	In this subsection we prove that the harmonic balls given by Theorem \ref{theorem:full-theorem-minus-uniqueness} are local; that is, 
	we prove Proposition \ref{prop:locally-determined}. 
	
	Before doing so, we note that we have constructed clusters and stated Lemma \ref{lemma:cluster-compatibility} for clusters restricted to domains which are balls.
	However, the definition of $\cluster{B_r; z}{t}$ and the proof of Lemma \ref{lemma:cluster-compatibility} extend essentially verbatim to the case when $B_r$ is replaced by any bounded open set containing the origin.
	
	\begin{lemma} \label{lemma:strong-compatibility}
		Let $U$ be a deterministic bounded open set. 
		For all $t > 0$ and~$z \in \C$, we have $\overline{{\regcluster{}{t}(z)}} = \overline{\cluster{U; z}{t}}$ if either ${\regcluster{}{t}(z)} \Subset U$ or $\cluster{U; z}{t} \Subset U$.
	\end{lemma}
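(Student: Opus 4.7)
My plan is to reduce the statement to the ball-based compatibility of Lemma~\ref{lemma:cluster-compatibility} by sandwiching $U$ inside a sufficiently large Euclidean ball $B_R$. The first step is to extend the obstacle-problem constructions~\eqref{eq:super-solutions},~\eqref{eq:least-super-solution},~\eqref{eq:non-co-set} verbatim, replacing $B_r$ by an arbitrary bounded open set $V$ containing $0$, and to verify (as hinted at in the paragraph preceding the statement) the following analogue of Lemma~\ref{lemma:cluster-compatibility}: if $V \subset V'$ are bounded open sets containing $0$ and $\overline{\cluster{V}{t}} \subset V$, then $\cluster{V'}{t} = \cluster{V}{t}$; and symmetrically, if $\overline{\cluster{V'}{t}} \subset V$, then $\cluster{V}{t} = \cluster{V'}{t}$. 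The arguments mirror those in Appendix~\ref{sec:obstacle-appendix} and rely only on the maximum principle, the boundary behavior of $G_{V'}$ recalled in Proposition~\ref{prop:green-properties}, and the basic structure of the least supersolution.

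Given this extension, suppose first that ${\regcluster{}{t}} \Subset U$, and choose $R$ so large that $\overline{U} \subset B_R$. Then ${\regcluster{}{t}} \Subset B_R$, so by Theorem~\ref{theorem:full-theorem-minus-uniqueness} we have $\overline{{\regcluster{}{t}}} = \overline{\cluster{B_R}{t}}$, and in particular $\overline{\cluster{B_R}{t}} \subset U$. Applying the second form of the generalized compatibility with $V = U$ and $V' = B_R$ yields $\cluster{U}{t} = \cluster{B_R}{t}$, hence $\overline{\cluster{U}{t}} = \overline{\cluster{B_R}{t}} = \overline{{\regcluster{}{t}}}$.

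Now suppose instead that $\cluster{U}{t} \Subset U$, and again choose $R$ so that $\overline{U} \subset B_R$. The first form of the generalized compatibility applied with $V = U$ and $V' = B_R$ gives $\cluster{B_R}{t} = \cluster{U}{t}$, so $\overline{\cluster{B_R}{t}} = \overline{\cluster{U}{t}} \subset U \Subset B_R$. In particular $\cluster{B_R}{t} \Subset B_R$, and Theorem~\ref{theorem:full-theorem-minus-uniqueness} identifies ${\regcluster{}{t}} = \mathrm{int}(\overline{\cluster{B_R}{t}})$; taking closures then gives $\overline{{\regcluster{}{t}}} = \overline{\cluster{B_R}{t}} = \overline{\cluster{U}{t}}$.

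The only nontrivial step is the extension of Lemma~\ref{lemma:cluster-compatibility} to arbitrary bounded open domains. I expect this to be fully analogous to the ball case: the proof in Appendix~\ref{sec:obstacle-appendix} uses only (i) the existence and standard potential-theoretic properties of the Green's function on a bounded open set and (ii) the maximum principle for the obstacle problem, both of which are insensitive to the geometry of the underlying domain. Everything else in the argument is bookkeeping with Theorem~\ref{theorem:full-theorem-minus-uniqueness} and the definition of $\regcluster{}{t}$.
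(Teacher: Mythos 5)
Your approach mirrors the paper's own, which simply notes that the statement is immediate from the generalized compatibility lemma (Lemma~\ref{lemma:cluster-compatibility} extended to general bounded open domains) together with Theorem~\ref{theorem:full-theorem-minus-uniqueness}. Case~1 is fully correct. In Case~2, however, you invoke Theorem~\ref{theorem:full-theorem-minus-uniqueness} to conclude ${\regcluster{}{t}} = \mathrm{int}(\overline{\cluster{B_R}{t}})$ from $\cluster{B_R}{t} \Subset B_R$; that theorem is stated in the converse direction (hypothesis ${\regcluster{}{t}} \Subset B_s$, conclusion ${\regcluster{}{t}} = \mathrm{int}(\overline{\cluster{B_r}{t}})$ for $r \geq s$), so as written the step does not quite follow. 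The patch is easy: since ${\regcluster{}{t}}$ is bounded by Theorem~\ref{theorem:harmonic-balls}~(b), pick $s_0$ with ${\regcluster{}{t}} \Subset B_{s_0}$; Theorem~\ref{theorem:full-theorem-minus-uniqueness} then gives ${\regcluster{}{t}} = \mathrm{int}(\overline{\cluster{B_{s_0\vee R}}{t}})$, while $\cluster{B_R}{t} \Subset B_R$ together with Lemma~\ref{lemma:cluster-compatibility} gives $\cluster{B_{s_0\vee R}}{t} = \cluster{B_R}{t}$, which finishes the argument. Equivalently, you can simply choose $R$ at the outset large enough that both $\overline{U} \subset B_R$ and ${\regcluster{}{t}} \Subset B_R$. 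With this adjustment the proof is complete and matches the intended argument.
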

	\begin{proof}
		This is immediate from the proof of Lemma \ref{lemma:cluster-compatibility} together with Theorem \ref{theorem:full-theorem-minus-uniqueness}. 
	\end{proof}

	\begin{proof}[Proof of Proposition \ref{prop:locally-determined}] 
		
		Let the deterministic open set $U$, base point $z \in \C$,~$t > 0$, and cluster ${\regcluster{}{t}(z)}$ be given. As $\P[\overline{{\regcluster{}{t}(z)}} \subset U]  = 0$ if $z\notin U$
		we suppose $z \in U$. As we may approximate $U$ by an increasing sequence of bounded open sets, we further suppose $U$ is bounded. 
		
		The cluster $\cluster{U; z}{t}$ depends only on $\mu_h |_{U}$ and hence, by locality (Fact \ref{fact:lqg-measure}), only on $h |_{U}$. 
		Therefore, it suffices to observe from Lemma \ref{lemma:strong-compatibility} that 
		\begin{equation} \label{eq:cluster-coincides}
		\overline{{\regcluster{}{t}(z)}} \subset U  \iff  \overline{\cluster{U; z}{t}} \subset U 
		\end{equation}
		This completes the proof.
	\end{proof}

	\section{Harnack-type estimate} \label{sec:harnack-type-estimate}
	Recall the notation for Euclidean annuli from~\eqref{eq:annulus}. The main goal of this section is to prove the following Harnack-type estimate for clusters: for every~$z \in B_{1/2}$,
	\begin{equation} \label{eq:harnack-type}
	\mu_h({\cluster{z}{t}} \cap \A_{\rho/2, \rho}(x_0)) \leq \alpha \mu_h(\A_{\rho/2, \rho}(x_0)) \implies {\cluster{z}{t}} \cap B_{\rho/2}(x_0) = \emptyset
	\end{equation}
	for all $t > 0$ where $\overline{B_{\rho}}(x_0) \subset B_1 \backslash \{z\}$ and $\alpha \in (0,1)$ is some fixed, small constant. See Figure \ref{fig:harnack-type} for a visualization of this condition. 
	\begin{figure}
		\begin{center}
			\includegraphics[width=0.5\textwidth]{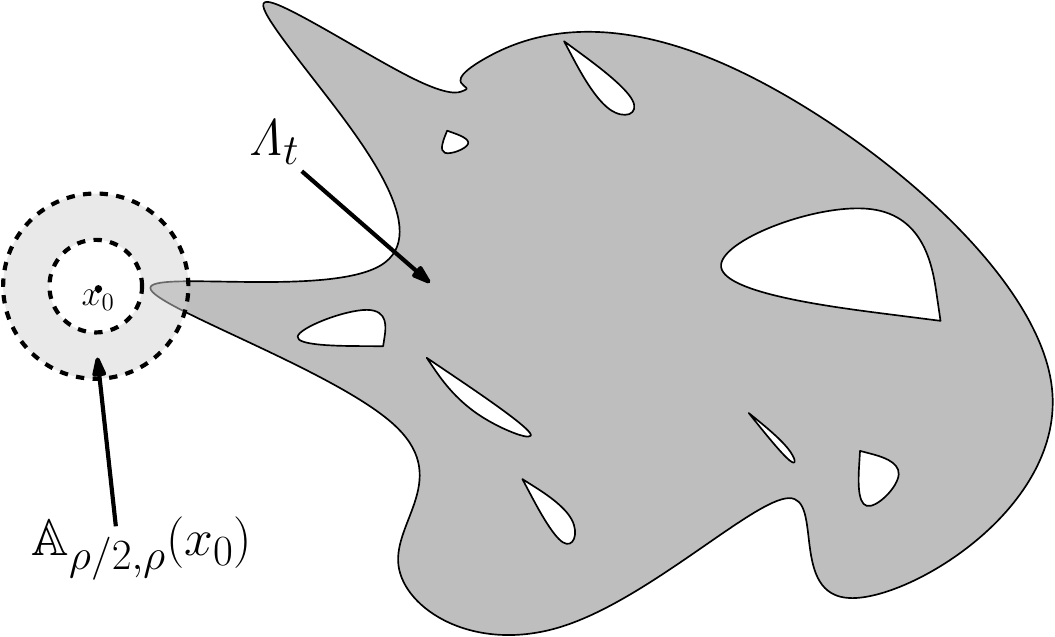}
		\end{center}
		\caption{An example of the Harnack-type property. The cluster ${\cluster{}{t}}$ is in gray with a solid boundary
			and an annulus $\A_{\rho/2,\rho}(x_0)$ for which $\overline E_\rho(x_0)$ occurs is displayed in light-gray with a dashed-line boundary. Proposition~\ref{prop:harnack-type-property} asserts that if $\mu_h( {\cluster{}{t}} \cap \A_{\rho/2,\rho}(x_0) )$ is small, then ${\cluster{}{t}} \cap B_{\rho/2}(x_0) = \emptyset$.  } \label{fig:harnack-type}
	\end{figure}
	
	Due to the variable nature of the Liouville measure, we cannot show this
	holds for every annulus but rather for `most' annuli. 	Specifically, we show the following.
	
	\begin{prop} \label{prop:harnack-type-property}	
		There exists $\alpha = \alpha(\gamma) > 0$ and events
		$\overline E_\rho(x_0)$ for $\rho \in (0,1)$ and $x_0 \in \C$
		with the following properties: for each~$z \in B_{1}$, if $\overline E_\rho(x_0)$ occurs, $\overline{B_{\rho}}(x_0) \subset B_1 \backslash \{z\}$, and $\mu_h({\cluster{z}{t}} \cap \A_{\rho/2, \rho}(x_0)) \leq \alpha \mu_h(\A_{\rho/2, \rho}(x_0))$, then ${\cluster{z}{t}} \cap B_{\rho/2}(x_0) = \emptyset$.
		
		Moreover,  there is a universal constant $c > 0$ so that
		with polynomially high probability as $\epsilon \to 0$ for each $x_0 \in (B_{1 + \epsilon} \backslash B_{10 \sqrt{\epsilon}}) \cap \frac{\epsilon}{100} \Z^2$ there are at least $c \log \epsilon^{-1/2}$ radii $\rho \in [\epsilon, \epsilon^{1/2}] \cap
		\{8^{-n}\}_{n \in \N}$ for which $\overline E_{\rho}(x_0)$ occurs. 
	\end{prop}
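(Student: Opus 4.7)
The plan is to prove the two assertions of Proposition~\ref{prop:harnack-type-property} in turn.

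For the Harnack-type implication, I argue by contradiction: suppose that $\mu_h({\cluster{}{t}} \cap \A_{\rho/2,\rho}(x_0)) \leq \alpha \mu_h(\A_{\rho/2,\rho}(x_0))$ yet ${\cluster{}{t}} \cap B_{\rho/2}(x_0) \neq \emptyset$. By Lemma~\ref{lemma:non-coincidence-open-harmonic} the cluster is open and connected, and since it contains the origin while $0 \notin \overline{B_\rho}(x_0)$, any component meeting $B_{\rho/2}(x_0)$ must cross the annulus $\A_{\rho/2, \rho}(x_0)$. So there is some $x^* \in {\cluster{}{t}} \cap B_{\rho/2}(x_0)$ with $\odometer{}{t}(x^*) > 0$, and on $O := B_\rho(x_0)$ (which avoids the origin) the odometer satisfies $\Delta \odometer{}{t} = \sigma$ for a non-negative measure $\sigma$ with $\sigma \leq \mu_h|_{\overline{{\cluster{}{t}}}}$; in particular the part of $\sigma$ living in the annulus is controlled by $\alpha \mu_h(\A_{\rho/2,\rho}(x_0))$. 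I aim to reach a contradiction by showing $\odometer{}{t}(x^*) = 0$ whenever $\alpha = \alpha(\gamma)$ is small enough.

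The core step is a potential-theoretic comparison on $O$. Using the Riesz representation
\[
\odometer{}{t}(x) = -\int_{O} G_{O}(x,y)\,\sigma(dy) + H_{O}[\odometer{}{t}|_{\partial O}](x),\qquad x \in O,
\]
where $H_O$ denotes harmonic extension from $\partial O$, I define the good event $\overline E_\rho(x_0)$ to bundle together: (i) Lemma~\ref{lemma:volume-growth}-type comparability $\mu_h(\A_{\rho/2,\rho}(x_0)) \asymp \mu_h(B_\rho(x_0)) \asymp \rho^{2-\gamma^2/2+o(1)}$; (ii) quantitative Hölder continuity and two-sided control of the Liouville potential $x \mapsto \int_{O} G_{O}(x,y)\mu_h(dy)$ on $B_{\rho/2}(x_0)$ via Proposition~\ref{prop:lbm-exit-time}; and (iii) regularity of the GFF circle averages in $B_{2\rho}(x_0)$ controlling Weyl-scaling factors. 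The small-mass hypothesis then controls the Green's-function term at $x^*$ directly. For the boundary term, I exploit the fact that $\odometer{}{t}$ vanishes off the cluster together with Lemma~\ref{odometer:cont} and subharmonicity of $\odometer{}{t}$ on $O$ to bound $H_O[\odometer{}{t}|_{\partial O}](x^*)$ in terms of the cluster's occupation of $\partial O$, which is itself constrained by a nested iteration of the same small-mass hypothesis on dyadic sub-annuli. This yields $\odometer{}{t}(x^*) \leq C \alpha^\beta$ for some $\beta = \beta(\gamma) > 0$; taking $\alpha$ small enough beats the strict positivity, giving the contradiction.

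For the second assertion, $\overline E_\rho(x_0)$ as defined is measurable with respect to $h|_{B_{2\rho}(x_0)}$, and each of the ingredients (i)--(iii) has probability bounded below by a universal constant $q > 0$, uniformly in $\rho$ and $x_0$, by Lemma~\ref{lemma:volume-growth}, Proposition~\ref{prop:lbm-exit-time}, and standard GFF regularity estimates. Decomposing the whole-plane GFF via its Markov property into a zero-boundary GFF on $B_{8\rho}(x_0)$ plus a harmonic function (the latter absorbed into a multiplicative shift of the measure by Weyl scaling, Fact~\ref{fact:lqg-measure}) shows that the events $\overline E_{8^{-n}}(x_0)$ at well-separated dyadic scales $n$ are asymptotically independent. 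A Bernstein-type concentration inequality then ensures that at least $c \log_8 \epsilon^{-1/2}$ of the scales $\rho \in \{8^{-n}\} \cap [\epsilon,\sqrt{\epsilon}]$ support $\overline E_\rho(x_0)$, except on an event of probability $\leq \epsilon^p$ for $p$ arbitrarily large (by taking $c$ sufficiently small); a union bound over the $O(\epsilon^{-2})$ points of $\tfrac{\epsilon}{100}\Z^2 \cap (B_{1+\epsilon} \setminus B_{10\sqrt{\epsilon}})$ then yields the stated polynomially-high-probability bound. The main obstacle is the boundary-bootstrap step of the barrier argument: because $\mu_h$ is not comparable to Lebesgue measure at any scale, classical elliptic Harnack is unavailable, and translating an integral small-mass hypothesis into a pointwise upper bound on $\odometer{}{t}$ forces $\overline E_\rho(x_0)$ to quantitatively encode LQG-specific regularity via volume growth and LBM heat-kernel continuity (Proposition~\ref{prop:lbm-continuity}).
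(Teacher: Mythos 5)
Your second assertion (prevalence of the events $\overline E_\rho(x_0)$) is essentially the paper's argument: near-independence of GFF annuli at well-separated scales plus a concentration bound and a union bound over the grid. The gap is in the first assertion, and it is structural.

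\textbf{The finite bound cannot yield the contradiction.} You write that the barrier argument produces $\odometer{}{t}(x^*) \leq C\alpha^\beta$ and that ``taking $\alpha$ small enough beats the strict positivity.'' This does not work: $\alpha$ is a fixed constant determined in advance, so $C\alpha^\beta$ is a fixed positive number, while $\odometer{}{t}(x^*)$ can be an arbitrarily small positive number. No finite iteration can close this. The conclusion one actually needs is $\odometer{}{t} \equiv 0$ on $\overline{B}_{\rho/2}(x_0)$ \emph{exactly}; the paper achieves this by running the iteration across an infinite sequence of shells $\{S_j\}_{j\geq 0}$ converging to a circle $\partial B_{d_\infty^+}(x_0)$ with $d_\infty^+ \in (\rho/2,\rho)$, showing the odometer contracts by a factor $b^{N_0+j}$ at step $j$, sending $j\to\infty$ to get $\odometer{}{t}=0$ on that circle, and then finishing with subharmonicity of $\odometer{}{t}\mathbf{1}_{\widetilde\Lambda_t}$ inside $B_{\rho}(x_0)$.

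\textbf{The event must encode a Brownian crossing estimate, not just regularity of $\mu_h$.} Your proposed ingredients (volume growth, H\"older continuity of the Liouville potential, circle-average regularity) control sizes but give no mechanism for \emph{geometric decay} of the odometer. In the Riesz decomposition on $O=B_\rho(x_0)$ the Green's-function term is nonpositive (since $\Delta\odometer{}{t}\geq 0$ on $O$), so all of the information must come from the harmonic extension $H_O[\odometer{}{t}|_{\partial O}]$ --- and there is no a priori reason for that to be small. What makes it small is a quantitative bound on the probability that Brownian motion started near $x^*$ crosses between the inner and outer boundaries of a sub-annulus before exiting the cluster; this is precisely the paper's good-annulus condition~\eqref{eq:good-annulus-harmonic}, which is then paired with the harmonic-comparison estimate of Lemma~\ref{lemma:harmonic-odometer-comparison}. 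Without that condition baked into $\overline E_\rho(x_0)$ (the paper's \ref{enum:very-good-criteria-3}, which requires many good annuli at a full range of inner scales around each grid point of the annulus), the iteration never gets off the ground, because the small-mass hypothesis is only given on $\A_{\rho/2,\rho}(x_0)$ and must be propagated to inner annuli by alternating it with an odometer bound (Lemmas~\ref{lemma:odometer-annuli} and~\ref{lemma:low-mass-implies-small-odometer}). Your proposal gestures at ``a nested iteration of the same small-mass hypothesis on dyadic sub-annuli,'' but the hypothesis is not available on inner annuli until the odometer bound is established there, which is the circular dependence the paper breaks via the alternating induction. Absent the crossing-probability condition in $\overline E_\rho(x_0)$, the argument has no engine.
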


	In fact, we prove the following stronger statement. 
	
	\begin{prop} \label{prop:harnack-type-stronger-property}
		Assume we are in the setting of Proposition \ref{prop:harnack-type-property} and let $\rho \in (0,1)$ and $x_0 \in \C$. If $\overline E_\rho(x_0)$ occurs, $\overline{B_{\rho}}(x_0) \subset B_1 \backslash \{z\}$, and $\widetilde\Lambda_t$ is a connected component of ${\cluster{z}{t}} \cap B_{\rho}(x_0)$ 
		then the following occurs.  If $\mu_h(\widetilde \Lambda_t \cap \A_{\rho/2, \rho}(x_0)) \leq \alpha \mu_h(\A_{\rho/2, \rho}(x_0))$, then $\widetilde \Lambda_t \cap \overline{B_{\rho/2}(x_0)} = \emptyset$. 
	\end{prop}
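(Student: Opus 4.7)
The plan is a contradiction argument: assume $\widetilde\Lambda_t$ is a connected component of $\Lambda_t \cap B_\rho(x_0)$ meeting $\overline{B_{\rho/2}(x_0)}$ and derive a lower bound on $\mu_h(\widetilde\Lambda_t \cap \A_{\rho/2,\rho}(x_0))$ that forces it to exceed $\alpha\mu_h(\A_{\rho/2,\rho}(x_0))$ for a suitable absolute constant $\alpha>0$. The event $\overline{E}_\rho(x_0)$ will be designed to encode the LQG volume-growth regularity of Lemma~\ref{lemma:volume-growth} at scale $\rho$ near $x_0$, together with control of circle averages of $h$ at scales down to $\rho$, so that the constants appearing below can be made universal.

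First, because $\Lambda_t$ is connected and contains the origin (Lemma~\ref{lemma:non-coincidence-open-harmonic} and Proposition~\ref{prop:lower-bound}) while $0\notin\overline{B_\rho(x_0)}$, the component $\widetilde\Lambda_t$ cannot be compactly contained in $B_\rho(x_0)$: it must touch $\partial B_\rho(x_0)$, so $\widetilde\Lambda_t\cap\overline{\A_{\rho/2,\rho}(x_0)}$ contains a continuum crossing the annulus. I would then pass to the odometer $v_t = w_t + t G_{B_R}(0,\cdot)\geq 0$, which by Lemma~\ref{lemma:non-coincidence-open-harmonic} satisfies $\Delta v_t = \mu_h \mathbf{1}_{\Lambda_t} + \nu\,\mathbf{1}_{\partial\Lambda_t}$ on $B_\rho(x_0)$, so $v_t$ is subharmonic there, vanishes on the portion of $\partial\widetilde\Lambda_t$ lying in $B_\rho(x_0)$ (by continuity, since $v_t=0$ off $\Lambda_t$), and is strictly positive on $\widetilde\Lambda_t$. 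Fix $y_0\in \widetilde\Lambda_t\cap\overline{B_{\rho/2}(x_0)}$; then $v_t(y_0)>0$.

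The heart of the argument is a matching pair of bounds on $v_t(y_0)$. For the \emph{lower bound} I would adapt the classical non-degeneracy estimate for obstacle problems (Caffarelli, in the Lebesgue setting) to the LQG measure: on the non-coincidence set, $v_t$ cannot decay too quickly away from $y_0$, so $\sup_{B_r(y_0)}v_t$ is bounded below by an explicit quantity involving the local $\mu_h$-mass, with constant made universal by $\overline{E}_\rho(x_0)$. For the \emph{upper bound} I would use a Green's representation of $v_t$ on a cleanly cut-off subdomain of $B_\rho(x_0)$ containing $\widetilde\Lambda_t$ but isolated from the other components of $\Lambda_t\cap B_\rho(x_0)$: its boundary decomposes into a portion of $\partial\widetilde\Lambda_t\cap B_\rho(x_0)$, where $v_t=0$, and a ``throat'' portion in $\partial B_\rho(x_0)\cap\overline{\widetilde\Lambda_t}$, so $v_t(y_0)$ expresses as a Poisson integral of $v_t$ over the throat minus a Green potential against $\mu_h|_{\widetilde\Lambda_t}$. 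The crossing structure from the geometric step, combined with the hypothesis $\mu_h(\widetilde\Lambda_t\cap\A_{\rho/2,\rho}(x_0))\leq\alpha\mu_h(\A_{\rho/2,\rho}(x_0))$, forces the harmonic measure of the throat at $y_0$ to be small via a capacity/extremal-length estimate on the annulus, and the volume-growth bounds control the Green potential, together yielding $v_t(y_0)\leq C\alpha\mu_h(\A_{\rho/2,\rho}(x_0))$. Choosing $\alpha$ small enough contradicts the lower bound.

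The main obstacle will be making the upper bound quantitative in the LQG setting. Classical potential-theoretic and obstacle-problem arguments rely on $\mu_h$ being comparable to Lebesgue measure, which fails here because $\mu_h$ is singular and multifractal, so the design of $\overline{E}_\rho(x_0)$ must provide enough LQG regularity at scale $\rho$ to make Green's-function and harmonic-measure estimates universal, while remaining weak enough to hold at polynomially many scales as the multi-scale conclusion of Proposition~\ref{prop:harnack-type-property} demands. The refinement to an individual connected component of $\Lambda_t\cap B_\rho(x_0)$, rather than the full intersection, further requires that the Green's representation be arranged to isolate $\widetilde\Lambda_t$ from other components, which is the most delicate technical point of the argument.
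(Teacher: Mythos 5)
Your proposal takes a genuinely different route from the paper, but I see two gaps that make me doubt it would go through.

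\textbf{Different structure.} The paper's proof of this proposition (via Lemma~\ref{lemma:very-good-annuli-satisfy-harnack-type-estimate}) is a \emph{multi-scale iteration}: the event $\overline E_\rho(x_0)$ is designed (via condition~\ref{enum:very-good-criteria-3}) so that each point $z$ in $\A_{\rho/2,\rho}(x_0)$ is surrounded at $\gtrsim \log\epsilon^{-1/2}$ sub-scales $r\in[\epsilon\rho,\epsilon^{1/2}\rho]$ by ``good'' annuli satisfying~\eqref{eq:good-annulus-harmonic}. Lemma~\ref{lemma:harmonic-odometer-comparison} yields a fixed multiplicative factor $b<1$ for the odometer across each such annulus, and Lemma~\ref{lemma:odometer-annuli} converts small odometer back to small mass, closing the loop. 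Iterating the bootstrap (small mass $\Rightarrow$ small odometer $\Rightarrow$ small mass) across an infinite nested family of shells forces the odometer to $0$ on $\partial B_{d_\infty^+}(x_0)$, hence on $\overline{B_{\rho/2}(x_0)}$. You instead propose a single-scale contradiction: a non-degeneracy lower bound for $v_t(y_0)$ against a Green's/harmonic-measure upper bound $\lesssim\alpha\mu_h(\A)$, with $\alpha$ chosen small at the end.

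\textbf{Gap (a): the non-degeneracy lower bound.} The Caffarelli non-degeneracy estimate for $\{v>0\}$ rests on the existence of a barrier $\Phi_r$ with $\Delta\Phi_r\leq\Delta v_t$ on $B_r(y_0)$, $\Phi_r(y_0)=0$, and $\Phi_r>0$ on $\partial B_r(y_0)$ (the paraboloid $\tfrac{\lambda}{4}|x-y_0|^2$ when $\Delta v_t\geq\lambda>0$). Here $\Delta v_t = \mu_h$ on the cluster, so a natural candidate is $\Phi_r = q_{B_r(y_0)}(y_0) - q_{B_r(y_0)}$ with $q$ the LBM expected exit time; but $q$ is \emph{not} maximized at the center (it is maximized wherever $\mu_h$ concentrates), so $\Phi_r$ need not be nonnegative and the maximum-principle comparison breaks. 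No universal LQG analogue of the non-degeneracy bound is available, and the introduction of the paper flags exactly this: the obstacle-problem machinery depends on two-sided comparability of $\mu$ to Lebesgue, which fails for $\mu_h$. The paper's proof never needs such a lower bound; it works entirely with decay of the odometer from outside in.

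\textbf{Gap (b): the upper bound does not scale with $\alpha$.} You claim a capacity/extremal-length estimate forces the harmonic measure of the ``throat'' at $y_0$ to be small once $\mu_h(\widetilde\Lambda_t\cap\A)\leq\alpha\mu_h(\A)$, yielding $v_t(y_0)\lesssim\alpha\mu_h(\A)$. But small $\mu_h$-mass does not imply small Euclidean width, so the extremal-length argument does not directly transfer. What the good-annulus event~\eqref{eq:good-annulus-harmonic} actually gives is a \emph{fixed} constant $b<1$ (independent of $\alpha$) bounding the Brownian crossing probability, and therefore a single application of the harmonic comparison yields only $v_t(y_0)\leq b\cdot\sup_{\partial B_\rho(x_0)}v_t$, not an $\alpha$-proportional bound. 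To make the single-scale estimate $\alpha$-sensitive, you would need $\overline E_\rho(x_0)$ to depend on $\alpha$, but $\alpha$ is supposed to be chosen \emph{after} the event's parameters are fixed (see Lemma~\ref{lemma:very-good-annuli-satisfy-harnack-type-estimate} and Figure~\ref{fig:constant-dependencies}), so this would be circular. The paper escapes by iterating the fixed factor $b$ across the $\gtrsim\log\epsilon^{-1/2}$ good sub-annuli; only this composition produces the vanishing you need.

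You correctly identify the central difficulty (LQG measure is incomparable to Lebesgue and $\overline E_\rho$ must supply the missing regularity), and you are right that restricting to a single connected component $\widetilde\Lambda_t$ requires some care (in the paper this is handled by applying the harmonic comparison lemmas to the corresponding component $\Lambda_t^x$ inside each sub-annulus, which is contained in $\widetilde\Lambda_t$). But the single-scale contradiction framework, and in particular the reliance on an LQG non-degeneracy estimate, would need an essentially new idea to rescue — or should be replaced by the paper's iteration scheme.
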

	
	We note that Proposition~\ref{prop:harnack-type-stronger-property} implies the first part of Proposition~\ref{prop:harnack-type-property}. Indeed, if $\mu_h({\cluster{z}{t}} \cap \A_{\rho/2, \rho}(x_0)) \leq \alpha \mu_h(\A_{\rho/2, \rho}(x_0))$, then each connected component $\widetilde\Lambda_t$ as in Proposition~\ref{prop:harnack-type-stronger-property} satisfies $\mu_h(\widetilde \Lambda_t \cap \A_{\rho/2, \rho}(x_0)) \leq \alpha \mu_h(\A_{\rho/2, \rho}(x_0))$. So, if $\overline E_\rho(x_0)$ occurs, then Proposition~\ref{prop:harnack-type-stronger-property} implies that none of these connected components intersect $\overline B_{\rho/2}(x_0)$.  
	
	Proposition~\ref{prop:harnack-type-property} is sufficient for most of our applications, but Proposition~\ref{prop:harnack-type-stronger-property} is needed in Section \ref{sec:boundary-curves} to show that the boundaries
	of the complementary connected components of ${\cluster{}{t}}$ are curves. 
	
	The proof of the Harnack-type estimate is inspired by the `no thin tentacles' argument of Jerison-Levine-Sheffield \cite{jerison2012logarithmic}
	and the IDLA bound of \cite{duminil2013containing}. 
	The idea is as follows. If $A$ is an annulus, then with high probability for every set $Y\subset A$ such that $\mu_h(Y)$ is much smaller than $\mu_h(A)$, it is unlikely for a Brownian motion to cross between the inner and outer boundaries of $A$ without exiting $Y$ (Lemma~\ref{lemma:annulus-hit}). Hence, if $\mu_h({\cluster{}{t}} \cap A) / \mu_h(A)$ is small, then a Brownian motion is unlikely to cross between the inner and outer boundaries of $A$ before exiting ${\cluster{}{t}}$. Since the cluster ${\cluster{}{t}}$ is `grown according to harmonic measure', one can show that if $A'$ is a smaller annulus which is disconnected from 0 by $A$, then $\mu_h({\cluster{}{t}} \cap A') / \mu_h(A')$ is even smaller than $\mu_h({\cluster{}{t}} \cap A) / \mu_h(A)$. See Section~\ref{subsec:odometer-cluster-comparison} for precise statements to this effect. Iterating this across several nested annuli (with decreasing aspect ratios) allows us to prevent ${\cluster{}{t}}$ from intersecting an appropriate Euclidean ball.

	We make the above argument precise via a combination of potential theory and LQG arguments.
	We simultaneously study the odometer and the mass of the cluster. Specifically, we study the decay of the odometer and mass in disjoint shells of geometrically decreasing radii. We show that whenever an annulus is `very good' (as defined in Section~\ref{subsec:good-and-very-good-annuli}) and the mass is small in that annulus, then the odometer has to decrease by a geometric amount from one shell to the next (Lemma~\ref{lemma:harmonic-odometer-comparison}). If the odometer is small, then the mass is also small (Lemma~\ref{lemma:odometer-annuli}). This together with the prevalence of `very good' annuli established below forces the odometer (and mass) to go to zero. 
	
	We start by introducing notions of `good' and `very good' annuli
	in Section \ref{subsec:good-and-very-good-annuli}. We show in Sections \ref{subsec:good-annuli-are-prevalent}
	and \ref{subsec:very-good-annuli-are-prevalent} that there are many `good' and `very good' annuli.
	We then establish harmonic comparison lemmas which allow us to compare the size of the odometer and the LQG mass of the cluster
	in Section \ref{subsec:odometer-cluster-comparison}. In Section \ref{subsec:very-good-annuli-satisfy-harnack-type-estimate}
	we use these harmonic comparison lemmas to show that very good annuli satisfy the Harnack-type estimate. This result
	combined with the prevalence of very good annuli leads to the proof of Proposition \ref{prop:harnack-type-stronger-property}.

	\subsection{Good and very good annuli} \label{subsec:good-and-very-good-annuli}
	
	In this subsection we define what it means for an annulus to be good and very good.  
	Roughly, for a good annulus it is difficult for Brownian motion to stay within a set 
	of relatively small LQG measure until it exits the annulus. Very good annuli
	contain lots of good annuli and satisfy additional regularity properties. 
	We will later see that very good annuli satisfy \eqref{eq:harnack-type}.  
	
	\subsubsection{Good annuli}
	We start with defining good annuli. For $z \in \C$, $r > 0$ and parameters $a,b \in (0,1)$,
	let $E_r(z) = E_r(z; a,b)$ be the event that the following is true. For each Borel set $Y \subset \A_{3r, 5r}(z)$
	such that $\mu_h(Y) \leq a \mu_h(\A_{3 r, 5r}(z))$ we have 
	\begin{equation} \label{eq:good-annulus-harmonic}
	\sup_{u \in \partial B_{4 r}(z)} P\left[ \mbox{$\mathcal{B}^u$ exits $\A_{3 r, 5 r}(z)$ before exiting $Y$}  \, |\, h  \right] \leq b ,
	\end{equation}
	where $\mathcal B^u$ denotes standard planar Brownian motion started from $u$. 
	We note that $E_r(z) \in \sigma(h |_{\A_{3r,5r}(z)})$.
	The annuli $\A_{3 r, 5r}(z)$ for which $E_r(z)$ holds are {\it good}.
	
	\subsubsection{Alternative measures of the LQG size of an annulus}
	It will be convenient to go back and forth between Liouville measure and two other notions of size when using 
	\eqref{eq:good-annulus-harmonic}.  To that end, let $\beta^-$ be the growth lower bound exponent appearing in Lemma \ref{lemma:volume-growth} and define 
	\begin{equation} \label{eq:alternate-size}
	\Mrho{\rho}{x_0} = \inf_{z \in \A_{\rho/2, \rho}(x_0)} \inf_{r \in (0, \rho/4)} \frac{ \mu_h(B_r(z))}{ (r/\rho)^{\beta^-}} .
	\end{equation}
	By Lemma \ref{lemma:volume-growth}, a.s.\ $0 < \Mrho{\rho}{x_0} < \infty$ for each $\rho \in (0,1)$ and each $x_0 \in \C$. 
	We also define
	\begin{equation} \label{eq:alternate-size-2}
	\SGrho{\rho}{x_0} = \sup_{x \in \A_{\rho/4, 2 \rho}(x_0)} \left( \int_{\A_{\rho/4, 2 \rho}(x_0)} G_{B_{2 \rho}(x_0)}(x, y) d \mu_h(y) \right),
	\end{equation}
	where $G_{A}: \overline{A} \times \overline{A} \to \overline \R$ is the Green's function for the set $A$. 
	By Proposition~\ref{prop:lbm-exit-time}, a.s.\ $\SGrho{\rho}{x_0}$ is finite and positive for each $\rho\in (0,1)$ and each $x_0 \in \C$. 
	For later use, we also define a version of  $\SGrho{\rho}{x_0}$ with a variable aspect ratio,
	\begin{equation} \label{eq:alternate-size-2-full}
	\SGrhofull{\rho}{x_0}{s_1}{s_2} = \sup_{x \in \A_{s_1 \rho , s_2 \rho}(x_0)} \left( \int_{\A_{s_1 \rho, s_2 \rho}(x_0)} G_{B_{s_2 \rho}(x_0)}(x, y) d \mu_h(y) \right).
	\end{equation}
	
	\subsubsection{Very good annuli}
	For $x_0 \in \C, \rho > 0$, and parameters $N_0 \geq 1,  a \in (0,1),b \in(0,1), C^{\pm}_1 > 0, C^{\pm}_2 > 0, C_3 > 0, $ let $\overline E_\rho(x_0) = \overline E_\rho(x_0; N_0, a, b,C^{\pm}_1, C^{\pm}_2, C_3)$ be the event that the following are true:
	\begin{enumerate}[label=(VG-\roman*)]
		\item $C^-_1  \leq \frac{\Mrho{\rho}{x_0}}{\mu_h(A_{\rho/4, 2 \rho}(x_0))} \leq C^+_1$.  \label{enum:very-good-criteria-1}
		\item $C^-_2  \leq \frac{\SGrho{\rho}{x_0}}{\mu_h(A_{\rho/4, 2 \rho}(x_0))} \leq C^+_2$.   \label{enum:very-good-criteria-2}
		\item For each $\epsilon \in \{2^{-n}\}_{n \geq N_0}$
		and each $z \in \frac{\epsilon \rho}{100} \Z^2 \cap \A_{\rho/2, \rho}(x_0)$,
		there are at least $C_3 \log_7 \epsilon^{-1/2}$ radii $r \in [\epsilon \rho, \epsilon^{1/2} \rho] \cap \{\rho 7^{-k}\}_{k \geq 1}$
		for which $E_r(z; a, b)$ occurs.   \label{enum:very-good-criteria-3}
	\end{enumerate}
	The annuli for which $\overline E_{\rho}(x_0)$ occur are {\it very good}. 
	Our goal in the next two sections is to show that with high probability there are many very good annuli surrounding each point in $B_1$.

	\subsection{There are many good annuli} \label{subsec:good-annuli-are-prevalent}

	We start by showing that good annuli are prevalent, that is we prove the following.

	\begin{lemma} \label{lemma:good-annuli}
		Fix $b \in (0,1)$. There exists $a = a(b,\gamma) > 0$ and a universal constant $c > 0$ such that the following holds
		with polynomially high probability as $\epsilon \to 0$. 
		For each $z \in (B_{1 + \epsilon} \backslash B_{10 \sqrt{\epsilon}}) \cap \frac{\epsilon}{100} \Z^2$ there are at least $c \log_7 \epsilon^{-1/2}$ radii $r \in [\epsilon, \epsilon^{1/2}] \cap
		\{7^{-n}\}_{n \in \N}$ for which $E_r(z)$ occurs, where $E_r(z) = E_r(z; a, b)$ is as in \eqref{eq:good-annulus-harmonic}. 
	\end{lemma}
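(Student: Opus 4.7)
The plan is to establish the lemma via a single-scale lower bound on $\P[E_r(z;a,b)]$, approximate independence across a geometric sequence of scales, a Chernoff-type concentration, and a union bound over the lattice points.

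\textbf{Single-scale estimate.} The first step is to show that for every $b \in (0,1)$, there exist $a = a(b,\gamma) > 0$ and $p_0 = p_0(b,\gamma) > 0$ such that $\P[E_r(z; a, b)] \geq p_0$ uniformly in $r \in (0,1)$ and bounded $z$. The event $E_r(z; a, b)$ is invariant under $h \mapsto h + c$ (a constant shift scales $\mu_h$ by a common factor for both $Y$ and $\A_{3r,5r}(z)$, and the Brownian motion $\mathcal{B}^u$ is independent of $h$), so by translation and the scaling relation \eqref{eq:h-coordinate-change} one reduces to $r=1$, $z=0$. The core bound is that on a positive-probability good event for $h$, for every $\eta > 0$ one can choose $a$ small enough that $\sup_{u \in \partial B_4} \int_Y G_{\A_{3,5}}(u, y)\, d\mu_h(y) \leq \eta$ uniformly over Borel $Y \subset \A_{3,5}$ with $\mu_h(Y) \leq a\, \mu_h(\A_{3,5})$. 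This quantity is the expected LBM occupation time in $Y$ before exiting $\A_{3,5}$; combining Markov's inequality applied to this time with a lower-tail bound on the LBM exit time from $\A_{3,5}$ (obtained from Propositions \ref{prop:lbm-exit-time} and \ref{prop:lbm-continuity}) yields the desired crossing probability bound $\leq b$.

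\textbf{Multi-scale near-independence.} Taking $r_k = 7^{-k}$, the annuli $\A_{3 r_k, 5 r_k}(z)$ are pairwise disjoint since $5 r_k < 3 r_{k-1}$. By the domain Markov property of the whole-plane GFF, the restriction of $h$ to each such annulus, modulo its harmonic extension from the boundary, is independent across $k$. Since $E_{r_k}(z; a, b)$ is invariant under adding a constant to $h$ on the annulus, and the harmonic extension is approximately constant on each sub-annulus up to bounded logarithmic errors on a high-probability event, one can condition on the harmonic parts and transfer the single-scale lower bound $p_0$ to a uniform conditional lower bound $p_1 > 0$.

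\textbf{Chernoff and union bound.} Setting $N = \lfloor c \log_7 \epsilon^{-1/2} \rfloor$, a Chernoff bound for $N$ approximately independent Bernoulli$(p_1)$ events shows that fewer than $(p_1/2)N$ succeed with probability at most $\epsilon^{q(c, p_1)}$, where $q \to \infty$ as $c \to 0$. A union bound over the $O(\epsilon^{-2})$ points $z \in (B_{1+\epsilon} \setminus B_{10\sqrt{\epsilon}}) \cap \tfrac{\epsilon}{100} \Z^2$, with $c$ chosen small enough that $q > 2$, gives polynomially high probability. The constant appearing in the lemma can then be taken to be $p_1 c / 2$.

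\textbf{Main obstacle.} The hardest step is the single-scale estimate. Since the Green's function $G_{\A_{3,5}}(u, \cdot)$ is singular on the diagonal, the naive bound $\int_Y G\, d\mu_h \leq \|G\|_\infty \mu_h(Y)$ fails. I would split the integral at a small scale $\delta$ around $u$: the near-diagonal piece $\int_{Y \cap B_\delta(u)} \log |u-y|^{-1}\, d\mu_h(y)$ is controlled using the volume-growth upper bound $\mu_h(B_r(u)) \leq r^{\beta^+}$ from Lemma \ref{lemma:volume-growth} via a layer-cake computation, while the far-diagonal piece is bounded by $C \log(1/\delta) \cdot a\, \mu_h(\A_{3,5})$. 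Choosing $\delta$ first, then $a$, yields the required uniform smallness. Converting this mean bound into a probability bound additionally requires a lower-tail estimate on $\tau^{\mathrm{LBM}}_{\A_{3,5}}$, for which Proposition \ref{prop:lbm-continuity} provides the heat-kernel positivity needed.
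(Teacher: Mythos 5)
Your high-level structure --- a uniform single-scale lower bound on $\P[E_r(z;a,b)]$, near-independence of the events across disjoint concentric annuli, a Chernoff-type concentration, and a union bound over the $O(\epsilon^{-2})$ lattice points --- matches the paper exactly. The multi-scale step is precisely the content of Lemma~\ref{lemma:annulus-iterate} (cited from~\cite{gwynne2020local}); you propose to re-derive it from the Markov property and near-constancy of the harmonic extension, which is essentially how that lemma is proved.

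Where you genuinely diverge is the single-scale estimate, and here the comparison is interesting. The paper's proof of Lemma~\ref{lemma:annulus-hit} bounds $P[\mathcal B^u_T \in Y \mid h]$ at a single (random) Liouville time $T$ using the continuity and positivity of the Neumann heat kernel (Proposition~\ref{prop:lbm-continuity}), and produces $T$ itself from the continuity in $u$ of the conditional law of LBM stopped at exit, which is~\cite[Proposition~2.19]{garban2016liouville}. You instead bound the \emph{expected Liouville occupation time} $\int_Y G_{\A_{3,5}}(u,y)\,d\mu_h(y)$ directly, splitting at a small scale $\delta$ and using the volume-growth upper bound from Lemma~\ref{lemma:volume-growth} for the near-diagonal part, then apply Markov's inequality together with a lower-tail bound on the LBM exit time. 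That is a clean alternative that avoids the heat kernel entirely. Two cautions: (i) you need the ball volume upper bound to hold simultaneously over all small $r$, which requires a Borel--Cantelli upgrade of Lemma~\ref{lemma:volume-growth} before you can fix a deterministic threshold $\delta$; and (ii) the lower-tail bound on the LBM exit time from $\A_{3,5}$ does not obviously follow from Propositions~\ref{prop:lbm-exit-time} and~\ref{prop:lbm-continuity} as you assert --- finiteness of the expected exit time gives an upper tail, and heat-kernel continuity does not by itself say the LBM has not yet crossed a unit Euclidean distance in a short Liouville time. The paper obtains exactly this ingredient from~\cite[Proposition~2.19]{garban2016liouville} together with compactness of $\partial B_4$, and you should cite or reprove that rather than attribute it to the two propositions you listed.
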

	
	We note the similarity between Lemma~\ref{lemma:good-annuli} and condition \ref{enum:very-good-criteria-3} in the definition of very good annuli.
	To prove Lemma \ref{lemma:good-annuli} we will first show that for each $z \in \C$ and $r > 0$, the event $E_r(z)$ occurs 
	with high probability provided $a$ is chosen to be sufficiently small depending on $b$ (Lemma \ref{lemma:annulus-hit}).
	We will then use the near-independence of the GFF across disjoint concentric annuli (Lemma~\ref{lemma:annulus-iterate}) 
	to show that for each fixed $z \in \C$, it holds with very high probability when $\epsilon$ is small 
	that there are many radii $r \in [\epsilon, \epsilon^{1/2}] \cap \{7^{-n}\}_{n \in \N}$ for which $E_r(z)$ occurs. 
	Finally, we will take a union bound over all $z \in B_{1 + \epsilon} \cap \frac{\epsilon}{100} \Z^2$.

	\begin{lemma} \label{lemma:annulus-hit}
		Let $b \in (0,1)$ and $p \in (0,1)$. There exists $a = a(b,p,\gamma) \in (0,1)$ such that the event $E_r(z) = E_r(z;a,b)$ of~\eqref{eq:good-annulus-harmonic} satisfies 
		\[
		P\left[ E_r(z) \right] \geq p ,\quad \forall r > 0,\quad \forall z \mbox{ such that $\dist(\A_{3 r, 5 r}(z), \{0\}) \geq r/100$}.
		\]
	\end{lemma}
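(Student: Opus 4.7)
The plan is to first reduce to the case $r=1$ and a bounded range of $z$ via the coordinate change~\eqref{eq:h-coordinate-change}. Setting $\tilde h(w) = h(z+rw) + Q\log r$, the conformal covariance and Weyl scaling properties of $\mu_h$ (Fact~\ref{fact:lqg-measure}) transport the event $E_r(z)$ onto a corresponding event for $\tilde h$ on the fixed annulus $\A_{3,5}$. The distance hypothesis $\dist(\A_{3r,5r}(z), 0) \geq r/100$ enforces $|z/r| \notin (2.99, 5.01)$, and so the log-singularity contribution in $\tilde h$, namely $-\boldsymbol{\alpha}_0 \log|z/r+\cdot|$, has oscillation on $\A_{3,5}$ bounded by a universal constant independent of $(r,z)$. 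Since ratios of $\mu$-measures only change by a factor of $e^{\gamma\,\mathrm{osc}(f)}$ when the field is perturbed by an additive function $f$ (Weyl scaling), after suitably adjusting the constant $a$ the problem reduces to a single law-invariant setup on $\A_{3,5}$.

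\textbf{Core estimate.} For the reduced problem, fix $u\in\partial B_4$ and a Borel set $Y \subset \A_{3,5}$ with $\mu_h(Y) \leq a\,\mu_h(\A_{3,5})$. Let $F$ denote the Liouville clock of $\mathcal{B}^u$, i.e. the time change making $\mathcal{B}^u \circ F^{-1}$ a Liouville Brownian motion. The standard Green-function identity from Liouville potential theory reads
\[
E_u\!\left[\int_0^{\tau_\A} \mathbf{1}_Y(\mathcal{B}^u_s)\, dF(s)\,\Big|\, h\right] \;=\; \int_Y G_\A(u, y)\, d\mu_h(y).
\]
On $\{\tau_\A < \tau_Y\}$ the BM stays in $Y$ throughout $[0,\tau_\A]$, so the integrand on the left is identically $1$ and the left side is $E_u[F(\tau_\A)\,\mathbf{1}_{\{\tau_\A<\tau_Y\}}\mid h]$. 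Splitting by whether $F(\tau_\A)\geq\delta$ and applying Markov's inequality yields
\[
P_u[\tau_\A<\tau_Y\mid h]\;\leq\;\delta^{-1}\int_Y G_\A(u, y)\, d\mu_h(y)\;+\;P_u[F(\tau_\A)<\delta\mid h]
\]
for any $\delta > 0$.

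\textbf{Finishing and main obstacle.} I would then fix a field event of probability at least $p$ on which: (i) $\mu_h(\A_{3,5}) \leq M$ (Lemma~\ref{lemma:volume-growth}); (ii) for some $p' > 1$, $\|G_\A(u, \cdot)\|_{L^{p'}(d\mu_h|_\A)} \leq C$ uniformly in $u \in \partial B_4$---this follows from the logarithmic singularity $G_\A(u,y)=O(\log|y-u|^{-1})$ combined with the bound $\mu_h(B_r(u)) = O(r^{\beta^+})$ from Lemma~\ref{lemma:volume-growth}, since $\int_{B_\epsilon(u)} \log|y-u|^{-p'} d\mu_h(y) < \infty$ for any $p' > 0$; and (iii) for some $\delta > 0$, $P_u[F(\tau_\A) < \delta \mid h] \leq b/2$ uniformly in $u \in \partial B_4$. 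Given (i)--(ii), H\"older's inequality yields $\int_Y G_\A(u, y)\, d\mu_h(y) \leq C\mu_h(Y)^{1/p} \leq C(aM)^{1/p}$, and choosing $a$ small enough that $\delta^{-1}C(aM)^{1/p} \leq b/2$ completes the proof. The main obstacle is (iii): a uniform-in-$u$ lower tail bound on the Liouville clock exit time from $\A$. This requires upgrading the qualitative continuity and strict positivity of the LBM heat kernel (Proposition~\ref{prop:lbm-continuity}) into a quantitative non-degeneracy estimate, uniform in the starting point $u\in\partial B_4$, via a compactness argument together with Proposition~\ref{prop:lbm-exit-time}.
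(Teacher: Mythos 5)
Your proposal takes a genuinely different route from the paper. You reformulate the crossing probability via the occupation-time identity for the Liouville clock, $E_u[\int_0^{\tau_\A}\mathbf 1_Y(\mathcal B_s)\,dF(s)\mid h]=\int_Y G_\A(u,\cdot)\,d\mu_h$, and then apply Markov's inequality to split the bound into a Green's-function term plus the lower tail of $F(\tau_\A)$. The paper instead fixes a random intermediate time $T$ and observes that the event ``exit $\A_{3,5}$ before exiting $Y$'' is contained in ``LBM exits $\A_{3,5}$ before time $T$'' $\cup$ ``$\mathcal B^u_T\in Y$'', bounding the second event by $(\sup_{u,v} p^K_T(u,v))\,\mu_h(Y)$ via the heat kernel. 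Your reduction via the coordinate change and the bounded oscillation of the shifted log-singularity is equivalent to the paper's two-step reduction, and both decompositions ultimately hinge on the same ingredient, your step (iii): a uniform-in-$u$ lower tail bound $\sup_{u\in\partial B_4}P_u[F(\tau_\A)<\delta\mid h]\le b/2$.

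The issue is that the tools you cite for (iii) do not give it. Proposition~\ref{prop:lbm-continuity} concerns joint continuity and positivity of the reflected heat kernel $p^K_t(u,v)$, i.e.\ the one-time occupation distribution of LBM; $P_u[F(\tau_\A)<\delta\mid h]$ is a statement about the exit-time law, and neither that proposition nor the expected-exit-time bound of Proposition~\ref{prop:lbm-exit-time} yields continuity in $u$ of the lower tail of $F(\tau_\A)$. What is actually needed, and what the paper invokes, is~\cite[Proposition~2.19]{garban2016liouville}: continuity in $u$ of the law of LBM \emph{stopped upon exiting} $\A_{3,5}$. Combined with a.s.\ positivity of $F(\tau_\A)$ and compactness of $\partial B_4$ (a Dini-type argument), this produces the random $T$ with the required uniform bound. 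Separately, your step (ii) --- a uniform-in-$u$ bound on $\|G_\A(u,\cdot)\|_{L^{p'}(\mu_h|_\A)}$ --- is plausible from Lemma~\ref{lemma:volume-growth}, but uniformizing over $u\in\partial B_4$ requires a covering or continuity argument you do not supply; the paper sidesteps this entirely because the heat-kernel supremum over the compact set $\partial B_4\times\overline{B_5}$ is immediate from joint continuity. So the route is workable in principle, but the two places where you defer to existing propositions both need to be replaced or supplemented.
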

	
	\begin{proof}

		We first show that it suffices to prove the lemma for $h^{\C}$, \ie, the GFF without a log-singularity ($\boldsymbol{\alpha}_0 = 0$ in \eqref{eq:gff}).
		We then prove the lemma for the case $\boldsymbol{\alpha}_0 = 0$.
		
		{\it Step 1: Reduction to $\boldsymbol{\alpha}_0 = 0$.} \\
		Recall that $h = h^\C -\boldsymbol{\alpha}_0 \log |\cdot|$, where $h^\C$ is a whole-plane GFF. 
		We first explain why it is sufficient to prove the lemma with $h^\C$ in place of $h$. 
		Suppose the statement of the lemma holds for $h^{\C}$ with $a' \in (0,1)$ in place of $a$. 
		
		Fix $r > 0$ and then  $z$ such that the annulus $\A_{3 r, 5 r}(z)$ lies at Euclidean distance at least $r/100$ from the origin. 
		By Weyl scaling (Fact~\ref{fact:lqg-measure}), there are constants $C_1,C_2>  0$ depending only on $\gamma$ such that 
		\begin{equation}
		\mu_h(Y) \geq C_1 |z|^{-\boldsymbol{\alpha}_0 \gamma} \mu_{h^{\C}}(Y), \quad \forall Y \subset \A_{3 r, 5 r}(z)
		\end{equation}
		and
		\begin{equation}
		\mu_h(Y) \leq C_2 |z|^{-\boldsymbol{\alpha}_0 \gamma} \mu_{h^{\C}}(Y), \quad \forall Y \subset \A_{3 r, 5 r}(z) .
		\end{equation} 
		Thus, for all $Y \subset \A_{3 r, 5 r}(z)$
		\[
		\mu_{h^\C}(Y) \leq a' \mu_{h^\C}(\A_{3 r, 5 r}(z)) \implies  \mu_h(Y) \leq \frac{C_2}{C_1} \times a' \mu_{h}(\A_{3 r, 5 r}(z))
		\]
		and hence $\P[E_r(z)] \geq p$ for $a := \frac{C_2}{C_1} \times a'$.

		{\it Step 2: Case when $\boldsymbol{\alpha}_0 = 0$.} \\
		For the rest of the proof we assume that $\boldsymbol{\alpha}_0 = 0$. The law of $h = h^\C$ is both scale and translation invariant modulo additive constant. By the Weyl scaling property of the measure $\mu_h$ (Fact~\ref{fact:lqg-measure}), the event $E_r(z)$ is a.s.\ determined by $h$ viewed modulo additive constant. From this and the LQG coordinate change formula for $\mu_h$, we infer that the probability of $E_r(z)$ does not depend on $r$ or $z$. Hence, it suffices to find $a  \in (0,1)$ as in the lemma statement such that $\P[E_1(0)] \geq p$. 
		
		To this end, for $u \in \C$ let $\mathcal{B}^u$ denote $\gamma$-Liouville Brownian motion with respect to the field $h$, started from $u$ with 
		reflecting boundary conditions in a square $K$ centered at the origin with side length 100. 
		
		By~\cite[Proposition 2.19]{garban2016liouville}, the conditional law of $\mathcal B^u$ stopped when exiting $\A_{3, 5}(0)$ depends continuously on $u$.
		Although the proof in \cite{garban2016liouville} is for the whole-plane massive GFF, as explained in ~\cite[Section 9]{garban2016liouville}, Proposition 2.19 in ~\cite{garban2016liouville} 
		extends to the massless GFF in a finite domain. Moreover, ordinary LBM and reflected LBM coincide until the first exit from $\A_{3,5}(0)$.
		Hence, by the compactness of the circle $\partial B_4(0)$, we may therefore find a random $T = T(h) > 0$ such that a.s.\
		\begin{equation} \label{eq:annulus-hit-time}
		\sup_{u \in \partial B_4(0)}  P\left[ \text{$\mathcal B^u$ exits $\A_{3,5}(0)$ before time $T$} \,|\, h \right] \leq  \frac{b}{2} .
		\end{equation} 
		
		For $t>0$, let $p^K_t(u,\cdot)$ be the time $t$ Liouville heat kernel for $\mathcal B^u$, so that $p^K_t(u,\cdot)\, d\mu_h$ is the law of $\mathcal B^u_t$. 
		By Proposition \ref{prop:lbm-continuity} a.s.\ $p^K_t(u,v)$ is a continuous function of $(t,u,v)$ and  $p^K_t(u,v) > 0$ for all $t > 0$ and all $u,v\in K$. 
		Again using the compactness of $\partial B_4(0)$, we infer that with $T$ as in~\eqref{eq:annulus-hit-time}, there exists a random $C = C(h)  > 0 $ such that a.s.\
		\begin{equation}
		\label{eq:heat-kernel-max}
		\sup_{u\in\partial B_4(0)} \sup_{v\in B_5(0)} p_{T}(u,v) \leq C  .
		\end{equation} 
		
		From~\eqref{eq:heat-kernel-max}, we get that for each Borel set $Y \subset  \A_{3,5}(0)$, 
		\begin{equation}
		\label{eq:heat-kernel-int}
		\sup_{u\in \partial B_4(0)} P\left[ \mathcal B^u_{T} \in Y \,|\, h \right] \leq C \mu_h(Y) .
		\end{equation}
		Hence, if $\mu_h(Y) \leq [ C \mu_h(\A_{3,5}(0) ) ]^{-1} (b/2)  \times \mu_h(\A_{3,5}(0) )$, then $P\left[ \mathcal B^u_{T} \in Y \,|\, h \right] \leq b/2$ for each $u\in \partial B_4(0)$. Combining this with~\eqref{eq:annulus-hit-time} shows that for every such Borel set $Y$,
		\begin{equation}
		\sup_{u \in \partial B_4(0)}  P\left[ \text{$\mathcal B^u$ exits $\A_{3,5}(0)$ before exiting $Y$} \,|\, h \right] \leq b .
		\end{equation}
		
		That is, a.s.\ $E_1(0)$ occurs with $a$ replaced by the random variable $[ C \mu_h(\A_{3,5}(0) ) ]^{-1} (b/2)$. This random variable is a.s.\ positive, so we can find a deterministic $a \in (0,1)$ such that 
		\begin{equation}
		P\left[ [ C \mu_h(\A_{3,5}(0) ) ]^{-1} (b/2) \geq a \right] \geq p . 
		\end{equation}
		Hence, for this choice of $a$ we have $\P[E_1(0)] \geq p$, as required.
	\end{proof}

	The following lemma is a consequence of the fact that the restrictions of the GFF to disjoint concentric annuli, viewed modulo additive constant, are nearly independent. See~\cite[Lemma 3.1]{gwynne2020local} for a slightly more general statement.

	\begin{lemma}[\cite{gwynne2020local}] \label{lemma:annulus-iterate}
		Fix $0 < s_1<s_2 < 1$. Let $\{r_k\}_{k\in\N}$ be a decreasing sequence of positive numbers such that $r_{k+1} / r_k \leq s_1$ for each $k\in\N$ and let $\{E_{r_k} \}_{k\in\N}$ be events such that $E_{r_k}$ is measurable with respect to $h |_{\A_{s_1 r_k , s_2 r_k}(0)  } $, viewed modulo additive constant, for each $k\in\N$. 
		For $K\in\N$, let $N(K)$ be the number of $k\in [1,K] \cap \Z$ for which $E_{r_k}$ occurs. 
		For each $\alpha > 0$ and each $\beta \in (0,1)$, there exists $p = p(\alpha,\beta,s_1,s_2) \in (0,1)$ and $C = C(\alpha,\beta,s_1,s_2) > 0$ (independent of the particular choice of $\{r_k\}$ and $\{E_{r_k}\}$) such that if  
		\begin{equation} \label{eqn-annulus-iterate-prob}
		P\left[ E_{r_k}  \right] \geq p , \quad \forall k\in\N  ,
		\end{equation} 
		then 
		\begin{equation} \label{eqn-annulus-iterate}
		P\left[ N(K)  < \beta K\right] \leq C e^{-\alpha K} ,\quad\forall K \in \N. 
		\end{equation}  
	\end{lemma}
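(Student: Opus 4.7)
I would prove this via a near-independence argument for the whole-plane GFF on disjoint concentric annuli, followed by a Chernoff-type concentration bound.

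The geometric setup comes for free: since $s_2 < 1$ and $r_{k+1}/r_k \leq s_1$, one has $s_2 r_{k+1} \leq s_1 s_2 r_k < s_1 r_k$, so the annuli $\A_{s_1 r_k, s_2 r_k}(0)$ are pairwise disjoint. The key probabilistic input is the conditional estimate
\begin{equation*}
\P[E_{r_{k+1}} \mid E_{r_1}, \dots, E_{r_k}] \geq \P[E_{r_{k+1}}] - q \quad \text{a.s.},
\end{equation*}
for a constant $q = q(s_1, s_2)$ that can be made arbitrarily small by passing to slightly sub-annuli of the given ones---which is harmless since $s_1, s_2$ are free parameters of the lemma. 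This is the standard near-independence property of the GFF on disjoint concentric annuli. One proves it via the Markov decomposition $h = h^{\mathrm{harm}} + h^{\mathrm{zero}}$ on a ball enclosing $\A_{s_1 r_k, s_2 r_k}(0)$: the zero-boundary piece is independent of the field outside, while the harmonic extension, viewed modulo additive constants on the much smaller annulus $\A_{s_1 r_{k+1}, s_2 r_{k+1}}(0)$, differs from a constant by a random function whose Cameron--Martin norm is small as a function of the aspect ratio $r_{k+1}/r_k$. A Girsanov-type argument then bounds the total variation distance between the conditional and unconditional laws of $h|_{\A_{s_1 r_{k+1}, s_2 r_{k+1}}(0)}$ modulo constant. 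A quantitative version is contained in \cite{gwynne2020local}, and I would invoke it as a black box rather than reprove it.

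Given the conditional lower bound with $p$ chosen so that $p - q > \beta$, the indicator sequence $(\mathbf{1}_{E_{r_k}})_k$ stochastically dominates an i.i.d.\ Bernoulli$(p-q)$ sequence (a standard Doob-type coupling applied one step at a time). A Chernoff bound then yields $\P[N(K) < \beta K] \leq C e^{-\alpha' K}$ with constants $C, \alpha'$ depending only on $\beta$ and $p - q$; by taking $p$ close enough to $1$---still depending only on $\alpha, \beta, s_1, s_2$---we guarantee $\alpha' \geq \alpha$, which gives the claim.

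The main obstacle is the near-independence coupling, which requires quantitative control of how the harmonic extension of the GFF decays across nested annuli. The remainder is a routine large-deviations computation, and the main subtlety in organizing it is to make sure that the threshold $p$ can indeed be chosen as a function only of $\alpha, \beta, s_1, s_2$ (and independent of the particular sequence $\{r_k\}$ and events $\{E_{r_k}\}$), which follows from the uniformity of the near-independence estimate across scales.
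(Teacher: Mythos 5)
The paper does not contain a proof of this lemma; it is cited directly from \cite[Lemma~3.1]{gwynne2020local}, which proves a slightly more general statement. Your overall strategy---near-independence of the GFF modulo additive constant on disjoint concentric annuli via the Markov decomposition, followed by a Chernoff-type concentration bound---is the right idea, and is the approach used in the cited reference.

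There is, however, a genuine gap in the form of near-independence you invoke and in your reason for why it suffices. You assert $\P[E_{r_{k+1}} \mid E_{r_1},\dots,E_{r_k}] \geq \P[E_{r_{k+1}}] - q$ with a constant $q = q(s_1,s_2)$ that you claim ``can be made arbitrarily small by passing to slightly sub-annuli of the given ones---which is harmless since $s_1, s_2$ are free parameters of the lemma.'' This does not work. The parameters $s_1, s_2$ are fixed inputs to the lemma, and the events are handed to you as measurable with respect to the specific annuli $\A_{s_1 r_k, s_2 r_k}(0)$ modulo additive constant; you cannot change the annuli. For a fixed annular geometry, the total-variation-type control you propose (which is what delivers an \emph{additive} estimate) is bounded but not arbitrarily small. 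With $q$ bounded away from zero the argument breaks: stochastic domination by i.i.d.\ Bernoulli$(p-q)$ and Chernoff give a rate of at most the relative entropy between $\beta$ and $1-q$, which is a fixed finite number as $p \to 1$, so you cannot reach an arbitrary prescribed $\alpha$.

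The correct ingredient is a \emph{multiplicative} form of near-independence. Conditional on the field on the previous (larger) annuli, the law of $h|_{\A_{s_1 r_{k+1}, s_2 r_{k+1}}(0)}$ viewed modulo additive constant is absolutely continuous with respect to its unconditional law, with Radon--Nikodym derivative whose moments are bounded by a constant depending only on $s_1, s_2$. By H\"older's inequality this gives, for instance, $\P[E_{r_{k+1}}^c\mid E_{r_1},\dots,E_{r_k}] \leq C\,\P[E_{r_{k+1}}^c]^{1/2} \leq C\,(1-p)^{1/2}$, so the conditional failure probability \emph{does} tend to zero as $p \to 1$, and the Chernoff moment-generating-function computation then yields a rate that grows without bound. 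Put differently: the constant in your additive estimate should really be $q = q(s_1,s_2,p)$ with $q \to 0$ as $p \to 1$, and the mechanism producing this smallness is the Radon--Nikodym moment bound together with $p$ being near $1$, not a change of annular geometry.
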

	
	We now prove the desired claim.

	\begin{proof}[Proof of Lemma~\ref{lemma:good-annuli}]
		The event $E_r(z)$ depends only on the measure $\mu_h|_{\A_{3 r,5r}(z)}$. Moreover, multiplying this measure by a constant does not change whether $E_r(z)$ occurs. 
		Therefore, $E_r(z)$ is a.s.\ determined by $h|_{\A_{3 r,5r}(z)}$ viewed modulo additive constant.
		
		We now apply Lemma~\ref{lemma:annulus-iterate} with $K = \lfloor \log_7  \epsilon^{-1/2} \rfloor$, the radii $r_1,\dots,r_K \in [ \epsilon, \epsilon^{1/2}] \cap \{7^{-n}\}_{n\in\N}$, the events $E_{r_k} = E_{r_k}(z)$, and appropriate universal constant choices of $\alpha$ and $\beta$. We find that there exist universal constants $p\in (0,1)$ and $c>0$ such that if $\P[E_r(z)] \geq p$
		for each $r > 0$ and each $z \in \C \backslash B_{10 r}$,  then for all $z \in \C \backslash B_{10 \sqrt{\epsilon}}$, 
		\begin{equation} \label{eqn-annulus-hit-iterate}
		P\left[ \text{$E_r(z)$ occurs for at least $c\log\epsilon^{-1/2}$ values of $r \in [ \epsilon, \epsilon^{1/2}] \cap \{7^{-n}\}_{n\in\N}$} \right] \geq 1 - O_\epsilon(\epsilon^3) 
		\end{equation} 
		with a universal implicit constant in the $O_\epsilon(\cdot)$. 
		
		By Lemma~\ref{lemma:annulus-hit}, there exists $a = a(b,\gamma) > 0$ such that for this choice of $a$, one has $\P[E_r(z)] \geq p$ for each $r > 0$ and each $z \in \C \backslash B_{10 r}$. Therefore, the estimate~\eqref{eqn-annulus-hit-iterate} holds for this choice of $a$. We now conclude by means of a union bound over all $z\in  (B_{1 + \epsilon} \backslash B_{10 \sqrt{\epsilon}}) \cap \frac{\epsilon}{100} \Z^2$. 
	\end{proof}

	\subsection{There are many very good annuli} \label{subsec:very-good-annuli-are-prevalent}
	In this section we prove that very good annuli are prevalent, following the same strategy as the last section. 
	
	\begin{lemma} \label{lemma:very-good-annuli}
		Fix $b \in (0,1)$. There exists $a = a(b,\gamma) > 0$, universal constants $c, C_3 > 0$, $C^{\pm}_1, C^{\pm}_2 > 0$ depending on $a, \gamma$, and $N_1 = N_1(b,\gamma) \geq 1$ such that for all $N_0  \geq N_1 $ the following holds with polynomially high probability as $\epsilon \to 0$. For each $x_0 \in (B_{1 + \epsilon} \backslash B_{10 \sqrt{\epsilon}}) \cap \frac{\epsilon}{100} \Z^2$ there are at least $c \log_8 \epsilon^{-1/2}$ radii $\rho \in [\epsilon, \epsilon^{1/2}] \cap
		\{8^{-n}\}_{n \in \N}$ for which $\overline E_\rho(x_0)$ occurs, where $\overline E_\rho(x_0) = \overline E_\rho(x_0; N_0, a, b,C^{\pm}_1, C^{\pm}_2, C_3)$ is as in \ref{enum:very-good-criteria-1}, 	\ref{enum:very-good-criteria-2}, 	\ref{enum:very-good-criteria-3}.
		
	\end{lemma}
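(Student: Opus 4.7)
The plan is to follow the three-step structure of the proof of Lemma \ref{lemma:good-annuli}: first show that for a suitable deterministic choice of constants each single event $\overline E_\rho(x_0)$ occurs with probability bounded below by some $p$ that can be taken arbitrarily close to $1$ uniformly in $(\rho,x_0)$; second, apply the near-independence Lemma \ref{lemma:annulus-iterate} across disjoint concentric annuli to produce many radii $\rho$ for which $\overline E_\rho(x_0)$ holds; and third, take a union bound over the grid of base points $x_0$.

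For the first step I would treat the three conditions \ref{enum:very-good-criteria-1}--\ref{enum:very-good-criteria-3} separately. The ratios in \ref{enum:very-good-criteria-1}--\ref{enum:very-good-criteria-2} are invariant under $h\mapsto h+c$, so by \eqref{eq:h-coordinate-change} their laws are independent of $(\rho,x_0)$ up to the Weyl-scaling correction coming from the log-singularity $-\boldsymbol{\alpha}_0\log|\cdot|$; this correction is uniformly bounded on $\A_{\rho/4,2\rho}(x_0)$ because $|x_0|/\rho \geq 10$ in our setting (since $|x_0|\geq 10\sqrt{\epsilon}$ and $\rho\leq \epsilon^{1/2}$), exactly as in Step~1 of the proof of Lemma \ref{lemma:annulus-hit}. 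Lemma \ref{lemma:volume-growth} then implies that $\Mrho{\rho}{x_0}/\mu_h(\A_{\rho/4,2\rho}(x_0))$ is a.s.\ finite and positive, so by enlarging $C_1^+$ and shrinking $C_1^-$ this ratio lies in $(C_1^-,C_1^+)$ with probability as close to $1$ as desired; Proposition \ref{prop:lbm-exit-time} gives the analogue for $\SGrho{\rho}{x_0}$. For \ref{enum:very-good-criteria-3}, I would apply a rescaled version of Lemma \ref{lemma:good-annuli} at scale $\rho$: it produces at least $c\log_7\epsilon^{-1/2}$ good radii $r\in[\epsilon\rho,\epsilon^{1/2}\rho]$ at every grid point $z\in\A_{\rho/2,\rho}(x_0)$ on an event of polynomially high probability, and choosing $N_1=N_1(b,\gamma)$ large and $C_3\leq c$ makes this part of $\overline E_\rho(x_0)$ hold with arbitrarily high probability.

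For the iteration, I would verify that each very-good condition is determined by $h|_{\A_{\rho/4,2\rho}(x_0)}$ viewed modulo additive constant: the restriction $r<\rho/4$ in \eqref{eq:alternate-size} keeps each ball $B_r(z)$ inside $\A_{\rho/4,5\rho/4}(x_0)$, $\SGrho{\rho}{x_0}$ is explicitly written on this annulus, and each good-annulus event $E_r(z)$ at scales $r\leq\epsilon^{1/2}\rho$ is determined by $h$ on a subset of $\A_{\rho/4,3\rho/2}(x_0)$ for small $\epsilon$. With radii $\rho_k=8^{-k}$ the annuli $\A_{\rho_k/4,2\rho_k}(x_0)$ are disjoint, so (after a cosmetic rescaling to match the $s_1<s_2<1$ convention) Lemma \ref{lemma:annulus-iterate} yields at least $c\log_8\epsilon^{-1/2}$ radii $\rho_k\in[\epsilon,\epsilon^{1/2}]$ with $\overline E_{\rho_k}(x_0)$ holding, except on an event of probability $O(\epsilon^q)$ for any chosen $q$; a union bound over the $O(\epsilon^{-2})$ base points in the grid closes the argument. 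The main obstacle I anticipate is the bookkeeping to confirm that all three parts of $\overline E_\rho(x_0)$ are really measurable with respect to $h$ on a single concentric annulus modulo additive constant, together with the simultaneous selection of the constants $(C_1^\pm, C_2^\pm, C_3, N_1)$ so that $\overline E_\rho(x_0)$ holds on a common high-probability event; handling the log-singularity uniformly in $x_0$ via Weyl scaling is a secondary technical issue.
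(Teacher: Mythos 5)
Your proposal matches the paper's argument exactly: the paper first proves a single-annulus probability lower bound for $\overline E_\rho(x_0)$ (its Lemma \ref{lemma:very-good-annuli-hit}, obtained just as you describe, by reducing to $\boldsymbol{\alpha}_0=0$ via Weyl scaling, then verifying (VG-i), (VG-ii) from a.s.\ finiteness/positivity of the relevant random variables and (VG-iii) from Lemma \ref{lemma:good-annuli} after enlarging $N_0$), and then iterates via Lemma \ref{lemma:annulus-iterate} and closes with a union bound over the grid. The one step you label ``cosmetic'' --- fitting the annuli $\A_{\rho_k/4,2\rho_k}(x_0)$ with $\rho_k=8^{-k}$ into the $s_1<s_2<1$ convention --- actually requires passing to a geometric subsequence of scales, since the outer radius $2\cdot 8^{-(k+1)}$ coincides with the inner radius $8^{-k}/4$ so that no choice of $(s_1,s_2)$ with $s_2<1$ makes $\rho_{k+1}/\rho_k\leq s_1$ hold, but this only costs a universal constant factor in the count of good radii and the paper's terse ``identical to Lemma \ref{lemma:good-annuli}'' glosses over the same point.
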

	
	We start by showing the event $\overline E_\rho(x_0)$ occurs with high probability 
	with $a,b$ chosen as in Lemma \ref{lemma:good-annuli},  $N_0$ large, and $C^{\pm}_1$,$C^{\pm}_2$
	chosen appropriately.

	\begin{lemma} \label{lemma:very-good-annuli-hit}
		Let $b \in (0,1)$ and $p \in (0,1)$.
		There exists $a = a(b,p,\gamma) \in (0,1)$,  $C^{\pm}_1, C^{\pm}_2$ depending on $a, \gamma, p$
		and a universal constant $C_3>0$ such that the event $\overline E_\rho(x_0)$ of \ref{enum:very-good-criteria-1} \ref{enum:very-good-criteria-2} \ref{enum:very-good-criteria-3} satisfies 
		\[
		P\left[ \overline E_\rho(x_0) \right] \geq p ,\quad \forall \rho > 0,\quad \forall x_0 \mbox{ such that $\dist(\A_{\rho/4, 2 \rho}(x_0), \{0\}) \geq \rho/100$}
		\]
		for all $N_0 \geq N_1(b,p,\gamma) \geq 1$ sufficiently large. 
	\end{lemma}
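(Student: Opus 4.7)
The plan is to verify each of the three defining conditions \ref{enum:very-good-criteria-1}, \ref{enum:very-good-criteria-2}, \ref{enum:very-good-criteria-3} of $\overline E_\rho(x_0)$ separately with probability at least $1-(1-p)/3$ and then conclude by a union bound. I would first reduce to the pure GFF case $\boldsymbol{\alpha}_0 = 0$ exactly as in Step 1 of Lemma \ref{lemma:annulus-hit}: under the hypothesis $\dist(\A_{\rho/4, 2\rho}(x_0), \{0\}) \geq \rho/100$, the function $z \mapsto -\boldsymbol{\alpha}_0 \log |z|$ oscillates by at most a constant depending only on $\gamma, \boldsymbol{\alpha}_0$ on $\A_{\rho/4, 2\rho}(x_0)$, so by Weyl scaling $\mu_h$ and $\mu_{h^\C}$ differ by a bounded multiplicative factor on this annulus. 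Since each of \ref{enum:very-good-criteria-1}, \ref{enum:very-good-criteria-2}, \ref{enum:very-good-criteria-3} involves only ratios of $\mu_h$-quantities (the events $E_r(z)$ are themselves invariant under replacing $\mu_h$ by a scalar multiple), this reduction only adjusts $a, C_1^\pm, C_2^\pm$ by constants depending on $\gamma, \boldsymbol{\alpha}_0$.

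Next I would reduce to a fixed reference pair $(\rho, x_0) = (1, 0)$. The event $\overline E_\rho(x_0)$ is determined by $h^\C|_{B_{2\rho}(x_0)}$ viewed modulo additive constant, so by the scale/translation invariance \eqref{eq:gff-scaling} of $h^\C$ modulo additive constant its probability is independent of $\rho$ and $x_0$. Choose $\rho = 1$ and $x_0 = 0$; the annulus $\A_{1/4, 2}(0)$ avoids the origin, so all the relevant quantities make sense.

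For conditions \ref{enum:very-good-criteria-1} and \ref{enum:very-good-criteria-2}: by Lemma \ref{lemma:volume-growth}, $\Mrho{1}{0}$ and $\mu_{h^\C}(\A_{1/4, 2}(0))$ are a.s.\ finite and strictly positive; by Proposition \ref{prop:lbm-exit-time}, $\SGrho{1}{0}$ is as well. Hence the ratios in \ref{enum:very-good-criteria-1} and \ref{enum:very-good-criteria-2} lie a.s.\ in $(0,\infty)$, and we may choose $C_1^\pm, C_2^\pm$ (depending on $p, \gamma$) to make each of these events hold with probability at least $1-(1-p)/3$.

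For condition \ref{enum:very-good-criteria-3}, take $a = a(b,\gamma)$ from Lemma \ref{lemma:good-annuli} and let $C_3$ be the universal constant $c$ produced there. For each $\epsilon \in \{2^{-n}\}_{n \geq N_0}$ with $\epsilon$ small enough that $10\sqrt{\epsilon} < 1/2$, the set $\A_{1/2, 1}(0) \cap \frac{\epsilon}{100}\Z^2$ is contained in $(B_{1+\epsilon} \setminus B_{10\sqrt{\epsilon}}) \cap \frac{\epsilon}{100}\Z^2$, so Lemma \ref{lemma:good-annuli} gives the desired count of radii simultaneously for all $z$ in this set with probability $\geq 1 - O(\epsilon^q)$ for a universal $q > 0$. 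Summing over $n \geq N_0$, the total failure probability is $O(2^{-N_0 q})$, which can be made $\leq (1-p)/3$ by choosing $N_1 = N_1(b, p, \gamma)$ large enough. Combining the three conditions via a union bound gives $\P[\overline E_1(0)] \geq p$. The main (mild) obstacle is bookkeeping: aligning the parameter $a$ across the three steps, verifying that the polynomial decay in $\epsilon$ is summable, and confirming that the reduction to $\boldsymbol{\alpha}_0 = 0$ does not disturb the universality of $C_3$.
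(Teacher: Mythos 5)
Your proposal is correct and takes essentially the same route as the paper's proof: reduce to $\boldsymbol{\alpha}_0=0$ via Weyl scaling, then to $(\rho,x_0)=(1,0)$ via scale/translation invariance modulo additive constant, and finally verify each of (VG-i), (VG-ii), (VG-iii) separately by a union bound — choosing $C_1^\pm, C_2^\pm$ since the relevant random ratios are a.s.\ finite and positive, and handling (VG-iii) by summing the polynomially small failure probabilities from Lemma \ref{lemma:good-annuli} over the dyadic scales $\epsilon \in \{2^{-n}\}_{n\geq N_0}$. The one place you add detail the paper compresses is the summability argument for (VG-iii), which the paper states only as ``for $N_0$ sufficiently large''; your explicit $O(2^{-N_0 q})$ bound is exactly what justifies that claim.
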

	
	\begin{proof}
		Recall that $h = h^\C -\boldsymbol{\alpha}_0 \log |\cdot|$. The proof is similar to that of Lemma \ref{lemma:annulus-hit}. We show that 
		we can reduce to the case $\boldsymbol{\alpha}_0 = 0$ and then give a proof in that case. 
		
		{\it Step 1: Reduction to $\boldsymbol{\alpha}_0 = 0$.} \\
		Suppose the statement of the lemma holds for $h^{\C}$ with constants $\underline{C}^{\pm}_1$ in place of $C^{\pm}_1$.  Write $M^{h}_{\rho}(x_0)$ to indicate the dependence in the definition of $\Mrho{\rho}{x_0}$
		on the GFF.

		Fix $\rho > 0$ and then $x_0$ such that the annulus $\A_{\rho/4, 2\rho}(x_0)$ lies at Euclidean distance at least $\rho/100$ from the origin. 
		By Weyl scaling (Fact~\ref{fact:lqg-measure}), there are constants $A_1,A_2>  0$ depending only on $\gamma$ such that 
		\begin{equation}
		\frac{M^{h}_{\rho}(x_0)}{\mu_h(A_{\rho/4, 2 \rho}(x_0))} \geq A_1  \frac{M^{h^\C}_{\rho}(x_0)}{\mu_{h^\C}(A_{\rho/4, 2 \rho}(x_0))}
		\end{equation}
		and
		\begin{equation}
		\frac{M^{h}_{\rho}(x_0)}{\mu_h(A_{\rho/4, 2 \rho}(x_0))} \leq A_2  \frac{M^{h^\C}_{\rho}(x_0)}{\mu_{h^\C}(A_{\rho/4, 2 \rho}(x_0))}.
		\end{equation} 
		Thus, 
		\[
		\underline{C}^{-}_1 \leq  \frac{M^{h^\C}_{\rho}(x_0)}{\mu_{h^\C}(A_{\rho/4, 2 \rho}(x_0))} \leq \underline{C}^{+}_1 \implies A_1 \underline{C}^{-}_1 \leq \frac{M^{h}_{\rho}(x_0)}{\mu_h(A_{\rho/4, 2 \rho}(x_0))} \leq A_2 \underline{C}^{+}_1 
		\]	
		and hence \ref{enum:very-good-criteria-1} occurs for $C^+_1 := A_2 \underline{C}^{+}_1$ and $C^-_1 := A_1 \underline{C}^{-}_1$ if it occurs under $h^{\C}$.
		The argument for \ref{enum:very-good-criteria-2} is nearly identical. Step 1 of the proof of Lemma \ref{lemma:annulus-hit} also shows that we can 
		reduce to the case $\boldsymbol{\alpha}_0 = 0$ for \ref{enum:very-good-criteria-3}.

		{\it Step 2: Case when $\boldsymbol{\alpha}_0 = 0$.} \\
		As in Step 2 of the proof of Lemma \ref{lemma:annulus-hit} it suffices to find $a \in (0,1)$
		and  $C^{\pm}_1, C^{\pm}_2, C_3 > 0$ such that $\P[\overline E_1(0)] \geq p$.
		
		Note that by Lemma \ref{lemma:volume-growth}, $\Mrho{1}{0}$ is a strictly positive, finite random variable. 
		Since $\mu_h(\A_{1/4, 2}(0))$ and $\SGrho{1}{0}$ are also strictly positive and finite random variables, 
		there exists $C^{\pm}_1, C^{\pm}_2 > 0$ so that 
		\begin{equation}
		P\left[C^-_1  \leq \frac{\Mrho{1}{0}}{\mu_h(A_{1/4, 2}(0))} \leq C^+_1\right] \geq p_1
		\quad \text{and} \quad
		P\left[C^-_2  \leq \frac{\SGrho{1}{0}}{\mu_h(A_{1/4, 2}(0))} \leq C^+_2\right] \geq p_2.
		\end{equation}
		Also, by Lemma \ref{lemma:good-annuli}, for $N_0$ sufficiently large, with probability at least $p_3$, the event in \ref{enum:very-good-criteria-3} occurs with $\rho = 1$ and $C_3$ an appropriate universal constant.

		By adjusting our choices of parameters so that $(1-p_1) +  (1-p_2) + (1-p_3) \leq 1-p$, we may conclude via a union bound. 
	\end{proof}

	\begin{proof}[Proof of Lemma~\ref{lemma:very-good-annuli}]
		Given Lemma \ref{lemma:very-good-annuli-hit}, the  argument is identical to the proof of Lemma~\ref{lemma:good-annuli}.
		%
		%
		%
	\end{proof}

	\subsection{Harmonic comparison} \label{subsec:odometer-cluster-comparison}
	In this section we prove lemmas which let us compare the size of the odometer to the LQG mass of the cluster.
	Our first lemma allows us to show that the LQG mass of the cluster is small in annuli where the odometer is small. 
	\begin{lemma} \label{lemma:odometer-annuli}
		Fix $0 < s_1 < s_2 < s_3 < s_4$ and~$z \in B_{1}$.
		Let $\A_{s_1 r, s_4 r}(w) \subset B_1$ be an annulus not containing~$z$
		and let $\hat \Lambda_t$ be a union of connected components of $\A_{s_1 r, s_4 r}(w) \cap {\cluster{z}{t}}$.
		There exists a constant $C$, depending only on $s_{1},\ldots,s_4$,  
		so that for all such annuli
		\[
		\mu_h(\A_{s_2 r, s_3 r}(w) \cap \hat \Lambda_t) \leq C \sup_{x \in \hat \Lambda_t} \odometer{z}{t}(x).
		\]
	\end{lemma}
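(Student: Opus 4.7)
My plan is to integrate the Laplacian of $\odometer{}{t}$ against a smooth cutoff localized near the target annulus, and to restrict the resulting bound to the chosen components by replacing $\odometer{}{t}$ with its zero extension off $\overline{\hat\Lambda_t}$. I would first fix auxiliary radii $s_1 < s_1' < s_2 < s_3 < s_4' < s_4$ (say midpoints) and choose a nonnegative cutoff $\phi \in C_c^\infty(\A_{s_1' r, s_4' r}(z))$ with $\phi \equiv 1$ on $\A_{s_2 r, s_3 r}(z)$. By scaling a fixed cutoff on a unit annulus, $\phi$ can be taken with $|\Delta \phi| \leq C(s_1,\ldots,s_4)/r^2$ on its support; since $|\operatorname{supp} \phi| \leq C(s_1,\ldots,s_4)\,r^2$, this gives $\int |\Delta \phi|\, dy \leq C(s_1,\ldots,s_4)$.

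Next, I would define the zero-extension $\tilde u := \odometer{}{t}\,\mathbf{1}_{\overline{\hat\Lambda_t}}$ and show it is subharmonic on a slightly smaller annulus $A'$ with $\operatorname{supp}\phi \subset A' \Subset \A_{s_1 r, s_4 r}(z)$. Since $\hat\Lambda_t$ is a union of connected components of the open set $\A_{s_1 r, s_4 r}(z) \cap \cluster{}{t}$, its boundary inside $\A_{s_1 r, s_4 r}(z)$ is contained in $\partial \cluster{}{t}$; by the definition~\eqref{eq:non-co-set} of $\cluster{}{t}$ and the continuity of $\odometer{}{t}$ (Lemma~\ref{odometer:cont}), $\odometer{}{t}$ vanishes on this set, so $\tilde u$ is continuous on $A'$. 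Inside $\hat\Lambda_t$, Lemma~\ref{lemma:non-coincidence-open-harmonic} combined with $0 \notin \A_{s_1 r, s_4 r}(z)$ gives $\Delta \odometer{}{t} = \mu_h$ (the singular part $\nu$ lives on $\partial \cluster{}{t}$, which is disjoint from the open set $\hat\Lambda_t$), so $\tilde u$ is subharmonic there; outside $\overline{\hat\Lambda_t}$ it is trivially subharmonic. The classical subharmonic extension principle (a continuous function that is subharmonic on an open set and vanishes on its relative boundary remains subharmonic after zero extension) then shows $\tilde u$ is subharmonic on $A'$, so $\Delta \tilde u$ is a nonnegative Radon measure which agrees with $\mu_h$ on the open set $\hat\Lambda_t$; hence $\Delta \tilde u \geq \mu_h|_{\hat\Lambda_t}$ as measures on $A'$.

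Testing $\phi$ against this measure inequality would then conclude the proof:
\[
\mu_h\!\left(\hat\Lambda_t \cap \A_{s_2 r, s_3 r}(z)\right) \leq \int \phi\, d(\Delta \tilde u) = \int \tilde u\, \Delta \phi\, dy \leq \sup_{\hat\Lambda_t} \odometer{}{t} \int_{A'} |\Delta \phi|\, dy \leq C \sup_{\hat\Lambda_t} \odometer{}{t},
\]
where the middle equality is the definition of the distributional Laplacian applied to the bounded continuous function $\tilde u$ against $\phi \in C_c^\infty(A')$, and the last inequality uses the cutoff bound together with $0 \leq \tilde u \leq \sup_{\hat\Lambda_t} \odometer{}{t}$.

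The main technical point I expect will be the subharmonic extension step, namely verifying that gluing $\odometer{}{t}$ and $0$ across the interface $\partial \hat\Lambda_t \cap A'$ yields a subharmonic function on $A'$. This is classical once one knows $\odometer{}{t}$ is continuous and vanishes on the interface, which is exactly what Lemmas~\ref{odometer:cont} and~\ref{lemma:non-coincidence-open-harmonic} together with the definition of $\hat\Lambda_t$ as a union of connected components of $\A_{s_1 r, s_4 r}(z) \cap \cluster{}{t}$ provide. All remaining steps are just integration by parts and elementary estimates on the cutoff.
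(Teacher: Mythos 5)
Your proof is correct, and it follows a genuinely different route from the paper's. You localize with a smooth cutoff $\phi$, identify the zero-extension $\tilde u$ as a subharmonic function (via the classical gluing lemma, using that $\odometer{}{t}$ is continuous and vanishes on $\partial\hat\Lambda_t$ away from $\partial\A_{s_1 r,s_4 r}(z)$), and then extract the bound by testing $\Delta\tilde u \geq \mu_h|_{\hat\Lambda_t}$ against $\phi$ and integrating by parts, using the scale-invariant estimate $\int|\Delta\phi|\,dy\leq C(s_1,\ldots,s_4)$. The paper instead covers $\A_{s_2 r,s_3 r}(z)$ by finitely many small balls, uses the pigeonhole principle to find one ball $B$ carrying a definite fraction of the mass, introduces the Green potential $u=\int_{B\cap\hat\Lambda_t}G_B(\cdot,y)\,\mu_h(dy)$, shows $u+\odometer{}{t}\mathbf 1_{\hat\Lambda_t}$ is subharmonic, and applies the maximum principle together with a scale-invariant lower bound on the Green's function at the center. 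Both arguments rest on the same essential facts ($\odometer{}{t}\mathbf 1_{\hat\Lambda_t}$ is a nonnegative subharmonic function whose distributional Laplacian dominates $\mu_h|_{\hat\Lambda_t}$), but your cutoff-plus-integration-by-parts version avoids the covering and pigeonhole step and is arguably cleaner; the paper's maximum-principle version is more in keeping with the potential-theoretic toolkit it uses elsewhere in Section~\ref{sec:harnack-type-estimate}. Either way the constant $C$ depends only on the aspect ratios $s_1,\ldots,s_4$, as required.
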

	
	\begin{proof}
		First note that there is a positive constant $c_1$ so that the annulus $\A_{s_2, s_3}$ can be covered by $c_1^{-1}$ 
		balls of radius $c_2 := \min(s_2-s_1, s_3 -s_2, s_4-s_3)/4$ centered at points in $\A_{s_2, s_3}$. Therefore, by scaling,
		this implies the annulus $\A_{s_2 r, s_3 r}(w)$ can be covered by $c_1^{-1}$ balls of radius $c_2 r$
		centered at points in $\A_{s_2 r, s_3 r}(w)$. By the pigeonhole principle, there is at least one such ball $B_{2 c_2 r}(x) \Subset \A_{s_1 r, s_4 r}(w)$ 
		with 
		\begin{equation} \label{eq:ball-lower-bound-mass}
		\mu_h(B_{c_2r}(x) \cap \hat \Lambda_t) \geq c_1 \mu_h(\A_{ s_2 r,  s_3 r}(w) \cap \hat \Lambda_t).
		\end{equation}
		
		Write $\odometer{z}{t} |_{\hat \Lambda_t}$ for $\odometer{z}{t}(\cdot) 1\{ \cdot \in\hat \Lambda_t\}$.
		As we will explain just below, one can deduce from Lemma~\ref{lemma:non-coincidence-open-harmonic} that
		\begin{equation} \label{eq:laplacian-of-indicator}
		\Delta( \odometer{z}{t} |_{\hat \Lambda_t} ) = 
		\begin{cases}
		\mu_h &\mbox{ on $\hat \Lambda_t$ } \\
		0 &\mbox{on $\A_{s_1 r, s_4 r}(w) \backslash \overline{\hat \Lambda_t}$} \\
		\geq 0 &\mbox{ on $\partial \hat \Lambda_t \cap \A_{s_1 r, s_4 r}(w)$}.
		\end{cases}
		\end{equation}
		Indeed, $\hat \Lambda_t$ is a union of connected components of ${\cluster{z}{t}} \cap \A_{s_1 r, s_4 r}(w)$, 
		an open set. The odometer, $\odometer{z}{t}$ is non-negative and continuous on $B_1 \setminus \{z\}$ and $\odometer{z}{t} = 0$ on $\partial \hat \Lambda_t \cap \A_{s_1 r, s_4 r}(w)$. Therefore $\odometer{z}{t} |_{\hat \Lambda_t}$ is continuous on $\hat \Lambda_t \cap \A_{s_1 r, s_4 r}(w)$
		and $\odometer{z}{t} |_{\hat \Lambda_t}$  satisfies the sub-mean-value property on $\partial \hat \Lambda_t \cap \A_{s_1 r, s_4 r}(w)$.
		As $\odometer{z}{t} |_{\hat \Lambda_t}$ coincides with  $\odometer{z}{t}$ on $\hat \Lambda_t$, this shows \eqref{eq:laplacian-of-indicator} by Lemma~\ref{lemma:non-coincidence-open-harmonic}.

		Let $G_A$ denote the Green's function for the domain $A$ with zero boundary conditions
		and let 
		\[
		u(\cdot) = \int_{B_{2 c_2 r}(x) \cap \hat \Lambda_t} G_{B_{2 c_2 r}(x)}(\cdot,y) \mu_h(dy).
		\]
		Observe that $\Delta (u+\odometer{z}{t} |_{\hat \Lambda_t} ) \geq 0$ on $B_{2 c_2 r}(x)$: indeed, 
		by the definition of $u$ we have $\Delta u = -\mu_h$ on $\hat \Lambda_t \cap B_{2 c_2 r}(x)$ and $\Delta u = 0$ elsewhere
		which, together with \eqref{eq:laplacian-of-indicator}, shows $u+\odometer{z}{t} |_{\hat \Lambda_t} $ is subharmonic on $B_{2 c_2 r}(x)$.

		Hence, by the maximum principle, on $B_{2 c_2 r}(x)$, 
		\[
		u+\odometer{z}{t} |_{\hat \Lambda_t}  \leq \sup_{\partial B_{2 c_2  r}(x)} (u + \odometer{z}{t} |_{\hat \Lambda_t} ) = \sup_{\partial B_{ 2 c_2 r }(x)} \odometer{z}{t} |_{\hat \Lambda_t}  \leq \sup_{\A_{s_1 r, s_4 r}(w)} \odometer{z}{t} |_{\hat \Lambda_t}  = \sup_{\hat \Lambda_t} \odometer{z}{t} 
		\]
		as $u = 0$ on $\partial B_{2 c_2 r}(x)$ and $B_{2 c_2 r}(x) \Subset \A_{s_1 r, s_4 r}(w)$. Thus, as $\odometer{z}{t} |_{\hat \Lambda_t}  \geq 0$, 
		\begin{equation} \label{eq:mass-ineq-upper-bound}
		0 \leq \left(\sup_{\hat \Lambda_t} \odometer{}{t} \right) - u ,\quad \text{on $B_{2 c_2 r}(x)$} .
		\end{equation} 
		
		We now estimate $u$ at the center of $B_{2 c_2 r}(x)$. By the definition of $u$ and then the scale invariance of the Green's function for a ball,
		\begin{align*}
		u(x) &= \int_{B_{2 c_2  r}(x) \cap \hat \Lambda_t} G_{B_{2 c_2 r}(x)}(x,y) \mu_h(dy) \\
		&\geq \int_{B_{c_2 r}(x) \cap \hat \Lambda_t} G_{B_{1}}(0, (2 c_2 r)^{-1}(y-x)) \mu_h(dy) \\
		&\geq C \mu_h(\A_{s_2 r, s_3 r}(w) \cap \hat \Lambda_t) \qquad \mbox{(by \eqref{eq:ball-lower-bound-mass})}
		\end{align*}
		where $C := c_1 \inf_{y \in B_{1/2}(0)} G_{B_{1}(0)}(0, y) > 0$ is independent of $r$.  We conclude the proof by combining this lower bound for $u(x)$ with \eqref{eq:mass-ineq-upper-bound}.
	\end{proof}

	We next show that if it is difficult for Brownian motion to get through a domain without
	exiting ${\cluster{}{t}}$, then the odometer must be small within the domain. 
	\begin{lemma} \label{lemma:harmonic-odometer-comparison} 
		Let~$z \in B_1$, and let $A$ denote an open set in $B_1$ not containing~$z$.
		For all $x \in A \cap {\cluster{z}{t}}$
		\begin{equation}
		\odometer{z}{t}(x) \leq \left( \sup_{\partial A \cap \overline \Lambda^x_t} \odometer{z}{t} \right)  \P[\mbox{$\mathcal{B}^x$ exits  $A$ before hitting  $A \backslash \Lambda^x_t$} | h] \, , 
		\end{equation}
		where $\mathcal{B}^x$ denotes an independent Brownian motion started at $x$ and $\Lambda^x_t$ 
		is the connected component of $A \cap {\cluster{z}{t}}$ containing $x$. 
	\end{lemma}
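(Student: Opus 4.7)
The plan is to use that $\odometer{}{t}$ is continuous and subharmonic on $A$, vanishes on the boundary of ${\cluster{}{t}}$, and then apply optional stopping at the exit time of $\mathcal{B}^x$ from $\Lambda^x_t$, splitting the exit distribution according to whether the exit point lies on $\partial A$ or in $A \cap \partial \Lambda^x_t$.

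First, I would record two facts about the odometer. By Lemma~\ref{lemma:non-coincidence-open-harmonic} and the hypothesis $0 \notin A$,
\[
\Delta \odometer{}{t} = \mu_h|_{{\cluster{}{t}}} + \nu|_{\partial {\cluster{}{t}}} \geq 0 \quad \text{on } A,
\]
so $\odometer{}{t}$ is distributionally subharmonic on $A$, and combined with its continuity (Lemma~\ref{lemma:basic-properties}) it is classically subharmonic there. Next, from the definitions~\eqref{eq:limit-odometer} and~\eqref{eq:non-co-set}, $\odometer{}{t} > 0$ precisely on ${\cluster{}{t}}$, and $\odometer{}{t} \equiv 0$ on $\overline{B_1} \setminus {\cluster{}{t}}$; by continuity, $\odometer{}{t} = 0$ on $\partial {\cluster{}{t}}$ as well.

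Let $\tau = \inf\{s \geq 0 : \mathcal{B}^x_s \notin \Lambda^x_t\}$. Since $\Lambda^x_t \subset B_1$ is bounded, $\tau < \infty$ almost surely. Optional stopping applied to the continuous subharmonic function $\odometer{}{t}$ on $\overline{\Lambda^x_t}$ yields
\[
\odometer{}{t}(x) \leq \mathbb{E}\!\left[ \odometer{}{t}(\mathcal{B}^x_\tau) \,\middle|\, h \right].
\]
The exit point lies in $\partial \Lambda^x_t \subset \partial A \cup (A \cap \partial \Lambda^x_t)$. I claim $\odometer{}{t} \equiv 0$ on $A \cap \partial \Lambda^x_t$: if some $y$ in this set were in ${\cluster{}{t}}$, then a small open ball around $y$ contained in $A \cap {\cluster{}{t}}$ would be connected and contain points of $\Lambda^x_t$ (as $y$ is a limit point of $\Lambda^x_t$), so the ball would lie in the component $\Lambda^x_t$, forcing $y \in \Lambda^x_t$---a contradiction. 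Hence $y \in \partial {\cluster{}{t}}$ and $\odometer{}{t}(y) = 0$.

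Consequently only exits through $\partial A$ contribute to the expectation. The event $\{\mathcal{B}^x_\tau \in \partial A\}$ is precisely the event that $\mathcal{B}^x$ exits $A$ before hitting $A \setminus \Lambda^x_t$, and on this event $\mathcal{B}^x_\tau \in \partial A \cap \overline{\Lambda^x_t}$. Bounding $\odometer{}{t}(\mathcal{B}^x_\tau)$ by $\sup_{\partial A \cap \overline{\Lambda^x_t}} \odometer{}{t}$ and pulling this (given $h$) deterministic quantity out of the conditional expectation yields the desired inequality. The main technical subtlety is justifying optional stopping for a subharmonic function that is only continuous rather than $C^2$; this is standard, as distributional subharmonicity plus continuity implies the submean-value property and hence that $\odometer{}{t}(\mathcal{B}^x_{s \wedge \tau})$ is a submartingale.
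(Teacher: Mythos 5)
Your argument is correct, and it reaches the same conclusion by a probabilistically dual route. The paper introduces the auxiliary function $f_A(z) = \P[\mathcal{B}^z \text{ exits } A \text{ before hitting } A \setminus {\cluster{}{t}} \mid h]$, observes it is harmonic in $A\cap{\cluster{}{t}}$ with the relevant boundary values, and then applies the (deterministic) maximum principle to the subharmonic function $\odometer{}{t} - (\sup_{\partial A \cap \overline\Lambda^x_t} \odometer{}{t})\,f_A$ on $\Lambda^x_t$; a final step identifies $f_A$ restricted to $\Lambda^x_t$ with the exit probability stated in terms of $\Lambda^x_t$. You instead run a Brownian motion directly to the exit time $\tau$ of $\Lambda^x_t$, use that $\odometer{}{t}(\mathcal B^x_{s\wedge\tau})$ is a submartingale (continuity plus the sub-mean-value property, since $\Delta\odometer{}{t}\geq 0$ on $A\ni x$), split the exit distribution according to $\partial A$ versus $A\cap\partial\Lambda^x_t$, and note that $\odometer{}{t}$ vanishes on the latter set because points of $A\cap\partial\Lambda^x_t$ lie in $\partial{\cluster{}{t}}$ (your connectedness argument for this step is correct). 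The two arguments are equivalent in substance, but yours bypasses constructing and analyzing $f_A$: where the paper proves a comparison between two functions by the maximum principle, you appeal to the optional stopping theorem once, with the trade-off that you must briefly justify that distributional subharmonicity plus continuity yields the submartingale property, which you acknowledge. Both are valid; the paper's version keeps the argument entirely on the PDE side, which is perhaps closer in spirit to the adjacent Lemmas~\ref{lemma:odometer-annuli} and~\ref{lemma:odometer-bound-lqg-mass}, whereas yours is marginally shorter.
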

	
	\begin{proof}
		Write $f_A(x) =\P[\mbox{$\mathcal{B}^x$ exits  $A$ before hitting  $A \backslash {\cluster{z}{t}}$} | h]$. Observe that
		\begin{equation} \label{eq:expression-for-fA}
		\begin{cases}
		\Delta f_A = 0 \quad &\mbox{in $A \cap {\cluster{z}{t}}$} \\
		f_A = 1 \quad &\mbox{on $\partial A \cap {\cluster{z}{t}}$} \\
		f_A = 0 \quad &\mbox{on $A \cap \partial {\cluster{z}{t}}$}
		\end{cases}
		\end{equation}
		and by Lemma~\ref{lemma:non-coincidence-open-harmonic},
		\begin{equation}\label{eq:expression-for-odometer}
		\begin{cases}
		\Delta \odometer{z}{t} \geq 0 \quad &\mbox{in $A \cap {\cluster{z}{t}}$} \\
		\odometer{}{t} \leq \sup_{\partial A \cap {\cluster{}{t}}} \odometer{z}{t} \quad &\mbox{on $\partial A \cap {\cluster{z}{t}}$} \\
		\odometer{z}{t} = 0 \quad &\mbox{on $A \cap \partial {\cluster{z}{t}}$}.
		\end{cases}
		\end{equation}
		Now, fix $x \in A \cap {\cluster{z}{t}}$ and consider $\Lambda^x_t$, the connected component of $A \cap {\cluster{z}{t}}$ containing $x$.  
		As $A \cap \Lambda^x_t$ is a connected component of $A \cap {\cluster{z}{t}}$, \eqref{eq:expression-for-fA} and \eqref{eq:expression-for-odometer}
		show that 
		\[
		\odometer{z}{t}(\cdot) - \left(\sup_{\partial A \cap \overline \Lambda^x_t} \odometer{z}{t}\right) f_A(\cdot)
		\]
		is subharmonic in $A \cap \Lambda^x_t$, equal to 0 on $A \cap \partial \Lambda^x_t$, and less than or 
		equal to 0 on $\partial A \cap \overline \Lambda^x_t$. 
		Hence, by the maximum principle,
		\[
		\odometer{z}{t}(y) \leq  \left(\sup_{\partial A \cap \overline \Lambda^x_t} \odometer{z}{t}\right) f_A(y) , \quad \mbox{for $y \in A \cap \Lambda^x_t$} .
		\]
		Moreover, 
		\[
		f_A(y) = \P[\mbox{$\mathcal{B}^y$ exits  $A$ before hitting  $A \backslash \Lambda^x_t$} | h ] \quad \mbox{for $y \in A \cap \Lambda^x_t$}
		\]
		since there is no path in $A$ from $y$ to any point of $\Lambda_t\setminus \Lambda_t^x$.
		The previous two sentences imply  
		\[
		\odometer{z}{t}(y) \leq  \left(\sup_{\partial A \cap \overline \Lambda^x_t} \odometer{z}{t}\right) \P[\mbox{$\mathcal{B}^y$ exits  $A$ before hitting  $A \backslash \Lambda^x_t$} |h ]  \quad  \mbox{for $y \in A \cap \Lambda^x_t$}
		\]
		completing the proof. 	
	\end{proof}

	We next provide a weak upper bound on the growth of the odometer around its zeros.  
	\begin{lemma} \label{lemma:odometer-bound-lqg-mass}
		Fix $0 < s_1  < s_2 < s_3 < s_4$. 
		There exists a constant $c > 0$, depending only on $s_1, \ldots, s_4$,  so that, with $\SGrhofull{\rho}{x_0}{s_1}{s_4}$ as in~\eqref{eq:alternate-size-2-full}, for all~$z \in B_1$
		\[
		\sup_{x \in \A_{s_2 \rho,s_3 \rho}(x_0)} \odometer{z}{t}(x) \leq c  \SGrhofull{\rho}{x_0}{s_1}{s_4}
		\]
		for all $\A_{s_1 \rho,s_4 \rho}(x_0) \subset B_1 \backslash \{z\}$ such that ${{{\cluster{z}{t}}}}^c \cap  \A_{s_2 \rho,s_3 \rho}(x_0) \neq \emptyset$. 
	\end{lemma}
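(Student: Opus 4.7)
The plan is to reduce the desired estimate to a Harnack inequality for a nonnegative harmonic function on the annulus $A := \A_{s_1\rho, s_4\rho}(x_0)$, after subtracting off an appropriate Green's potential to kill the contribution of $\mu_h$ to the Laplacian of $\odometer{}{t}$. The key observation is that the hypothesis ${\cluster{}{t}}^c \cap \A_{s_2\rho, s_3\rho}(x_0) \neq \emptyset$ furnishes a point $x^* \in \A_{s_2\rho, s_3\rho}(x_0)$ at which the odometer vanishes, which pins down the size of the harmonic extension.

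First I would let $\sigma$ denote the restriction of $\mu_h|_{{\cluster{}{t}}} + \nu|_{\partial {\cluster{}{t}}}$ to $A$. Since $A$ avoids the origin, Lemma \ref{lemma:non-coincidence-open-harmonic} gives $\Delta \odometer{}{t} = \sigma$ on $A$. I would then introduce the Green's potential
\[
\tilde q(x) := \int_A G_A(x,y)\, d\sigma(y), \quad x \in A,
\]
using the Green's function $G_A$ of the open set $A$ (which is smooth and bounded, e.g.\ a spherical shell if $x_0$ is far from the origin, but in general the standard construction works since $A$ is a bounded connected open set). Then $\tilde q$ is nonnegative, vanishes on $\partial A$, and satisfies $\Delta \tilde q = -\sigma$ on $A$, so $u := \odometer{}{t} + \tilde q$ is a nonnegative harmonic function on $A$ with $u(x^*) = \tilde q(x^*)$.

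Next I would apply Harnack's inequality on the connected open set $A$, with compact subset $\overline{\A_{s_2\rho, s_3\rho}(x_0)} \subset A$. By translation and scale invariance, the resulting constant $C_H$ depends only on the ratios $s_1, \ldots, s_4$, yielding
\[
\sup_{\A_{s_2\rho, s_3\rho}(x_0)} u \;\leq\; C_H \inf_{\A_{s_2\rho, s_3\rho}(x_0)} u \;\leq\; C_H\, u(x^*) = C_H\, \tilde q(x^*).
\]
To bound $\tilde q(x^*)$ by $\SGrhofull{\rho}{x_0}{s_1}{s_4}$, I would use the domain monotonicity $G_A \leq G_{B_{s_4\rho}(x_0)}$ on $A \times A$ (which follows from $A \subset B_{s_4\rho}(x_0)$) together with $\sigma \leq 2\mu_h|_A$ (coming from $0 \leq \nu \leq \mu_h$), obtaining
\[
\tilde q(x^*) \;\leq\; 2\int_A G_{B_{s_4\rho}(x_0)}(x^*,y)\, d\mu_h(y) \;\leq\; 2\,\SGrhofull{\rho}{x_0}{s_1}{s_4}.
\]
Since $\odometer{}{t} \leq u$, combining the two displays finishes the proof with $c = 2 C_H$.

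I do not expect any step to be a serious obstacle. The mild subtlety to keep track of is that the outer ball $B_{s_4\rho}(x_0)$ may contain the origin, so that the full odometer $\odometer{}{t}$ carries a $-t\delta_0$ inside that ball; however, the entire argument is carried out on the annulus $A$, where the $\delta_0$ contribution to $\Delta \odometer{}{t}$ is absent, and the Green's function $G_{B_{s_4\rho}(x_0)}$ used to bound $\tilde q(x^*)$ is well-defined irrespective of whether $0 \in B_{s_4\rho}(x_0)$. The only place care is needed is verifying the scale- and translation-invariance of the Harnack constant and the inequality $G_A \leq G_{B_{s_4\rho}(x_0)}$, both of which are standard for bounded open sets with smooth enough topology.
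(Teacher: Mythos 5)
Your proof is correct and follows the same underlying strategy as the paper's: subtract a nonnegative Green's potential from the odometer to obtain a nonnegative harmonic function on the annulus, apply Harnack with a constant that is scale- and translation-invariant, use the hypothesis to locate a zero of the odometer, and bound the Green's potential by $\SGrhofull{\rho}{x_0}{s_1}{s_4}$. The only cosmetic difference is that you form the potential $\tilde q$ using the Green's function $G_A$ of the annulus itself and then invoke domain monotonicity $G_A \leq G_{B_{s_4\rho}(x_0)}$, whereas the paper uses $G_{B_{s_4\rho}(x_0)}$ directly in the definition of $q$ (which makes the bound $\sup q \leq \SGrhofull{\rho}{x_0}{s_1}{s_4}$ immediate and sidesteps the Green's function of a multiply connected domain). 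Also, the factor of $2$ in $\sigma \leq 2\mu_h|_A$ is unnecessary: since $\mu_h|_{\cluster{}{t}}$ and $\nu|_{\partial\cluster{}{t}}$ have disjoint supports and $\nu \leq \mu_h$, one has $\sigma \leq \mu_h|_A$ outright.
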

	
	\begin{proof}
		Fix an annulus $\A_{s_1 \rho,s_4 \rho}(x_0) \subset B_1 \setminus \{z\}$ and let $\lambda = \Delta \odometer{z}{t}$.
		Consider the positive function  
		\[
		q(w) = \int_{\A_{s_1 \rho,s_4 \rho}(x_0)} G_{B_{s_4 \rho}(x_0)}(w,y) d \lambda(y).
		\]
		for $w \in \A_{s_1 \rho,s_4 \rho}(x_0)$.
		Since $\lambda \leq \mu_h$ (Lemma~\ref{lemma:non-coincidence-open-harmonic}), 
		\begin{equation} \label{eq:q-inequality} 
		\sup_{w \in \A_{s_1 \rho,s_4 \rho}} q(w) \leq \SGrhofull{\rho}{x_0}{s_1}{s_4}.
		\end{equation}
		Hence the statement of the lemma will follow once we bound $\odometer{z}{t}$ by $q$. We do this via Harnack's inequality for positive harmonic functions.

		Our choice of $q$ ensures it is positive and 
		\[
		\Delta q = -\lambda \qquad \mbox{ on $\A_{s_1 \rho,s_4 \rho}(x_0)$}.
		\]
		This implies that the function $g: \A_{s_1 \rho,s_4 \rho}(x_0) \to \R$ defined by
		\[
		g := \odometer{z}{t} + q
		\]
		is harmonic and non-negative in $\A_{s_1 \rho,s_4 \rho}(x_0)$.
		Fix $s_2', s_3'$ so that 
		\[
		0 < s_1 < s_2' < s_2 < s_3 < s_3' < s_4.
		\]
		By Harnack's inequality for positive harmonic functions, 
		\begin{equation} \label{eq:g-harnack-step}
		\sup_{z \in \A_{s_2' \rho,s_3' \rho}} g(z) \leq c \inf_{y \in \A_{s_2 \rho,s_3 \rho}(x_0)} g(y)
		\end{equation}
		for a constant $c$ (depending only on the ratio of the domains on the left and right of \eqref{eq:g-harnack-step}). 
		Note that the assumption ${{{\cluster{z}{t}}}}^c \cap  \A_{s_2 \rho,s_3 \rho}(x_0) \neq \emptyset$ implies the existence of $z_0 \in \A_{s_2 \rho,s_3 \rho}(x_0)$ with $ \odometer{z}{t}(z_0) = 0$. 
		Hence, 
		\begin{align*}
		\sup_{z \in \A_{s_2' \rho,s_3' \rho}(x_0)} \odometer{z}{t}(z) &\leq \sup_{w \in \A_{s_2' \rho,s_3' \rho}(x_0)} g(w)  \qquad \mbox{ (since $q \geq 0$)} \\
		&\leq c \inf_{y \in \A_{s_2 \rho,s_3 \rho}(x_0)} g(y) \qquad \mbox{(by \eqref{eq:g-harnack-step})} \\
		&\leq c g(z_0)   \qquad \mbox{ (since $z_0 \in \A_{s_2 \rho,s_3 \rho}(x_0)$)} \\
		&= c q(z_0) \qquad \mbox{(since $\odometer{z}{t}(z_0) = 0$)} \\
		&\leq c \sup_{w \in \A_{s_2 \rho,s_3 \rho}(x_0)} q(w),
		\end{align*}
		which together with~\eqref{eq:q-inequality} completes the proof. 	
	\end{proof}
	
	\subsection{Very good annuli satisfy the Harnack-type estimate} \label{subsec:very-good-annuli-satisfy-harnack-type-estimate}
	In this section we prove that there are choices of parameters so that for every~$z \in B_{1}$, every annulus $\A_{\rho/2, \rho}(x_0)$ with $\overline{B}_{\rho}(x_0) \subset B_1 \backslash \{z\}$  which is very good also satisfies the Harnack-type property at~$z$.
	\begin{lemma} \label{lemma:very-good-annuli-satisfy-harnack-type-estimate}
		There exists a universal constant $C_3 > 0$ and $b = b(\gamma) \in (0,1)$ so that the following is true
		for every~$z \in B_{1}$.  For every choice of $C_1^\pm , C_2^\pm  > 0$ and $a \in (0,1)$ there exists 
		\begin{itemize}
			\item $N_1 \geq 200$ depending only on $a,\gamma,C_1^-, C_2^+$;
			\item for each $N_0 \geq N_1$, a parameter $\alpha \in (0,1)$ depending on $a, N_0, \gamma$, and $C^-_1$;
		\end{itemize}
		with the following property. If $\rho \in (0,1)$ and $x_0 \in B_1$ are such that $\overline{B}_{\rho}(x_0) \subset B_1 \backslash \{z\}$,  the event $\overline E_\rho(x_0)$ of \ref{enum:very-good-criteria-1},
		\ref{enum:very-good-criteria-2}, and \ref{enum:very-good-criteria-3} occurs, and $\widetilde\Lambda_t$ is a connected component of ${\cluster{z}{t}} \cap B_{\rho}(x_0)$ 
		for which  $\mu_h(\widetilde \Lambda_t \cap \A_{\rho/2, \rho}(x_0)) \leq \alpha \mu_h(\A_{\rho/2, \rho}(x_0))$, then $\widetilde\Lambda_t \cap \overline{B_{\rho/2}(x_0)} = \emptyset$.
	\end{lemma}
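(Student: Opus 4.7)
I would proceed by contradiction. Suppose $\widetilde\Lambda_t \cap \overline{B_{\rho/2}(x_0)} \neq \emptyset$. Because ${\cluster{}{t}}$ is connected (Lemma~\ref{lemma:non-coincidence-open-harmonic}) and contains the origin while $\overline{B_\rho(x_0)} \subset B_1\setminus\{0\}$, the closure of every connected component of ${\cluster{}{t}} \cap B_\rho(x_0)$ must reach $\partial B_\rho(x_0)$, so $\widetilde\Lambda_t$ contains a connected crossing of $\A_{\rho/2,\rho}(x_0)$. Setting $\epsilon := 2^{-N_0}$, I fix a lattice point $z \in \tfrac{\epsilon\rho}{100}\Z^2 \cap \A_{\rho/2,\rho}(x_0)$ within Euclidean distance $\epsilon\rho/100$ of this crossing, chosen in the middle strip of $\A_{\rho/2,\rho}(x_0)$ so that $B_{5r}(z) \subset \A_{\rho/4,2\rho}(x_0) \subset B_1 \setminus\{0\}$ for every $r \leq \epsilon^{1/2}\rho$. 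By~\ref{enum:very-good-criteria-3}, there are $K \geq C_3\log_7\epsilon^{-1/2}$ good sub-annuli $A_k := \A_{3r_k,5r_k}(z)$ with radii $r_1 > \cdots > r_K$ in $[\epsilon\rho,\epsilon^{1/2}\rho]$ satisfying $r_{k+1}/r_k \leq 1/7$.

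The heart of the argument is an induction tracking $\sigma_k := \sup_{B_{5r_k}(z)\cap\overline{\widetilde\Lambda_t}}\odometer{}{t}$ and $m_k := \mu_h(\widetilde\Lambda_t\cap A_k)$. In the inductive step, given $m_k \leq a\mu_h(A_k)$, the good-annulus property~\eqref{eq:good-annulus-harmonic} applied to $Y = \widetilde\Lambda_t \cap A_k$, combined with the strong Markov property at $\partial B_{4r_k}(z)$ and Lemma~\ref{lemma:harmonic-odometer-comparison} taken with $A = B_{5r_k}(z)$, yields $\sigma_{k+1} \leq b\sigma_k$ (using $B_{5r_{k+1}}(z) \subset B_{3r_k}(z)$). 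Lemma~\ref{lemma:odometer-annuli}, applied with $A_{k+1}$ as the inner shell inside an outer shell contained in $B_{5r_k}(z)$, then gives $m_{k+1} \leq C\sigma_k$. Using the volume lower bound $\mu_h(A_k) \geq C_1^-(r_k/\rho)^{\beta^-}\mu_h(\A_{\rho/4,2\rho}(x_0))$ supplied by~\ref{enum:very-good-criteria-1}, and choosing $b$ small enough (in terms of $\gamma, \beta^-, C_3$) that $b^{K-1}$ beats the worst-case $\epsilon^{\beta^-}$ volume contraction accumulated over all $K$ scales, the small-mass hypothesis propagates through the iteration.

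For the base case, Lemma~\ref{lemma:odometer-bound-lqg-mass} together with~\ref{enum:very-good-criteria-2} bounds $\sigma_1 \leq C\mu_h(\A_{\rho/4,2\rho}(x_0))$ (the hypothesis of Lemma~\ref{lemma:odometer-bound-lqg-mass} on the existence of a point of ${\cluster{}{t}}^c$ in the shell is supplied by the smallness of $\alpha$), and the mass hypothesis together with~\ref{enum:very-good-criteria-1} and the inclusion $A_1 \subset \A_{\rho/2,\rho}(x_0)$ gives $m_1 \leq \alpha\mu_h(\A_{\rho/2,\rho}(x_0)) \leq a\mu_h(A_1)$ provided $\alpha \lesssim aC_1^-\epsilon^{\beta^-/2}$. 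This is the source of the required $N_0$-dependence of $\alpha$. The parameter bookkeeping therefore fixes $b$ first in terms of $\gamma, \beta^-, C_3$, then $N_1$ large enough to make $K$ sufficient for the geometric decay, then $\alpha$ small in terms of $a$ and $N_0$.

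After $K$ iterations, $\sigma_K \leq b^{K-1}\sigma_1$ and $m_K \leq Cb^{K-1}\sigma_1$, both polynomially smaller in $\epsilon$ than $\Mrho{\rho}{x_0}(r_K/\rho)^{\beta^-} \asymp \mu_h(A_K)$. On the other hand, since $z$ was chosen within distance $\epsilon\rho/100 < r_K$ of a point of $\widetilde\Lambda_t$ on the crossing, connectedness of $\widetilde\Lambda_t$ forces the component of $\widetilde\Lambda_t \cap B_{5r_K}(z)$ through that point to cross $A_K$ topologically; combining this crossing with the uniform volume growth from~\ref{enum:very-good-criteria-1} by a ball-packing argument produces a lower bound on $m_K$ incompatible with the iterated upper bound, giving the contradiction. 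I expect the two most delicate aspects of the argument to be maintaining the small-mass hypothesis $m_k \leq a\mu_h(A_k)$ through every scale despite the initial mismatch between $\sigma_1$ and $\mu_h(A_1)$, and extracting the clean lower bound on the mass carried by a topological crossing of $A_K$ from the volume-growth condition~\ref{enum:very-good-criteria-1}; the rest reduces to parameter tuning and careful tracking of the relevant connected component at each scale.
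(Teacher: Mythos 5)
Your odometer--decay mechanism (harmonic comparison via the good-annulus estimate, iterated geometrically across scales and coupled to the mass via Lemma \ref{lemma:odometer-annuli}) is essentially the one the paper uses, but the global structure of your argument is different and, as set up, the final contradiction does not go through.

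The gap is in the endgame. You stop after $K \asymp \log_7\epsilon^{-1/2}$ iterations at a single lattice point $z$, obtain $\sigma_K \leq b^{K-1}\sigma_1$ and $m_K \leq C b^{K-1}\sigma_1$, and then try to contradict this with a lower bound on $m_K = \mu_h(\widetilde\Lambda_t \cap A_K)$ coming from the fact that $\widetilde\Lambda_t$ topologically crosses $A_K$ combined with \ref{enum:very-good-criteria-1}. No such lower bound exists. The condition \ref{enum:very-good-criteria-1} lower-bounds $\mu_h(B_r(y))$ for \emph{balls}, but a connected set that crosses an annulus can be an arbitrarily thin tentacle and can carry arbitrarily small $\mu_h$-measure; this is exactly the degeneracy the whole argument is designed to \emph{rule out}, not a free input. (Indeed, even for Lebesgue measure a crossing of an annulus can have tiny area.) So the claimed ``ball-packing'' lower bound on $m_K$ is not available, and the contradiction never materializes. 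More structurally: after finitely many iterations you have only shown that $v_t$ is \emph{small} near $z$, not that it vanishes, and smallness does not forbid $\widetilde\Lambda_t$ from reaching $\overline{B_{\rho/2}(x_0)}$.

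The paper handles this by a different global scheme. Instead of iterating at a single point, it decomposes $\A_{\rho/2,\rho}(x_0)$ into an \emph{infinite} nested sequence of shells $\{S_j\}_{j\ge 0}$ shrinking to a circle $\partial B_{d_\infty^+}(x_0)$ with $d_\infty^+ \in (\rho/2,\rho)$ (Lemma \ref{lemma:shell-decomposition}). For each $j$ it covers the whole shell $S_j$ by grid points at scale $\epsilon_j = 2^{-(N_0+j)}$, applies the good-annulus iteration at \emph{every} grid point, and uses subharmonicity of $v_t 1\{\cdot\in\widetilde\Lambda_t\}$ together with the maximum principle (and the fact that $S_j$ separates $\partial B_{d_{j+1}^+}(x_0)$ from $\partial B_\rho(x_0)$) to transfer the bound from grid points to the entire inner annulus $\A_{\rho/2,d_{j+1}^+}(x_0)$. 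This lets the bound $K_j = a\,\Mrho{\rho}{x_0}\,2^{-(N_0+j)\beta^-}$ be propagated through infinitely many shells, with $K_j \to 0$, so in the limit $v_t \equiv 0$ on $\widetilde\Lambda_t \cap \partial B_{d_\infty^+}(x_0)$. Since that circle encloses $\overline{B_{\rho/2}(x_0)}$ and $v_t 1\{\cdot\in\widetilde\Lambda_t\}$ is subharmonic, the maximum principle then gives $v_t \equiv 0$ on $\widetilde\Lambda_t \cap \overline{B_{\rho/2}(x_0)}$, i.e.\ that set is empty. Two ingredients you omit are thus essential: covering entire shells rather than tracking a single center $z$, and running the iteration over all $j$ so that the bound tends to zero rather than stopping at a finite, nonzero threshold.
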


	\begin{figure}
		\begin{center}
			\begin{forest}
	[	{\hyperref[lemma:very-good-annuli]{$C_3$}}, name = C3, fill = lightgray
	[ 
	{\hyperref[eq:choice-of-b]{$b$}}, name = b, fill = lightgray
		[{\hyperref[lemma:very-good-annuli]{$a$}}, name = srca, fill = lightgray]  
		 { \draw (.west) node[left]{Lem. \ref{lemma:very-good-annuli}}; }
		[{\hyperref[lemma:very-good-annuli]{$C^{\pm}_1$}}, name = srcc, fill = lightgray]
		[{\hyperref[lemma:very-good-annuli]{$C^{\pm}_2$}}, fill = lightgray,
			[{\hyperref[eq:choice-of-N1]{$N_1 \leq N_0$}}, fill = lightgray, name = N
				[{\hyperref[eq:choice-of-alpha]{$\alpha$}}, name = tgt, fill = lightgray] { \draw (.east) node[right]{Eqn. \eqref{eq:choice-of-alpha}}; }
			] 	{ \draw (.east) node[right]{Eqn. \eqref{eq:choice-of-N1}}; }
		]  
		[	{\hyperref[eq:small-odometer-implies-low-mass]{$C_4$}}, name = C4, fill = lightgray, no edge]
		{ \draw (.east) node[right]{Eqn. \eqref{eq:small-odometer-implies-low-mass}}; }
	]{ \draw (.west) node[left]{Eqn. \eqref{eq:choice-of-b}}; } 	
	]
	\draw[-] (srcc) to[out=south,in=north west] (tgt);
	\draw[-] (srca) to[out=south,in=west] (tgt);
	\draw[-] (srca) to[out=south,in=west] (N);
	\draw[-] (srcc) to[out=south,in=north] (N);
	\draw[-] (C4) to[out=south,in=north] (N);
	{ \draw (.west) node[left]{Lem. \ref{lemma:very-good-annuli}}; }
\end{forest}
		\end{center}
		\caption{Illustration of the choice of constants and dependencies in the proof of Proposition \ref{prop:harnack-type-stronger-property}.
			A line between two constants indicates
			that the downwards constant is chosen in a way which depends directly on the upwards constant. Where the constant is chosen is written directly next to it. Note that $b = b(\gamma)$. Otherwise, dependence on $\gamma \in (0,2)$ and some universal constants is not indicated.  
			$N_0$ can be any number larger than $N_1$. This is done so that there is flexibility later (the proof of Proposition \ref{prop:boundary-measure-zero}) to choose the initial scale $N_0$ to be large.} \label{fig:constant-dependencies}
	\end{figure}
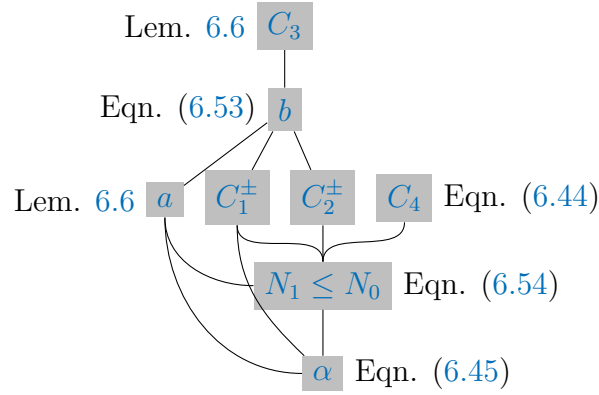

	This leads to the Harnack-type inequality. 
	\begin{proof}[Proof of Propositions \ref{prop:harnack-type-property} and \ref{prop:harnack-type-stronger-property}]
		Combine Lemma \ref{lemma:very-good-annuli} and Lemma \ref{lemma:very-good-annuli-satisfy-harnack-type-estimate}.  
		See Figure \ref{fig:constant-dependencies} for an illustration of how the constants are chosen. 
	\end{proof}

	The proof of Lemma \ref{lemma:very-good-annuli-satisfy-harnack-type-estimate} is purely deterministic. We now outline the proof (in the case~$z = 0$ for convenience) --- also see Figure \ref{fig:shell-decomp-harnack}. 
	As mentioned previously, we will use \ref{enum:very-good-criteria-1} and \ref{enum:very-good-criteria-2}  to switch between $\mu_h$, $\SGrho{\rho}{\cdot}$, and $\Mrho{\rho}{\cdot}$ when convenient. 
	\begin{enumerate}
		\item Set up the iteration by decomposing $\A_{\rho/2, \rho}(x_0)$ into a disjoint, sparse collection of shells $\{S_j\}_{j \geq 0}$
		contained in $\{\A_{\rho/2, d_j}(x_0)\}_{j \geq 0}$ for some infinite geometric sequence $d_j \downarrow d_{\infty} > \rho/2$ (Lemma \ref{lemma:shell-decomposition}).
		\item Show that if $\mu_h(\A_{\rho/2, d_j}(x_0) \cap {\cluster{}{t}})$ is very small, then the supremum of the odometer
		decreases by a multiplicative factor from $\A_{\rho/2, d_j}(x_0)$ to $\A_{\rho/2, d_{j+1}}(x_0)$ (Lemma \ref{lemma:low-mass-implies-small-odometer}).
		\begin{enumerate}
			\item As  $\mu_h(\A_{d_j, \rho}(x_0) \cap {\cluster{}{t}})$ is small, we may use \ref{enum:very-good-criteria-3} 
			to cover $S_j$ by a dense grid of points
			surrounded by a large number $N$ of concentric good annuli.
			\item For each good annulus, apply harmonic comparison, Lemma \ref{lemma:harmonic-odometer-comparison}, together
			with \eqref{eq:good-annulus-harmonic} to see that the supremum of the odometer 
			decreases by a factor of $b$ from one concentric annulus to the next.
			\item  Since the concentric annuli cover $S_j$ and $\odometer{}{t}$ is subharmonic, iterating shows that 
			the odometer decreases by a factor of $b^N$. 
		\end{enumerate}
		\item Show that if $\sup_{\A_{\rho/2, d_{j+1}}(x_0)} \odometer{}{t}$ is small, then $\mu_h(\A_{3r, 5r}(z) \cap {\cluster{}{t}})$ is small for each annulus $\A_{r, 7r}(z) \subset \A_{\rho/2, d_{j+1}}(x_0)$ (using Lemma \ref{lemma:odometer-annuli}).
		
		\item Start with a weak initial bound on the odometer, Lemma \ref{lemma:odometer-bound-lqg-mass}
		and the initial assumption that $\mu_h(\A_{\rho/2, \rho}(x_0) \cap {\cluster{}{t}}) \leq \alpha \mu_h(\A_{\rho/2,\rho}(x_0))$ and iterate the previous two steps to see that $\lim \left( \sup_{\A_{\rho/2, d_j}(x_0)}  \odometer{}{t} \right) \to 0$.
	\end{enumerate}

	\begin{figure}
		\begin{center}
			\includegraphics[width=0.5\textwidth]{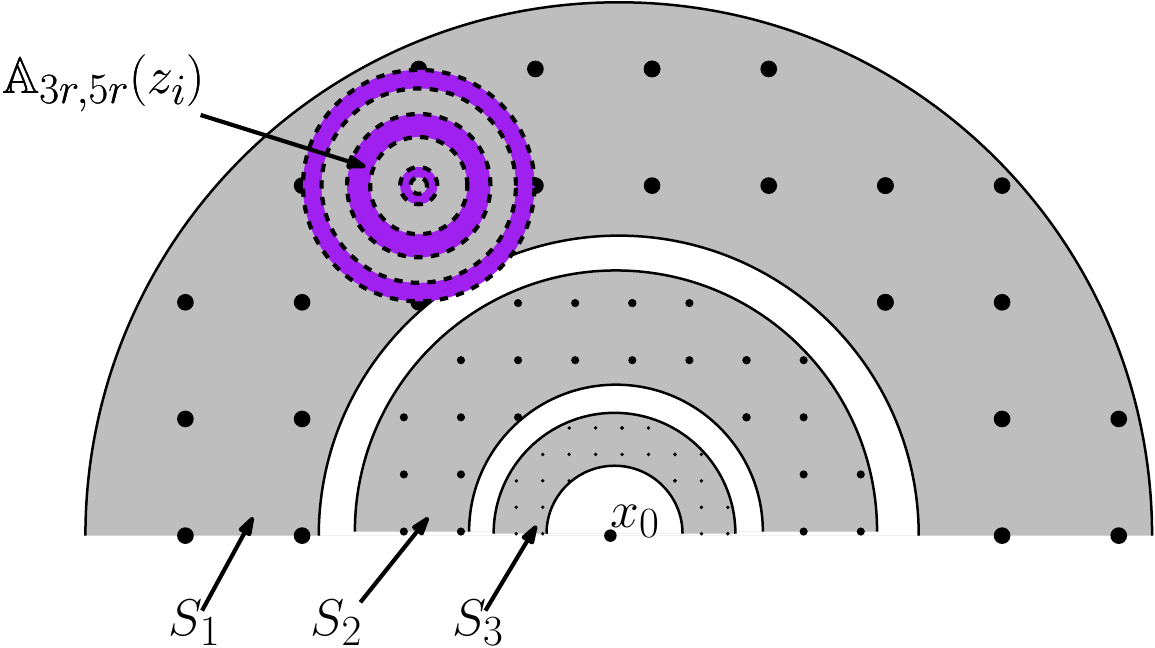}
			\caption{Visual explanation of the proof of Lemma \ref{lemma:very-good-annuli-satisfy-harnack-type-estimate}. 
				We display some of the shells in the shell decomposition, the grid of points in each shell surrounded by good annuli, and some of the good annuli $\A_{r_i, 7 r_i}(z)$ surrounding one of the grid points $z_i$. The shells are displayed in gray, the good annuli are in purple, and the grid of points are black dots. Aspect ratios of annuli are not shown to scale. } \label{fig:shell-decomp-harnack}
		\end{center}
	\end{figure}

	We start with the shell decomposition. 
	\begin{lemma} \label{lemma:shell-decomposition}
		Fix $\rho \in (0,1)$ and $x_0 \in B_1$. Define the collection of shells
		\begin{equation}
		S_j =  \partial B_{d_j}(x_0) + B_{l_j}  \qquad \mbox{for $j \geq 0$}
		\end{equation}
		where 
		\begin{equation}
		l_j = \rho \times 2^{-j/4} \times 2^{-50} \qquad \mbox{for $j \geq 0$}
		\end{equation}
		and
		\begin{equation}
		\begin{aligned}
		d_0 &= \frac34 \rho  \\
		d_j &= d_{j-1} - 32   l_{j-1} \qquad \mbox{for $j \geq 1$}.
		\end{aligned}
		\end{equation}
		For each $N_0 \geq 200$, the shells satisfy the following properties.
		\begin{itemize}
			\item The union of fattened shells is contained in an annulus:
			\begin{equation} \label{eq:contained-in-annulus}
			\bigcup_{j \geq 0} (S_j + B_{8 l_j}) \Subset \A_{\rho/2, \rho}(x_0).
			\end{equation}
			\item Shells are sufficiently far apart:
			\begin{equation} \label{eq:sparse-shells}
			(S_j + B_{8 l_j}) \cap \left( \cup_{j' \neq j} S_{j'} \right) = \emptyset \qquad \mbox{for $j \geq 0$}.
			\end{equation}
			\item Balls centered at points of a shell are contained in an annulus: for each $j \geq 0$, let
			\begin{equation} \label{eq:def-epsilon-j}
			\epsilon_j := 2^{-(N_0 + j)} \quad \text{and} \quad d^+_j := d_j + 8 l_j .
			\end{equation} 
			Then for each $z\in S_j$, 
			\begin{equation} \label{eq:annulus-contained-in-ball}
			B_{8 \epsilon^{1/2}_j \rho}(z) \subset \A_{\rho/2, d^+_j}(x_0) \subset \A_{\rho/2, \rho}(x_0).
			\end{equation}

		\end{itemize}
		
	\end{lemma}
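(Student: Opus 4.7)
The statement is purely deterministic, so the plan is to verify each of the three bullets directly from the recursive definitions of $d_j$ and $l_j$. Two ingredients will do all the work: the geometric identity $\sum_{k=0}^\infty l_k = 2^{-50}\rho/(1-2^{-1/4}) < 7\cdot 2^{-50}\rho$, which controls how far $d_j$ can drift from $d_0 = 3\rho/4$, and the recursion $d_j - d_{j+1} = 32\, l_j$, which provides generous separation between consecutive shells. Note also that $S_j = \A_{d_j - l_j,\, d_j + l_j}(x_0)$, so $S_j + B_{8 l_j} = \A_{d_j - 9l_j,\, d_j + 9l_j}(x_0)$; thus everything reduces to comparing one-dimensional radial intervals.

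For \eqref{eq:contained-in-annulus}, the first ingredient gives $d_j \in [3\rho/4 - 224\cdot 2^{-50}\rho,\, 3\rho/4]$ for every $j\geq 0$, so the radial extent $(d_j - 9l_j,\, d_j + 9l_j)$ of $S_j + B_{8l_j}$ stays well inside $(\rho/2,\, \rho)$, with a buffer of roughly $\rho/4$ on each side. For \eqref{eq:sparse-shells}, writing the radial extents of $S_j + B_{8l_j}$ and $S_{j'}$ and assuming $j < j'$ (the other case being symmetric), one has $d_j - d_{j'} = 32\sum_{k=j}^{j'-1} l_k \geq 32 l_j$, so combining with $l_{j'} \leq l_j$ gives a radial gap of at least $32 l_j - 9 l_j - l_{j'} \geq 22 l_j > 0$; this proves disjointness. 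For the mirror case $j' < j$, the same estimate uses $l_{j'} \geq l_j$ to conclude $31 l_{j'} - 9l_j \geq 22 l_j > 0$.

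The only bullet that pins down $N_0$ is \eqref{eq:annulus-contained-in-ball}. For $z \in S_j$, the ball $B_{8\epsilon_j^{1/2}\rho}(z)$ has radial extent $(d_j - l_j - 8\epsilon_j^{1/2}\rho,\, d_j + l_j + 8\epsilon_j^{1/2}\rho)$, so the requirement reduces to $8\epsilon_j^{1/2}\rho \leq 7 l_j$ for the outer bound $\leq d_j^+ = d_j + 8l_j$ (the inner bound $> \rho/2$ being immediate from Step 1). Substituting $\epsilon_j = 2^{-N_0-j}$ and $l_j = 2^{-50-j/4}\rho$ reduces this to $2^{53 - N_0/2 - j/4} \leq 7$, which is tightest at $j=0$ and demands $N_0 \geq 2(53 - \log_2 7) \approx 100.4$. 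The hypothesis $N_0 \geq 200$ supplies ample margin.

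No step presents a genuine obstacle; the lemma is a bookkeeping exercise ensuring that the constants $32$ and $2^{-50}$ in the definitions, together with $N_0 \geq 200$, produce compatible inequalities. The only subtlety worth flagging is that \eqref{eq:annulus-contained-in-ball} is the binding constraint on $N_0$, and the worst case occurs at $j=0$ (larger $j$ helps, because the exponent on the left gains a $-j/4$ while the right gains a $-j/4$ as well from $l_j$, but the left also gains $-j/2$ from $\epsilon_j^{1/2}$).
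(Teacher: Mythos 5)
Your proof is correct and follows essentially the same route as the paper's: reduce everything to radial intervals, use $\sum_k l_k < 7\cdot 2^{-50}\rho$ and $d_j - d_{j+1} = 32 l_j$ for the first two bullets, and verify the one $N_0$-dependent inequality $8\epsilon_j^{1/2}\rho \leq 7 l_j$ (the paper proves the slightly stronger $l_j + 8\epsilon_j^{1/2}\rho \leq 2 l_j$ en route to $\leq 4 l_j$, but the arithmetic is the same and both give the claim). No gaps.
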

	\begin{proof}
		The first two properties \eqref{eq:contained-in-annulus} and \eqref{eq:sparse-shells} are immediate from the definitions of $d_j$ and $l_j$.
		The second inclusion in \eqref{eq:annulus-contained-in-ball} follows from unpacking the definitions:
		\[
		d^+_j = d_j + 8 l_j \leq d_0 + 8 l_0 =  (3/4 + 8 \times 2^{-100}) \rho < \rho.
		\]
		We now check check the first inclusion in \eqref{eq:annulus-contained-in-ball}. First note
		\begin{equation}
		d_j - l_j \leq |z - x_0| \leq d_j + l_j, \quad \forall z \in S_j
		\end{equation}
		and
		\begin{equation} \label{eq:shell-ball-contained}
		d_j - l_j - 8 \epsilon^{1/2}_j \rho \leq |z'-x_0| \leq d_j + l_j + 8 \epsilon^{1/2}_j \rho, \quad \forall z' \in B_{8 \epsilon^{1/2}_j \rho}(z), z \in S_j.
		\end{equation}
		We claim that the first inclusion in \eqref{eq:annulus-contained-in-ball} follows from the following inequality
		which we verify below,
		\begin{equation} \label{eq:desired-inequality-shell-decomp}
		l_j + 8 \epsilon^{1/2}_j \rho \leq 4 l_j.
		\end{equation}
		Indeed, if \eqref{eq:desired-inequality-shell-decomp} holds, then by \eqref{eq:shell-ball-contained}
		and the definitions of $d^+_j$ and $S_j$, 
		\[
		B_{8 \epsilon^{1/2}_j \rho}(z) \subset S_j + B_{4 l_j} \subset \A_{\rho/2, d^+_j}, \quad \forall z \in S_j.
		\]
		It remains to check \eqref{eq:desired-inequality-shell-decomp}: 
		\begin{align*}
		l_j + 8 \epsilon^{1/2}_j \rho &= \rho \times 2^{-j/4} \times 2^{-50} + 8 \times  2^{-N_0/2-j/2} \times \rho \\
		&\leq \rho \times 2^{-j/4} \left(  2^{-50} + 8 \times 2^{-N_0/2}  \right) \qquad \mbox{(since $2^{-j/4} \geq 2^{-j/2}$ for $j \geq 0$)} \\
		&\leq 2 \times \rho \times 2^{-j/4} \times 2^{-50} \qquad \mbox{(since $N_0 \geq 200$)} \\
		&= 2 l_j.
		\end{align*}
	\end{proof}
	
	We next show that an upper bound for the amount of mass in $\A_{\rho/2, d^+_j}(x_0)$ implies an upper bound for the odometer in $\A_{\rho/2, d^+_{j+1}}(x_0)$. This will be a key input in the induction argument in the proof of Lemma~\ref{lemma:very-good-annuli-satisfy-harnack-type-estimate}.
	
	\begin{lemma} \label{lemma:low-mass-implies-small-odometer}
		Let~$z \in B_1$, and fix an annulus $\A_{\rho/2,\rho}(x_0)$ not containing~$z$ for which \ref{enum:very-good-criteria-3} occurs
		with parameters $a,b \in (0,1)$, $N_0 \geq 200$, and universal constant $C_3 > 0$. Let the shell decomposition $\{S_j, d^+_j\}_{j \geq 0}$ be given by Lemma \ref{lemma:shell-decomposition} and let $\widetilde \Lambda_t$ be a connected component of ${\cluster{z}{t}} \cap B_{\rho}(x_0)$. 
		For $j\geq 0$, let
		\begin{equation} \label{eq:k-j-parameter}
		K_j = a \times \Mrho{\rho}{x_0} \times \epsilon^{\beta^-}_j \qquad \mbox{($\epsilon_j$ from \eqref{eq:def-epsilon-j} and $\beta^-$ from Lemma \ref{lemma:volume-growth})} .
		\end{equation}
		Then for each $j\geq 0$,
		\begin{align} \label{eq:low-mass-small-odometer}
		& \sup_{\substack{r > 0 , z \in B_1 : \\ \A_{r, 7r}(z) \subset \A_{\rho/2, d^+_j}(x_0)}}  \mu_h(\widetilde \Lambda_t \cap \A_{3r, 5r}(z)) \leq K_j \notag \\
		&\qquad\qquad \implies 
		\sup_{\widetilde \Lambda_t \cap \A_{\rho/2, d^+_{j+1}(x_0)}} \odometer{}{t}  \leq  b^{C' \times C_3 \times (N_0 + j)}  \left( \sup_{\tilde \Lambda_t \cap \A_{\rho/2, d^+_j}(x_0)} \odometer{}{t} \right)
		\end{align}
		where $C' > 0$ is a universal constant.
	\end{lemma}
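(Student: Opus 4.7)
The plan is to establish a one-step odometer decay estimate across each good annulus, iterate it along $N \gtrsim C_3 \log_7 \epsilon_j^{-1/2}$ nested good annuli centered at grid points whose $4\epsilon_j\rho$-neighborhoods cover the shell $S_j$, and then propagate the resulting bound from $S_j$ inward to $\A_{\rho/2, d_{j+1}^+}(x_0)$ via one more maximum principle.

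For each such grid point $z \in \frac{\epsilon_j \rho}{100}\Z^2 \cap \A_{\rho/2, \rho}(x_0)$ (which lies in $\A_{\rho/2, \rho}(x_0)$ since $S_j + B_{8l_j} \subset \A_{\rho/2, \rho}(x_0)$), property \ref{enum:very-good-criteria-3} supplies radii $r_1 < \cdots < r_N \in [\epsilon_j \rho, \epsilon_j^{1/2}\rho] \cap \{\rho 7^{-k}\}$ with $N \geq C_3 \log_7 \epsilon_j^{-1/2}$ and $r_{i+1}/r_i \geq 7$ for which the annuli $\A_{3r_i, 5r_i}(z)$ are good. To access \eqref{eq:good-annulus-harmonic} with $Y := \widetilde \Lambda_t \cap \A_{3r_i, 5r_i}(z)$, I would verify the mass condition in two steps: the inclusion $\A_{3r_i, 5r_i}(z) \subset \A_{r_i, 7r_i}(z) \subset \A_{\rho/2, d_j^+}(x_0)$ (the latter from \eqref{eq:annulus-contained-in-ball}, since $r_i \leq \epsilon_j^{1/2}\rho$) combined with the hypothesis gives $\mu_h(Y) \leq K_j$; meanwhile, applying the definition of $\Mrho{\rho}{x_0}$ to a ball of radius $r_i$ inscribed in $\A_{3r_i, 5r_i}(z)$ yields the matching lower bound $\mu_h(\A_{3r_i, 5r_i}(z)) \geq \Mrho{\rho}{x_0} \epsilon_j^{\beta^-}$, so $\mu_h(Y) \leq a \mu_h(\A_{3r_i, 5r_i}(z))$, as required.

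The one-step decay follows by applying Lemma \ref{lemma:harmonic-odometer-comparison} with $A = \A_{3r_i, 5r_i}(z)$ at each $x \in \partial B_{4r_i}(z) \cap \widetilde \Lambda_t$: since $A \subset B_\rho(x_0)$, the component $\Lambda^x_t \subset Y$, so the Brownian exit probability in that lemma is bounded by $\P[\mathcal{B}^x \text{ exits } A \text{ before exiting } Y] \leq b$ by \eqref{eq:good-annulus-harmonic}. Since $\odometer{}{t}$ vanishes on $\partial \widetilde \Lambda_t$ (Lemma \ref{lemma:non-coincidence-open-harmonic}) and is subharmonic on $\widetilde \Lambda_t$ (as $0 \notin B_\rho(x_0)$), the maximum principle on $B_{4r_i}(z) \cap \widetilde \Lambda_t$ gives
\[
\sup_{\overline{B_{4r_i}}(z) \cap \widetilde \Lambda_t} \odometer{}{t} \leq b \sup_{\overline{B_{5r_i}}(z) \cap \widetilde \Lambda_t} \odometer{}{t}.
\]
Iterating, using $\overline{B_{5r_i}}(z) \subset \overline{B_{4r_{i+1}}}(z)$ (from $r_{i+1} \geq 7r_i$) and $\overline{B_{5r_N}}(z) \subset \overline{\A_{\rho/2, d_j^+}}(x_0)$ (again from \eqref{eq:annulus-contained-in-ball}), yields the per-grid-point bound $\sup_{\overline{B_{4r_1}}(z) \cap \widetilde \Lambda_t} \odometer{}{t} \leq b^N \sup_{\overline{\A_{\rho/2, d_j^+}}(x_0) \cap \widetilde \Lambda_t} \odometer{}{t}$.

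Covering $S_j$ by balls $B_{4\epsilon_j\rho}(z) \subset B_{4r_1}(z)$ over the grid gives the shell bound $\sup_{S_j \cap \widetilde \Lambda_t} \odometer{}{t} \leq b^N \sup_{\overline{\A_{\rho/2, d_j^+}}(x_0) \cap \widetilde \Lambda_t} \odometer{}{t}$. A final application of the maximum principle on $W := \widetilde \Lambda_t \cap B_{d_j - l_j}(x_0)$ propagates it inward: $\partial W$ does not touch $\partial B_\rho(x_0)$ (since $d_j - l_j < \rho$), so $\odometer{}{t} = 0$ on $\partial \widetilde \Lambda_t \cap \overline W$; and on $\partial B_{d_j - l_j}(x_0) \cap \overline{\widetilde \Lambda_t} \subset \overline{S_j}$, $\odometer{}{t}$ is bounded by the shell bound via continuity. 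Since $\A_{\rho/2, d_{j+1}^+}(x_0) \subset B_{d_j - l_j}(x_0)$ (from $d_{j+1}^+ < d_j - l_j$, Lemma \ref{lemma:shell-decomposition}), the bound propagates, and $N \geq C_3 \log_7 \epsilon_j^{-1/2} = C_3(N_0 + j)(\log_7 2)/2$ produces the exponent $b^{C' C_3 (N_0 + j)}$ with universal $C' = (\log_7 2)/2$. The main obstacle is the calibration of $K_j$ to the $\Mrho{\rho}{x_0}$ lower bound on $\mu_h(\A_{3r_i, 5r_i}(z))$ so that the mass condition behind \eqref{eq:good-annulus-harmonic} holds uniformly across all $N$ nested annuli, which also explains why the hypothesis of \eqref{eq:low-mass-small-odometer} is phrased in terms of $\A_{r, 7r}(z)$ (whose $\mu_h$-mass dominates that of $\A_{3r, 5r}(z)$) rather than $\A_{3r, 5r}(z)$ itself.
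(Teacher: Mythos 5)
Your proof is correct and follows essentially the same strategy as the paper's: verify the mass condition in each good annulus via the calibration of $K_j$ against $\Mrho{\rho}{x_0}$, obtain a one-step odometer decay across each good annulus from Lemma~\ref{lemma:harmonic-odometer-comparison} combined with~\eqref{eq:good-annulus-harmonic} and the maximum principle, iterate across the $N$ nested good annuli at each grid point, cover the shell $S_j$, and propagate inward to $\A_{\rho/2,d_{j+1}^+}(x_0)$ by one more maximum principle. The only deviations are cosmetic: you index the radii from small to large (the paper goes large to small), you use an inscribed ball of radius $r_i$ rather than $\epsilon_j\rho$ in the $\Mrho{\rho}{x_0}$ lower bound (both yield $\geq\Mrho{\rho}{x_0}\epsilon_j^{\beta^-}$), and you phrase the final inward propagation via the maximum principle on $\widetilde\Lambda_t\cap B_{d_j-l_j}(x_0)$ rather than via the observation that $S_j$ disconnects $\partial B_{d_{j+1}^+}(x_0)$ from $\partial B_\rho(x_0)$; the exponent $b^N$ versus the paper's $b^{N-1}$ is immaterial since both are absorbed into the universal constant $C'$.
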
 
	\begin{proof} 
		Let $j \geq 0$ be given and let $\widetilde \Lambda_t$ be a connected component of ${\cluster{z}{t}} \cap B_{\rho}(x_0)$. We first check that the assumption 
		\[
		\sup_{\substack{r > 0 , w \in B_1 : \\ \A_{r, 7r}(z) \subset \A_{\rho/2, d^+_j}(x_0)}}    \mu_h(\widetilde \Lambda_t \cap \A_{3 r, 5r}(w)) \leq K_j
		\]
		allows us to use \eqref{eq:good-annulus-harmonic} on sufficiently many annuli covering $S_j$.
		
		In particular, there exists a finite set of points $Z\subset  S_j \cap \frac{\epsilon_j \rho}{100} \Z^2$ so that $\cup_{z\in Z} B_{\rho \epsilon_j}(z)$ 
		covers $S_j$. Fix one such $w \in Z$,
		define
		\[
		N = \lfloor C_3 \log \epsilon_j^{-1/2} \rfloor,
		\]
		and let $r_1, \ldots, r_N \in [\epsilon_j \rho, \epsilon_j^{1/2} \rho] \cap \{\rho 7^{-n}\}_{n \in \N}$ be distinct radii $r_1 > r_2 > \cdots > r_N$ for which the event $E_r(w)$ occurs, as provided by \ref{enum:very-good-criteria-3}.

		{\it Step 1: Small mass in annuli. } \\ 
		For each $r \in \{ r_1, \ldots, r_N\}$,
		\begin{align*}
		\mu_h(\widetilde \Lambda_t \cap \A_{3r,5r }(z)) 
		&\leq K_j \qquad \mbox{($\A_{r, 7r}(z) \subset \A_{\rho/2, d_j^+}(x_0)$ by \eqref{eq:annulus-contained-in-ball} and since $r  \leq \epsilon_j^{1/2} \rho$)} \\
		&= a \times \Mrho{\rho}{x_0} \times \epsilon^{\beta^-}_j  \qquad \mbox{(definition of $K_j$)}\\
		&\leq a \times \frac{\mu_h(B_{\epsilon_j \rho }(z + 4 r e_1))}{\epsilon_j^{\beta^-}} \times  \epsilon_j^{\beta^-} \qquad \mbox{(definition of $\Mrho{\rho}{x_0}$ and \eqref{eq:annulus-contained-in-ball})}  \\
		&= a \times \mu_h(B_{\epsilon_j \rho}(z + 4 r e_1)) \\
		&\leq a \times \mu_h(\A_{3 r, 5 r}(z)) \qquad \mbox{(since $r \geq r_N \geq \epsilon_j \rho$)}.
		\end{align*} 
		Hence, we may use the estimate on the exit probability  \eqref{eq:good-annulus-harmonic} on each such annulus with $Y = \widetilde \Lambda_t \cap \A_{3r,5r }(z)$.
		In fact, we may use it with $Y$ set to be any connected component of  $\widetilde \Lambda_t \cap \A_{3r,5r }(z)$.
		
		{\it Step 2: Small mass in good annulus implies small odometer.} \\
		First note that as $\A_{3 r_1, 5 r_1}(w) \Subset \A_{\rho/2, d^+_j}(x_0)$, 
		\begin{equation} \label{eq:outer-odom-sup}
		\sup_{\partial \A_{3 r_1, 5 r_1}(w) \cap \widetilde \Lambda_t} \odometer{z}{t} \leq \sup_{\widetilde \Lambda_t \cap \A_{\rho/2, d^+_j}(x_0)} \odometer{z}{t}.
		\end{equation}
		For each $x \in \widetilde \Lambda_t \cap \partial B_{4 r_1}(w)$, let $\mathcal{B}^x$ denote an independent Brownian motion started at $x$ and $\Lambda^x_t$ 
		the connected component of $\A_{3r_1, 5r_1}(w) \cap {\cluster{z}{t}}$ containing $x$. 
		Note that for each such $x$, we have $\Lambda_t^x \subset \widetilde \Lambda_t$ (and hence $\mu_h(\Lambda_t^x) \leq \mu_h(\tilde \Lambda_t))$.
		We use this to see that for each $x \in \widetilde \Lambda_t \cap \partial B_{4 r_1}(w)$, 
		\begin{align*}
		&\odometer{z}{t}(x) \\
		&\leq \left( \sup_{\partial \A_{3r_1, 5r_1}(w) \cap \overline \Lambda^x_t} \odometer{z}{t} \right)  \P[\mbox{$\mathcal{B}^x$ exits  $\A_{3r_1 , 5r_1}(w)$ before  $\Lambda^x_t$} | h]
		\quad \mbox{(Lemma \ref{lemma:harmonic-odometer-comparison} with $A = \A_{3r_1, 5r_1}(w)$)} \\
		&\leq b \left( \sup_{\partial \A_{3r_1, 5r_1}(w) \cap \overline \Lambda^x_t} \odometer{z}{t} \right)  \quad \mbox{(\eqref{eq:good-annulus-harmonic} with $Y = \Lambda^x_t$)} \\
		&\leq b  \left( \sup_{  \A_{3 r_1, 5 r_1}(w) \cap  \widetilde \Lambda_t}   \odometer{z}{t} \right) \quad \mbox{($\Lambda_t^x \subset \widetilde \Lambda_t$)}.
		\end{align*}

		As $\widetilde \Lambda_t$ is a connected component 
		of ${\cluster{z}{t}} \cap \A_{\rho/2, \rho}(x_0)$,  $\widetilde \Lambda_t \cap B_{4 r_1}(w)$ is a union 
		of connected components of ${\cluster{z}{t}} \cap B_{4 r_1}(w)$. 
		The same argument in the proof of Lemma \ref{lemma:odometer-annuli} shows that $\odometer{z}{t} 1\{ \cdot \in \widetilde \Lambda_t\}$
		is subharmonic in $B_{\rho}(x_0)$. 
		
		This together with the maximum principle and the most recent indented inequality shows that
		\[
		\sup_{\widetilde \Lambda_t \cap B_{4 r_1}(w)} \odometer{z}{t} \leq 	\sup_{\widetilde \Lambda_t \cap \partial B_{4 r_1}(w)} \odometer{}{t} \leq  b \left( \sup_{ \A_{3 r_1, 5 r_1}(w) \cap \widetilde \Lambda_t} \odometer{z}{t} \right).
		\]
		Since the next annulus, $\A_{r_2, 7 r_2}(w) \subset B_{4 r_1}(w)$ ($r_2 \leq r_1/7$ by construction), this implies that 
		\[
		\sup_{  \A_{r_2, 7 r_2}(w) \cap \widetilde \Lambda_t } \odometer{z}{t} \leq  b  \left( \sup_{  \A_{3 r_1, 5 r_1}(w) \cap \widetilde \Lambda_t} \odometer{z}{t} \right) .
		\]

		{\it Step 3: Iterate. } \\
		We have shown in Step 1 that each annulus $\A_{r, 7 r}(w)$ for $r \in \{r_1, \ldots, r_N\}$ satisfies the conditions required to use \eqref{eq:good-annulus-harmonic}, so we may iterate Step 2 $(N-1)$ times, then apply~\eqref{eq:outer-odom-sup}, to get
		\begin{equation} \label{eq:first-iteration}
		\sup_{\widetilde \Lambda_t \cap \A_{r_N, 7 r_N}(w)} \odometer{z}{t} 
		\leq b^{N-1}  \left( \sup_{  \A_{3 r_1, 5 r_1}(w)  \cap \widetilde \Lambda_t} \odometer{z}{t} \right) 
		\leq b^{N-1} \left( \sup_{\widetilde \Lambda_t \cap \A_{\rho/2, d^+_j}(x_0)} \odometer{z}{t} \right).
		\end{equation}
		Since the estimate \eqref{eq:first-iteration} holds for all $w\in Z$, $r_N \geq \rho \epsilon_j$, and $\cup_{w\in Z} B_{\rho \epsilon_j}(w ) \supseteq S_j$, the maximum principle applied in $B_{r_N}(w)$ for each $w\in Z$ gives
		\begin{equation} \label{eq:odom-sup-S}
		\sup_{\widetilde \Lambda_t \cap S_j} \odometer{z}{t}  \leq b^{N-1} \left( \sup_{\widetilde \Lambda_t \cap \A_{\rho/2, d^+_j}(x_0)} \odometer{z}{t}  \right).
		\end{equation}
		As previously mentioned, $v_t 1\{ \cdot \in \widetilde \Lambda_t\}$ is subharmonic in $B_{\rho}(x_0)$. 
		Therefore, as $S_j$ disconnects $\partial B_{d^+_{j+1}}(x_0)$ from $\partial B_{\rho}(x_0)$, 
		\[
		\sup_{\widetilde \Lambda_t \cap \A_{\rho/2, d^+_{j+1}}(x_0)} \odometer{z}{t} \leq \sup_{\widetilde \Lambda_t \cap S_j} \odometer{z}{t}.
		\]
		This combined with~\eqref{eq:odom-sup-S} gives
		\begin{equation} \label{eq:odom-sup-end}
		\sup_{\widetilde \Lambda_t \cap \A_{\rho/2, d^+_{j+1}}(x_0)} \odometer{z}{t} \leq b^{N-1} \left( \sup_{\widetilde \Lambda_t \cap \A_{\rho/2, d^+_j}(x_0)} \odometer{z}{t}  \right) .
		\end{equation} 
		We now recall that $N = \lfloor C_3 \log \epsilon_j^{-1/2} \rfloor$ and $\epsilon_j = 2^{-(N_0 + j)}$~\eqref{eq:def-epsilon-j}. Hence the lemma statement follows from~\eqref{eq:odom-sup-end}.
	\end{proof}
	
	\begin{figure}
		\begin{center}
			\begin{tikzpicture}[auto, node distance=0.6cm,>=latex,block/.style={draw, fill=white, rectangle}, minimum height=1.2em, minimum width=6em]
\node[block, text width = 7 cm] (A1) {Decompose annulus $\A_{\rho/2,\rho}(x_0)$ into shells,  $\{S_j\}_{j \geq 0}$, given by Lemma \ref{lemma:shell-decomposition}};
\node[block, below=of A1, text width = 15 cm] (A2) {
	\makecell[c]{Assume cluster in annulus has small mass relative to the size of the annulus \eqref{eq:ind-assumption} \\$\implies$ Base case:
	small mass in shell $S_0$ \eqref{eq:step-4-initial-mass-bound}}
};
\node[block, below= of A2] (A3) {Base case: small odometer in shell $S_{1}$, \eqref{eq:base-case-odometer-size}};
\node[below = of A3, node distance=0cm] (A5) {}; 
\node[block, below= of A3, text width = 10cm, align = center, node distance = 1cm] (A6) {\makecell[c]{Small mass in shell $S_{j}$, \eqref{eq:step-4-induct-mass}  for $j$\\Small odometer in shell $S_{j+1}$, \eqref{eq:step-4-induct-odometer} for $(j+1)$}};
\node[block, below = of A6] (A7) {Small mass in shell $S_{j+1}$, \eqref{eq:step-4-induct-mass} for $(j+1)$}; 
\node[block, below = of A7] (A8) {Small odometer in shell $S_{j+2}$, \eqref{eq:step-4-induct-odometer} for $(j+2)$}; 
\node[ below = of A8] (A9) {}; 
\node[block, below = of A8] (A10) {Zero odometer on $\partial B_{\rho/2}$, \eqref{eq:zero-odometer-in-inner-ball}};
\draw[->] (A2) -- node[pos=0.5,right]{Lem.~\ref{lemma:low-mass-implies-small-odometer} and Lem.~\ref{lemma:odometer-bound-lqg-mass}}(A3); 
\draw[dashed, ->] (A3) -- node[pos=0.5,right]{Inductive hypothesis}(A6); 
\draw[->] (A6) -- node[pos=0.5,right]{Eqn. \eqref{eq:small-odometer-implies-low-mass}}(A7); 
\draw[->] (A7) -- node[pos=0.5,right]{Lem.~\ref{lemma:low-mass-implies-small-odometer}}(A8)
coordinate[midway] (aux); 
\draw[-] (A6.west) |- ($(A6.west) - (0.1,0)$)  |- ($ (A8.north)+(0,0.3) $)
-| (aux.west);
\draw[dashed, ->] (A8) -- node[pos=0.5,right]{Iterate and send $j \to \infty$}(A10); 
\end{tikzpicture}
		\end{center}
		\caption{Schematic outline of the proof of Lemma \ref{lemma:very-good-annuli-satisfy-harnack-type-estimate}}
	\end{figure}
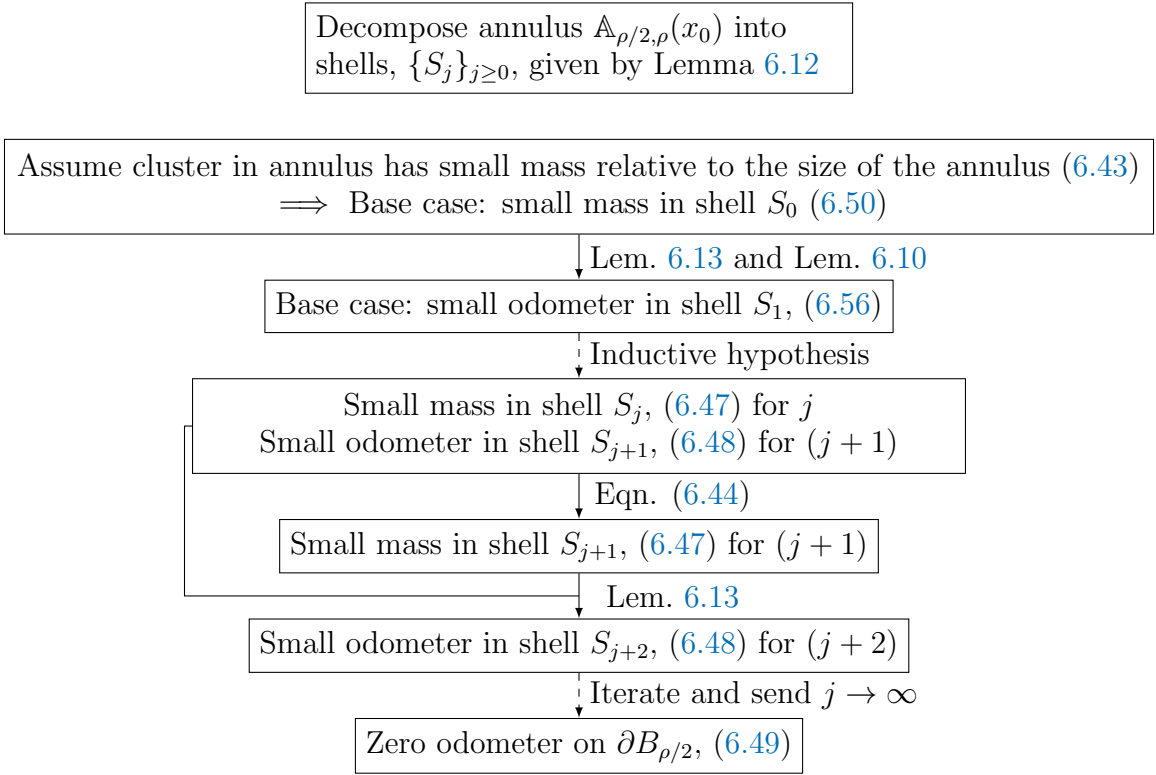

	We conclude with the proof of the desired claim.

	\begin{proof}[Proof of Lemma \ref{lemma:very-good-annuli-satisfy-harnack-type-estimate}]
		Let $C^{\pm}_1 > 0, C^{\pm}_2 >0$, corresponding to \ref{enum:very-good-criteria-1} and \ref{enum:very-good-criteria-2},
		and $a \in (0,1)$, $C_3 > 0$, corresponding to \ref{enum:very-good-criteria-3}, be given.  Fix $b = b(\gamma) \in (0,1)$, $N_0 \geq N_1(a,C^-_1, C^+_2) \geq 200$, and then $\alpha = \alpha(a, N_0, \gamma, C^-_1) \in (0,1)$ which will be specified 
		in \eqref{eq:choice-of-b}, \eqref{eq:choice-of-N1}, and \eqref{eq:choice-of-alpha} respectively.

		Fix~$z \in B_1$, $\rho \in (0,1)$ and $x_0 \in B_1$ such that $\overline E_\rho(x_0)$ occurs,~$\overline{B}_{\rho} \subset B_1 \setminus \{z\}$ and let $\widetilde \Lambda_t$ be a connected component of ${\cluster{z}{t}} \cap \A_{\rho/2, \rho}(x_0)$ with $\overline{\widetilde \Lambda_t} \cap \partial B_{\rho}(x_0) \neq \emptyset$. We assume that
		\begin{equation} \label{eq:ind-assumption}
		\mu_h(\widetilde{\Lambda}_t \cap \A_{\rho/2, \rho}(x_0)) \leq \alpha \mu_h(\A_{\rho/2, \rho}(x_0)) 
		\end{equation}
		and we seek to show that $\widetilde{\Lambda}_t \cap \partial B_{\rho/2}(x_0) = \emptyset$.
		
		Start by decomposing the annulus via the shell decomposition  $\{S_j, d^+_j\}_{j \geq 0}$ given by Lemma \ref{lemma:shell-decomposition}.
		We will iteratively apply Lemma \ref{lemma:low-mass-implies-small-odometer} and Lemma \ref{lemma:odometer-annuli}. 
		Suppose $r>0$ and $w \in B_1$ such that $\A_{r,7r}(w) \subset \A_{\rho/2, d^+_{j+1}}(x_0)$.
		As $\widetilde \Lambda_t$ is a connected component of $\A_{\rho/2,\rho}(x_0)$ and $\A_{r,7r}(w) \Subset \A_{\rho/2,\rho}(x_0)$, we have that $\A_{r, 7r}(w) \cap \widetilde \Lambda_t$ is a union of connected components of ${\cluster{z}{t}} \cap \A_{r, 7r}(w)$. Hence, we may use the following consequence of Lemma \ref{lemma:odometer-annuli}: 
		\begin{align} \label{eq:small-odometer-implies-low-mass}
		&\mu_h(\A_{3r, 5r}(w) \cap \widetilde \Lambda_t) \leq C_4 \sup_{\widetilde \Lambda_t \cap \A_{\rho/2, d^+_{j+1}}(x_0)} \odometer{z}{t} \notag\\
		&\qquad\qquad \forall j \geq 0 \mbox{ and all $r>0$ and $w\in B_1$ such that } \A_{r, 7r}(w) \subset \A_{\rho/2, d^+_{j+1}}(x_0),
		\end{align}
		where $C_4 > 0$ is a universal constant.

		{\it Step 1: Choose $\alpha$ and set up iteration.}  \\
		We set up the iteration of Lemma \ref{lemma:low-mass-implies-small-odometer} and Lemma \ref{lemma:odometer-annuli}.
		Start by choosing 
		\begin{equation} \label{eq:choice-of-alpha}
		\alpha := a \times 2^{-N_0 \beta^-} \times \frac{1}{C^-_1 \vee 1}
		\end{equation}
		so that our assumption~\eqref{eq:ind-assumption} implies
		\begin{equation} \label{eq:step-4-initial-assumption}
		\mu_h(\widetilde \Lambda_t \cap \A_{\rho/2, \rho}(x_0)) \leq a \times 2^{-N_0 \beta^-} \times \frac{1}{C^-_1 \vee 1}  \times \mu_h(\A_{\rho/2, \rho}(x_0)).
		\end{equation}
		Let $K_j$ be as in Lemma~\ref{lemma:low-mass-implies-small-odometer}. We will show 
		\begin{equation} \label{eq:step-4-induct-mass}
		\sup_{\substack{r > 0 , w \in B_1 : \\ \A_{r, 7r}(w) \subset \A_{\rho/2, d^+_j}(x_0)}}    \mu_h(\widetilde \Lambda_t \cap \A_{3 r, 5r}(w)) \leq K_j, \quad \forall j \geq 0
		\end{equation}
		and
		\begin{equation} \label{eq:step-4-induct-odometer}
		\sup_{\widetilde \Lambda_t \cap \A_{\rho/2, d^+_{j}(x_0)}} \odometer{z}{t} \leq \frac{1}{C_4 \vee 1} K_j, \quad \forall j \geq 1 \quad 
		\end{equation}
		where the universal constant $C_4$ is from \eqref{eq:small-odometer-implies-low-mass}.
		
		Once we show this, then we may take $j \to \infty$ in \eqref{eq:step-4-induct-odometer} and use that $\lim_{j\to \infty} K_j = 0$ to get that $\odometer{z}{t} $ is zero on $\widetilde\Lambda_t \cap \partial B_{d_\infty^+}(x_0)$, where $ d_\infty^+:= \lim_{j\to\infty} d_j^+ \in (\rho/2,\rho)$. 
		Since $\odometer{z}{t} 1\{ \cdot \in {\widetilde \Lambda_t}\}$ is subharmonic in $B_{\rho}(x_0)$, this implies that 
		\begin{equation} \label{eq:zero-odometer-in-inner-ball}
		\sup_{\widetilde \Lambda_t \cap \overline B_{\rho/2}(x_0)} \odometer{z}{t} = 0,
		\end{equation}
		implying the desired statement by the definition of ${\cluster{z}{t}}$.

		Hence, it remains to prove \eqref{eq:step-4-induct-mass} and \eqref{eq:step-4-induct-odometer}.
		Our strategy is to induct on $j \geq 0$ and show the following chain of implications:
		\[
		\{ \mbox{\eqref{eq:step-4-induct-mass} for $j$ and \eqref{eq:step-4-induct-odometer} for $(j+1)$} \} \implies 	\{ \mbox{\eqref{eq:step-4-induct-mass} for $(j+1)$ and \eqref{eq:step-4-induct-odometer} for $(j+2)$} \}.
		\]
		We start with the base case.
		
		{\it Step 2: Base case $j = 0$.} \\
		By \ref{enum:very-good-criteria-1} and the inequality \eqref{eq:step-4-initial-assumption} 
		\begin{equation} \label{eq:step-4-initial-mass-bound}
		\mu_h(\widetilde \Lambda_t \cap \A_{\rho/2, \rho}(x_0)) \leq a \times \Mrho{\rho}{x_0} \times 2^{-N_0 \beta^-}  = K_0,
		\end{equation}
		which is \eqref{eq:step-4-induct-mass} for $j = 0$.
		Since $\alpha \in (0,1)$, by \eqref{eq:ind-assumption},
		\[
		\A_{\rho/2, \rho}(x_0) \cap \widetilde{\Lambda}_t^c \neq \emptyset.
		\]
		Therefore, by Lemma 
		\ref{lemma:odometer-bound-lqg-mass}
		\begin{equation} \label{eq:step-4-odometer-upper-bound}
		\sup_{\A_{\rho/2, \rho}(x_0)} \odometer{z}{t} \leq c \times \SGrho{\rho}{x_0} 	\leq c \times C^+_2 \times \frac{1}{C^-_1 \vee 1} \times \Mrho{\rho}{x_0} 
		\end{equation}
		with the latter inequality following from \ref{enum:very-good-criteria-1} and \ref{enum:very-good-criteria-2}. The inequality \eqref{eq:step-4-initial-mass-bound} allows us to use Lemma \ref{lemma:low-mass-implies-small-odometer}
		to see that 
		\begin{equation} \label{eq:first-step-initial-bound}
		\sup_{\widetilde \Lambda_t \cap \A_{\rho/2, d^+_{1}}(x_0)} \odometer{z}{t} \leq b^{C' \times C_3 \times N_0} \left(	\sup_{\widetilde \Lambda_t \cap \A_{\rho/2, d^+_{0}}(x_0)} \odometer{z}{t} \right).
		\end{equation}
		Now, pick  
		\begin{equation} \label{eq:choice-of-b}
		b := 2^{-\frac{2 \beta^-}{(C' \times C_3) \vee 1}}  .
		\end{equation}
		We emphasize that $b$ depends only on $\gamma$ (through $\beta^-$) since $C_3$ is a universal constant.
		We also choose $N_1$ sufficiently large so that
		\begin{equation} \label{eq:choice-of-N1}
		2^{-\beta^- N_1} \leq a \times 2^{-\beta^-} \times \frac{\frac{1}{C_4 \vee 1}}{(c \times C^+_2 \times \frac{1}{C^-_1 \vee 1}) \vee 1}.
		\end{equation}
		Fix some $N_0 \geq N_1$. With these choices of $b$ and $N_0$, we have 
		\begin{equation} \label{eq:choice-of-b-and-m0}
		b^{C' \times C_3 \times N_0} \leq 
		2^{-N_0 \times 2 \beta^-} \leq
		\frac{ \frac{1}{C_4 \vee 1} \times a 2^{-\beta^-(N_0+1)}}{(c \times C^+_2 \times \frac{1}{C^-_1 \vee 1}) \vee 1}.
		\end{equation}
		Hence, by \eqref{eq:first-step-initial-bound} followed by \eqref{eq:step-4-odometer-upper-bound} and~\eqref{eq:choice-of-b-and-m0}, 
		
		\begin{equation} \label{eq:base-case-odometer-size}
		\sup_{\widetilde \Lambda_t \cap \A_{\rho/2, d^+_{1}(x_0)}} \odometer{z}{t} \leq b^{C' \times C_3 \times N_0} \left(	\sup_{\widetilde \Lambda_t \cap \A_{\rho/2, \rho}(x_0)} \odometer{z}{t} \right) \leq \frac{1}{C_4 \vee 1} a 2^{-\beta^-(N_0+1)} \Mrho{\rho}{x_0}	= \frac{1}{C_4 \vee 1} K_1,
		\end{equation}
		which is \eqref{eq:step-4-induct-odometer} for $j = 1$.

		{\it Step 3: Inductive step, $j \to (j+1)$.} \\
		If \eqref{eq:step-4-induct-odometer} holds for $(j+1)$, then 
		by \eqref{eq:small-odometer-implies-low-mass}, we have \eqref{eq:step-4-induct-mass} for $(j+1)$. 
		It remains to show that 
		\[
		\{\mbox{\eqref{eq:step-4-induct-mass} for $(j+1)$ and \eqref{eq:step-4-induct-odometer} for $(j+1)$} \} \implies 	\{\mbox{\eqref{eq:step-4-induct-odometer} for $(j+2)$} \}.
		\]
		This is similar to the argument of the base case however we will not need the full strength of the inductive step (unlike the base case). In particular we will use the very crude bound $b^{C' \times C_3 \times (N_0 + j)} \leq 2^{-\beta^-}$.
		
		By \eqref{eq:step-4-induct-mass} for $(j+1)$
		we may use Lemma \ref{lemma:low-mass-implies-small-odometer}. Hence, 
		\begin{align*}
		\sup_{	\A_{\rho/2, d^+_{j+2}}(x_0)} \odometer{z}{t} 
		&\leq b^{C' \times C_3 \times (N_0 + j + 1)} \sup_{	\A_{\rho/2, d^+_{j+1}}(x_0)} \odometer{z}{t} \quad \mbox{(by \eqref{eq:low-mass-small-odometer})} \\
		&\leq b^{C' \times C_3 \times (N_0 + j)} \frac{1}{C_4 \vee 1} K_{j+1} \quad \mbox{(by \eqref{eq:step-4-induct-odometer} for $(j+1)$)}  \\
		&\leq 2^{-\beta^- } \times \frac{1}{C_4 \vee 1} K_{j+1} \qquad \mbox{(by \eqref{eq:choice-of-b-and-m0})} \\
		&= \frac{1}{C_4 \vee 1} K_{j+2} \quad \mbox{(by \eqref{eq:k-j-parameter})}
		\end{align*}
		which is \eqref{eq:step-4-induct-odometer} for $(j+2)$, completing the proof.
	\end{proof}

	\section{Upper bound and continuity} \label{sec:upper-bound}
	
	In this section we prove, using the Harnack-type estimate Proposition \ref{prop:harnack-type-property}, that clusters do not immediately exit the unit ball and are in fact H\"{o}lder-continuous in the parameter $t$. 
	We start with the upper bound. 
	
	\begin{prop} \label{prop:upper-bound-polynomially-high-prob}
		For each $r \in (0,1)$, on an event which occurs with polynomially high probability as $T \to 0$, 
		\[
		\overline{{{\cluster{z}{t}}}} \subset B_{r/2}(z) ,\quad \forall t \leq T \, , \quad \forall z \in B_{1/2} \, . 
		\] 
	\end{prop}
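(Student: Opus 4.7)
The plan is to reduce to forcing $\cluster{}{T}$ to avoid a circle $\partial B_{r_1}$ for some $r_1 \in (0, r)$, then to cover that circle by balls and apply the Harnack-type estimate at each one. By the monotonicity of clusters (Lemma \ref{lemma:monotonicity}), $\overline{\cluster{}{t}} \subset \overline{\cluster{}{T}}$ for $t \leq T$, so it suffices to prove $\overline{\cluster{}{T}} \subset B_r$. Since $\cluster{}{T}$ is open, connected, and contains the origin (Lemma \ref{lemma:non-coincidence-open-harmonic} and Proposition \ref{prop:lower-bound}), establishing $\cluster{}{T} \cap \partial B_{r_1} = \emptyset$ for any fixed $r_1 \in (0, r)$ is enough: this forces $\cluster{}{T}$ to lie in the component of $\C \setminus \partial B_{r_1}$ that contains $0$, namely $B_{r_1}$.

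Set $\epsilon := T^{1/(2\beta^-)}$, with $\beta^-$ from Lemma \ref{lemma:volume-growth}, and let $G := \frac{\epsilon}{100}\Z^2 \cap \A_{r_1/2, (1+r_1)/2}$. By Proposition \ref{prop:harnack-type-property} combined with Lemma \ref{lemma:volume-growth} (applied at each of the $O(\log \epsilon^{-1})$ dyadic scales $\rho \in \{8^{-n}\}_{n \in \N} \cap [\epsilon, \epsilon^{1/2}]$ and union bounded over scales together with the $O(\epsilon^{-2})$ points of $G$), with polynomially-high-in-$\epsilon$ probability the following holds for every $x_0 \in G$: there exists $\rho_{x_0} \in [\epsilon, \epsilon^{1/2}] \cap \{8^{-n}\}_{n\in\N}$ for which $\overline E_{\rho_{x_0}}(x_0)$ occurs, and in addition $\mu_h(\A_{\rho/2, \rho}(x_0)) \geq c \rho^{\beta^-} \geq c \epsilon^{\beta^-}$ for every dyadic $\rho$ in that range, with a universal $c > 0$. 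Smallness of $\epsilon$ relative to $r_1$ and $1 - r_1$ also ensures $\overline{B_{\rho_{x_0}}}(x_0) \subset B_1 \setminus \{0\}$, so Proposition \ref{prop:harnack-type-property} is applicable at each $x_0$.

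Because $c \epsilon^{\beta^-} = c T^{1/2} \gg T$ for small $T$, and $\mu_h(\cluster{}{T}) \leq T$ by Lemma \ref{lemma:conservation-of-mass}, we have
\[
\mu_h\bigl(\cluster{}{T} \cap \A_{\rho_{x_0}/2, \rho_{x_0}}(x_0)\bigr) \leq T \leq \alpha c \epsilon^{\beta^-} \leq \alpha \mu_h\bigl(\A_{\rho_{x_0}/2, \rho_{x_0}}(x_0)\bigr)
\]
for each $x_0 \in G$, so the Harnack-type estimate gives $\cluster{}{T} \cap B_{\rho_{x_0}/2}(x_0) = \emptyset$. Since the grid spacing $\epsilon/100$ is much smaller than $\rho_{x_0}/2 \geq \epsilon/2$, the balls $\{B_{\rho_{x_0}/2}(x_0)\}_{x_0 \in G}$ cover $\partial B_{r_1}$, completing the reduction. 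The only obstacle is bookkeeping: the polynomially-high-in-$\epsilon$ probabilities convert to polynomially-high-in-$T$ via $\epsilon = T^{1/(2\beta^-)}$. The real substance already lies in the Harnack-type estimate, plugged in with the crude input $\mu_h(\cluster{}{T}) \leq T$.
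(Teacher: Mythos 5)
Your proof is correct and takes essentially the same route as the paper's: cover a circle separating the target ball by grid points, apply the Harnack-type estimate (Proposition~\ref{prop:harnack-type-property}) at each grid point together with the volume lower bound of Lemma~\ref{lemma:volume-growth}, and use $\mu_h({\cluster{}{t}}) \le t$ from Lemma~\ref{lemma:conservation-of-mass} as the crude input. The only differences are cosmetic: you conclude from $\cluster{}{T}\cap\partial B_{r_1}=\emptyset$ by connectedness of $\cluster{}{T}$, whereas the paper argues that $\odometer{}{t}$ vanishes on $\partial B_{r/2}$ and on $\partial B_1$ and is subharmonic in between; you also keep the constant $c$ in the volume lower bound explicit, whereas the paper absorbs it into the exponent by writing $\rho_x^{2\beta^-}$ in place of $c\rho_x^{\beta^-}$. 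One small remark on the bookkeeping: the union bound over the $O(\epsilon^{-2})$ grid points is not needed as an extra step, since the ``moreover'' part of Proposition~\ref{prop:harnack-type-property} and the statement of Lemma~\ref{lemma:volume-growth} already assert the required uniformity over points; only the union bound over the $O(\log\epsilon^{-1})$ dyadic scales is genuinely additional, and that is harmless because the per-scale failure probability is polynomially small in the scale.
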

	
	\begin{proof}
		Fix $r \in (0,1)$ and let $\overline E_{\rho}(x_0)$ and $\alpha \in (0,1)$ be the event and parameter from Proposition \ref{prop:harnack-type-property}.
		By Proposition~\ref{prop:harnack-type-property}, it holds with polynomially high probability as $\epsilon \to 0$ that for each $x\in (B_{1 + \epsilon} \backslash B_{10 \sqrt{\epsilon}}) \cap \frac{\epsilon}{100} \Z^2$, there exists $\rho_x \in [\epsilon,\epsilon^{1/2}]$ such that $\overline E_{\rho_x}(x)$ occurs. Henceforth assume that this is the case for some $\epsilon \in (0, 2^{-20} r^2)$.
		
		Let $\mathbf{X} \subset  (B_{1 + \epsilon} \backslash B_{10 \sqrt{\epsilon}}) \cap \frac{\epsilon}{100} \Z^2$ be a set such that
		\begin{equation}
		\bigcup_{x \in \mathbf{X}} \A_{\rho_x/2, \rho_x}(x) \Subset B_{1} \backslash B_{r/4}(z)
		\end{equation}
		and
		\begin{equation} \label{eq:cover-inner-ball}
		\partial B_{r/2}(z) \subset \bigcup_{x \in \mathbf{X}} B_{\rho_x/2}(x).
		\end{equation}
		This is possible since for each $x$, we have $\epsilon \leq \rho_x$ and $\rho_x^{1/2} \leq r/100$ --- see Figure \ref{fig:upper-bound}.

		By Lemma~\ref{lemma:volume-growth}, it holds with polynomially high probability as $\epsilon \to 0$ that
		\begin{equation} \label{eq:upper-bound-annuli}
		\mu_h(\A_{\rho_x/2,\rho_x}(x )) \geq \rho_x^{2 \beta^-}, \quad \forall x \in \mathbf{X}, \quad \mbox{$\beta^-$ from Lemma \ref{lemma:volume-growth}}.
		\end{equation}
		Henceforth assume that $\epsilon \in  (0, 2^{-20} r^2)$ is such that~\eqref{eq:upper-bound-annuli} holds.
		
		Now choose $T \leq \alpha \epsilon^{2 \beta^-}$. Then for each $x \in \mathbf{X}$,~$z \in B_{1/2}$, and each $t \leq T$,
		\begin{align*}
		\mu_h({\cluster{z}{t}} \cap \A_{\rho_x/2,\rho_x}(x)) &\leq \mu_h({\cluster{z}{t}}) \quad \mbox{(monotonicity, Lemma \ref{lemma:monotonicity})} \\
		&\leq  \alpha \epsilon^{2 \beta^-} \qquad \mbox{(choice of $T$ and Lemma \ref{lemma:conservation-of-mass})} \\
		&\leq \alpha \rho_x^{2 \beta^-} \qquad \mbox{($\rho_x \geq \epsilon$)} \\
		&\leq \alpha \mu_h(\A_{\rho_x/2,\rho_x}(x )) \qquad \mbox{(by \eqref{eq:upper-bound-annuli})}.
		\end{align*}
		Thus, we may apply  Proposition \ref{prop:harnack-type-property} and \eqref{eq:cover-inner-ball} to see that 
		\begin{equation}
		\sup_{\partial B_{r/2}(z)} \odometer{z}{t} = 0, \quad \forall t \leq T \, , \quad \forall z \in B_{1/2} \, . 
		\end{equation}
		Since $\sup_{\partial B_1} \odometer{z}{t} = 0$ by Lemma \ref{lemma:conservation-of-mass} and $\odometer{z}{t}$ is subharmonic away from~$B_{r/2}(z)$, this implies 
		\begin{equation} \label{eq:upper-bound-odometer}
		\sup_{B_1 \setminus B_{r/2}(z)} \odometer{z}{t} = 0, \quad \forall t \leq T \, , \quad \forall z \in B_{1/2} \, . 
		\end{equation}
		By the definition~\eqref{eq:non-co-set} of ${\cluster{}{t}}$, \eqref{eq:upper-bound-odometer} implies that ${\cluster{z}{t}} \subset B_{r/2}(z)$. 
		We conclude by recalling that both the condition in the first paragraph and~\eqref{eq:upper-bound-annuli} hold with polynomially high probability as $\epsilon\to 0$.
	\end{proof}

	\begin{figure}
		\begin{center}
			\includegraphics[width=0.4\textwidth]{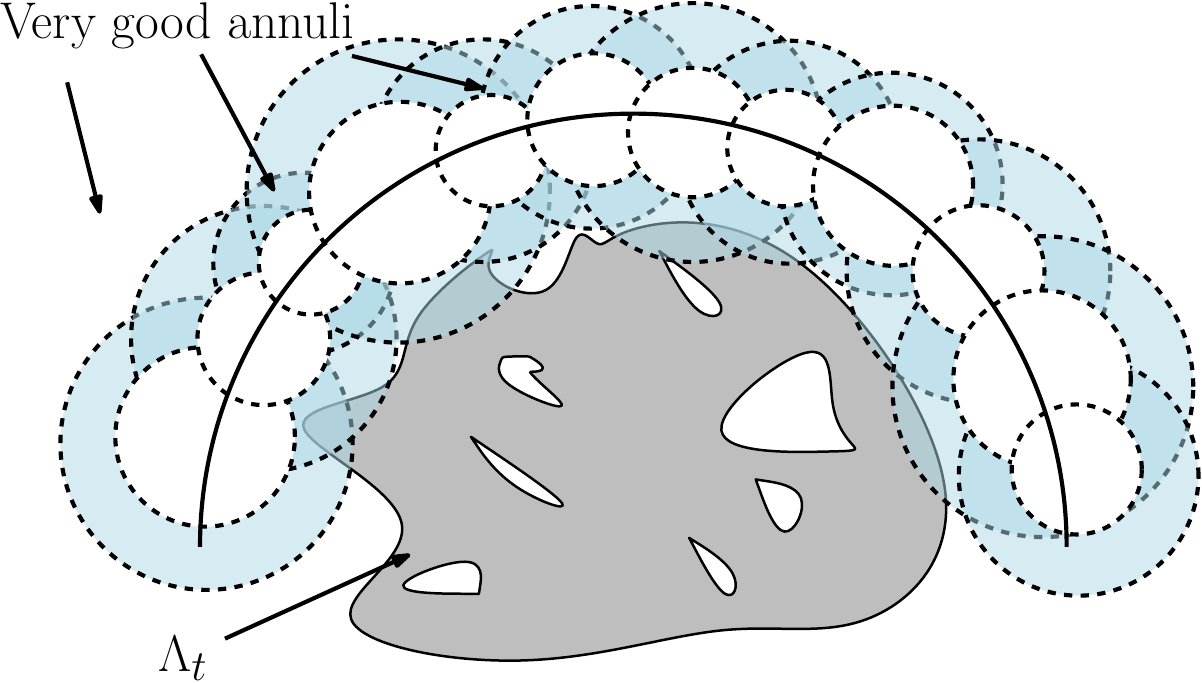}
			\caption{Covering of $\partial B_{r/2}$ by very good annuli as in the proof of Proposition \ref{prop:upper-bound-polynomially-high-prob}. The cluster ${\cluster{}{t}}$ is displayed in gray with a 
				solid black border and the good annuli are light gray with a dotted black border. } 
			\label{fig:upper-bound}
		\end{center}
	\end{figure}

	We next prove continuity of the clusters, using a similar argument as in the proof of Proposition~\ref{prop:upper-bound-polynomially-high-prob}.   
	
	\begin{prop} \label{prop:continuity-of-clusters}
		The following occurs on an event of probability 1. 
		For each $t > 0$ and~$z \in B_1$ such that ${\cluster{z}{t}} \Subset B_1$, for all $\epsilon > 0$ sufficiently small, depending on $t$,
		\[
		{\cluster{z}{t + \epsilon}} \Subset {\cluster{z}{t}} + B_{\delta}(z) ,\quad \text{for $\delta = C \epsilon^{1/(4\beta^-)}$}
		\]
		where $\beta^- > (2+\gamma)^2/2$ is from Lemma \ref{lemma:volume-growth} and  $C = C(\beta^-) >0$ is a deterministic constant. 
	\end{prop}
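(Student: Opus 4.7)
The plan is to mirror the proof of Proposition \ref{prop:upper-bound-polynomially-high-prob}, replacing the sphere $\partial B_{r/2}$ by the topological shell $\partial({\cluster{}{t}} + B_{\delta/2})$ and the a priori total-mass bound $\mu_h({\cluster{}{t}}) \leq T$ by a bound on the increment $\mu_h({\cluster{}{t+\epsilon}} \setminus {\cluster{}{t}})$. Fix $t>0$ with $\overline{{\cluster{}{t}}} \Subset B_1$ and set $\delta = C \epsilon^{1/(4\beta^-)}$. By monotonicity (Lemma \ref{lemma:monotonicity}) and compactness, for $\epsilon$ small enough depending on $t,h$ we also have $\overline{{\cluster{}{t+\epsilon}}} \Subset B_1$, so integrating the odometer PDE of Lemma \ref{lemma:non-coincidence-open-harmonic} against the Dirichlet condition $\odometer{}{s}|_{\partial B_1}=0$ from Lemma \ref{lemma:conservation-of-mass} yields the mass identity $\mu_h({\cluster{}{s}}) + \nu_s(\partial {\cluster{}{s}})=s$ for $s\in\{t,t+\epsilon\}$, and hence
\[
\mu_h({\cluster{}{t+\epsilon}} \setminus {\cluster{}{t}}) = \epsilon + \nu_t(\partial {\cluster{}{t}}) - \nu_{t+\epsilon}(\partial {\cluster{}{t+\epsilon}}).
\]

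I next apply Proposition \ref{prop:harnack-type-property} at Euclidean scale $\epsilon' \asymp \delta^2$: on an event of polynomially high probability (and hence a.s.\ after intersecting over a summable sequence of scales via Borel--Cantelli), every lattice point $x \in (B_{1+\epsilon'} \setminus B_{10\sqrt{\epsilon'}}) \cap (\epsilon'/100)\Z^2$ is surrounded by a very good annulus $\A_{\rho_x/2,\rho_x}(x)$ with $\rho_x \in [\epsilon', \sqrt{\epsilon'}]$. I then cover the shell $\partial({\cluster{}{t}}+B_{\delta/2})$ by finitely many such balls $B_{\rho_{x_i}/2}(x_i)$, chosen so that each annulus $\A_{\rho_{x_i}/2,\rho_{x_i}}(x_i)$ lies inside $({\cluster{}{t}}+B_\delta) \setminus \overline{{\cluster{}{t}}}$ and in particular is disjoint from ${\cluster{}{t}}$. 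By Lemma \ref{lemma:volume-growth}, each such annulus carries $\mu_h$-mass at least $c \rho_{x_i}^{2\beta^-} \gtrsim (\epsilon')^{2\beta^-} \asymp \delta^{4\beta^-} \asymp \epsilon$, so choosing $C$ large makes $\alpha \mu_h(\A_{\rho_{x_i}/2,\rho_{x_i}}(x_i)) \gg \epsilon$. Granted a bound of the form $\mu_h({\cluster{}{t+\epsilon}} \setminus {\cluster{}{t}}) \lesssim \epsilon$, the disjointness of the annulus from ${\cluster{}{t}}$ gives the Harnack-type hypothesis
\[
\mu_h({\cluster{}{t+\epsilon}} \cap \A_{\rho_{x_i}/2,\rho_{x_i}}(x_i)) \leq \mu_h({\cluster{}{t+\epsilon}} \setminus {\cluster{}{t}}) \leq \alpha\, \mu_h(\A_{\rho_{x_i}/2,\rho_{x_i}}(x_i)),
\]
so Proposition \ref{prop:harnack-type-property} yields ${\cluster{}{t+\epsilon}} \cap B_{\rho_{x_i}/2}(x_i) = \emptyset$ for every $i$. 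As in Proposition \ref{prop:upper-bound-polynomially-high-prob}, the subharmonicity of $\odometer{}{t+\epsilon}$ on $B_1 \setminus \{0\}$, combined with $\odometer{}{t+\epsilon} \equiv 0$ on $\partial B_1$ and on the covering, then forces $\odometer{}{t+\epsilon} \equiv 0$ on $B_1 \setminus ({\cluster{}{t}}+\overline{B_{\delta/2}})$, whence ${\cluster{}{t+\epsilon}} \subset {\cluster{}{t}}+B_\delta$.

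The main obstacle is justifying the mass bound $\mu_h({\cluster{}{t+\epsilon}} \setminus {\cluster{}{t}}) \lesssim \epsilon$ without invoking Proposition \ref{prop:boundary-measure}, which is proved only in the following section; by the identity above, this amounts to controlling the free-boundary excess $\nu_t(\partial{\cluster{}{t}}) - \nu_{t+\epsilon}(\partial{\cluster{}{t+\epsilon}})$ by $O(\epsilon)$. The strategy is to exploit the bounded monotonicity of $s \mapsto \mu_h({\cluster{}{s}})$: since this function is non-decreasing and bounded by $s$ (Lemma \ref{lemma:conservation-of-mass}), a pigeonhole argument on a fixed neighborhood of $t$ shows that for $\epsilon$ small enough depending on $t$, the increment $\mu_h({\cluster{}{t+\epsilon}})-\mu_h({\cluster{}{t}})$ is at most a constant times $\epsilon$, which is enough. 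The final Hölder exponent $1/(4\beta^-)$ then matches the ratio of the volume-growth exponent $2\beta^-$ to the square-root loss $\sqrt{\epsilon'}$ in the radii provided by Proposition \ref{prop:harnack-type-property}.
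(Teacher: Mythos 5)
Your geometric covering argument---applying Proposition \ref{prop:harnack-type-property} to a collection of very good annuli in a thin shell around $\cluster{}{t}$, each annulus disjoint from $\cluster{}{t}$, so that the increment bound alone makes the relative mass small---is essentially the paper's argument, and your scale bookkeeping ($\epsilon'\asymp\delta^2$ with $\rho_{x_i}\in[\epsilon',\sqrt{\epsilon'}]$, producing the exponent $1/(4\beta^-)$) is a valid reparametrization of the one in the paper. You also correctly identified that the whole thing hinges on a bound of the form $\mu_h(\cluster{}{t+\epsilon}\setminus\cluster{}{t})\lesssim\epsilon$, and that this cannot be read off immediately from $\mu_h(\cluster{}{s})\leq s$ because of the possible defect $\nu_s(\partial\cluster{}{s})$.

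The gap is in the claimed fix. Monotonicity of $s\mapsto\mu_h(\cluster{}{s})$ together with the bound $\mu_h(\cluster{}{s})\leq s$ is fully consistent with a right-jump of this function at the particular $t$ you are studying: if $\mu_h(\partial\cluster{}{t})=c>0$, then one can have $\mu_h(\cluster{}{t})=t-c$ while $\mu_h(\cluster{}{t+\epsilon})$ increases to $t$ as $\epsilon\downarrow 0$, in which case $\mu_h(\cluster{}{t+\epsilon}\setminus\cluster{}{t})\geq c$ for every $\epsilon>0$, so it is never $O(\epsilon)$. No pigeonhole or averaging argument on the fixed monotone function $\mu_h(\cluster{}{\cdot})$ can rule this out: the bad set of $\epsilon$ in this scenario is an entire interval $(0,\epsilon_0)$, not a sparse set, and the Proposition demands the conclusion for \emph{all} sufficiently small $\epsilon$, not just for most. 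The correct input is exactly what you were trying to avoid, namely $\mu_h(\partial\cluster{}{t})=0$ (Proposition \ref{prop:boundary-measure}), which gives $\nu_t(\partial\cluster{}{t})=0$ since $0\leq\nu_t\leq\mu_h$, hence $\mu_h(\cluster{}{t})=t$ exactly and $\mu_h(\cluster{}{t+\epsilon}\setminus\cluster{}{t})\leq(t+\epsilon)-t=\epsilon$. Note that there is no circularity in invoking it here: the proof of Proposition \ref{prop:boundary-measure-zero} uses only the Harnack-type estimate and the Lebesgue density theorem, not continuity of the clusters in $t$, so the forward reference within the text is harmless. If you replace your last paragraph by this citation, the rest of your argument closes.
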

	
	\begin{proof}
		Let $\overline E_{\rho}(x_0)$ and $\alpha \in (0,1)$ be the event and parameter from Proposition \ref{prop:harnack-type-property}.
		By Proposition~\ref{prop:harnack-type-property}, it holds with polynomially high probability as $\delta \to 0$ that for each $x\in (B_{1 + \delta} \backslash B_{10 \sqrt{\delta}}) \cap \frac{\delta}{100} \Z^2$, there exists $\rho_x = \rho_x(\delta) \in [\delta,\delta^{1/2}]$ such that $\overline E_{\rho_x}(x)$ occurs. 
		By Lemma~\ref{lemma:volume-growth}, it also holds with polynomially high probability as $\delta \to 0$ that 
		\begin{equation} \label{eq:volume-growth-lower-bound}
		\mu_h\left( \A_{\rho_x/2, \rho_x}(x ) \right)  \geq \rho_x^{2 \beta^-}, \quad \forall x \in (B_{1 + \delta } \backslash B_{10 \sqrt{\delta }})  \cap \frac{\delta }{100} \Z^2 .
		\end{equation}
		By the Borel-Cantelli lemma, a.s.\ there exists $M_0$ sufficiently large such that the preceding two conditions hold for each $\delta \in \{2^{-n}\}_{n \geq M_0}$.

		Let $t > 0$, and $z \in B_1$ such that ${\cluster{z}{t}} \Subset B_1$. By Proposition \ref{prop:lower-bound}, there exists $\epsilon_0(t) > 0$ so that $B_{\epsilon_0}(z) \subset {\cluster{z}{t}}$.
		Hence, since ${\cluster{z}{t}} \Subset B_1$, by taking $M_0$ possibly larger (depending on $\epsilon_0$), we can arrange that
		\begin{equation} \label{eq:boundary-inside}
		({\cluster{z}{t}} + B_{20 \sqrt{\delta }}) \backslash ({\cluster{z}{t}} + B_{\sqrt{\delta }}) \Subset  B_1 \backslash B_{10 \sqrt{\delta }}(z), \quad \forall \delta  \in \{2^{-n}\}_{n \geq M_0} .
		\end{equation}
		By~\eqref{eq:boundary-inside} and the fact that $\rho_x \in [\delta , \delta^{1/2}]$, we obtain that for each $\delta  \in \{2^{-n}\}_{n \geq M_0}$ there exists $\mathbf{X} = \mathbf{X}(\delta) \subset (B_{1 + \delta } \backslash B_{10 \sqrt{\delta }})  \cap \frac{\delta }{100} \Z^2$ for which
		\begin{equation} \label{eq:cover-continuous}
		({\cluster{z}{t}} + B_{4 \sqrt{\delta }}) \backslash ({\cluster{z}{t}} + B_{3 \sqrt{\delta }}) \Subset \bigcup_{x \in \mathbf{X}} B_{\rho_x/2}(x ) \Subset B_{1- 10 \sqrt{\delta}}
		\end{equation}
		and 
		\begin{equation} \label{eq:cover-in-complement}
		\bigcup_{x \in \mathbf{X}} B_{\rho_x}(x) \Subset ({{{\cluster{z}{t}}}})^c
		\end{equation}
		for each $\delta  \in \{2^{-n}\}_{n \geq M_0}$.

		As ${\cluster{z}{t}} \Subset B_1$, by Lemma \ref{lemma:conservation-of-mass} we have $\mu_h({\cluster{z}{t}}) = t$.
		Thus, by monotonicity, Lemma \ref{lemma:monotonicity}, 
		\[
		\mu_h({\cluster{z}{t + \epsilon}} \backslash {\cluster{z}{t}}) \leq \epsilon, \quad \forall \epsilon > 0.
		\] 
		Therefore, by \eqref{eq:cover-in-complement}, whenever $\delta  \in \{2^{-n}\}_{n \geq M_0}$ we have
		\begin{equation} \label{eq:expand-cluster-mass}
		\mu_h(\A_{\rho_x/2,\rho_x}(x ) \cap {\cluster{z}{t+\epsilon}}) \leq \epsilon, \quad \forall x \in \mathbf{X}, \quad \forall \epsilon > 0.
		\end{equation}
		
		Now set 
		\[
		\epsilon = \epsilon(\delta) := \alpha \delta^{2 \beta^-},
		\]
		(where $\alpha$ is as in Proposition \ref{prop:harnack-type-property}). Then for each $\delta  \in \{2^{-n}\}_{n \geq M_0}$ and each $x \in \mathbf{X}$, we have
		\begin{align*}
		\mu_h(\A_{\rho_x/2,\rho_x}(x) \cap {\cluster{z}{t+\epsilon }}) &\leq \alpha \delta^{2 \beta^-} \quad \text{(by \eqref{eq:expand-cluster-mass})} \\
		&\leq \alpha \rho_x^{2 \beta^-} \quad \mbox{($\rho_x \geq \delta $)} \\
		&\leq \alpha \mu_h(\A_{\rho_x/2,\rho_x}(x)) \quad \mbox{(by \eqref{eq:volume-growth-lower-bound})}.
		\end{align*}
		Therefore, as the event $\overline E_{\rho_x }(x )$ occurs for each $x \in \mathbf{X}$, by Proposition \ref{prop:harnack-type-property},
		\[
		B_{\rho_x/2}(x ) \cap {\cluster{z}{t+\epsilon }} = \emptyset, \quad \forall x \in \mathbf{X}, \quad \forall \delta  \in \{2^{-n}\}_{n \geq M_0} .
		\]
		This implies by \eqref{eq:cover-continuous} that 
		\begin{equation} \label{eq:zero-on-shell}
		\sup_{({\cluster{z}{t}} + B_{4 \sqrt{\delta}}) \backslash ({\cluster{z}{t}} + B_{3 \sqrt{\delta}})} \odometer{z}{t+\epsilon } = 0, \quad \forall \delta  \in \{2^{-n}\}_{n \geq M_0}.
		\end{equation}
		Since $\sup_{\partial B_1} \odometer{z}{t+\epsilon } = 0$ by Lemma \ref{lemma:conservation-of-mass} and $\odometer{z}{t+\epsilon }$ is subharmonic away from~$z$, \eqref{eq:zero-on-shell} implies
		\begin{equation}
		\sup_{B_1 \backslash ({\cluster{z}{t}} + B_{3 \sqrt{\delta }})} \odometer{z}{t+\epsilon } = 0, \quad \forall \delta  \in \{2^{-n}\}_{n \geq M_0} .
		\end{equation}
		By the definition~\eqref{eq:non-co-set} of $\cluster{z}{t+\epsilon }$, this implies that
		\begin{equation}
		{\cluster{z}{t+\epsilon }} \Subset {\cluster{z}{t}} + B_{4\sqrt{\delta}}, \quad \forall \delta  \in \{2^{-n}\}_{n \geq M_0} .
		\end{equation}
		Recalling that $\epsilon = \alpha \delta^{2 \beta^-}$, this concludes the proof. 
	\end{proof}

	For completeness, we indicate how the above two results imply Proposition~\ref{prop:upper-bound} and Proposition \ref{prop:continuity}. 
	
	\begin{proof}[Proof of Propositions~\ref{prop:upper-bound} and~\ref{prop:continuity}]
		Proposition~\ref{prop:upper-bound} follows immediately from Proposition~\ref{prop:upper-bound-polynomially-high-prob}. 
		
		The fact that the clusters continuously increase in $t$, \eqref{eq:continuity}, is implied by Proposition \ref{prop:continuity-of-clusters}. 
		We show here that Proposition \ref{prop:upper-bound-polynomially-high-prob} implies that the clusters decrease to the center point, \eqref{eq:contains-origin}. 
		
		By Proposition \ref{prop:lower-bound}, for each $t > 0$ and~$z \in B_{1/2}$ there exists $\epsilon(t) > 0$ so that $B_{\epsilon(t)}(z) \subset {\cluster{z}{t}}$.
		By Proposition \ref{prop:upper-bound-polynomially-high-prob} and the Borel-Cantelli lemma, applied to a sequence of dyadic radii, $r_n := \{2^{-n}\}_{n \geq 2}$, 
		a.s.\ there exists a sequence of positive times $\{t_n\}_{n \geq 2}$ so that 
		\[
		\overline{{\cluster{z}{t_n}}} \Subset B_{r_n}(z), \quad \forall n \geq 2.
		\]
		The desired claim follows by combining the previous two sentences with monotonicity of the clusters in $t$, Lemma \ref{lemma:monotonicity}.
	\end{proof}

	\section{Boundary has measure zero} \label{sec:boundary-measure-zero}
	
	We show that the boundary of a cluster compactly embedded in the unit ball has LQG-measure zero. 
	\begin{prop} \label{prop:boundary-measure-zero}
		On an event of probability 1, 
		\[
		\mu_h(\partial {\cluster{z}{t}}) = 0 \qquad \mbox{for all $z \in B_{1}$ with ${\cluster{z}{t}} \Subset B_1$}.
		\]		
	\end{prop}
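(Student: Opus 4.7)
The plan is to argue by contradiction: suppose that with positive probability some $t > 0$ satisfies $\overline{\cluster{}{t}} \subset B_1$ but $\mu_h(\partial\cluster{}{t}) > 0$. The strategy is to combine the Harnack-type estimate (Proposition~\ref{prop:harnack-type-property}) with a Besicovitch differentiation argument applied to the Radon measure $\mu_h$.

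First, I would promote the ``polynomially high probability'' statement in Proposition~\ref{prop:harnack-type-property} into an almost-sure statement along dyadic scales. Applying the second assertion of Proposition~\ref{prop:harnack-type-property} at each scale $\epsilon_n = 2^{-n}$ and invoking Borel--Cantelli (the log-many good scales per grid point make the tail estimates summable), I obtain an event of probability one on which: for every compact $K \Subset B_1 \setminus \{0\}$, every $\delta > 0$, and every $y \in K$, there exist arbitrarily small $\rho \in (0,\delta)$ and a grid point $y'$ with $|y'-y| \leq \rho/100$ such that $\overline E_\rho(y')$ occurs. The same Borel--Cantelli argument can be arranged so that on this event the volume-growth bounds of Lemma~\ref{lemma:volume-growth} hold at all such scales, giving in particular $\mu_h(\A_{\rho/2,\rho}(y')) \geq c_0 \mu_h(B_\rho(y'))$ for a universal constant $c_0 > 0$.

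Next, I would exploit the contrapositive of Proposition~\ref{prop:harnack-type-property} at boundary points. Fix any $y \in \partial\cluster{}{t}$ and choose $\rho, y'$ as above with $\overline{B_\rho(y')} \subset B_1 \setminus \{0\}$. Since $\cluster{}{t}$ is open and $y \in \partial\cluster{}{t}$, the intersection $\cluster{}{t} \cap B_{\rho/4}(y) \subset \cluster{}{t} \cap B_{\rho/2}(y')$ is non-empty, so Proposition~\ref{prop:harnack-type-property} forces
\[
\mu_h(\cluster{}{t} \cap \A_{\rho/2,\rho}(y')) > \alpha\,\mu_h(\A_{\rho/2,\rho}(y')).
\]
Combining this with the annulus-to-ball comparison from Lemma~\ref{lemma:volume-growth} and enlarging slightly to swallow $y$, I conclude that for every $y \in \partial\cluster{}{t}$ there exist $\rho_n \downarrow 0$ and a constant $c_1 = c_0 \alpha /C > 0$ (depending only on $\gamma$) such that
\[
\frac{\mu_h(\cluster{}{t} \cap B_{2\rho_n}(y))}{\mu_h(B_{2\rho_n}(y))} \geq c_1.
\]

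Finally, I would close the argument via Lebesgue-type differentiation. Since $\mu_h$ is a Radon measure on $\C$, the Besicovitch differentiation theorem gives that for $\mu_h$-a.e.\ $y \in \partial\cluster{}{t}$,
\[
\lim_{r \to 0} \frac{\mu_h(\partial\cluster{}{t} \cap B_r(y))}{\mu_h(B_r(y))} = 1.
\]
Because $\cluster{}{t}$ is open, $\cluster{}{t} \cap \partial\cluster{}{t} = \emptyset$, so $\cluster{}{t} \subset \C \setminus \partial\cluster{}{t}$ and the complementary density tends to zero. If $\mu_h(\partial\cluster{}{t}) > 0$ this contradicts the lower bound from the previous paragraph at any such $y$, and hence $\mu_h(\partial\cluster{}{t}) = 0$. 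Since the event in Step~1 is uniform in $t$, the conclusion holds simultaneously for all $t$ with $\overline{\cluster{}{t}} \subset B_1$.

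The main obstacle I anticipate is the bookkeeping in Step~1: one must arrange the probabilistic inputs so that the good-annulus and volume-growth estimates hold \emph{at every} point of the uncountable random set $\partial\cluster{}{t}$ along a common sequence of scales. This is handled by running Borel--Cantelli on a countable deterministic grid at each dyadic scale and then exploiting the fact that a boundary point $y$ and a nearby grid point $y'$ (within $\rho/100$) are interchangeable for all the geometric quantities at play, namely Brownian exit events, LQG mass in annuli, and the definition of the very-good event $\overline E_\rho$. Once this setup is in place, the density contradiction is immediate.
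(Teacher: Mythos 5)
Your overall strategy — contrapositive of the Harnack-type estimate combined with a Lebesgue/Besicovitch density argument at boundary points — is the same as the paper's, but there is a genuine gap in the doubling step that the paper resolves with a substantially more delicate argument.

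The gap is in your claim that Lemma~\ref{lemma:volume-growth} yields a \emph{universal} constant $c_0>0$ with $\mu_h(\A_{\rho/2,\rho}(y')) \geq c_0\,\mu_h(B_\rho(y'))$ uniformly over grid points $y'$ and small scales $\rho$. Lemma~\ref{lemma:volume-growth} gives one-sided bounds with \emph{different} exponents: $\mu_h(B_\rho)\le \rho^{\beta^+}$ and $\mu_h(\A_{\rho/2,\rho})\ge C_2\rho^{\beta^-}$ with $\beta^->(2+\gamma)^2/2 > (2-\gamma)^2/2 > \beta^+$. Combining them only gives $\mu_h(\A_{\rho/2,\rho}(y'))/\mu_h(B_\rho(y'))\gtrsim \rho^{\beta^--\beta^+}\to 0$, so no uniform doubling constant follows. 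More fundamentally, a pointwise annulus-to-ball comparison for the LQG measure does \emph{not} hold uniformly over all grid points and scales (the paper is explicit about this just after introducing the event $G_n(z;m)$), and no Borel--Cantelli/union-bound argument over a deterministic grid can salvage it, because the doubling event at a single point only has probability bounded below, not close to $1$. This is precisely why the paper proves a weaker but sufficient statement, Lemma~\ref{lemma:mu_h-doubling-and-harnack}: for $\mu_h$-a.e.\ $z$ (not for every grid point), both the very-good event $\overline E_n(z)$ and the doubling event $G_n(z)$ occur for a positive fraction of scales $n$. Establishing this requires sampling $z$ from $\mu_h$, reweighting so that the field near $z$ becomes a GFF with a $\gamma$-log singularity, and applying the Birkhoff ergodic theorem to the sequence of indicators along dyadic scales. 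Without this piece, the contradiction step at the end of your argument cannot be made to run, since you cannot pass from the annulus lower bound given by the contrapositive of Proposition~\ref{prop:harnack-type-property} to a lower density bound in balls centered at the boundary point.
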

	Our strategy for doing so is to use the Lebesgue density theorem together with some of the intermediate results from Section \ref{sec:harnack-type-estimate}.

	The Lebesgue density theorem for general Radon measures on $\C$~\cite[Corollary 2.14]{mattila1999geometry} shows that a.s.\ for every Borel set $X$, the set of $\mu_h$-density points of $X$ has full $\mu_h$-mass, i.e., 
	\begin{equation} \label{eq:density-0}
	\lim_{r \to 0} \frac{\mu_h(B_r(z) \cap X)}{\mu_h(B_r(z))} = 1,\quad \text{for $\mu_h$-a.e.\ $z\in X$}. 
	\end{equation}

	We want to deduce Proposition~\ref{prop:boundary-measure-zero} from the density theorem for $\mu_h$ in Euclidean balls~\eqref{eq:density-0}
	together with the Harnack-type estimate from Section~\ref{sec:harnack-type-estimate}. 
	However, the results of Section~\ref{sec:harnack-type-estimate} are in terms of the LQG mass of the intersection of a cluster with an annulus. In order to compare the $\mu_h$-masses of balls and annuli, we require a doubling property for the $\mu_h$-masses of Euclidean balls, \ie, an up-to-constants comparison of the $\mu_h$-masses of $B_{2^{-n}}(z)$ and $B_{2^{-n-1}}(z)$ with the constant independent of $n$ and $z$. That is, we require an event of the form,
	\begin{equation} \label{eq:annulus-ball-comparison}
	G_{n}(z) = G_n(z; m) := \left\{\mu_{h}(\A_{2^{-n-1}, 2^{-n}}(z)) \geq m \mu_{h}(B_{2^{-n}}(z)) \right\}
	\end{equation}
	for $z \in \C$ and $m > 0$. 
	Due to the randomness of $\mu_h$, such an event does not hold uniformly over all choices of $n$ and $z$. Instead, we will show that for $\mu_h$-a.e.\ $z \in \C$, one has this estimate 
	{\it and} a Harnack-type property for `most' large values of $n$. For convenience, write
	\begin{equation} \label{eq:very-good-annuli-event}
	\overline E_{n}(z) := \overline E_{2^{-n}}(z)
	\end{equation}
	for $z \in \C$ and $n \in \N$ where $\overline E_{\rho}(z)$ is the very good event from Section \ref{subsec:good-and-very-good-annuli}.
	
	\begin{lemma} \label{lemma:mu_h-doubling-and-harnack}
		Let $\zeta > 0$ and $b \in (0,1)$. There exists $m > 0$ and parameters corresponding to the event $\overline E_{\rho}(z) = \overline E_{\rho}(z; N_0, a, b, C_1^{\pm}, C_2^{\pm}, C_3)$ (from Section \ref{subsec:good-and-very-good-annuli}) such that a.s.\ for $\mu_h$-a.e.\ $z\in  B_1$, it holds for each large enough $N\in \N$ (depending on $z$) that
		\begin{equation} \label{eq:mu_h-doubling}
		\#\left\{ n \in [N+1,2N] \cap \Z : \mbox{$\overline E_{n}(z)$ and $G_{n}(z)$ occur} \right\}  \geq (1-\zeta) N.
		\end{equation}  
	\end{lemma}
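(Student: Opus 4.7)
The plan is a Borel--Cantelli argument, reducing the $\mu_h$-a.e.\ statement to a fixed-$z$ concentration bound via a GMC Cameron--Martin (rooted-measure) identity. Let $F_n(z) := \overline E_n(z) \cap G_n(z)$ and let $Y_N$ denote the random set of $z \in B_1$ for which $F_n(z)$ fails for more than $\zeta N$ values of $n \in [N+1, 2N] \cap \Z$. It suffices to show $\sum_N \E[\mu_h(Y_N)] < \infty$, since then the first Borel--Cantelli lemma applied to the random measure $\mu_h$ gives $\mu_h(\limsup_N Y_N) = 0$ a.s., which is the claim.

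To bound $\E[\mu_h(Y_N)]$, I would invoke the rooted-measure identity
$$ \E[\mu_h(Y_N)] = \int_{B_1} \P_z[z \in Y_N]\, \nu(z)\, dz ,$$
where $\P_z$ is a Cameron--Martin shift of the law of $h$ under which $z$ becomes a $\gamma$-thick point (concretely, $h$ is replaced by $h - \gamma \log|z - \cdot| + \text{smooth}$), and $\nu(z)\, dz = \E[d\mu_h(z)]$ is a smooth intensity on $B_1 \setminus \{0\}$ (integrability near $0$ is handled by restricting to $B_1 \setminus B_\delta$ and sending $\delta \to 0$). The task then reduces to showing $\P_z[z \in Y_N] \leq C e^{-\alpha N}$ uniformly in $z$ for some $\alpha > 0$. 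This in turn follows from Lemma~\ref{lemma:annulus-iterate} applied along a sufficiently thinned dyadic subsequence of scales, provided (i) $\P_z[\overline E_{2^{-n}}(z)] \geq 1 - \eta$ uniformly in $n$, which follows from Lemma~\ref{lemma:very-good-annuli-hit} since the Cameron--Martin drift differs from a constant by a bounded amount on any fixed-aspect-ratio annulus around $z$; and (ii) $\P_z[G_n(z)] \geq 1 - \eta$ uniformly in $n$, for $m = m(\gamma, \eta)$ chosen small enough, which follows from the thick-point scaling $\mu_h(B_r(z)) \asymp r^{2 - \gamma^2/2}$ under $\P_z$, forcing a definite fraction of the mass of $B_{2^{-n}}(z)$ to sit in the outermost shell $\A_{2^{-n-1}, 2^{-n}}(z)$.

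The main obstacle will be handling $G_n(z)$: unlike $\overline E_n(z)$, it is not a priori measurable modulo additive constant on a single annulus around $z$, since it involves the mass of the full ball $B_{2^{-n}}(z)$. The resolution is to use the thick-point scaling under $\P_z$ to show that the nested inner-ball masses decay geometrically, so that up to a small exceptional $\P_z$-event, $G_n(z)$ is well-approximated by an event measurable on the shell $\A_{2^{-n-1}, 2^{-n}}(z)$. The near-independence of the GFF on disjoint annuli (as exploited in Lemma~\ref{lemma:annulus-iterate}) then decouples the scales and yields the required exponential concentration bound.
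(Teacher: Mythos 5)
Your reduction to the rooted (Peyri\`ere) measure is exactly the paper's: one samples $Z$ from $\mu_h$ normalized and uses that $(h,Z)$ is mutually absolutely continuous with $(\tilde h, \tilde Z)$ where $\tilde Z$ is Lebesgue and $\tilde h$ carries the deterministic drift $-\gamma\log|\cdot-\tilde Z|$; this is precisely the content of Lemma~\ref{lemma:mu_h-doubling-and-harnack}'s proof, which reduces to Lemma~\ref{lemma:doubling-alpha} for a field with a $\gamma$-log singularity at $0$. Where you diverge is in the one-point estimate: you want an exponential concentration bound $\P_z[z\in Y_N]\le Ce^{-\alpha N}$ via Lemma~\ref{lemma:annulus-iterate} plus Borel--Cantelli, whereas the paper proves Lemma~\ref{lemma:doubling-alpha} with the Birkhoff ergodic theorem: $\{1_{G_n\cap \overline E_n}\}_n$ is stationary under scaling of $h$ modulo additive constant, the limiting ergodic average is measurable with respect to the tail $\sigma$-algebra $\bigcap_\epsilon \sigma(h|_{B_\epsilon(0)})$, which is trivial, and hence equals the constant $q=\P[G_n\cap \overline E_n]\ge 1-\zeta/4$.

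The gap in your route is the one you yourself flag and then wave at: $G_n(z)$ is \emph{not} measurable with respect to $h$ restricted to a single annulus around $z$ modulo additive constant, so Lemma~\ref{lemma:annulus-iterate} does not apply to it directly. Your proposed fix --- approximate $G_n(z)$ by a shell-measurable event up to a small $\P_z$-exceptional set --- is circular as stated: to know that most of the mass of $B_{2^{-n}}(z)$ sits in the outer shell (which is what makes the approximation tight) is essentially the content of $G_n(z)$ itself, and to quantify the exceptional set you would again need either a concentration estimate for a non-local quantity or to sum over all interior shells $k\ge n$ and control the whole tail, which requires its own decoupling argument. This is exactly the kind of non-locality that the ergodic-theorem route avoids: stationarity under scaling only needs $G_n$ to be a function of $h$ modulo additive constant, not a function of $h$ on an annulus, and tail triviality is a separate, much softer, condition that $G_n$ does satisfy. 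There is also a smaller technical point you gloss over: even for $\overline E_n(z)$, the annuli $\A_{2^{-n}/4,\, 2\cdot 2^{-n}}(z)$ have aspect ratio $8$ and overlap for consecutive $n$, so you must thin to disjoint scales and then interleave several thinned subsequences and union-bound to recover the desired density $(1-\zeta)$ on all of $[N+1,2N]$; doable, but not free. In short: the Cameron--Martin reduction is right and matches the paper, but the core estimate needs the ergodic-theorem--plus--tail-triviality argument (or a genuinely new decoupling for the non-local $G_n$), not an appeal to Lemma~\ref{lemma:annulus-iterate}.
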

	
	Recall that $h^{\C} = h + \boldsymbol{\alpha}_0 \log |\cdot|$ is the whole plane GFF as defined in \eqref{eq:cov-kernel}. It is a standard fact from LQG theory that if $U\subset  \C$ is open and $Z$ is sampled from $\mu_{h^\C}|_U$, normalized to be a probability measure, then near $Z$ the field $h$ locally looks like $\tilde h - \gamma \log |\cdot-Z|$, where $\tilde h$ is a GFF sampled independently from $Z$ (see, e.g.,~\cite[Section 3.3]{duplantier2011liouville}). Hence, Lemma~\ref{lemma:mu_h-doubling-and-harnack} will turn out to be a consequence  of the following statement for a GFF with a logarithmic singularity at 0. 
	
	\begin{lemma} \label{lemma:doubling-alpha}
		Let $\zeta > 0$ and $b \in (0,1)$. There exists $m > 0$ and parameters corresponding to the event $\overline E_{\rho}(0) = \overline E_{\rho}(0; N_0, a, b, C_1^{\pm}, C_2^{\pm}, C_3)$ (from Section \ref{subsec:good-and-very-good-annuli}) such that a.s.\ for each large enough $N \in \N$, the condition~\eqref{eq:mu_h-doubling} holds.
	\end{lemma}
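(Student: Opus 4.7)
The plan is to combine the exact scale invariance of $h$ at the origin with the near-independence of the GFF across well-separated concentric annuli (Lemma \ref{lemma:annulus-iterate}), via a Borel--Cantelli argument.

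I would first verify that for any $p \in (0,1)$, by choosing $m$ small and the parameters of $\overline E_\rho(0)$ as in Lemma \ref{lemma:very-good-annuli-hit}, one has $\P[\overline E_n(0) \cap G_n(0)] \geq p$ uniformly in $n$. This uses \eqref{eq:h-coordinate-change} with $a = 2^{-n}$, combined with Weyl scaling and the LQG coordinate change formula \eqref{eq:measure-coord} for the dilation $z \mapsto 2^{-n} z$: the joint law of $\bigl(\mu_h(\A_{2^{-n-1},2^{-n}}(0)),\,\mu_h(B_{2^{-n}}(0))\bigr)$ equals that of a random positive scalar multiple of $\bigl(\mu_h(\A_{1/2,1}),\, \mu_h(B_1)\bigr)$, so the ratio defining $G_n(0)$ has the same distribution as $\mu_h(\A_{1/2,1})/\mu_h(B_1) \in (0,1]$, an a.s.\ positive random variable.

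The main obstacle is that $G_n(0)$ is not measurable with respect to $h$ on any annulus around the origin, since $\mu_h(B_{2^{-n}}(0))$ depends on $h$ everywhere in the ball. To apply Lemma \ref{lemma:annulus-iterate} I would introduce an annulus-measurable surrogate
\[
\widehat G_n(0) := \bigl\{ \mu_h(\A_{2^{-n-1},2^{-n}}(0)) \geq m \cdot \mu_h(\A_{2^{-n-J},\,2^{-n+1}}(0)) \bigr\},
\]
where $J$ is a large integer parameter, measurable with respect to $h|_{\A_{2^{-n-J},2^{-n+1}}(0)}$. By the same scaling argument $\P[\widehat G_n(0)]$ can be made arbitrarily close to $1$ uniformly in $n$. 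To recover $G_n(0)$ from $\widehat G_n(0)$, one decomposes $\mu_h(B_{2^{-n}}(0)) = \mu_h(\A_{2^{-n-J},2^{-n}}(0)) + \mu_h(B_{2^{-n-J}}(0))$ and iterates $\widehat G_{n + kJ}(0)$ for $k \geq 0$ to obtain geometric decay of the nested tail masses $\mu_h(B_{2^{-n-kJ}}(0))$; combining with the polynomial lower bound from Lemma \ref{lemma:volume-growth} forces $\mu_h(B_{2^{-n}}(0))$ to be comparable to $\mu_h(\A_{2^{-n-1},2^{-n}}(0))$, giving $G_n(0)$ with a slightly weakened constant.

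Finally I would apply Lemma \ref{lemma:annulus-iterate} to the combined events $\overline E_n(0) \cap \widehat G_n(0)$ along $J$ arithmetic subsequences of $[N+1, 2N]$ of common difference $J$ (chosen so that the supporting annuli are disjoint and so that the measurability parameters $s_1 = 2^{-J}$, $s_2 = 2$ validate the hypotheses of Lemma \ref{lemma:annulus-iterate}). This yields an exponential tail bound of order $J \cdot Ce^{-\alpha N/J}$ on the event that the combined events fail on more than a $\zeta$-fraction of $n \in [N+1, 2N]$, which is summable in $N$, and Borel--Cantelli upgrades this to an a.s.\ statement for all large $N$. The hard part will be making the reduction from $\widehat G_n(0)$ to $G_n(0)$ quantitative and uniform in $n$, since uniformly controlling the log-correlated tail masses $\mu_h(B_{2^{-n-kJ}}(0))$ requires careful coordination of the geometric iteration with the Lemma \ref{lemma:volume-growth} bounds.
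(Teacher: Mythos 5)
You take a genuinely different route from the paper, which handles this lemma via the Birkhoff ergodic theorem together with triviality of the germ $\sigma$-algebra at the origin. The paper observes that $\{1_{G_n(0)\cap \overline E_n(0)}\}_{n\geq 0}$ is a stationary sequence (by scale invariance of $h$ modulo additive constant, noting the events are invariant under adding a constant to $h$), applies Birkhoff to get a.s.\ Ces\`aro convergence, and then uses triviality of $\bigcap_{\epsilon>0}\sigma(h|_{B_\epsilon(0)})$ to force the a.s.\ limit to equal the deterministic constant $q = \P[G_n\cap \overline E_n]$. No annulus-measurability and no Borel--Cantelli is required; the point is precisely that stationarity does not care whether $G_n$ is annulus-measurable. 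Your proposal via Lemma~\ref{lemma:annulus-iterate} is a natural thing to try, since it parallels Lemmas~\ref{lemma:good-annuli} and~\ref{lemma:very-good-annuli}, and you correctly flag the obstruction: $G_n(0)$ depends on $\mu_h(B_{2^{-n}}(0))$, which sees $h$ arbitrarily close to the origin.

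However, the reduction you sketch from the surrogate $\widehat G_n$ back to $G_n$ does not close, and I think the gap is structural rather than technical. The event $\widehat G_n$ compares a thin annulus to a thick annulus; it says nothing about the ball mass $\mu_h(B_{2^{-n-J}}(0))$ nor about how the thin annulus mass at scale $n+J$ compares with that at scale $n$. To control the tail $\mu_h(B_{2^{-n-J}}(0))$ relative to $\mu_h(\A_{2^{-n-1},2^{-n}}(0))$ you would need the entire chain $\widehat G_{n+kJ}$ to hold simultaneously for all $k\geq 0$, and even then the decomposition $\mu_h(B_{2^{-n}}(0)) = \sum_k \mu_h(\A_{2^{-n-kJ},2^{-n-(k-1)J}}(0))$ together with $\widehat G_{n+kJ}$ yields only that the ball mass is bounded by $m^{-1}$ times a sum of the \emph{local} thin-annulus masses, which is circular. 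Trying to truncate the chain at a deterministic depth $K$ and use Lemma~\ref{lemma:volume-growth} fails quantitatively: the annulus lower bound $\mu_h(\A_{2^{-n-1},2^{-n}}(0))\gtrsim 2^{-n\beta^-}$ and the ball upper bound $\mu_h(B_{2^{-n-KJ}}(0))\lesssim 2^{-(n+KJ)\beta^+}$ require $KJ\beta^+ > n(\beta^- - \beta^+)$ (and $\beta^->\beta^+$), so $K$ must grow linearly in $n$, meaning the number of surrogates you need in the chain is unbounded and the probability that they all hold goes to zero. I do not see how to repair this within the surrogate framework; the paper's ergodic argument avoids it entirely by never requiring annulus-measurability in the first place.
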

	\begin{proof}
		By the scale invariance of the law of $h$, viewed modulo additive constant~\eqref{eq:h-coordinate-change}, and the LQG coordinate change formula for $\mu_{h}$ (Fact \ref{fact:lqg-measure}), the law of  
		\[
		\mu_{h}(\A_{2^{-n-1}, 2^{-n}}(0))/\mu_{h}(B_{2^{-n}}(0))
		\]
		does not depend on $n$. 
		Furthermore, this random variable is a.s.\ finite and strictly larger than 0. Hence, 
		$   \P[G_n(0; m)]$ does not depend on $n$ and we can find $m = m(\alpha,\zeta,\gamma) > 0$ such that
		\begin{equation} \label{eq:doubling-prob}
		\P[G_n  ] \geq 1 - \zeta /8 , \quad\forall n\in\N ,\quad \text{where $G_n := G_n(0; m)$} .
		\end{equation} 
		
		By  Lemma \ref{lemma:very-good-annuli-hit} there is a choice of parameters so that 
		\begin{equation} \label{eq:harnack-prob}
		\P[\overline E_n] \geq 1 - \zeta /8 , \quad \forall n \geq 0 ,\quad \text{where $\overline E_n := \overline E_n(0)$} .
		\end{equation}
		Hence, by a union bound
		\begin{equation} \label{eq:doubling-and-harnack-prob}
		q := \P[\overline E_n \cap G_n] \geq 1- \zeta/4.
		\end{equation}
		By the scale invariance of the law of $h$ modulo additive constant and the fact that the occurrence of the event $G_n \cap \overline E_n$ does not depend on the choice of additive constant for $h$, the sequence of random variables
		\[
		\{1_{G_n \cap \overline E_n}\}_{n \geq 0}
		\]
		is stationary. 
		Hence, by the Birkhoff ergodic theorem, 
		\[
		\frac{1}{N} \sum_{n=1}^N  1_{G_n \cap \overline E_n}
		\]
		converges a.s.\ and in $L^1$ to a (possibly random) limit. The limiting random variable is measurable with respect to the $\sigma$-algebra $\bigcap_{\epsilon > 0} \sigma(h|_{B_\epsilon(0)})$, which is trivial (see~\cite[Lemma 7.2]{duplantier2014liouville} for a proof of the analogous tail triviality statement for a free-boundary GFF; the proof for a whole-plane GFF is similar). 
		Therefore, the limiting random variable is a.s.\ constant, and hence is a.s.\ equal to the number $q$ from~\eqref{eq:doubling-and-harnack-prob}. Consequently, a.s.\ 
		\begin{equation}
		\lim_{N \to \infty} \frac{1}{N} \sum_{n=1}^N  1_{G_n \cap \overline E_n} = q. 
		\end{equation}
		Hence
		\begin{equation}
		\lim_{N \to \infty} \frac{1}{N} \sum_{n=N+1}^{2N}  1_{G_n \cap \overline E_n}
		= \lim_{N \to\infty} \frac{1}{N} \sum_{n=1}^{2N}  1_{G_n \cap \overline E_n} - \lim_{N \to\infty} \frac{1}{N} \sum_{n=1}^{N}  1_{G_n \cap \overline E_n} 
		= 2q - q
		\geq 1-\zeta/2 .
		\end{equation}
		By the definition of $G_n$ and $\overline E_n$, this implies the lemma statement. 
	\end{proof}

	\begin{proof}[Proof of Lemma~\ref{lemma:mu_h-doubling-and-harnack}]
		Recall that $h = h^{\C} - \boldsymbol{\alpha}_0 \log|\cdot|$, where $h^{\C}$ is a whole-plane GFF normalized so that $h^{\C}_1(0) = 0$. 
		By Weyl scaling, we have $\mu_{h^{\C}} = |\cdot|^{\boldsymbol{\alpha}_0 \gamma} \mu_h$. 
		Conditional on $h$, let $Z$ be sampled from $|\cdot|^{\boldsymbol{\alpha}_0 \gamma} \mu_{h}|_{B_1}$, normalized to be a probability measure. 
		By~\cite[Lemma A.10]{duplantier2014liouville}, the law of the pair $(h, Z)$ is mutually absolutely continuous with respect to the law of the pair $(\tilde h , \tilde Z)$, where $\tilde Z$ is sampled from Lebesgue measure in $B_1$ independently from $h$ and $\tilde h = h  - \gamma\log|\cdot-\tilde Z|  + \gamma \log \max\{|\cdot|,1\}$, with $h$ and $\tilde h$ viewed as distributions modulo additive constant. 
		
		From the definitions of $G_n(\tilde Z)$ and $\overline E_n(\tilde Z)$ and the locality property of $\mu_{\tilde h}$ (Fact~\ref{fact:lqg-measure}), we have 
		\begin{equation} \label{eq:doubling-locality}
		G_n(\tilde Z) \cap \overline E_n(\tilde Z) \in \sigma\left( \tilde Z  , h |_{B_{2^{-n+1}}(\tilde Z)} \right).
		\end{equation}
		Almost surely, $\tilde Z \not= 0$. If $r < |\tilde Z|$, then the restriction of $\tilde h$ to $B_r(\tilde Z)$ is equal to the restriction of a whole-plane GFF to $B_r(\tilde Z)$ plus $- \gamma\log|\cdot-\tilde Z|$ plus the function $-\boldsymbol{\alpha}_0 \log|\cdot|  + \gamma \log\max\{|\cdot|,1\}$, which is smooth on $B_r(\tilde Z)$. By standard absolute continuity results for the GFF (see, e.g.,~\cite[Proposition 2.9]{miller2017imaginary}), the conditional law of $\tilde h|_{B_r(\tilde Z)}$ given $\tilde Z$ is absolutely continuous with respect  to the law of the corresponding restriction of a whole-plane GFF plus $-\gamma\log|\cdot-\tilde Z|$.  
		From this, \eqref{eq:doubling-locality}, Lemma~\ref{lemma:doubling-alpha} (with $\boldsymbol{\alpha}_0 =\gamma$), and the translation invariance of the law of the whole-plane GFF, viewed modulo additive constant, we get that if the parameters for $\overline E_n(\cdot)$ and  $G_{n}(\cdot)$ are chosen as in Lemma~\ref{lemma:doubling-alpha}, then a.s.\ for each large enough $N\in\N$, 
		\begin{equation} \label{eq:mu_h-doubling-reweight}
		\#\left\{ n \in [N+1,2N] \cap \Z  : \mbox{$\overline E_{n}(\tilde Z)$ and $G_{n}(\tilde Z)$ occur with $\tilde h$ in place of $h$} \right\}  \geq (1-\zeta) N.
		\end{equation}  
		By absolute continuity, the same is also true with $(h,Z)$ in place of $(\tilde h , \tilde Z)$. Since $Z$ is sampled from $|\cdot|^{\boldsymbol{\alpha}_0 \gamma} \mu_{h}|_{B_1}$, we get that a.s.\ the lemma statement holds for $\mu_h$-a.e.\ $z\in B_1$. 
	\end{proof}

	We conclude with a proof of the desired claim. 
	
	\begin{proof}[Proof of Proposition \ref{prop:boundary-measure-zero}]
		By Lemma \ref{lemma:very-good-annuli-satisfy-harnack-type-estimate} 
		and Lemma \ref{lemma:mu_h-doubling-and-harnack}
		we can choose parameters $b$, $N_0$, $a,C^{\pm}_1, C^{\pm}_2, C_3, c, m$
		so that for some fixed $\alpha \in (0,1)$, it holds for all~$z \in B_{1}$, $x_0 \in B_1 \setminus \{z\}$ with~$\overline{B}_{\rho}(x_0) \subset B_1 \setminus \{z\}$ that
		\begin{equation} \label{eq:good-annulus-implies-harnack}
		\mbox{$\overline E_{\rho}(x_0)$ occurs} \implies  \{ 		\mu_h({\cluster{z}{t}} \cap \A_{\rho/2, \rho}(x_0)) \leq \alpha \mu_h(\A_{\rho/2, \rho}(x_0)) \implies {\cluster{z}{t}} \cap B_{\rho/2}(x_0) = \emptyset \}
		\end{equation}
		and the implication of Lemma \ref{lemma:mu_h-doubling-and-harnack}
		holds with $\zeta = 1/2$. 
		
		We will now show that $\mu_h(\partial{\cluster{z}{t}}) = 0$ for every $t>0$ and~$z \in B_{1}$ such that ${\cluster{z}{t}} \Subset B_1$. By \eqref{eq:density-0}, a.s.\ for every $t>0$ we have
		\begin{equation} \label{eq:density-0-apply0}
		\lim_{\rho \to 0} \frac{\mu_h(B_{\rho}(w) \cap \partial {\cluster{z}{t}})}{\mu_h(B_{\rho}(w))} = 1, \quad \text{for $\mu_h$-a.e.\ $w\in \partial {\cluster{z}{t}}$} .
		\end{equation}		
		Since ${\cluster{z}{t}}$ is open, we have ${\cluster{z}{t}} \cap \partial {\cluster{z}{t}} = \emptyset$, so \eqref{eq:density-0-apply0} implies that
		\begin{equation} \label{eq:density-0-apply}
		\lim_{\rho \to 0} \frac{\mu_h(B_{\rho}(w) \cap {\cluster{z}{t}})}{\mu_h(B_{\rho}(w))} = 0 , \quad \text{for $\mu_h$-a.e.\ $w\in \partial {\cluster{z}{t}}$} .
		\end{equation}
		In particular, for $\mu_h$-a.e.\ $w\in \partial {\cluster{z}{t}}$ it holds for each large enough $n\in\N$ that
		\begin{equation} \label{eq:density-theorem-apply}
		\mu_h(B_{2^{-n}}(w) \cap {\cluster{z}{t}}) \leq  \alpha \times m \times \mu_h(B_{2^{-n}}(w))  
		\end{equation} 
		where $\alpha$ is as in Lemma \ref{lemma:very-good-annuli-satisfy-harnack-type-estimate} and $m$ is as in the definition~\eqref{eq:annulus-ball-comparison} of $G_n(w; m)$.
		
		By Lemma \ref{lemma:mu_h-doubling-and-harnack}, it is a.s.\ the case that for $\mu_h$-a.e\ $w\in B_1$, there are arbitrarily large values of $n$ such that $\overline E_{n}(w)$ and $G_{n}(w)$ occur. 
		Hence, a.s.\ for each $t>0$, it holds for $\mu_h$-a.e.\ $w\in\partial {\cluster{z}{t}}$ that there are arbitrarily large values of $n$ such that $\overline E_n(w) \cap G_n(w)$ occurs and
		\begin{align*}
		\mu_h(\A_{2^{-n-1}, 2^{-n}}(w) \cap {\cluster{z}{t}}) &\leq \mu_h(B_{2^{-n}}(w) \cap {\cluster{z}{t}})   \\
		&\leq \alpha \times m \times \mu_h(B_{2^{-n}}(w)) \qquad \mbox{(by \eqref{eq:density-theorem-apply})} \\
		&\leq \alpha \mu_h(\A_{2^{-n-1}, 2^{-n}}(w) \cap {\cluster{z}{t}}) \qquad \mbox{(since $G_n(w)$ holds)}. 
		\end{align*}
		Since $\overline E_{n}(w)$ holds, this implies by \eqref{eq:good-annulus-implies-harnack} that
		\[
		{\cluster{z}{t}} \cap B_{2^{-n-1}}(w) = \emptyset, 
		\]
		which shows that $w\notin \partial {\cluster{z}{t}}$. Hence, we have shown that a.s., it holds for each $t>0$ that $\mu_h$-a.e.\ $w\in \partial {\cluster{z}{t}}$ does not belong to $\partial{\cluster{z}{t}}$, which means that $\mu_h(\partial{\cluster{z}{t}}) = 0$.  
	\end{proof}

	\section{Uniqueness of harmonic balls} \label{sec:uniqueness}
	
	In this section we show that there is only one family of harmonic balls satisfying the conditions given by Theorem \ref{theorem:harmonic-balls}. 
	
	\subsection{Uniqueness of subharmonic balls}
	
	The uniqueness of subharmonic balls (defined in \eqref{eq:subharmonic-ball}) for the Lebesgue measure is well known, see, \eg, \cite{sakai1984solutions},
	and the proof extends verbatim to $\gamma$-LQG subharmonic balls.
	The idea is that every subharmonic ball generates a supersolution to the obstacle problem.
	\begin{lemma}[Theorem 2.1 in \cite{shahgholian2013harmonic}, \cite{sakai1984solutions}] \label{lemma:obstacle-comp}
		Let~$R > 0$,~$z \in B_R$ and let $A$ be a domain strictly contained which contains~$z$ and let
		\begin{equation} \label{eq:obstacle-comp}
		f(x) = \mu_h(A) G_{B_R}(x,z) - \int_{A} G_{B_R}(x,y)  \mu_h(d y).
		\end{equation}
		\begin{enumerate}
			\item If $A$ is a subharmonic ball centered at~$z$, then $f \geq 0$ on $B_R$. 
			\item If $A$ is a harmonic ball centered at~$z$, then $f = 0$ on $B_R \backslash A$. 
		\end{enumerate}
	\end{lemma}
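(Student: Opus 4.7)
The plan is to realize, for fixed $x \in B_R$, the function $y \mapsto G_{B_R}(x,y)$ (or a smooth approximation) as a test function in either the defining inequality \eqref{eq:subharmonic-ball} for a subharmonic ball or the harmonic ball mean-value property via \eqref{eq:harmonic-fns}, and then read off the claim for $f(x)$ directly. Since $0 \in A$, the origin is not a concern: $f(0) = +\infty$ trivially satisfies $f(0) \geq 0$ in part~1, and $0 \notin B_R\setminus A$ in part~2. So throughout I fix $x \in B_R\setminus\{0\}$.

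For part~2, note that for $x \in B_R \setminus A$ the point mass $\delta_x$ is supported in $B_R \setminus A$, so $G_{B_R}(x,\cdot) = \int_{B_R} G_{B_R}(\cdot,y)\,\delta_x(dy)$ lies in $\tilde H_{B_R}(A)$ and hence in the class of test functions for a harmonic ball. Applying \eqref{eq:harmonic-ball} yields immediately $\mu_h(A)\,G_{B_R}(x,0) = \int_A G_{B_R}(x,y)\,\mu_h(dy)$, i.e.\ $f(x) = 0$.

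For part~1, if $x \in B_R \setminus \overline A$, then $G_{B_R}(x,\cdot)$ is harmonic on an open neighborhood of $\overline A$ contained in $B_R \setminus \{x\}$, so both $\pm G_{B_R}(x,\cdot)$ lie in the class \eqref{eq:sub-harmonic-potential} with $\nu = 0$; applying the subharmonic ball inequality in both directions gives $f(x) = 0 \geq 0$. If instead $x \in \overline A$, the pole at $y=x$ prevents the test function $-G_{B_R}(x,\cdot)$ from being real-valued on $\overline{B_R}$, and I mollify. Let $\rho_\epsilon$ be a smooth nonnegative bump of total mass one supported in $B_\epsilon(x) \Subset B_R$, and set
\[
  g_\epsilon(y) := -\int_{B_R} G_{B_R}(y,z)\,\rho_\epsilon(z)\,dz.
\]
Then $g_\epsilon$ is bounded and continuous on $\overline{B_R}$, and fits the class \eqref{eq:sub-harmonic-potential} with $O = B_R$, $q \equiv 0$, and $\nu = -\rho_\epsilon(z)\,dz \leq 0$ (so $\nu|_A \leq 0$). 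The subharmonic ball inequality $\mu_h(A)\,g_\epsilon(0) \leq \int_A g_\epsilon\,d\mu_h$, combined with Fubini and the symmetry of the Green's function, reads
\[
  \mu_h(A) \int_{B_R} G_{B_R}(0,z)\,\rho_\epsilon(z)\,dz \;\geq\; \int_{B_R} q(z)\,\rho_\epsilon(z)\,dz,
\]
where $q(z) := \int_A G_{B_R}(z,y)\,\mu_h(dy)$. As $\epsilon \to 0$, the left-hand side tends to $\mu_h(A)\,G_{B_R}(x,0)$ by continuity of $G_{B_R}(0,\cdot)$ at $x \neq 0$, and the right-hand side tends to $q(x) = \int_A G_{B_R}(x,y)\,\mu_h(dy)$ by the Hölder continuity of the Green potential $q$ from Proposition \ref{prop:lbm-exit-time}. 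Rearranging gives $f(x) \geq 0$.

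The main (and essentially only) obstacle is the limit-passage in the mollification step for $x \in \overline A$: controlling the RHS requires pointwise continuity of the potential $q$, which is precisely what Proposition \ref{prop:lbm-exit-time} supplies. Everything else is a direct unpacking of the definitions.
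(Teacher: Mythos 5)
Your proposal is correct, and the overall strategy is the same as the paper's: both parts are proved by testing the defining (sub)mean-value inequality against the Green kernel $y \mapsto G_{B_R}(x,y)$, read off as an admissible element of the class~\eqref{eq:sub-harmonic-potential} (with $\nu = \mp\delta_x$, $q\equiv 0$) or of~\eqref{eq:harmonic-fns}. The one substantive difference is your treatment of $x \in \overline A$ in part~(1): you mollify $\delta_x$ by a bump $\rho_\epsilon$ so that the test function $g_\epsilon$ is bounded and continuous on $\overline{B_R}$, apply the subharmonic-ball inequality, and pass to the limit using the continuity of the Green potential $q$ from Proposition~\ref{prop:lbm-exit-time}. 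The paper simply applies the subharmonic-ball inequality directly to $-G_{B_R}(\cdot,x)$ for all $x\in B_R$ without comment, which implicitly relaxes the requirement in~\eqref{eq:sub-harmonic-potential} that $f:\overline O\to\R$ be finite-valued (the test function has a pole at $x\in\overline A$). Your mollification closes that small gap rigorously; it is not a different route, but a slightly more careful execution of the same one. Everything else — the identification of $G_{B_R}(x,\cdot)$ with an element of $\tilde H_{B_R}(A)$ for $x\in B_R\setminus A$ in part~(2), the case $x=0$, and the Fubini/symmetry bookkeeping — is correct and matches the paper.
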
 
	\begin{proof}
		{\it (1)} \\
		For all $x \in B_R$, the function $g(w) := G_{B_R}(w, x)$
		is superharmonic in $A$, so $-g$ is subharmonic in $A$. Then, since $A$ is a subharmonic ball centered at~$z$, 
		\begin{equation}
		\int_{A} g(w) \mu_h(dw) \leq \mu_h(A) g(z),
		\end{equation}
		and so 
		\begin{equation}
		\int_{A} G_{B_R}(w, x) \mu_h(dw) \leq \mu_h(A) G_{B_R}(x,z) ,\quad \forall x \in B_R .
		\end{equation} 
		
		{\it (2)} \\ 
		In this case, the function $g(w) = G_{B_R}(w, x)$ is harmonic in $A$ for all $x \in B_R \backslash A$. 
		Hence, since $A$ is a harmonic ball, 
		\begin{equation}
		\int_{A} G_{B_R}(w, x) \mu_h(dw) = \mu_h(A) G_{B_R}(x,z) ,\quad \forall x \in B_R \backslash A .
		\end{equation}
	\end{proof}
	
	The prior lemma implies uniqueness (up to sets of $\mu_h$-measure zero) of subharmonic balls. 
	\begin{prop} \label{prop:sub-harmonic-ball-uniqueness} 
		The following holds on an event of probability 1. Let $A \Subset B_R$ be a subharmonic ball centered at~$z \in B_R$ with $\mu_h(A)=t$ for some $t, R > 0$. Then, ${\regcluster{B_R;z}{t}} \subset A$ and $\mu_h(A \backslash {\regcluster{B_R;z}{t}}) = 0$.  
	\end{prop}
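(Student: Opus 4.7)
The plan is to build an explicit supersolution out of $A$ via Lemma \ref{lemma:obstacle-comp} and use the extremal property of the least supersolution \eqref{eq:least-super-solution} to sandwich the odometer between zero and this supersolution.

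First I would note that every subharmonic ball is also a harmonic ball (the set of admissible test functions in \eqref{eq:subharmonic-ball} is closed under $f \mapsto -f$), so both conclusions of Lemma \ref{lemma:obstacle-comp} are available. Applied to the candidate
\[
f(x) := t\, G_{B_R}(x,0) - \int_A G_{B_R}(x,y)\, \mu_h(dy),
\]
this gives $f \geq 0$ on $B_R$ and $f \equiv 0$ on $B_R \setminus A$.

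Next I would set $\tilde f := f - t\, G_{B_R}(0,\cdot) = -\int_A G_{B_R}(\cdot,y)\,\mu_h(dy)$ and check that $\tilde f \in \mclS{B_R}{t}$: the distributional identity $\Delta \tilde f = \mu_h|_A \leq \mu_h$ on $B_R$ is immediate, the obstacle inequality $\tilde f \geq -t\,G_{B_R}(0,\cdot)$ is just a restatement of $f \geq 0$, and continuity on $\overline{B_R}$ is exactly the content of Proposition \ref{prop:lbm-exit-time}. Extremality of $\lss{B_R}{t}$ then yields $\lss{B_R}{t} \leq \tilde f$ and hence $\odometer{B_R}{t} \leq f$. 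Combined with $\odometer{B_R}{t} \geq 0$ and $f \equiv 0$ off $A$, this forces the odometer to vanish on $B_R \setminus A$, so $\cluster{B_R}{t} = \{\odometer{B_R}{t} > 0\} \subset A$.

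With $\cluster{B_R}{t} \subset A \Subset B_R$, Proposition \ref{prop:boundary-measure-zero} gives $\mu_h(\partial \cluster{B_R}{t}) = 0$ and Lemma \ref{lemma:conservation-of-mass} upgrades this to $\mu_h(\cluster{B_R}{t}) = t = \mu_h(A)$, so $\mu_h(A \setminus \cluster{B_R}{t}) = 0$. Since $\cluster{B_R}{t} \Subset B_R$, Theorem \ref{theorem:full-theorem-minus-uniqueness} (via Lemma \ref{lemma:strong-compatibility}) identifies $\regcluster{}{t} = \mathrm{int}(\overline{\cluster{B_R}{t}})$, and $\regcluster{}{t} \setminus \cluster{B_R}{t} \subset \partial \cluster{B_R}{t}$ has zero $\mu_h$-mass, giving $\mu_h(A \setminus \regcluster{}{t}) = 0$. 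For the set-theoretic inclusion $\regcluster{}{t} \subset A$, the fact that $\mu_h$ charges every open set (Lemma \ref{lemma:volume-growth}) forces $A \setminus \cluster{B_R}{t}$ to be nowhere dense, so $A \subset \overline{\cluster{B_R}{t}}$, hence $\overline A = \overline{\cluster{B_R}{t}}$; the inclusion then follows from $A = \mathrm{int}(\overline A)$, which holds in the intended application to Proposition \ref{prop:uniqueness} (where this regularity is built into the hypothesis on $A_t$).

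The main technical obstacle is verifying $\tilde f \in \mclS{B_R}{t}$: the obstacle condition is precisely Lemma \ref{lemma:obstacle-comp}(1), while the continuity of $\tilde f$ up to $\partial B_R$ hinges on the Hölder regularity of the Liouville Green's potential furnished by Proposition \ref{prop:lbm-exit-time}. Once this is in place, the remainder is bookkeeping with the results of Sections \ref{sec:non-degenerate}--\ref{sec:boundary-measure-zero}.
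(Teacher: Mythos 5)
Your proposal follows the paper's proof essentially verbatim: build the supersolution $f(\cdot) - t\,G_{B_R}(0,\cdot) \in \mclS{B_R}{t}$ from Lemma~\ref{lemma:obstacle-comp}, use minimality of $\lss{B_R}{t}$ to get $0 \leq \odometer{B_R}{t} \leq f$ with $f = 0$ off $A$, conclude $\cluster{B_R}{t}\subset A$, and finish with the mass identity $\mu_h(\cluster{B_R}{t}) = t = \mu_h(A)$. Your extra care in checking the continuity requirement in $\mclS{B_R}{t}$ via Proposition~\ref{prop:lbm-exit-time} is a detail the paper leaves implicit, and your observation that the set-theoretic inclusion $\regcluster{}{t}\subset A$ forces $\regcluster{}{t} = \mathrm{int}(\overline A)$ (hence needs the regularity $A = \mathrm{int}(\overline A)$) is a genuine subtlety that the paper's own proof passes over silently; as the remark following the proposition notes it is not used elsewhere in the paper, this does not affect the main results.
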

	
	\begin{proof}
		Let $f$ be given by \eqref{eq:obstacle-comp} and note that by Lemma \ref{lemma:obstacle-comp} (and since every subharmonic ball is a harmonic ball), 
		$f  \geq 0$ in $B_R$ and $f = 0$ on $B_R \backslash A$.  Since $f \geq 0$ and $\Delta f \leq -\mu_h(A) \delta_z + \mu_h$ on $B_R$, we have that $(f(\cdot)- t G_{B_R}(z,\cdot)) \in \mclS{B_R; z}{t}$ and hence $f \geq \odometer{B_R; z}{t}$. This implies, together with $f = 0$ on $B_R \backslash A$, that  $\cluster{B_R; z}{t} \subset A$. 
		Since $t = \mu_h(A) = \mu_h({\regcluster{B_R; z}{t}})$, this completes the proof. 
	\end{proof}

	We do not use Proposition \ref{prop:sub-harmonic-ball-uniqueness} in this paper but decided to include it explicitly as it may be useful for future work.

	\subsection{Comparing harmonic and subharmonic balls}
	
	We show that any harmonic ball for $\mu_h$ (in the sense of~\eqref{eq:harmonic-ball}) centered at~$z$ must have a boundary contained in the closure of 
	some ${\regcluster{z}{t}}$. Similar arguments have appeared in \cite[Proposition 3.2]{shahgholian2013harmonic},
	\cite[Proposition 2.5]{hedenmalm2002hele}, and \cite[Section 3]{gustafsson1990onquadraturedomains}. 
	\begin{lemma}\label{lemma:contains-boundary}
		Almost surely, every harmonic ball  $A \Subset B_R$ centered at~$z \in B_R$ with $t = \mu_h(A) > 0$  satisfies $\partial A \subset \overline{{\regcluster{}{t}}(z)}$.
	\end{lemma}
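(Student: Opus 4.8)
The plan is to use the characterization of $\cluster{B_R}{t}$ via the obstacle problem, exactly as in the proof of Proposition~\ref{prop:sub-harmonic-ball-uniqueness}, but now read off more precise information from the function
\[
f(x) = \mu_h(A) G_{B_R}(x,0) - \int_A G_{B_R}(x,y)\,\mu_h(dy) = t\,G_{B_R}(0,x) - \int_A G_{B_R}(x,y)\,\mu_h(dy).
\]
First I would invoke Lemma~\ref{lemma:obstacle-comp}: since $A$ is in particular a harmonic ball, part (2) gives $f = 0$ on $B_R\setminus A$, and since every harmonic ball in the sense of~\eqref{eq:harmonic-ball} is (by the discussion around~\eqref{eq:harmonic-fns}, using the enlarged class of test functions) also a subharmonic ball, part (1) gives $f\geq 0$ on all of $B_R$. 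As in Proposition~\ref{prop:sub-harmonic-ball-uniqueness}, from $\Delta f \leq -t\,\delta_0 + \mu_h$ and $f\geq 0$ we get that $f(\cdot) - t\,G_{B_R}(0,\cdot) \in \mclS{B_R}{t}$, hence $f \geq \odometer{B_R}{t}$ on $B_R$, i.e. $f - tG_{B_R}(0,\cdot) \geq \lss{B_R}{t}$.

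Next I would show the reverse containment of non-coincidence sets, namely $\cluster{B_R}{t}\subseteq A$. Indeed $f = 0$ on $B_R\setminus A$ and $\lss{B_R}{t}(x) = \odometer{B_R}{t}(x) - tG_{B_R}(0,x) \leq f(x) - tG_{B_R}(0,x) = -tG_{B_R}(0,x)$ there, so by~\eqref{eq:non-co-set} every point of $B_R\setminus A$ lies outside $\cluster{B_R}{t}$; equivalently $\cluster{B_R}{t}\subseteq A$. Combining with Theorem~\ref{theorem:full-theorem-minus-uniqueness} (and its relation ${\regcluster{}{t}} = \mathrm{int}(\overline{\cluster{B_R}{t}})$ once $R$ is large, together with Lemma~\ref{lemma:strong-compatibility} to handle a general bounded open set in place of a ball), one gets $\overline{\cluster{B_R}{t}}\subseteq \overline A$, hence $\overline{{\regcluster{}{t}}}\subseteq \overline A$ and in particular $\mathrm{int}(\overline{{\regcluster{}{t}}})\subseteq A$ up to $\mu_h$-null sets; since $\mu_h({\regcluster{}{t}}) = t = \mu_h(A)$ this forces $\mu_h(A\setminus \overline{{\regcluster{}{t}}}) = 0$.

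The key remaining point, and the one I expect to be the main obstacle, is to upgrade this to the boundary inclusion $\partial A\subseteq\overline{{\regcluster{}{t}}}$. The idea is: suppose $x_0\in\partial A\setminus\overline{{\regcluster{}{t}}}$. Since $\overline{{\regcluster{}{t}}}$ is closed, there is a small ball $B_r(x_0)$ disjoint from $\overline{{\regcluster{}{t}}}$; on this ball $f$ vanishes on $B_r(x_0)\setminus A$ (a nonempty open set, since $x_0\in\partial A$) and $f \geq \odometer{B_R}{t} + $ a Green potential that is locally harmonic off $A$... more precisely, on $B_r(x_0)$ we have $\Delta f = -\mu_h|_A$ (no Dirac mass, since $0\notin B_r(x_0)$ for $r$ small), so $f$ is superharmonic there, nonnegative, and equal to $0$ on the nonempty open set $B_r(x_0)\setminus \overline A$. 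Because $\overline{{\regcluster{}{t}}}\supseteq \overline{\cluster{B_R}{t}}$ and $\mu_h(A\setminus\overline{{\regcluster{}{t}}}) = 0$, we also have $\mu_h(A\cap B_r(x_0)) = 0$, so in fact $f$ is harmonic on $B_r(x_0)$; a harmonic, nonnegative function vanishing on a nonempty open subset of a connected open set vanishes identically, so $f\equiv 0$ on $B_r(x_0)$. But then $\Delta f = 0$ means $\mu_h(A\cap B_r(x_0)) = 0$, consistent, and moreover $f = 0$ on the whole component of $B_r(x_0)$ containing points of $A$ — one then uses that $\mu_h$ assigns positive mass to every open set together with the representation of $f$ via the Green potential of $\mu_h|_A$ to derive that $A\cap B_r(x_0) = \emptyset$, contradicting $x_0\in\partial A$. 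Making this last chain of implications airtight — in particular extracting a genuine contradiction from the interplay of "$f\equiv 0$ on a ball meeting both $A$ and its complement" with the strict positivity properties of $G_{B_R}$ and of $\mu_h$ on open sets, along the lines of the cited arguments in~\cite{shahgholian2013harmonic, hedenmalm2002hele, gustafsson1990onquadraturedomains} — is where the real work lies.
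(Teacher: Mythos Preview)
There is a genuine gap. Your claim that ``every harmonic ball in the sense of~\eqref{eq:harmonic-ball} is \ldots\ also a subharmonic ball'' reverses the implication the paper actually establishes: just after~\eqref{eq:sub-harmonic-potential} it is shown that subharmonic $\Rightarrow$ harmonic, not the converse. The subharmonic test class~\eqref{eq:sub-harmonic-potential} allows $\nu|_\Lambda\leq 0$, which is strictly larger than the harmonic test class~\eqref{eq:harmonic-fns} requiring $\nu$ to be supported in $O\setminus\Lambda$. So for a general harmonic ball $A$ you cannot invoke Lemma~\ref{lemma:obstacle-comp}(1) to obtain $f\geq 0$ on $B_R$; without that, your chain $f - tG_{B_R}(0,\cdot)\in\mclS{B_R}{t}$, hence $\cluster{B_R}{t}\subseteq A$, hence $\mu_h(A\setminus\overline{{\regcluster{}{t}}})=0$, breaks at the very first step.

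The paper's remedy is to work not with $f$ but with the comparison function
\[
u(x)=\int_A G_{B_R}(x,y)\,\mu_h(dy)-\int_{{\regcluster{}{t}}} G_{B_R}(x,y)\,\mu_h(dy),
\]
and to apply Lemma~\ref{lemma:obstacle-comp}(1) to ${\regcluster{}{t}}$ --- which \emph{is} known to be a subharmonic ball by Theorem~\ref{theorem:full-theorem-minus-uniqueness} --- together with Lemma~\ref{lemma:obstacle-comp}(2) for $A$. This yields $u\geq 0$ on $B_R\setminus A$; since $\Delta u=(1_{{\regcluster{}{t}}}-1_A)\mu_h\leq 0$ on $A$, the minimum principle extends $u\geq 0$ to all of $B_R$. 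As both $A$ and ${\regcluster{}{t}}$ are harmonic balls, $u=0$ on $B_R\setminus(A\cup{\regcluster{}{t}})$. If $x_0\in\partial A\setminus\overline{{\regcluster{}{t}}}$, then in a neighborhood of $x_0$ the function $u$ is superharmonic, nonnegative, and vanishes at $x_0$, so $u\equiv 0$ there by the strong minimum principle; but then $\Delta u=-1_A\,\mu_h=0$ there, impossible since $A\cap B_r(x_0)$ is a nonempty open set and $\mu_h$ charges every open set. (Incidentally, had your $\mu_h(A\setminus\overline{{\regcluster{}{t}}})=0$ been available, the contradiction would follow instantly from this same positivity of $\mu_h$ on the nonempty open set $A\cap B_r(x_0)$, without any further analysis of $f$.)
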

	
	\begin{proof}
		
		Let $A$ be a harmonic ball centered at~$z \in B_R$ with $t = \mu_h(A) > 0$ and $A \Subset B_R$. Take $R$ possibly larger so that
		${\regcluster{B_R; z}{t}} \Subset B_R$ and hence by Theorem \ref{theorem:full-theorem-minus-uniqueness},  ${\regcluster{B_R;z}{t}} = {\regcluster{}{t}}(z)$ is a subharmonic ball in $B_R$ centered at~$z$.
		We show that $\partial A \subset \overline{{\regcluster{}{t}}}(z)$ by considering the auxiliary function
		\begin{equation} \label{eq:auxiallary-function}
		u(x) := \int_{A} G_{B_R}(x,y)  \mu_h(d y) - \int_{{\regcluster{}{t}(z)}} G_{B_R}(x,y)  \mu_h(d y)
		\end{equation}
		which satisfies $\Delta u = (1_{{\regcluster{}{t}(z)}} - 1_{A}) \mu_h$ on $B_R$. 
		
		{\it Step 1: $u \geq 0$ in $B_R$.}  \\
		By Lemma \ref{lemma:obstacle-comp}, 	as $A$ is a harmonic ball centered at~$z$
		\begin{equation} \label{eq:harmonic-ball-equality}
		t G_{B_R}(x,z) = \int_{A} G_{B_R}(x,y)  \mu_h(d y) \quad \mbox{on $B_R \backslash A$}.
		\end{equation}
		and as ${\regcluster{}{t}(z)}$ is a subharmonic ball centered at~$z$
		\begin{equation} \label{subharmonic-ball-inequality}
		t G_{B_R}(x,z) \geq \int_{{\regcluster{}{t}(z)}} G_{B_R}(x,y)  \mu_h(d y) \quad \mbox{on $B_R$}.
		\end{equation}
		Combining \eqref{eq:harmonic-ball-equality} and \eqref{subharmonic-ball-inequality} shows 
		\begin{equation} \label{eq:intermediate-harmonic-ball-inequality}
		u \geq 0 \quad \mbox{on $B_R \backslash A$}.
		\end{equation}
		As $u$ is superharmonic on $A$, \eqref{eq:intermediate-harmonic-ball-inequality} together with the minimum 
		principle shows $u \geq 0$ on $B_R$.
		
		{\it Step 2: $u = 0$ on $B_R \cap {{\regcluster{}{t}(z)}}^c \cap A^c$.} \\
		As ${\regcluster{}{t}}(z)$ is also a harmonic ball, by Lemma \ref{lemma:obstacle-comp}, 
		\begin{equation} \label{eq:subharmonic-ball-equality}
		t G_{B_R}(x,z) = \int_{{\regcluster{}{t}(z)}} G_{B_R}(x,y)  \mu_h(d y) \quad \mbox{on $B_R \backslash {\regcluster{}{t}(z)}$}.
		\end{equation}
		Step 2 follows by combining \eqref{eq:harmonic-ball-equality} and \eqref{eq:subharmonic-ball-equality}
		with the definition of $u$.

		{\it Step 3: Conclude.} \\
		We use  the fact $\Delta u = (1_{{\regcluster{}{t}(z)}} - 1_{A}) \mu_h$ on $B_R$. 
		Suppose for sake of contradiction there is $x_0 \in \partial A \backslash \overline{{\regcluster{}{t}}(z)}$.
		Then, $u$ is superharmonic in a neighborhood of $x_0$ as $x_0 \in (\overline{{\regcluster{}{t}(z)}})^c$.
		By Step 2, $u(x_0) = 0$, which, together with Step 1 and the strong minimum principle, shows that $u$ is identically 0 in a neighborhood of $x_0$. 
		This in turn implies that $u$ is harmonic in a neighborhood of $x_0$. 
		
		However, as $x_0 \in \overline{{\regcluster{}{t}}(z)}^c$, we have that $\Delta u = -1_{A} \mu_h$ in a neighborhood of $x_0$. As $\mu_h$ assigns positive mass to every open set, $A$ is open, and $x_0 \in \partial A$, $\Delta u$ is strictly negative on an open subset of every neighborhood of $x_0$, which supplies the desired contradiction.
	\end{proof}

	\subsection{Strong uniqueness} 
	We first show that two regular open sets
	which coincide $\mu_h$-a.e. are in fact equal.
	This fact is the reason for our assumption that $\mathrm{int}(\overline{\Lambda}_t) = \Lambda_t$ in Theorem \ref{theorem:harmonic-balls}.
	
	\begin{lemma} \label{lemma:regular-open-ae}
		Almost surely, the following is true. Let $X$ and $Y$ be two open subsets of $\C$ such that  $\mathrm{int}(\overline{X}) = X$ and $\mathrm{int}(\overline{Y}) = Y$.
		If $\mu_h(X \backslash Y) = \mu_h(Y \backslash X) = 0$, then $X = Y$. 
	\end{lemma}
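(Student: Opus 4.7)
The plan is to exploit the regularity hypothesis $\mathrm{int}(\overline{X})=X$ and $\mathrm{int}(\overline{Y})=Y$ together with the fact (stated in Section~\ref{sec:intro} just after Theorem~\ref{theorem:harmonic-balls} and built into Fact~\ref{fact:lqg-measure}) that a.s.\ $\mu_h$ assigns strictly positive mass to every non-empty open subset of $\C$. The strategy is to first argue that the zero-measure hypothesis upgrades to the topological inclusions $X\subset\overline{Y}$ and $Y\subset\overline{X}$, and then to use the regularity assumption to pass from these closures back to $X\subset Y$ and $Y\subset X$.

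First, I would consider the set $X\setminus\overline{Y}$. Since $X$ is open and $\overline{Y}$ is closed, this is an open set. It is also contained in $X\setminus Y$, so by assumption $\mu_h(X\setminus\overline{Y})=0$. On the event of probability one that $\mu_h$ charges every non-empty open set, this forces $X\setminus\overline{Y}=\emptyset$, i.e., $X\subset\overline{Y}$. Symmetrically, $Y\subset\overline{X}$.

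Next, taking interiors and using monotonicity of the interior operator, $X=\mathrm{int}(X)\subset\mathrm{int}(\overline{Y})=Y$, where the last equality is the regularity hypothesis on $Y$. The symmetric argument, using the regularity hypothesis on $X$, gives $Y\subset X$, and hence $X=Y$.

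The only step that is not completely formal is the appeal to strict positivity of $\mu_h$ on non-empty open sets; this is not a genuine obstacle since it is a standard property of the $\gamma$-LQG area measure (immediate from the a.s.\ non-degeneracy statement in Fact~\ref{fact:lqg-measure} together with the local dependence of $\mu_h$ on $h$). Thus the entire argument is a short piece of point-set topology once that property of $\mu_h$ is invoked.
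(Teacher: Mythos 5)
Your proof is correct, and it is a cleaner arrangement of the same two ingredients the paper uses: that a.s.\ $\mu_h$ charges every non-empty open set, and that the regularity hypothesis lets one pass from $X\subset\overline{Y}$ to $X\subset Y$. The difference is one of packaging. The paper argues pointwise by contradiction: it picks $x_0\in X\cap Y^c$, shows $x_0\in\partial Y$, uses $\mathrm{int}(\overline Y)=Y$ to place $x_0$ on the boundary of $\overline Y^c$, and then extracts a non-empty open subset of $X\cap Y^c$, contradicting the zero-measure hypothesis. You instead work globally: the set $X\setminus\overline Y$ is open, has $\mu_h$-measure zero, hence is empty, so $X\subset\overline Y$; then monotonicity of the interior operator together with the regularity of $Y$ gives $X=\mathrm{int}(X)\subset\mathrm{int}(\overline Y)=Y$. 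Your route avoids the pointwise accumulation-point argument entirely and is shorter for it; the conclusion and the assumptions used are identical.
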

	
	\begin{proof}
		
		We use the fact that a.s. $\mu_h$ assigns positive mass to every open set to show that $X \backslash Y = \emptyset$. A symmetric argument shows $Y \backslash X = \emptyset$. 
		
		Suppose for the sake of contradiction there is $x_0 \in X \cap Y^c$. As $X$ is open, $B_r(x_0) \subset X$ for all $r < r_0$, where $r_0$ is some small radius. 
		As $\mu_h(X \cap Y^c) = 0$, $B_r(x_0) \not \subset Y^c$ for all $r < r_0$. Thus, 
		$x_0$ is a limit point of $Y$ and so $x_0 \in \partial Y$ by definition.  
		Since $\mathrm{int}(\overline{Y}) = Y$, $x_0$ cannot be in the interior of $\overline Y$ so $x_0$ must be an accumulation point of $\overline Y^c$.
		Since $\overline Y^c$ is open, this implies that every neighborhood of $x_0$ contains an open subset of $\overline Y^c$. Hence $X\cap \overline Y^c \subset X\cap Y^c$ contains a non-empty open set.
		This implies $B_r(x_0) \cap Y^c$ contains a non-empty open set, and hence that $\mu_h(X \cap Y^c) > 0$, a contradiction. 
	\end{proof}
	We now use Lemma~\ref{lemma:regular-open-ae} to show uniqueness of the family of harmonic balls satisfying the conditions of Theorem \ref{theorem:harmonic-balls}.
	Our proof is inspired by the proof of Theorem 10.13 in \cite{sakai2006quadrature}.

	\begin{proof}[Proof of Proposition \ref{prop:uniqueness}]
		Let $t_0 > 0$ and~$z \in \C$ be given and suppose $A_{t_0} \Subset B_R$ for some $R > 0$. 
		By Lemma \ref{lemma:regular-open-ae} it suffices to show that $\mu_h(A_{t_0} \backslash \regcluster{}{t_0})  = \mu_h( \regcluster{}{t_0}(z) \setminus A_{t_0}) = 0$. As we have assumed $\mu_h(A_{t_0}) = t_0$, we only need to show that $\mu_h(A_{t_0} \backslash \regcluster{}{t_0}(z)) = 0$. 
		
		Suppose for the sake of contradiction that $\mu_h(A_{t_0} \backslash \regcluster{}{t_0}(z)) > 0$. 
		Since $\mu_h(\partial \regcluster{}{t_0}(z)) = 0$, this implies $\mu_h(A_{t_0} \backslash \overline{\regcluster{}{t_0}(z)})  >0$.
		In particular, as $A_{t_0} \backslash \overline{\regcluster{}{t_0}(z)}$ is open, there exists a non-empty ball $B \subset A_{t_0} \backslash \overline{\regcluster{}{t_0}(z)}$
		which lies at positive distance from the origin. We will show that $B$ cannot exist by monotonicity 
		\begin{equation} \label{eq:cluster-monotonicity}
		A_a \subseteq  A_b  \quad \text{and} \quad \regcluster{}{a} \subseteq \regcluster{}{b} , \quad \forall a \leq b 
		\end{equation}
		and Lemma \ref{lemma:contains-boundary},
		\begin{equation} \label{eq:boundary-contained}
		\partial A_{t} \subset \overline{{\regcluster{}{t}(z)}}, \quad \forall t > 0.
		\end{equation}

		As the family $\{A_t\}_{t > 0}$ continuously decreases to~$\{z\}$ as $t \to 0$, 
		there exists some $0 < s_0 < t_0$ for which 
		\begin{equation} \label{eq:ball-contained-in-initial-complement}
		B \subset A_{s_0}^c.
		\end{equation}
		Now let, $s_0 < s_1 < \cdots < s_m = t_0$ be a sequence of points satisfying, 
		\begin{equation}
		s_{n} < s_{n+1} \leq s_n + \mu_h(B)/2  \quad \forall n \in \{0,\dots,m-1\}.
		\end{equation}
		We show by induction on $n$ that for each $0 \leq n \leq m$, 
		\begin{equation} \label{eq:b-contained-in-next-cluster}
		B \subset A_{s_n}^c,
		\end{equation}
		which contradicts $B \subset A_{t_0}$. The base case $n = 0$ is established by \eqref{eq:ball-contained-in-initial-complement}. 
		
		Now assume \eqref{eq:b-contained-in-next-cluster} holds for $n \in \{0,\dots,m-1\}$. 
		We show that for 
		\begin{equation} \label{eq:time-interval}
		s_{n+1} \in \left[ s_n ,  \min(s_n + \mu_h(B)/2, t_0) \right]
		\end{equation}
		we must have 
		\begin{equation} \label{eq:b-contained-in-next-cluster-induct}
		B \subset A_{s_{n+1}}^c.
		\end{equation}

		First note that the interval \eqref{eq:time-interval} is non-empty as $B \subset A_{t_0}$, $B \subset A_{s_n}^c$, 
		and $\mu_h(A_t) = t$ for all $t > 0$. 
		Moreover, as $B \subset  (\overline{\regcluster{}{t_0}(z)})^c$ and $t_0 \geq s_{n+1}$, by monotonicity~\eqref{eq:cluster-monotonicity}, $B \subset  (\overline{\regcluster{}{s_{n+1}(z)}})^c$.
		This together with \eqref{eq:boundary-contained} implies  $B \cap \partial A_{s_{n+1}} = \emptyset$; equivalently 
		\begin{equation} \label{eq:two-cases}
		B \subset A_{s_{n+1}} \quad \mbox{or} \quad B \subset \overline{A_{s_{n+1}}}^c. 
		\end{equation}
		The former case in \eqref{eq:two-cases} is impossible as 
		\begin{align*}
		\mu_h(A_{s_{n+1}} \cap B) &\leq \mu_h(A_{s_{n} + \mu_h(B)/2} \cap B)  \quad \mbox{($s_{n+1} \leq s_{n} + \mu_h(B)/2$)} \\
		&= \mu_h(A_{s_{n}} \cap B) + \mu_h((A_{s_{n} + \mu_h(B)/2} \backslash A_{s_{n}} ) \cap B) \quad \mbox{(additivity of measure)} \\
		&\leq \mu_h(A_{s_{n}} \cap B) + \mu_h(B)/2  \quad \mbox{(monotonicity and $\mu_h(A_t) = t, \forall t$)} \\
		&= \mu_h(B)/2 \quad \mbox{(by \eqref{eq:b-contained-in-next-cluster})}
		\end{align*}
		which shows \eqref{eq:b-contained-in-next-cluster-induct}.
	\end{proof}

	\section{Boundary curves of Harmonic balls} \label{sec:boundary-curves}
	
	In this section we show that harmonic balls have boundaries which are simple loops. 
	We first consider clusters ${\cluster{}{t}} \Subset B_1$ and then rescale to achieve the result
	for all $\{{\regcluster{}{t}}\}_{t > 0}$.  
	
	Our main result is essentially a consequence of the following lemma, which limits 
	how many times clusters cross annuli --- see Figure \ref{fig:cannot-cross}.

	\begin{figure}
		\begin{center}
			\includegraphics[width=0.25\textwidth]{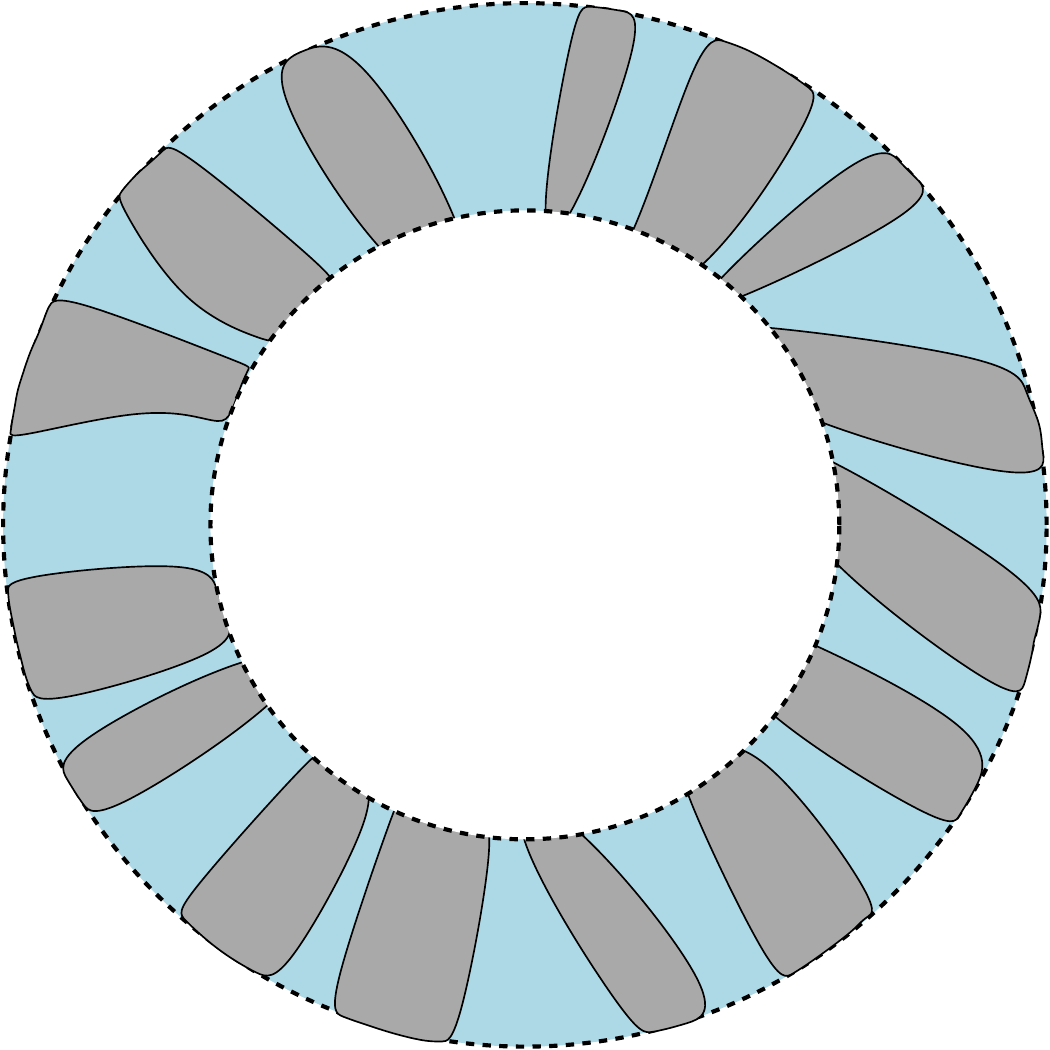}
			\caption{A situation ruled out by Lemma \ref{lemma:cannot-cross}. The annulus $\A_{\rho/3, 2 \rho}$ is in light blue and the connected components of ${\cluster{}{t}} \cap \A_{\rho/3, 2 \rho}$
				which cross the annulus are in gray.}
			\label{fig:cannot-cross}
		\end{center}
		
	\end{figure}
	
	\begin{lemma}\ \label{lemma:cannot-cross} Almost surely, for each small enough $\epsilon > 0$
		it holds for each $w \in B_1 \backslash B_{20 \sqrt{\epsilon}}$ that there is a $\rho \in [\epsilon, \epsilon^{1/2}]$
		(depending on $w$) such that for every~$t > 0$ and~$z \in \C$, we have that ${\cluster{}{t}(z)}$ does not cross $\A_{\rho/3, 2\rho}(w)$ more than  $K := \lceil 1/\alpha \rceil$ times for all ${\cluster{}{t}(z)} \Subset B_1$
		where $\alpha = \alpha(\gamma) \in (0,1)$ is as in Proposition \ref{prop:harnack-type-stronger-property}. 
		That is, there are at most $K$
		connected components of ${\cluster{}{t}(z)} \cap B_{2 \rho}(w)$
		whose closures intersect both $\partial B_{\rho/3}(w)$ and $\partial B_{2\rho}(w)$.
	\end{lemma}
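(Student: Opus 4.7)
The plan is to apply the Harnack-type estimate (Proposition \ref{prop:harnack-type-stronger-property}) to each crossing component individually: the estimate forces each crossing component to contain at least an $\alpha$-fraction of $\mu_h(\A_{\rho/2,\rho}(z))$, so by disjointness the number of crossings is bounded by $1/\alpha \le K$. The random input is only the existence of a single good scale $\rho \in [\epsilon, \epsilon^{1/2}]$ at each center $z$; once this is in hand the rest is a purely deterministic counting argument.

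For the random input I would invoke Proposition \ref{prop:harnack-type-property} together with the Borel--Cantelli lemma along a geometric sequence $\epsilon_n = 2^{-n}$ to conclude that, almost surely, for every sufficiently small $\epsilon>0$ and every lattice point $x_0 \in (B_{1+\epsilon}\setminus B_{10\sqrt{\epsilon}})\cap \tfrac{\epsilon}{100}\Z^2$, there is some $\rho_{x_0}\in [\epsilon, \epsilon^{1/2}]$ for which $\overline{E}_{\rho_{x_0}}(x_0)$ occurs. For a general $z\in B_1\setminus B_{20\sqrt{\epsilon}}$ I would take $x_0$ to be the closest lattice point (so $|z-x_0|\le \epsilon\sqrt 2/200 \ll \rho_{x_0}$) and transfer $\overline{E}_{\rho_{x_0}}(x_0)$ to a very good annulus event centered at $z$ at radius $\rho := \rho_{x_0}$ via a mild perturbation.

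For the deterministic core, assume for contradiction that there are $M\ge K = \lceil 1/\alpha\rceil$ distinct connected components $C_1,\dots,C_M$ of $\Lambda_t\cap B_{2\rho}(z)$ whose closures meet both $\partial B_{\rho/3}(z)$ and $\partial B_{2\rho}(z)$. Since each $C_i$ is open and $\overline{C_i}\cap \partial B_{\rho/3}(z)\neq \emptyset$, I can select a point $q_i\in C_i$ with $|q_i-z| < \rho/2$. Let $\widetilde{C}_i$ be the connected component of $\Lambda_t\cap B_\rho(z)$ containing $q_i$. As $\widetilde C_i$ is a connected subset of $\Lambda_t\cap B_{2\rho}(z)$ meeting $C_i$, we have $\widetilde{C}_i\subseteq C_i$; hence the sets $\widetilde{C}_1,\dots,\widetilde{C}_M$ are pairwise disjoint. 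Since $q_i\in \widetilde{C}_i\cap \overline{B_{\rho/2}(z)}$, the contrapositive of Proposition \ref{prop:harnack-type-stronger-property} yields
\[
\mu_h\bigl(\widetilde C_i\cap \A_{\rho/2,\rho}(z)\bigr) \;>\; \alpha\,\mu_h\bigl(\A_{\rho/2,\rho}(z)\bigr),\qquad i=1,\dots,M.
\]
Summing and using disjointness,
\[
M\alpha\,\mu_h\bigl(\A_{\rho/2,\rho}(z)\bigr) \;<\; \sum_{i=1}^M\mu_h\bigl(\widetilde C_i\cap \A_{\rho/2,\rho}(z)\bigr) \;\le\; \mu_h\bigl(\A_{\rho/2,\rho}(z)\bigr),
\]
so $M < 1/\alpha \le K$, contradicting $M\ge K$.

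The only delicate point is the transfer in the second paragraph: the very good annulus event $\overline{E}_\rho(\cdot)$ is anchored at a specific center, while Proposition \ref{prop:harnack-type-property} only supplies such centers on a lattice. I expect this to be handled by a small-perturbation argument (using that $|z-x_0|$ is much smaller than $\rho$, so the annuli $\A_{\rho/2,\rho}(z)$ and $\A_{\rho/2,\rho}(x_0)$ agree up to a set of negligible $\mu_h$-mass in the scales considered), rather than by any new random estimate.
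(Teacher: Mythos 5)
Your counting argument in the middle paragraph is sound and is the same pigeonhole the paper uses: each crossing component is forced to carry at least $\alpha\,\mu_h(\A_{\rho/2,\rho})$ of mass, and disjointness caps the count at $\lceil 1/\alpha\rceil$. The gap is the ``transfer'' step, which you flag as delicate but then dismiss as a mild perturbation. Transferring $\overline E_\rho(x_0)$ to $\overline E_\rho(z)$ is not a perturbation of $\mu_h$-mass: conditions \ref{enum:very-good-criteria-1} and \ref{enum:very-good-criteria-2} compare $\Mrho{\rho}{\cdot}$, $\SGrho{\rho}{\cdot}$, and $\mu_h(\A_{\rho/4,2\rho}(\cdot))$, all of which can change by constant factors under a shift of size $\Theta(\epsilon)$ when $\rho$ is as small as $\epsilon$ (the shift is then a definite fraction of the annulus width, and $\mu_h$ has no regularity to lean on), and condition \ref{enum:very-good-criteria-3} is a geometric statement about a fixed lattice of centers and a fixed ladder of radii inside $\A_{\rho/2,\rho}(x_0)$ that does not simply slide to the shifted annulus. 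Making this rigorous would require re-proving a version of Lemma \ref{lemma:very-good-annuli} with continuously varying centers, or a genuine quantitative comparison; neither is ``mild,'' and the paper does not attempt it.

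The paper's proof avoids the transfer entirely: it applies Proposition \ref{prop:harnack-type-stronger-property} at the lattice point $x_0$ where $\overline E_\rho(x_0)$ is known, and uses the lopsided shape of the crossing annulus $\A_{\rho/3,2\rho}(z)$ to bridge the two centers. Since $|z-x_0|<\epsilon/50\le \rho/50$, one has $B_{\rho/3}(z)\subset B_{\rho/2}(x_0)$ and $B_\rho(x_0)\subset B_{2\rho}(z)$, so every connected component of $\Lambda_t\cap B_{2\rho}(z)$ whose closure meets both $\partial B_{\rho/3}(z)$ and $\partial B_{2\rho}(z)$ must contain a connected component of $\Lambda_t\cap B_\rho(x_0)$ whose closure meets $\overline{B_{\rho/2}(x_0)}$. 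The Harnack estimate at $x_0$ then gives $\mu_h(\hat\Lambda_t\cap\A_{\rho/2,\rho}(x_0))\ge\alpha\,\mu_h(\A_{\rho/2,\rho}(x_0))$ for each such crossing component $\hat\Lambda_t$, and the count follows by disjointness exactly as in your last step. This also accounts for the otherwise puzzling choice of $\A_{\rho/3,2\rho}(z)$ over $\A_{\rho/2,\rho}(z)$ in the statement: the slack on both radii is precisely what absorbs the offset $|z-x_0|$, a signal that the intended proof keeps the Harnack estimate anchored at $x_0$ rather than moving it to $z$.
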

	\begin{proof}
		Let $t>0$ such that ${\cluster{}{t}(z)} \Subset B_1$. By Proposition \ref{prop:harnack-type-property} and the Borel-Cantelli lemma, a.s.\ there exists a random $M_0 \in \N$ and a deterministic $\alpha \in (0,1)$ such that for all $\epsilon \in \{2^{-n}\}_{n \geq M_0}$ we have that for each $x_0 \in (B_{1 +\epsilon} \backslash B_{10 \sqrt{\epsilon}}) \cap \frac{\epsilon}{100} \Z^2$
		there is a $\rho \in [\epsilon, \epsilon^{1/2}]$ for which $\overline E_\rho(x_0)$ occurs.

		Fix $\epsilon \in \{2^{-n}\}_{n \geq M_0}$ and let $w \in B_1 \backslash B_{20 \sqrt{\epsilon}}$. Let $x_0$ be a point of $(B_{1 + \epsilon} \backslash B_{10 \sqrt{\epsilon}}) \cap \frac{\epsilon}{100} \Z^2$
		with $|x_0 - w| < \epsilon/50$ and let $\rho$ be a radius in $[\epsilon, \epsilon^{1/2}]$ such that $\overline E_\rho(x_0)$ occurs, as in the statement of Proposition \ref{prop:harnack-type-property}. 
		Note that since $\rho > \epsilon$ and $|x_0-w| < \epsilon/50$, 
		\begin{equation} \label{eq:annulus-grid-contains}
		B_{\rho}(x_0) \subset B_{2 \rho}(w).
		\end{equation}
		As ${\cluster{}{t}(z)}$ is open and connected (Lemma \ref{lemma:non-coincidence-open-harmonic}), and contains~$\{z\}$ (Proposition \ref{prop:lower-bound}), \eqref{eq:annulus-grid-contains} shows that any non-empty connected component of ${\cluster{}{t}(z)} \cap B_{2 \rho}(w)$
		with closure intersecting $\partial B_{\rho/3}(w)$ and $\partial B_{2 \rho}(w)$
		must intersect $B_{\rho}(x_0)$ and have closure intersecting $\partial B_{\rho}(x_0)$ and $\partial B_{\rho/2}(x_0)$. 
		
		Also note that as $\overline E_\rho(x_0)$ occurs, Proposition \ref{prop:harnack-type-stronger-property} implies that
		\begin{equation} \label{eq:harnack-contrapositive}
		\begin{aligned}
		\overline{\tilde \Lambda_t} \cap \partial B_{\rho/2}(x_0) \neq \emptyset  &\implies \mu_h(\tilde \Lambda_t \cap \A_{\rho/2, \rho}(x_0)) \geq \alpha \mu_h(\A_{\rho/2, \rho}(x_0)) \\
		&\quad \forall  \tilde \Lambda_t, \mbox{ connected component of ${\cluster{}{t}(z)} \cap B_{\rho}(x_0)$}.
		\end{aligned}
		\end{equation}		
		
		By \eqref{eq:annulus-grid-contains}, every connected component of ${\cluster{}{t}(z)} \cap B_{2 \rho}(w)$
		with closure intersecting $\partial B_{\rho/3}(w)$ 
		intersected with $B_{\rho}(x_0)$ decomposes into a union of connected components of 
		${\cluster{}{t}(z)} \cap B_{\rho}(x_0)$ each of which has closure intersecting $\partial B_{\rho}(x_0)$ and $\partial B_{\rho/2}(x_0)$. 
		Thus, the relations \eqref{eq:annulus-grid-contains} and \eqref{eq:harnack-contrapositive} together imply that
		\begin{equation} \label{eq:harnack-type-contrapositive-consequence}
		\begin{aligned}
		\mu_h(\hat \Lambda_t \cap \A_{\rho/2, \rho}(x_0)) &\geq \alpha \mu_h(\A_{\rho/2, \rho}(x_0)) \\
		&\qquad \forall \hat \Lambda_t, \mbox{ connected component of ${\cluster{}{t}(z)} \cap B_{2 \rho}(w)$} \\
		&\quad \mbox{such that $\overline{\hat \Lambda_t} \cap \partial B_{2 \rho}(w) \neq \emptyset$ and $\overline{\hat \Lambda_t} \cap \partial B_{\rho/3}(w) \neq \emptyset$}.
		\end{aligned}
		\end{equation}
		
		By \eqref{eq:annulus-grid-contains}, this is a lower bound on the mass of each connected component of ${\cluster{}{t}(z)} \cap B_{2 \rho}(w)$. 
		In particular, \eqref{eq:harnack-type-contrapositive-consequence} implies that the number of such connected components with closures which intersect
		$\partial B_{\rho/3}(w)$ and $\partial B_{2 \rho}(w)$ is at most $K := \lceil 1/\alpha \rceil$. 
		This implies ${\cluster{}{t}(z)}$ can cross $\A_{\rho/3, 2 \rho}(w)$ (as in the statement of Lemma \ref{lemma:cannot-cross}) at most $K$ times. 
	\end{proof}

	We recall the definition of a loop.
	\begin{definition}
		A set $\Gamma \subset \C$ is a \emph{loop} if $\Gamma = \{\varphi(\zeta): \zeta \in \mathbb T\}$
		for a continuous function $\varphi$ from the unit circle $\mathbb T$ to $\C$. The set $\Gamma$ is an \emph{arc} if $\Gamma = \{\varphi(\zeta): \alpha \leq \zeta \leq \beta\}$.
		$\Gamma$ is a \emph{simple loop [arc]} if $\Gamma$ is a loop [arc] and $\varphi$ is also injective. 
		
	\end{definition}

	We recall the definition of a locally connected set.
	\begin{definition}
		A set $X \subset \C$ is \emph{locally connected} if every
		neighborhood of each $x \in X$ with respect to $X$ contains a connected neighborhood of $x$. 
	\end{definition}
	
	We also recall the definition of cut points. 
	\begin{definition}
		Let $A$ be a compact, connected, and locally connected set. A point $a \in A$ is a \emph{cut point} 
		if $A \backslash \{a\}$ is no longer connected.
	\end{definition}

	\begin{figure}
		\includegraphics[width=0.5\textwidth]{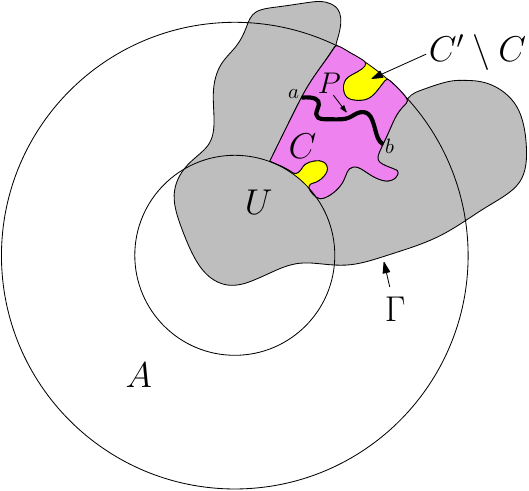}
		\caption{A visual aid to the proof of Lemma \ref{lemma:crossing-connected}. The annulus $A$ is transparent, $U$ is the gray simply connected domain, 
			$C$ is the connected violet domain, the components of $(C' \backslash C) \cap A$ are in yellow, and $P$ is the path with two boundary points $a$ and $b$; both $a$ and $b$ are in $\Gamma$, the boundary of $U$.}
		\label{fig:crossing-argument}
	\end{figure}

	We will use Lemma \ref{lemma:cannot-cross} together with some basic properties of the cluster to show
	that the boundaries of the complementary connected components of the cluster are simple loops. Before doing so, 
	we prove a topological lemma. Recall from the statement of Lemma \ref{lemma:cannot-cross} that a set $X$ {\it crosses} an annulus $\A_{s_1, s_2}(z)$ if 
	$\overline{X} \cap \partial B_{s_1}(z) \neq \emptyset$ and $\overline{X} \cap \partial B_{s_2}(z) \neq \emptyset$.
	\begin{lemma} \label{lemma:crossing-connected}
		Let $A \Subset B_1$ be an annulus,~$t > 0$ and~$z \in B_1$ such that~${\cluster{}{t}(z)} \Subset B_1$, and $U$ a simply connected component of ${\overline{{{\cluster{}{t}(z)}}}}^c$ 
		with $\Gamma = \partial U \Subset B_1$. Every connected component of $U^c \cap A$ which crosses
		$A$ contains a connected component of ${\cluster{}{t}(z)} \cap A$ which crosses $A$. 
	\end{lemma}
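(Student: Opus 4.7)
I proceed by contradiction, assuming that $C'$ is a connected component of $U^c \cap A$ crossing $A = \A_{s_1, s_2}(z)$ but no connected component of ${\cluster{}{t}} \cap A$ inside $C'$ crosses $A$. The strategy is to produce a crosscut of the simply connected domain $U$---a Jordan arc $\alpha$ with two distinct endpoints $a, b \in \Gamma$ and interior in $U$---whose presence, together with the resulting separation of $U$ into two pieces, disconnects $C'$ in the plane, contradicting the connectedness of $C'$.

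First, I would select a connected component $C$ of ${\cluster{}{t}} \cap A$ inside $C'$ whose closure meets one of the two boundary circles, say $\partial B_{s_1}(z)$. Such a $C$ exists because $C'$ is contained in $\overline{{{\cluster{}{t}}}} \cup V$, where $V$ denotes the union of the connected components of $\overline{{{\cluster{}{t}}}}^c$ other than $U$, and crosses $A$; the ``bulk'' of $C'$ adjacent to $\partial B_{s_1}(z)$ cannot be concentrated entirely on $\Gamma \cup V$ (if it were, a symmetric argument would apply to $\partial B_{s_2}(z)$ or to a component of $V$ inside $C'$). By the contradiction hypothesis, $\overline{C}$ is disjoint from $\partial B_{s_2}(z)$, so $\overline C \subset \overline{B_{r_0}(z)}$ for some $r_0 \in (s_1, s_2)$. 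I would then pick a radius $r \in (r_0, s_2)$ for which $\partial B_r(z) \cap U \neq \emptyset$; the existence of such $r$ is ensured by the contradiction hypothesis, which prevents $\overline{{{\cluster{}{t}}}}$ from forming a loop around $\overline C$ (any such loop would contain a crossing cluster component).

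Next, I would take $\alpha$ to be an open sub-arc of $\partial B_r(z) \cap U$ with endpoints $a, b$ lying in $\Gamma$ and lying on the outer side of $\overline C$. Because $U$ is simply connected, such an $\alpha$ is a crosscut of $U$ and hence separates $U$ into two connected components by a classical theorem of plane topology (see, e.g., M.~H.~A.~Newman, \emph{Elements of the Topology of Plane Sets}, Ch.~V). I would then argue that this separation forces a decomposition of $C'$ into nonempty subsets: the portion trapped between $\alpha$ and $\overline{C}$ (containing the sub-continuum of $C'$ touching $\partial B_{s_1}(z)$) and the portion lying on the opposite side of $\alpha$ (containing the sub-continuum of $C'$ touching $\partial B_{s_2}(z)$). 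Since $\alpha \subset U$ is disjoint from $C' \subset U^c$, this is a genuine topological separation of $C'$, contradicting its connectedness. The hardest step will be this last separation argument: one must verify that the separation of $U$ by $\alpha$ extends to a separation of $C'$ in $\C$, a delicate point because $\Gamma$ is not \emph{a priori} a Jordan curve, so ``closing up'' $\alpha$ into a Jordan curve by adjoining a sub-arc of $\Gamma$ requires care. The contradiction hypothesis that no component of ${\cluster{}{t}} \cap A$ inside $C'$ crosses $A$ enters crucially here, guaranteeing enough room in $U$ on the outer side of $\overline C$ for the construction and ruling out pathological configurations where $\alpha$ fails to genuinely separate $C'$.
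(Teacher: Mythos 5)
The approach you sketch is genuinely different from the paper's and, as written, has an unresolved gap at the step you yourself identify as the hardest. The paper's proof takes the component $C$ of $U^c \cap A$, passes to its \emph{filling} $C'$ (a topological rectangle with two boundary segments on the circles $\partial B_{s_1}(z),\partial B_{s_2}(z)$ and two boundary segments in $\Gamma$), and then invokes the Poincar\'e--Miranda theorem to get a clean dichotomy: either there is a simple arc in ${\cluster{}{t}}\cap C'$ crossing the annulus (done), or there is a simple arc $P$ in ${\cluster{}{t}}^c\cap C'$ joining the two $\Gamma$-segments. In the second case, it uses \cite[Prop.~2.12]{pommerenke2013boundary} applied to the simply connected domain $\mathrm{int}(U^c)$ to conclude that $\mathrm{int}(U^c)\setminus P$ has exactly two components $G_0,G_1$, and then contradicts the \emph{connectedness of ${\cluster{}{t}}$}: since $\partial G_0\cup\partial G_1\subset\partial U\cup P\subset{\cluster{}{t}}^c$, the open sets $G_0\cap{\cluster{}{t}}$ and $G_1\cap{\cluster{}{t}}$ would lie in different components of ${\cluster{}{t}}$.

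Your plan aims instead to contradict connectedness of the component $C'$ of $U^c\cap A$ by cutting $U$ with a circular crosscut $\alpha\subset\partial B_r(z)\cap U$. There are two concrete problems. First, the existence step is not justified: you assert that some component $C$ of ${\cluster{}{t}}\cap A$ inside $C'$ meets $\partial B_{s_1}(z)$, with a vague appeal to ``the bulk of $C'$''; a priori, $C'$ could meet a boundary circle only along $\partial{\cluster{}{t}}$ or along other complementary components, and the lemma itself is precisely the nontrivial statement that rules this out, so this cannot be taken as given. Second, and more seriously, the crosscut $\alpha$ is an \emph{arc} contained in $U$, hence disjoint from $C'\subset U^c$; an arc does not separate the plane, so $\alpha$ alone cannot disconnect $C'$. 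To separate $C'$ you would need to close $\alpha$ into a Jordan curve by adjoining a sub-arc of $\Gamma=\partial U$, but at this point in the paper $\Gamma$ is not known to be locally connected, let alone a Jordan curve --- indeed Lemma~\ref{lemma:crossing-connected} is an input to the proof that $\Gamma$ is a Jordan curve in Proposition~\ref{prop:boundary-simple-loop}, so this would be circular. You flag this issue but do not resolve it, and I do not see how to resolve it without effectively switching to the paper's strategy of working with $\mathrm{int}(U^c)$ and contradicting connectedness of ${\cluster{}{t}}$ rather than of $C'$.
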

	
	\begin{proof}
		See Figure \ref{fig:crossing-argument}. 
		
		Let $C$ be a connected component of $U^c \cap A$ which crosses $A$. The `filling' of $C$, $C'$, is the union of $C$ and the regions which are disconnected 
		in $A$ from the inner boundary of the annulus, the outer boundary of the annulus, or both by $C$. Note that $C'$ is a topological rectangle
		with two boundary segments which are part of the inner and outer boundaries of the annulus respectively, and two boundary segments contained in $\Gamma$. 
		
		By, \eg, the Poincar\'e-Miranda theorem~\cite{poincaremiranda}, there is either a simple arc in ${\cluster{}{t}(z)} \cap C'$ between the inner and outer boundaries 
		of the annulus, or there is a simple arc in ${\cluster{}{t}(z)}^c \cap C'$ between the two boundary segments contained in $\Gamma$. 
		In the former case, ${\cluster{}{t}(z)}$ crosses $A$ and thus there is a connected component of ${\cluster{}{t}(z)} \cap A$ in $C$ which crosses $A$. Hence it suffices to rule out the latter case, which we now do. 
		
		Suppose for sake of contradiction there is a simple open arc $P$ in ${\cluster{}{t}(z)}^c \cap C'$ between the two boundary segments contained in $\Gamma$. That is, $P$ is the interior of a simple arc contained in ${\cluster{}{t}(z)}^c \cap C'$ and $\overline{P} = P \cup \{a\} \cup \{b\}$
		with $\{a, b\} \subset \Gamma$.
		
		As $\mathrm{int}(U^c)$ is simply connected, by \cite[Proposition 2.12]{pommerenke2013boundary},  
		$\mathrm{int}(U^c) \backslash P$ has exactly two components $G_0$ and $G_1$ and these satisfy
		\begin{equation}
		\mathrm{int}(U^c) \cap \partial G_0 = \mathrm{int}(U^c) \cap \partial G_1 = P.
		\end{equation}
		As $\partial G_0$ and $\partial G_1$ contain points in $\Gamma$, the boundary of a simply connected component of 
		${\cluster{}{t}(z)}^c$, by definition of component, both $G_0$ and $G_1$ must contain points in ${\cluster{}{t}(z)}$.  However, this contradicts the fact ${\cluster{}{t}(z)}$ is connected, Lemma \ref{lemma:non-coincidence-open-harmonic}. 
		Indeed, 
		\[
		\partial G_0 \cup \partial G_1 \subset \partial U \cup P \subset {\cluster{}{t}(z)}^c
		\]
		which implies, as $G_0 \cap  {\cluster{}{t}(z)}$ and $G_1 \cap {\cluster{}{t}(z)}$ are open and disjoint,
		that they must lie in different connected components of ${\cluster{}{t}(z)}$.
	\end{proof}

	\begin{prop} \label{prop:boundary-simple-loop}
		Almost surely, for all $t > 0$ and~$z \in B_1$ such that ${\cluster{}{t}(z)} \Subset B_1$, each of the connected components of $B_1 \backslash \overline{{{\cluster{}{t}(z)}}}$
		has a boundary that is a simple loop.
	\end{prop}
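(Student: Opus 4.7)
The plan is to show, via the Riemann sphere $\hat{\C}$, that each connected component $U$ of $\hat{\C}\setminus\overline{{\cluster{}{t}}}$ is a Jordan domain, so $\partial U$ is a simple closed curve. Since ${\cluster{}{t}}\Subset B_1$, these components account for the components of $B_1\setminus\overline{{\cluster{}{t}}}$ (the ``outer'' component of $B_1\setminus\overline{{\cluster{}{t}}}$ corresponds to the unique unbounded component of $\hat{\C}\setminus\overline{{\cluster{}{t}}}$, and in every case $\partial U\subset\partial{\cluster{}{t}}\Subset B_1$). Because $\overline{{\cluster{}{t}}}$ is a compact continuum in $\hat{\C}$ (closure of an open connected set, by Lemma~\ref{lemma:non-coincidence-open-harmonic}), classical plane topology yields that each $U$ is simply connected. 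I then invoke the classical criterion that a simply connected planar domain is a Jordan domain iff its boundary is a locally connected continuum without cut points (see, e.g., \cite[Thm.~2.6 and Prop.~2.12]{pommerenke2013boundary}).

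For local connectedness of $\partial U$, fix $z\in\partial U$ and $\varepsilon>0$; by Lemma~\ref{lemma:cannot-cross}, choose a scale $\rho<\varepsilon/4$ adapted to $z$ such that ${\cluster{}{t}}$ crosses $\A_{\rho/3, 2\rho}(z)$ at most $K:=\lceil 1/\alpha\rceil$ times. By Lemma~\ref{lemma:crossing-connected}, at most $K$ components of $U^c\cap\A_{\rho/3, 2\rho}(z)$ cross the annulus. Any point of $\partial U\cap B_{\rho/3}(z)$ lies either in one of these $\leq K$ annulus-crossing components of $\overline{{\cluster{}{t}}}\cap\overline{B_{2\rho}(z)}$ or in an ``island'' component contained entirely in $\overline{B_{\rho/3}(z)}$; since $\overline{{\cluster{}{t}}}$ is connected, each island is attached to the rest of the cluster through a path in $\overline{{\cluster{}{t}}}$ that crosses $\partial B_{2\rho}(z)$, hence via one of the $\leq K$ crossing components. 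Chaining the finitely many relevant components through $z$ produces a connected subset of $\partial U$ of diameter $O(\rho)<\varepsilon$ containing all of $\partial U\cap B_{\rho/3}(z)$, giving local connectedness.

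To rule out cut points, suppose $p\in\partial U$ is a cut point of $\partial U$. Local connectedness lets the Riemann map $\phi:\mathbb{D}\to U$ extend continuously to $\overline{\mathbb{D}}$ by Carath\'eodory's theorem; a cut point forces $\phi^{-1}(p)\cap\partial\mathbb{D}$ to contain at least two distinct points. Classical prime-end theory then produces, for a sequence of scales $\rho_n\downarrow 0$, a connected subset $C_n$ of $\overline{{\cluster{}{t}}}\cap\A_{\rho_n/3, 2\rho_n}(p)$ that crosses the annulus and locally disconnects $U$ near $p$ (the geometric incarnation of the two distinct prime-end preimages). By Lemma~\ref{lemma:crossing-connected}, each $C_n$ contains a crossing component of ${\cluster{}{t}}\cap\A_{\rho_n/3, 2\rho_n}(p)$. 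Choosing $\rho_n$ from the good scales of Proposition~\ref{prop:harnack-type-property} and passing to a sparse subsequence so that the annuli $\A_{\rho_n/3,2\rho_n}(p)$ are pairwise disjoint, Proposition~\ref{prop:harnack-type-stronger-property} gives each such crossing component mass at least $\alpha\,\mu_h(\A_{\rho_n/2,\rho_n}(p))$, and Lemma~\ref{lemma:volume-growth} bounds this below by $c\,\rho_n^{2\beta^-}$. Summing over $n$ gives infinite $\mu_h$-mass inside ${\cluster{}{t}}$, contradicting $\mu_h({\cluster{}{t}})\leq t<\infty$.

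The main obstacle is the geometric translation of ``$p$ is a cut point of $\partial U$'' into the statement that at arbitrarily small Euclidean scales there is a crossing component of $\overline{{\cluster{}{t}}}\cap\A_{\rho_n/3,2\rho_n}(p)$ that locally separates $U$ near $p$. The prime-end correspondence is classical, but one must verify that the crosscuts produced by the Riemann map give rise to genuine Euclidean-metric crossings of our fixed annuli, not merely to conformal-metric crossings in $U$. Once this geometric bridge is in place, the finite-crossing bound of Lemma~\ref{lemma:cannot-cross}, the component-containment statement of Lemma~\ref{lemma:crossing-connected}, and the mass lower bound of Proposition~\ref{prop:harnack-type-stronger-property} combine to close the argument.
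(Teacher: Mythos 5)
Your overall strategy — Carath\'eodory's criterion (locally connected boundary with no cut points), plus the crossing bound of Lemma~\ref{lemma:cannot-cross} and the component-containment of Lemma~\ref{lemma:crossing-connected} — is the right starting point, and your Step~1 is in the same spirit as the paper's (though the ``chaining through $z$'' construction is left too vague to verify; the paper instead proves local connectedness by contradiction via \cite[Theorem 2.1]{pommerenke2013boundary}, showing that a failure of local connectedness of $U^c$ produces infinitely many crossings of a fixed annulus, which is ruled out immediately). The serious problem is in Step~2.

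Your no-cut-points argument does not close. Choosing disjoint annuli $\A_{\rho_n/3, 2\rho_n}(p)$ forces $\rho_{n+1} \lesssim \rho_n/6$, so $\rho_n$ decreases geometrically. The crossing-component mass you extract at scale $\rho_n$ is bounded below by $\alpha\,\mu_h(\A_{\rho_n/2,\rho_n}(p))\gtrsim \rho_n^{2\beta^-}$ with $\beta^-> (2+\gamma)^2/2 > 0$. Hence
\[
\sum_n \rho_n^{2\beta^-} \ \lesssim\ \sum_n 6^{-2 n\beta^-} \ < \ \infty ,
\]
a \emph{convergent} geometric series. There is no contradiction with $\mu_h(\Lambda_t)\leq t<\infty$: the cluster may very well contain crossings of all these annuli and still have finite total mass. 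A cut point does not produce divergent mass, so quantitative volume bounds alone cannot rule one out. Moreover, even the existence of the required Euclidean-annulus crossings at infinitely many scales is not established (you yourself flag this prime-end-to-Euclidean bridge as ``the main obstacle''), so the argument has two gaps, the mass divergence being the fatal one.

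The paper's Step~2 is of a wholly different character: after establishing local connectedness, it uses the continuous boundary extension of the Riemann map $\psi:B_1\to U$ and a homotopy argument. If $a\in\Gamma$ is a cut point, then $\psi^{-1}(a)$ has at least two points; decomposing $\psi|_{\partial B_1}$ as the concatenation of $\psi|_I$ and $\psi|_J$ over complementary arcs of $\partial B_1\setminus\psi^{-1}(a)$, at least one of $\psi(\bar I)$, $\psi(\bar J)$ separates $0$ from a point $y\in U$, contradicting the connectedness of $\Lambda_t\ni 0$ (Lemma~\ref{lemma:non-coincidence-open-harmonic}) combined with $\psi(\bar I)\cap\psi(J)=\emptyset$. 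To repair your proof you would need to replace the mass estimate by a purely topological contradiction of this kind; the measure-theoretic route cannot work because the relevant sums are summable.
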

	\begin{proof}
		
		Suppose ${\cluster{}{t}(z)} \Subset B_1$ and let $U$ be a component of ${\overline{{{\cluster{}{t}(z)}}}}^c$.
		Note that $U$ and ${\cluster{}{t}(z)}$ are disjoint and $\partial U \subseteq \partial {\cluster{}{t}(z)} \Subset B_1$.
		
		Let $\Gamma = \partial U \Subset B_1$. 
		By Caratheodory's theorem ~\cite[Theorem 2.6]{pommerenke2013boundary} to show $\Gamma$ is a Jordan loop
		it suffices to show that $\Gamma$ is locally connected and has no cut points. 
		
		{\it Step 1: Locally connected.} \\
		As we will show, this follows from Lemma \ref{lemma:cannot-cross} and the definition of locally connected:
		
		Suppose for the sake of contradiction that $\Gamma$ is not locally connected.
		By~\cite[Theorem 2.1]{pommerenke2013boundary}, this implies $U^c$ is not locally connected. 
		By definition, this implies the existence
		of a point $z \in U^c$ and $s > 0$ so that for every sub-neighborhood $V \subset B_{s}(z)$ containing 
		$z$, the set $V \cap U^c$ is not connected.

		As $U^c$ is connected, the closure of every component of $U^c \cap B_s(z)$ has non-empty intersection with $\partial B_s(z)$. 
		Since $U^c$ is closed, every such component not containing $z$ must lie at positive distance from $z$.
		Hence, for each $\epsilon \in (0,s)$ the number of such components intersecting $B_\epsilon(z)$ must be infinite, as
		otherwise we could take $V$ to be $B_{\epsilon}(z)$ minus the other components which do not contain $z$ which intersect $B_{\epsilon}(z)$.
		
		This implies that for all $\rho > 0$ sufficiently small there is a ball $B_{2\rho}(z)$ with infinitely many distinct components of $U^c \cap B_{2 \rho}(x_0)$ with closures intersecting $\partial B_{2\rho}(z)$ and $\partial B_{\rho/3}(z)$. 
		Every connected component of $U^c \cap B_{2 \rho}(z)$ intersected with $\A_{\rho/3, 2 \rho}(z)$ decomposes into a union of connected components 
		of $U^c \cap \A_{\rho/3, 2 \rho}(z)$. Hence, by Lemma \ref{lemma:crossing-connected} applied to each connected
		component of $U^c \cap \A_{\rho/3, 2 \rho}(z)$, every connected component of $U^c \cap B_{2 \rho}(z)$ which crosses
		$\A_{\rho/3, 2 \rho}(z)$ contains a connected component of ${\cluster{}{t}(z)} \cap \A_{\rho/3, 2 \rho}(z)$ which crosses $\A_{\rho/3, 2 \rho}(z)$.
		
		The previous paragraph implies there is a ball $B_{2\rho}(z)$ with infinitely many distinct components of ${\cluster{}{t}(z)} \cap B_{2 \rho}(x_0)$ with closures intersecting $\partial B_{2\rho}(z)$ and $\partial B_{\rho/3}(z)$, contradicting Lemma \ref{lemma:cannot-cross}.

		{\it Step 2: No cut points.} \\
		Let $\psi : B_1 \to U$ be a conformal map, which exists since $U$ is connected with connected complement, so is simply connected. Since $\Gamma$ is locally connected,~\cite[Theorem 2.1]{pommerenke2013boundary} implies that $\psi$ extends to a continuous map $\overline B_1 \to \overline U = U \cup \Gamma$. 
		
		Now, assume by way of contradiction that $\Gamma$ has a cut point $a \in \Gamma$. By~\cite[Proposition 2.5]{pommerenke2013boundary}, $\# \psi^{-1}(a) \geq 2$ (in principle $\#\psi^{-1}(a) $ could be infinite, even uncountable).
		Furthermore, if $\mathcal I$ is the set of connected components of $\partial B_1 \setminus \phi^{-1}(a)$, then the set of connected components of $\Gamma \setminus \{a\}$ is $\{\psi(I) : I\in \mathcal I\}$. 
		
		Fix some $I\in\mathcal I$ and let $J$ be equal to $\partial B_1 \setminus I$ minus its endpoints. 
		Then $I$ and $J$ are disjoint open arcs of $\partial B_1$ and their common endpoints are distinct points of $\psi^{-1}(a)$.  
		Furthermore, the preceding paragraph implies that $\psi(\bar I) \cap \psi(J) = \emptyset$ and  $\psi(I) \cap \psi(\bar J) = \emptyset$
		
		Since $\Gamma$ disconnects $0$ from $y$, the homotopy class of the loop $\psi|_{\partial B_1}$ in $(\C \cup \infty) \setminus \{0,y\}$ is non-trivial.
		Since $\psi$ maps the endpoints of $I$ and $J$ to $a$, each of $\psi|_I$ and $\psi|_J$ is a loop in $\C$, and $\psi|_{\partial B_1}$ is the concatenation of these two loops. 
		The concatenation of two homotopically trivial loops is also homotopically trivial.
		Therefore, one of $\psi|_I$ or $\psi|_J$ is not homotopic to a point in $(\C \cup \infty)\setminus \{0,y\}$. 
		This implies that one of $\psi(\bar I)$ or $\psi(\bar J)$ disconnects $0$ from $y$.   
		
		Assume without loss of generality that $\psi(\bar I)$ disconnects $0$ from $y$. 
		Since ${\cluster{}{t}(z)} \ni 0$ is connected (Lemma \ref{lemma:non-coincidence-open-harmonic}) and $U \ni y$ is connected by definition, ${\cluster{}{t}(z)}$ and $U$ are contained in different connected components of $\C\setminus \psi(\bar I)$. 
		But, every point of $\psi(J) \subset \Gamma$ is an accumulation point of both ${\cluster{}{t}(z)}$ and $U$, so $\psi(J) \subset \psi(\bar I)$. 
		Since $J$ is non-empty by construction, this contradicts the fact that $\psi(\bar I) \cap \psi(J) = \emptyset$.
		We conclude that $\Gamma$ has no cut points.
	\end{proof}
	
	The desired claim follows immediately from a scaling argument. 
	\begin{prop} \label{prop:boundary-simple-loop-full-cluster}
		Almost surely, for all $t > 0$ and~$z \in \C$ each of the connected components of $\C \backslash \overline{{\regcluster{}{t}}(z)}$
		has a boundary that is a simple loop.
	\end{prop}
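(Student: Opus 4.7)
The plan is to deduce this from Proposition \ref{prop:boundary-simple-loop} by combining the scale invariance from Lemma \ref{lemma:scale-invariance-flow} with the identification of $\regcluster{}{t}$ with rescaled ``unit-ball clusters'' provided by Theorem \ref{theorem:full-theorem-minus-uniqueness}. The key point is to notice that the proof of Proposition \ref{prop:boundary-simple-loop} is phrased in terms of an arbitrary connected component $U$ of $\overline{\cluster{}{t}}^c$ in the whole plane (including the unbounded one)---the local-connectedness argument uses Lemma \ref{lemma:cannot-cross}, which is insensitive to which component one picks, and the no-cut-point argument goes through a conformal map from $B_1$ onto $U$, which exists whenever $U$ is simply connected, bounded or not. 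So Proposition \ref{prop:boundary-simple-loop} already yields: a.s., for every $s>0$ with $\cluster{B_1}{s}\Subset B_1$, each component of $\C\setminus \overline{\cluster{B_1}{s}}$ has boundary a simple loop.

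Next I would transfer this to arbitrary balls. Fix $r>0$ and set $A_r=e^{\gamma(Q\log r + h_r(0))}$ as in Lemma \ref{lemma:scale-invariance-flow}; that lemma gives $\{r^{-1}\cluster{B_r}{A_r s}\}_{s>0}\overset{d}{=}\{\cluster{B_1}{s}\}_{s>0}$ as random families of sets. The event
\begin{equation*}
\mathcal E_r := \bigl\{\text{for every } t>0 \text{ with } \cluster{B_r}{t}\Subset B_r,\ \text{each component of } \C\setminus\overline{\cluster{B_r}{t}} \text{ has Jordan boundary}\bigr\}
\end{equation*}
is a measurable function of the cluster family and is invariant under the rescaling $x\mapsto r x$, so by the scale invariance and the unit-ball case above we have $\P[\mathcal E_r]=1$. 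Taking a union bound over $r\in\N$ gives an event $\Omega_0$ of probability one on which $\mathcal E_r$ holds for every $r\in\N$ simultaneously.

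Finally, work on $\Omega_0$ and fix an arbitrary $t>0$. By Theorem \ref{theorem:harmonic-balls}(b), the set $\regcluster{}{t}$ is bounded, so there is $r\in\N$ with $\regcluster{}{t}\Subset B_r$. Theorem \ref{theorem:full-theorem-minus-uniqueness} then identifies $\regcluster{}{t}=\mathrm{int}(\overline{\cluster{B_r}{t}})$; in particular $\cluster{B_r}{t}\subset\regcluster{}{t}\Subset B_r$ (so $\cluster{B_r}{t}\Subset B_r$) and $\overline{\regcluster{}{t}}=\overline{\cluster{B_r}{t}}$. Consequently the connected components of $\C\setminus\overline{\regcluster{}{t}}$ are exactly the components of $\C\setminus\overline{\cluster{B_r}{t}}$, each of which has boundary a simple loop by $\mathcal E_r$. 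The only real content to verify is the extension of Proposition \ref{prop:boundary-simple-loop} to the unbounded component, which amounts to rereading its proof with this more general choice of $U$; everything else is bookkeeping built on Theorem \ref{theorem:full-theorem-minus-uniqueness} and Lemma \ref{lemma:scale-invariance-flow}. \qed
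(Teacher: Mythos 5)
Your proposal takes essentially the same route as the paper: reduce to the unit-ball case (Proposition \ref{prop:boundary-simple-loop}), transfer to all balls $B_r$ with $r\in\N$ via the scale invariance in Lemma \ref{lemma:scale-invariance-flow}, intersect the resulting full-probability events, and finally invoke the identification $\regcluster{}{t}=\mathrm{int}(\overline{\cluster{B_r}{t}})$ from Theorem \ref{theorem:full-theorem-minus-uniqueness}. The bookkeeping is right, including the observation that $\overline{\regcluster{}{t}}=\overline{\cluster{B_r}{t}}$, which lets you conclude that the two sets have identical complementary connected components.

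The one step you handle too quickly is the extension of Proposition \ref{prop:boundary-simple-loop} to the unbounded complementary component. You assert that ``the no-cut-point argument goes through a conformal map from $B_1$ onto $U$, which exists whenever $U$ is simply connected, bounded or not,'' implicitly claiming that the unbounded component $U$ of $\overline{\cluster{}{t}}^c$ is simply connected. It is not: its complement on the Riemann sphere $\C\cup\{\infty\}$ is the disjoint union of the filled-in $\overline{\cluster{}{t}}$ (a compact connected set) and the isolated point $\infty$, which is disconnected, so a large circle around $\overline{\cluster{}{t}}$ is a non-contractible loop in $U$ and no Riemann map $B_1\to U$ exists. The correct fix is to observe that $U\cup\{\infty\}$ \emph{is} simply connected on the sphere, and to either uniformize $U\cup\{\infty\}$ directly or precompose with a M\"obius transformation sending $\infty$ into $U$. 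Both the local-connectedness argument (which only depends on $\partial U\subset\partial\cluster{}{t}$ and Lemma \ref{lemma:cannot-cross}) and the cut-point argument (the homotopy computation in $(\C\cup\infty)\setminus\{0,y\}$ and the Carath\'eodory/Pommerenke machinery) go through after this change of chart, since $\partial U$ is unchanged, but you should say so rather than assert the false simple connectivity of $U$. With that point repaired, your argument is correct and coincides with the paper's.
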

	
	\begin{proof}
		Combine Lemma \ref{lemma:scale-invariance-flow} together with Lemma \ref{lemma:cannot-cross},  a union bound, 
		and the relation between ${\regcluster{}{t}}$ and $\{\cluster{B_r}{t}\}_{r > 0}$ given in Theorem \ref{theorem:full-theorem-minus-uniqueness}. 
	\end{proof}

	\section{Novelty of Harmonic balls} \label{sec:novelty}

	In this section we show that typical harmonic balls are too rough to have Lipschitz boundaries 
	yet differ in a quantitative way from LQG-metric balls.

	\subsection{Not Lipschitz} \label{subsec:not-lipschitz}
	In this section we show that a `typical' harmonic ball is not a Lipschitz domain.
	Roughly, a Lipschitz domain is a domain whose boundary can be locally represented
	by a graph of a Lipschitz function. Note that as every convex function is locally Lipschitz, see, for example, 
	\cite[Lemma 1.1.6]{gutierrez}, every convex domain, \eg,  a ball or polygon, is a Lipschitz domain.
	
	\begin{definition} \label{def:lipschitz}
		Let $A$ be a non-empty connected open set. $A$ is a  \emph{Lipschitz domain}
		if for every point $x_0 \in \partial A$, there exists $r > 0$ and a Lipschitz function
		$f: \R \to \R$ such that --- upon relabeling and reorienting the coordinate axes if necessary ---
		we have 
		\[
		U \cap B_r(x_0) = \{ z \in B_r(x_0) : \Im(z) > f(\Re(z)) \}.
		\]
	\end{definition}

	In this section we prove the following. 
	\begin{prop} \label{prop:not-balls}
		Almost surely, ${\regcluster{}{t}}$ is not a Lipschitz domain for Lebesgue-a.e.\ $t$. 
	\end{prop}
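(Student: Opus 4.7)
Since every Lipschitz domain (Definition~\ref{def:lipschitz}) satisfies a uniform exterior cone condition at every boundary point, it suffices to prove an intermediate claim (Lemma~\ref{lemma:no-exterior-cone-condition}): almost surely, for Lebesgue-a.e.\ $t > 0$, there exists $x_0 \in \partial \regcluster{}{t}$ admitting no exterior truncated cone in $(\regcluster{}{t})^c$. The plan is to combine the Harnack-type estimate of Proposition~\ref{prop:harnack-type-property} with the angular variability of the LQG measure.

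First, the harmonic comparison chain. If $x_0 \in \partial \regcluster{}{t}$ then $\regcluster{}{t} \cap B_{\rho/2}(x_0) \neq \emptyset$ for every $\rho > 0$, so the contrapositive of Proposition~\ref{prop:harnack-type-property} yields
\begin{equation*}
\mu_h\bigl(\regcluster{}{t} \cap \A_{\rho/2, \rho}(x_0)\bigr) > \alpha \, \mu_h\bigl(\A_{\rho/2, \rho}(x_0)\bigr)
\end{equation*}
at every scale $\rho$ for which $\overline{E}_\rho(x_0)$ occurs. If $(\regcluster{}{t})^c$ contained an exterior truncated cone $C = C_{v, \theta, r}$ based at $x_0$ with direction $v$, opening angle $\theta > 0$, and height $r > 0$, then $\regcluster{}{t} \cap \A_{\rho/2, \rho}(x_0) \subset \A_{\rho/2, \rho}(x_0) \setminus C$ would force $\mu_h(C \cap \A_{\rho/2, \rho}(x_0)) < (1 - \alpha) \mu_h(\A_{\rho/2, \rho}(x_0))$ at all very good scales $\rho < r$. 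It therefore suffices to exhibit a sequence of very good scales $\rho_k \to 0$ at which the cone occupies more than a $(1-\alpha)$-fraction of the annulus mass.

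To produce such scales, we use the angular variability of $\mu_h$. Since $\mu_h$ is mutually singular with respect to Lebesgue measure, for each rational direction $v$ and rational angle $\theta > 0$ the ratio $R(v, \theta) := \mu_h(C_{v, \theta, 1} \cap \A_{1/2, 1}(0)) / \mu_h(\A_{1/2, 1}(0))$ is a non-degenerate random variable on $(0, 1)$, so $\P[R(v, \theta) > 1 - \alpha] > 0$. By the scale invariance~\eqref{eq:h-coordinate-change} of $h$ modulo additive constant and an application of the Birkhoff ergodic theorem to the scaling shift (as in the proof of Lemma~\ref{lemma:doubling-alpha}), a positive density of dyadic scales $\rho \in \{2^{-n}\}$ a.s.\ witness both $\overline{E}_\rho(0)$ and $\{R_\rho(v, \theta) > 1 - \alpha\}$. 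A Radon--Nikodym rerooting argument as in the proof of Lemma~\ref{lemma:mu_h-doubling-and-harnack} upgrades this to: almost surely, for each rational $(v, \theta)$ and Lebesgue-a.e.\ $x_0 \in B_1$, there are infinitely many $\rho$ at which both $\overline{E}_\rho(x_0)$ holds and $\mu_h(C_{v, \theta, \rho}(x_0) \cap \A_{\rho/2, \rho}(x_0)) > (1 - \alpha) \mu_h(\A_{\rho/2, \rho}(x_0))$. A countable intersection over rational $(v, \theta)$ rules out every candidate exterior cone at such an $x_0$.

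Finally, for the spatial-to-temporal translation, define $\tau(x_0) := \inf\{t > 0 : x_0 \in \regcluster{}{t}\}$. By the monotonicity and continuity of $\{\regcluster{}{t}\}_{t > 0}$ (Proposition~\ref{prop:continuity-of-clusters}), $\tau$ is Borel measurable and $x_0 \in \partial \regcluster{}{\tau(x_0)}$ for Lebesgue-a.e.\ $x_0 \in \C \setminus \{0\}$. Combining the previous two paragraphs, for Lebesgue-a.e.\ $x_0$ the domain $\regcluster{}{\tau(x_0)}$ fails the exterior cone condition at $x_0$. A Fubini argument then yields the claim for Lebesgue-a.e.\ $t$, modulo absolute continuity of the pushforward of Lebesgue measure on $\C$ under $\tau$ (equivalently, of the monotone map $t \mapsto |\regcluster{}{t}|_{\mathrm{Leb}}$). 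The main obstacle is this last step: the absolute continuity of $t \mapsto |\regcluster{}{t}|_{\mathrm{Leb}}$ should follow from the H\"older continuity in $t$ of Proposition~\ref{prop:continuity-of-clusters} together with a uniform Euclidean diameter bound, but verifying it requires careful book-keeping, as does the rerooting step in the previous paragraph where the cone-mass event must be shown to survive the change of root from an LQG-typical to a Lebesgue-typical point.
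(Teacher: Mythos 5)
Your cone-condition argument at the spatial level is essentially sound — the Harnack estimate forces a definite fraction of annulus mass to lie in the cluster near any boundary point, so an exterior cone of $(1-\alpha)$-fractional mass at infinitely many very good scales is incompatible with being a boundary point. But your choice of reference measure for ``typical'' $x_0$, and the resulting spatial-to-temporal translation, are where the proof breaks.

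The paper works with $\mu_h$-typical boundary points, not Lebesgue-typical ones, precisely because the family $\{\regcluster{}{t}\}$ is parameterized by $\mu_h$-mass: since $\mu_h(\regcluster{}{t}) = t$ and $\mu_h(\partial\regcluster{}{t}) = 0$, one has the exact identity
\[
\mu_h\bigl(\{z : z \in \partial \regcluster{}{t} \text{ for some } t \in A\}\bigr) = \mathrm{Leb}(A)
\]
for Borel $A \subset [0,\infty)$, i.e.\ the pushforward of $\mu_h$ under the hitting time $\tau$ \emph{is} Lebesgue measure on $[0,\infty)$. Starting from a $\mu_h$-full set of points where the cone condition fails, this identity immediately gives a Lebesgue-full set of times. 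The rerooting lemma (Lemma~\ref{lemma:mu_h-bad-and-harnack}) is stated for $\mu_h$-a.e.\ $z$ for exactly this reason; getting it for $\mu_h$-typical points requires the $\gamma$-log-singularity rerooting via~\cite[Lemma A.10]{duplantier2014liouville}, which you'd be bypassing.

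Your approach replaces $\mu_h$ by Lebesgue measure and then needs, at minimum, that Lebesgue measure on $[0,\infty)$ is absolutely continuous with respect to the pushforward of two-dimensional Lebesgue measure under $\tau$. That is the opposite of what you wrote (you asked for absolute continuity of $t \mapsto |\regcluster{}{t}|_{\mathrm{Leb}}$, which is the wrong direction), and there is no reason to expect it: $\mu_h$ and Lebesgue are mutually singular, so the Lebesgue-area growth of $\regcluster{}{t}$ as a function of $\mu_h$-mass is genuinely singular. H\"older continuity of $t\mapsto\regcluster{}{t}$ in Hausdorff distance (Proposition~\ref{prop:continuity-of-clusters}) does not control Lebesgue areas at all — a domain can move a positive Lebesgue area past a point with arbitrarily small Hausdorff displacement. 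So the last step is not a book-keeping issue but a real obstruction, and it is exactly what the paper's use of $\mu_h$-typical points is designed to avoid. The fix is to run your cone argument at $\mu_h$-typical $z$ (which forces the rerooting machinery), and then invoke the identity above.

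One minor point: the paper's version of the cone argument detects the failure of \emph{both} the interior and exterior cone conditions simultaneously by routing through Lemma~\ref{lemma:harmonic-comparison-chain}, which converts ``large cluster mass in a small annulus'' into ``cluster fills $\A_{4r,5r}$''. Your version uses only the Harnack estimate and the exterior cone, which is a legitimate alternative for ruling out Lipschitz boundaries, but be aware that Definition~\ref{def:cone-condition} asks for both an interior and an exterior cone, and the paper's argument uses the interior cone to supply the mass that contradicts the exterior cone.
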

	
	To that end, we show that `typical' points on the boundary of the cluster
	do not satisfy the `cone condition'. We define these terms. 
	\begin{definition}
		A \emph{cone} $\mathcal Q \subset \C$ is a non-empty open set strictly contained in $\C$ which can be written as $\mathcal Q = \{c v + d w: c, d > 0\}$ for \emph{extremal directions} $v,w \in \C \setminus \{0\}$. We also define complements of cones to be cones. 
	\end{definition}
	
	Note that any cone $\mathcal Q$ is scale invariant, that is, for any $\theta > 0$, $\theta \mathcal Q = \mathcal Q$. 
	For a cone $\mathcal Q$ and $z \in \C$, we write $\mathcal Q(z) := \mathcal Q + z$. 
	
	\begin{definition} \label{def:cone-condition}
		A domain $A \subset \C$ satisfies the \emph{interior} (respectively, \emph{exterior}) \emph{cone condition} at $z \in \partial A$
		if there is a radius $R > 0$ and a cone $\mathcal Q$ such that $\mathcal Q(z) \cap B_R(z) \subset A$ (respectively, $\mathcal Q(z) \cap B_R(z) \subset A^c$).
		$A$ satisfies the \emph{cone condition} at $z \in \partial A$ if it satisfies both the interior and exterior cone conditions at $z$.
	\end{definition}
	
	\begin{figure}
		\begin{center}
			\includegraphics[width=0.5\textwidth]{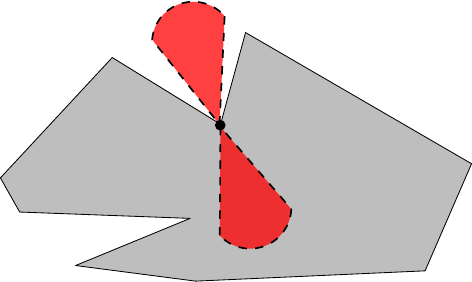}
			\caption{A domain which satisfies the cone condition of Definition \ref{def:cone-condition}. 
				The domain is in gray and the two cones are displayed in red. }
		\end{center}
	\end{figure}

	The main input in the proof of Proposition~\ref{prop:not-balls} is the following lemma.
	\begin{lemma} \label{lemma:no-exterior-cone-condition}
		Almost surely, for $\mu_h$-a.e.\ $z \in \C$, if $t > 0$ such that $z \in \partial {\regcluster{}{t}}$, then
		${\regcluster{}{t}}$ does not satisfy the cone condition at $z$. 
	\end{lemma}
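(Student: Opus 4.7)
The plan is to establish the stronger fact that the \emph{exterior} cone condition fails at $\mu_h$-a.e.\ boundary point; since the cone condition requires both interior and exterior versions, this gives the lemma. The strategy parallels the ergodic template of Section \ref{sec:boundary-measure-zero}, but in place of a Lebesgue-density statement we use an \emph{angular} mass-concentration statement for $\mu_h$, and then apply Proposition \ref{prop:harnack-type-property} to derive a contradiction.

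Fix an open cone $\mathcal Q$ and let $\alpha \in (0,1)$ be the Harnack constant from Proposition \ref{prop:harnack-type-property}. I would introduce the angular mass-concentration event
\[
M_\rho(z; \mathcal Q) := \bigl\{ \mu_h(\A_{\rho/2, \rho}(z) \setminus \mathcal Q(z)) \leq \alpha\, \mu_h(\A_{\rho/2, \rho}(z)) \bigr\}
\]
and aim to show $\P[M_1(0;\mathcal Q)]>0$. To see this, I would add a smooth bump function $\varphi$ supported in the open pie-slice $\mathcal Q\cap\A_{1/2,1}(0)$ to $h^\C$; by Weyl scaling (Fact \ref{fact:lqg-measure}), the perturbed field satisfies $\mu_{h^\C+\varphi}=e^{\gamma\varphi}\mu_{h^\C}$, so for $\varphi$ large enough on its support, $M_1(0;\mathcal Q)$ occurs deterministically under $h^\C+\varphi$. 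Cameron--Martin absolute continuity between $h^\C$ and $h^\C+\varphi$ then transfers this to positive probability under the original law. Scale/translation invariance of $h$ modulo additive constant (together with Weyl scaling of $\mu_h$) yields $\P[M_\rho(z;\mathcal Q)]\geq p_{\mathcal Q}>0$ for every $\rho>0$ and every $z$ such that $\A_{\rho/2,\rho}(z)$ is bounded away from $0$.

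Combining Lemma \ref{lemma:very-good-annuli-hit} with a union bound, there is $q_{\mathcal Q}>0$ such that $\P[\overline E_\rho(z)\cap M_\rho(z;\mathcal Q)]\geq q_{\mathcal Q}$ uniformly. I would then follow the proof of Lemma \ref{lemma:mu_h-doubling-and-harnack} essentially verbatim: stationarity under dyadic rescaling makes $\{\mathbf 1_{\overline E_{2^{-n}}(0)\cap M_{2^{-n}}(0;\mathcal Q)}\}_{n\geq 0}$ a stationary sequence whose Birkhoff average is a.s.\ constant (by tail triviality of $h$ at $0$), and size-biasing by $\mu_h$ via the Girsanov-type absolute continuity of~\cite[Lemma A.10]{duplantier2014liouville} yields: a.s., for $\mu_h$-a.e.\ $z\in\C$, there exist infinitely many scales $\rho\in\{2^{-n}\}$ at which both $\overline E_\rho(z)$ and $M_\rho(z;\mathcal Q)$ occur. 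Applying this to every cone in a countable dense family $\mathcal F$ (for example, cones with rational extremal angles) gives a single full-$\mu_h$-measure set of ``good'' $z$ that works simultaneously for all $\mathcal Q\in\mathcal F$.

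To conclude, fix such a good $z$, suppose $z\in\partial{\regcluster{}{t}}$, and suppose the exterior cone condition holds at $z$ with some open cone $\mathcal Q^*$ and radius $R>0$. Pick $\mathcal Q\in\mathcal F$ with $\overline{\mathcal Q}\setminus\{0\}\subset\mathcal Q^*$, then pick a scale $\rho<R$ at which both $\overline E_\rho(z)$ and $M_\rho(z;\mathcal Q)$ occur. The exterior cone condition forces ${\regcluster{}{t}}\cap\A_{\rho/2,\rho}(z)\subset\A_{\rho/2,\rho}(z)\setminus\mathcal Q(z)$, so $M_\rho(z;\mathcal Q)$ gives $\mu_h({\regcluster{}{t}}\cap\A_{\rho/2,\rho}(z))\leq\alpha\,\mu_h(\A_{\rho/2,\rho}(z))$. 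Proposition \ref{prop:harnack-type-property}, applied to $\cluster{B_r}{t}$ for $r$ large enough that ${\regcluster{}{t}}\Subset B_r$ and transported to this scale via Lemma \ref{lemma:scale-invariance-flow} and Theorem \ref{theorem:full-theorem-minus-uniqueness}, then forces ${\regcluster{}{t}}\cap B_{\rho/2}(z)=\emptyset$, contradicting $z\in\partial{\regcluster{}{t}}$. The main obstacle is the positive-probability angular mass-concentration input: the Cameron--Martin mechanism is robust, but requires care with the log-singularity at the origin and the $H^1$-class of $\varphi$ on $\C$; the remainder of the argument is a direct analog of the density-based proof of Proposition \ref{prop:boundary-measure-zero}.
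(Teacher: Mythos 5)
Your proposal takes a genuinely different route from the paper. You refute the \emph{exterior} cone condition alone, by combining a mass-concentration event $M_\rho(z;\mathcal Q)$ (mass outside the cone is at most an $\alpha$-fraction of the annulus mass) with the Harnack-type Proposition~\ref{prop:harnack-type-property} to force ${\regcluster{}{t}}\cap B_{\rho/2}(z)=\emptyset$, contradicting $z\in\partial{\regcluster{}{t}}$. The paper instead refutes the \emph{conjunction} of interior and exterior cone conditions: its event $\tilde G_r(z;M,\mathcal Q)$ of~\eqref{eq:mu-h-wild} compares $\mu_h(\A_{r,2r}(z)\cap\mathcal Q(z))$ against $\SGrho{3r,6r}{z}$; the interior cone transfers this into a lower bound on $\mu_h(\A_{r,2r}(z)\cap{\regcluster{}{t}})$, which the \emph{unconditional} Lemma~\ref{lemma:harmonic-comparison-chain} (contrapositive) turns into $\A_{4r,5r}(z)\subset{\regcluster{}{t}}$, contradicting the exterior cone. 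The paper's route needs no reference to the `very good annulus' events $\overline E_\rho$, and Lemma~\ref{lemma:harmonic-comparison-chain} is already stated for ${\regcluster{}{t}}$, so no rescaling of the Harnack proposition is needed.

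The concern with your route is the claim that $\P[\overline E_\rho(z)\cap M_\rho(z;\mathcal Q)]\geq q_{\mathcal Q}>0$ ``by a union bound.'' The parameter $\alpha$ appearing in $M_\rho$ is not free: it is produced by Lemma~\ref{lemma:very-good-annuli-satisfy-harnack-type-estimate}, and through~\eqref{eq:choice-of-alpha} and~\eqref{eq:choice-of-N1} it shrinks when the constants $C_1^-,C_2^+$ are pushed to their extremes to make $\P[\overline E_\rho]$ close to $1$. A union bound needs $\P[M_\rho(\alpha)]>1-\P[\overline E_\rho]$, but tightening the right-hand side forces $\alpha$ down, which in turn shrinks $\P[M_\rho(\alpha)]$. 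There is no obvious reason this fixed-point inequality is satisfiable, and you neither quantify the rates nor replace the union bound with a conditioning argument (and the naive `add a bump in $\mathcal Q\cap\A_{1/2,1}$' trick \emph{destroys} $\overline E_\rho$, since it distorts the ratios in \ref{enum:very-good-criteria-1}--\ref{enum:very-good-criteria-2}). By contrast, the paper sets $M=2C$ with $C$ a constant independent of the $\overline E_\rho$ parameters, so its event $\tilde G_r$ and its Lemma~\ref{lemma:harmonic-comparison-chain} entirely bypass this circularity. Modulo that gap, the remaining steps of your argument (the ergodic size-biasing, the rational approximation of cones, the transport of the Harnack estimate from $\cluster{B_1}{t}$ to $\cluster{B_r}{t}$, and the topological contradiction with $z\in\partial{\regcluster{}{t}}$) are sound; the cleanest fix is probably to switch the concentration event to compare against $SG$ as the paper does, which re-anchors you to Lemma~\ref{lemma:harmonic-comparison-chain} and dispenses with $\overline E_\rho$ altogether.
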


	To show that that ${\regcluster{}{t}}$ does not satisfy the cone condition, we study an event 
	concerning the oscillation of the Liouville measure across sectors of annuli. 
	Before doing so, we motivate the event we consider by reformulating some of the harmonic comparison lemmas
	from Section \ref{subsec:odometer-cluster-comparison} into the following. 
	\begin{lemma} \label{lemma:harmonic-comparison-chain}	
		There exists a universal constant $C > 0$ such that the following holds for each $r > 0$ and $z \in \C$ such that 
		$\overline B_{10 r}(z) \cap \{0\} = \emptyset$.
		If  
		$\A_{4 r , 5r }(z) \cap {{\regcluster{}{t}}}^c \neq \emptyset$, we have
		\[
		\mu_h(\A_{r,2 r}(z) \cap {\regcluster{}{t}}) \leq C \times \SGrho{3r, 6r}{z}.
		\]
		where $\SGrho{3r, 6r}{z}$ is as in \eqref{eq:alternate-size-2}. 
		
	\end{lemma}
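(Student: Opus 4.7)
The plan is to chain Lemmas \ref{lemma:odometer-annuli} and \ref{lemma:odometer-bound-lqg-mass}, bridging them by the fact that the odometer is subharmonic away from the origin. Fix $R > 0$ large enough that both ${\regcluster{}{t}} \Subset B_R$ and $\overline{B_{6r}(z)} \subset B_R$. By Theorem \ref{theorem:full-theorem-minus-uniqueness} together with Proposition \ref{prop:boundary-measure-zero}, we may identify ${\regcluster{}{t}}$ with $\cluster{B_R}{t}$ up to a set of zero $\mu_h$-mass, so the results of Section \ref{sec:harnack-type-estimate} (which are stated for clusters in a fixed ball) apply with $B_R$ in place of $B_1$. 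Write $\odometer{}{t}$ for $\odometer{B_R}{t}$; by Lemma \ref{lemma:non-coincidence-open-harmonic} and the assumption $\overline{B_{10r}(z)} \cap \{0\} = \emptyset$, $\odometer{}{t}$ is continuous and subharmonic on $\overline{B_{5r}(z)} \subset B_R \setminus \{0\}$.

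First, applying Lemma \ref{lemma:odometer-annuli} to $\hat{\Lambda}_t := \A_{r/2, 3r}(z) \cap \cluster{B_R}{t}$ with $(s_1,s_2,s_3,s_4) = (1/2,1,2,3)$ produces a universal constant $C_1 > 0$ with
\[
\mu_h(\A_{r, 2r}(z) \cap {\regcluster{}{t}}) \leq C_1 \sup_{\A_{r/2, 3r}(z)} \odometer{}{t} .
\]
The maximum principle on $\overline{B_{5r}(z)}$, combined with the inclusions $\A_{r/2, 3r}(z) \subset B_{5r}(z)$ and $\partial B_{5r}(z) \subset \overline{\A_{4r, 5r}(z)}$, then yields
\[
\sup_{\A_{r/2, 3r}(z)} \odometer{}{t} \leq \sup_{\partial B_{5r}(z)} \odometer{}{t} \leq \sup_{\A_{4r, 5r}(z)} \odometer{}{t} ,
\]
where continuity of $\odometer{}{t}$ lets us pass between open and closed annuli.

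Finally, we apply Lemma \ref{lemma:odometer-bound-lqg-mass} with $\rho = r$ and $(s_1,s_2,s_3,s_4) = (3,4,5,6)$. The required hypothesis ${\regcluster{}{t}}^c \cap \A_{4r, 5r}(z) \neq \emptyset$ is exactly the standing assumption of the lemma, and $\A_{3r, 6r}(z) \subset B_R \setminus \{0\}$ by construction. This yields a universal constant $c > 0$ with
\[
\sup_{\A_{4r, 5r}(z)} \odometer{}{t} \leq c\,\SGrhofull{r}{z}{3}{6} = c\,\SGrho{3r, 6r}{z} .
\]
Chaining the three displays gives the lemma with $C := C_1 c$. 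There is no genuine obstacle here; the argument is entirely bookkeeping, and the only point that requires care is reconciling the scale-$B_1$ phrasing of Section \ref{sec:harnack-type-estimate} with the global cluster ${\regcluster{}{t}}$ via Theorem \ref{theorem:full-theorem-minus-uniqueness}.
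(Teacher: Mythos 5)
Your proof is correct and takes essentially the same route as the paper: apply Lemma \ref{lemma:odometer-annuli} to bound the mass by the odometer's supremum, move that supremum outward to $\A_{4r,5r}(z)$ via subharmonicity and the maximum principle, then apply Lemma \ref{lemma:odometer-bound-lqg-mass} with $(s_1,s_2,s_3,s_4)=(3,4,5,6)$. The only cosmetic difference is the choice of intermediate annulus/shell parameters (you use $(0.5,1,2,3)$ and pass through $\partial B_{5r}(z)$; the paper uses $(0.5,1,2,4)$ and $\partial B_{4r}(z)$), which changes nothing.
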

	\begin{proof}
		We use Lemma \ref{lemma:odometer-annuli} and Lemma \ref{lemma:odometer-bound-lqg-mass} which were both 
		stated for $\cluster{B_1}{t}$, $\odometer{B_1}{t}$ but whose proofs and statements extend verbatim to $\cluster{B_R}{t}$, $\odometer{B_R}{t}$ for any $R > 0$. 
		In the following chain of inequalities $C$ refers to a universal constant 
		which may change from line to line,
		\begin{align*}
		&\mu_h({\regcluster{}{t}} \cap \A_{r, 2r}(z)) \\
		&= \mu_h(\cluster{B_R}{t} \cap \A_{r, 2 r}(z)) \qquad \mbox{(as for some $R > 0$, $\mu_h({\regcluster{}{t}} \triangle \cluster{B_R}{t})=0$ by Theorem \ref{theorem:full-theorem-minus-uniqueness})} \\
		&\leq C \times \sup_{\A_{0.5 r, 4 r}(z)} \odometer{B_R}{t} \qquad \mbox{(by Lemma \ref{lemma:odometer-annuli} with $(s_1,s_2,s_3,s_4)=(0.5, 1, 2,4)$)} \\
		&\leq C \times \sup_{\partial B_{4 r}(z)} \odometer{B_R}{t} \qquad \mbox{($\odometer{B_R}{t}$ is subharmonic in $B_{10 r}(z)$)} \\
		&\leq C \times \SGrho{3r, 6r}{z} \qquad \mbox{(by Lemma \ref{lemma:odometer-bound-lqg-mass} with $(s_1,s_2,s_3,s_4)=(3,4,5,6)$)}.
		\end{align*}
		
	\end{proof}

	For a cone $\mathcal Q$, $M > 0$,  $z \in \C$, and $r > 0$, consider the event
	\begin{equation} \label{eq:mu-h-wild}
	\tilde G_{r}(z) = \tilde G_{r}(z; M, \mathcal Q) := \left\{\mu_{h}(\A_{r, 2r}(z) \cap \mathcal Q(z)) \geq  M \times \SGrho{3r, 6r}{z} \right\}
	\end{equation}
	and observe that $\tilde G_{r}(z) \in \sigma(h |_{\A_{r, 6r}(z)})$.
	We abbreviate
	\begin{equation}
	\tilde G_n(z) = \tilde G_{2^{-n}}(z), \quad \forall n\in\N 
	\end{equation}
	and prove an analogue of Lemma \ref{lemma:mu_h-doubling-and-harnack} for rare events.  
	\begin{lemma} \label{lemma:mu_h-bad-and-harnack}
		Let $\mathcal Q$ be a cone and let $M > 0$. There exists $\delta = \delta(M) \in (0,1)$ such that a.s.\ for $\mu_h$-a.e.\ $z\in  \C$, it holds for each large enough $N\in \N$ (depending on $z$) that
		\begin{equation} \label{eq:mu_h-bad}
		\#\left\{ n \in [N+1,2N] \cap \Z  : \mbox{$\tilde G_n(z; M, \mathcal Q)$ occurs} \right\}  \geq \delta N.
		\end{equation}  
	\end{lemma}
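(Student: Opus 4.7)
The proof follows the template of Lemmas~\ref{lemma:mu_h-doubling-and-harnack} and~\ref{lemma:doubling-alpha}, with the key difference that we now only require a lower bound $q(M) > 0$ on the probability of the one-scale event (rather than a probability close to $1$), since the target density $\delta(M)$ need not be close to $1$.

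The first step is to establish that there exists $q(M) > 0$ such that $\P[\tilde G_r(0; M, \mathcal Q)] \geq q(M)$ for every $r > 0$. By the scale invariance~\eqref{eq:h-coordinate-change} of the law of $h$ modulo additive constant, the LQG coordinate change formula (Fact~\ref{fact:lqg-measure}), and the scale invariance of the 2D Green's function, the ratio
\[
R := \mu_h(\A_{r,2r}(0) \cap \mathcal Q(0)) / \SGrhofull{r}{0}{3}{6}
\]
has a law independent of $r$ and invariant under additive shifts of $h$ (both numerator and denominator scale by $e^{\gamma c}$ under $h \mapsto h + c$). It therefore suffices to show that the law of $R$ has unbounded support. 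I would verify this via a Cameron-Martin shift: fix a nonnegative smooth $\phi \not\equiv 0$ compactly supported in $\mathcal Q(0) \cap \A_{1,2}(0)$. The law of $h + K\phi$ is mutually absolutely continuous with the law of $h$, and by Weyl scaling one has $\mu_{h+K\phi}(\mathcal Q(0) \cap \A_{1,2}(0)) \geq e^{\gamma K \epsilon}\, \mu_h(V)$ for $V \subset \{\phi \geq \epsilon\}$, while $\SGrhofull{1}{0}{3}{6}$ is unchanged because $\phi$ is supported off $\A_{3,6}(0)$. Taking $K$ sufficiently large produces $R \geq M$ with positive probability under the shifted law, and hence under the original law by absolute continuity.

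The second step is a direct application of the Birkhoff ergodic theorem as in the proof of Lemma~\ref{lemma:doubling-alpha}: since $\tilde G_n(0)$ is measurable with respect to $h|_{\A_{2^{-n}, 6 \cdot 2^{-n}}(0)}$ modulo additive constant, the sequence $\{\mathbf{1}_{\tilde G_n(0)}\}_{n \geq 0}$ is stationary, and its ergodic average has a limit measurable with respect to $\bigcap_{\epsilon > 0} \sigma(h|_{B_\epsilon(0)})$, which is trivial (cf.~\cite[Lemma 7.2]{duplantier2014liouville}), so a.s.\ the limit equals $q(M)$. Subtracting the partial sums up to $N$ from those up to $2N$ yields
\[
\frac{1}{N} \sum_{n=N+1}^{2N} \mathbf{1}_{\tilde G_n(0)} \longrightarrow q(M) \quad \text{a.s.},
\]
giving the claim at $z = 0$ for the field with $\boldsymbol{\alpha}_0 = \gamma$ (a $\gamma$-log singularity at $0$), with $\delta := q(M)/2$.

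The third step transfers this conclusion from $z = 0$ to $\mu_h$-a.e.\ $z \in \C$ exactly as in the proof of Lemma~\ref{lemma:mu_h-doubling-and-harnack}. Given a bounded open set $U$, one samples $Z$ from $|\cdot|^{\boldsymbol{\alpha}_0 \gamma} \mu_h|_U$, normalized, and uses~\cite[Lemma A.10]{duplantier2014liouville} to replace $(h,Z)$ by a mutually absolutely continuous pair $(\tilde h, \tilde Z)$ with $\tilde Z$ independent of $h$ and uniform on $U$, and $\tilde h = h - \gamma \log|\cdot - \tilde Z| + \gamma \log \max\{|\cdot|, 1\}$. Near $\tilde Z$, the field $\tilde h$ is a whole-plane GFF with a $\gamma$-log singularity at $\tilde Z$ plus a smooth function, so by translation invariance of the whole-plane GFF (modulo additive constant) and absolute continuity of GFF restrictions under smooth shifts~\cite[Proposition 2.9]{miller2017imaginary}, Step~2 applies to give the density estimate~\eqref{eq:mu_h-bad} at $\tilde Z$ for $\tilde h$ almost surely, which transfers to $Z$ for $h$ via the absolute continuity of $(h,Z)$ and $(\tilde h, \tilde Z)$. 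Exhausting $\C$ by bounded sets $U$ then yields the result for $\mu_h$-a.e.\ $z \in \C$. The main obstacle is Step~1 --- specifically, verifying via the Cameron-Martin shift that the ratio $R$ has unbounded support so that $q(M) > 0$ for \emph{every} $M > 0$; once this is in hand, Steps~2 and~3 are structural copies of the corresponding steps in the proofs of Lemmas~\ref{lemma:doubling-alpha} and~\ref{lemma:mu_h-doubling-and-harnack}.
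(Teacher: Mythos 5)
Your proposal is correct and takes essentially the same approach as the paper: positive probability of the one-scale event at $z=0$ via a Cameron–Martin shift, Birkhoff plus tail triviality to upgrade this to a positive density of good scales, and the rooted-measure absolute-continuity argument of~\cite[Lemma A.10]{duplantier2014liouville} to transfer from $z=0$ to $\mu_h$-a.e.\ $z$. The paper packages your Step~1 as a separate statement (Lemma~\ref{lemma:positive-probability}) proved by a two-step reduction to $\boldsymbol{\alpha}_0=0$, whereas you streamline this by observing that the ratio of $\mu_h(\A_{r,2r}\cap\mathcal Q)$ to $\SGrho{3r,6r}{0}$ is exactly scale invariant in law at $z=0$ for any $\boldsymbol{\alpha}_0$; both routes are valid, and since the ergodic step only ever needs $z=0$, your simplification is sound.
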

	In order to prove this lemma, we first prove the following.
	\begin{lemma} \label{lemma:positive-probability}
		Let $\mathcal Q$ be a cone and let $M > 0$. There exists a constant $p_M \in (0,1)$ so  that
		\begin{equation} \label{eq:desired-claim}
		P[ \tilde G_n(z; M, \mathcal Q)] \geq p_M, \quad \mbox{$\forall z$ s.t. $\dist(\A_{2^{-n}, 6 \times 2^{-n}}(z), \{0\}) > 2^{-n}/100$,  $\forall n \geq N(z)$.}
		\end{equation}
	\end{lemma}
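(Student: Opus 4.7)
My plan is to reduce the lemma, via two applications of scale invariance, to a positive-probability estimate at unit scale, which I will establish by a Cameron-Martin shift.

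\textbf{Step 1 (Reduction to $\boldsymbol{\alpha}_0 = 0$).} Since $h = h^{\C} - \boldsymbol{\alpha}_0\log|\cdot|$ and the distance hypothesis $\dist(\A_{2^{-n},6\cdot 2^{-n}}(z),\{0\}) > 2^{-n}/100$ implies that $\sup_{y \in \A}|y|/\inf_{y \in \A}|y|$ over $\A_{2^{-n},6\cdot 2^{-n}}(z)$ is bounded by a universal constant, Weyl scaling (Fact~\ref{fact:lqg-measure}) yields $\mu_h = C_z \mu_{h^{\C}}$ on this annulus up to universal multiplicative constants, for some deterministic $C_z > 0$. The prefactor $C_z$ cancels in the ratio defining $\tilde G_n(z)$, so it suffices to prove the lemma with $h^{\C}$ in place of $h$, at the cost of replacing $M$ with some $M' = M'(M, \gamma, \boldsymbol{\alpha}_0)$. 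This is the same reduction as in the proof of Lemma~\ref{lemma:annulus-hit}.

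\textbf{Step 2 (Reduction to a fixed scale).} Setting
\[
R_n(z) := \frac{\mu_{h^{\C}}(\A_{2^{-n}, 2\cdot 2^{-n}}(z) \cap \mathcal Q(z))}{\SGrho{3\cdot 2^{-n}, 6\cdot 2^{-n}}{z}},
\]
I would combine~\eqref{eq:gff-scaling}, the LQG coordinate change (Fact~\ref{fact:lqg-measure}), and the scale-translation invariance of the 2D Green's function to verify that $R_n(z) \stackrel{d}{=} R_0(0)$. Explicitly, the substitution $w = (y - z)/2^{-n}$, together with the coordinate change, replaces $h^{\C}$ by the field $\tilde h(w) := h^{\C}(2^{-n}w + z) + Q\log(2^{-n})$, which differs from a copy of $h^{\C}$ only by the random additive constant $h^{\C}_{2^{-n}}(z) + Q\log(2^{-n})$. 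Under Weyl scaling this constant multiplies numerator and denominator by the same exponential factor and therefore cancels in the ratio. So it suffices to show $\P[R_0(0) \geq M'] > 0$.

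\textbf{Step 3 (Cameron-Martin shift).} Pick a smooth, nonnegative $\phi:\C \to \R$ compactly supported in $\A_{1,2}\cap\mathcal Q$, with $\phi\equiv 1$ on some nonempty open ball $B \subset \A_{1,2}\cap\mathcal Q$. Since $\mu_{h^{\C}}(B)$ and $\SGrho{3,6}{0}$ are a.s.\ positive and finite, choose $\epsilon, K > 0$ so that
\[
E := \{\mu_{h^{\C}}(B) \geq \epsilon\} \cap \{\SGrho{3,6}{0} \leq K\}
\]
has positive probability, and then $c > 0$ large enough that $e^{\gamma c}\epsilon \geq M' K$. Under the shifted field $h^{\C} + c\phi$, Weyl scaling gives $\mu_{h^{\C}+c\phi}(\A_{1,2}\cap\mathcal Q) \geq e^{\gamma c}\mu_{h^{\C}}(B)$, and since $\phi \equiv 0$ on $\A_{3,6}(0)$ the denominator $\SGrho{3,6}{0}$ is unchanged by the shift. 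Hence on $E$, the ratio for the shifted field is at least $M'$, so $\{R_0(0) \geq M'\}$ has positive probability under the law of $h^{\C} + c\phi$. As $c\phi$ is smooth and compactly supported, it lies in the Cameron-Martin space of $h^{\C}$ modulo additive constant, so the laws of $h^{\C} + c\phi$ and $h^{\C}$ are mutually absolutely continuous (cf.\ the proof of~\cite[Proposition~2.9]{miller2017imaginary}). Therefore $\P[R_0(0) \geq M'] > 0$ under the original law, giving the desired $p_M > 0$.

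\textbf{The main obstacle} is keeping the denominator $\SGrho{3,6}{0}$ bounded while simultaneously forcing the numerator to be large, since the LQG masses of $\A_{1,2}$ and $\A_{3,6}$ are correlated through the long-range structure of the GFF. The Cameron-Martin shift circumvents this cleanly by using a perturbation supported entirely in the inner annulus, leaving the outer annulus untouched.
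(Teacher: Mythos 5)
Your proof is correct and takes essentially the same approach as the paper: reduce to $\boldsymbol{\alpha}_0 = 0$ via Weyl scaling (using the uniform bound on $|\cdot|^{-\boldsymbol{\alpha}_0\gamma}$ over the annulus guaranteed by the distance hypothesis), reduce to a single scale via the scale- and translation-invariance of the whole-plane GFF modulo additive constant (noting that the additive constant cancels in the ratio), and then force the inner annulus mass to be large while leaving $\SGrho{3,6}{0}$ untouched by adding a bump function supported away from $\A_{3,6}$, concluding by Cameron--Martin absolute continuity. The paper orders the two reductions oppositely and uses a bump that is constant on all of $\A_{1,2}$ rather than one supported inside $\A_{1,2}\cap\mathcal Q$, but these are cosmetic differences.
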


	\begin{proof}
		
		Recall from \eqref{eq:gff} that $h = h^\C - \boldsymbol{\alpha}_0 \log |\cdot|$. 
		We first show \eqref{eq:desired-claim} in the case $\boldsymbol{\alpha}_0 = 0$, 
		and then use Weyl scaling to get the general case.

		In the following, we write $SG_{3r ,6r}^h(z)$ instead of $\SGrho{3r,6r}{z}$ to indicate the dependence on the underlying field $h$. 
		Note that $SG^h_{3r ,6r }(z)$ depends only on $h$ restricted to $\A_{3r, 6r}(z)$. 
		
		{\it Step 1: $\boldsymbol{\alpha}_0 = 0$.} \\
		By definition, if $\boldsymbol{\alpha}_0 = 0$ then $h$ is a whole-plane GFF.
		We have chosen the event $\tilde G_n$ so that it is a.s.\ determined by $h $ viewed modulo additive constant. Hence, by the scale and translation invariance of the law of the whole-plane GFF~\eqref{eq:gff-scaling} it suffices to bound $\P[\tilde G_1(0)]$ from below. This lower bound is achieved via the `adding a bump function' technique. 
		
		Since the random variables involved are finite and positive, there are $\gamma$-dependent constants $C_1$ and $C_2$ so that
		\begin{equation} \label{eq:l-and-r-bounds}
		\P\left[\mu_h(\A_{1,2} \cap \mathcal Q) \geq C_1 \mbox{ and }  SG^h_{3,6} \leq C_2\right] \geq 1/2.
		\end{equation}
		Let $\phi$ be a smooth, non-negative bump function 
		which is identically equal to $\gamma^{-1} \log \frac{M C_2}{C_1}$ on $\A_{1,2}$ and identically equal to 0 on $\A_{0.5,2.5}^c$. On the event in \eqref{eq:l-and-r-bounds}
		\begin{align*}
		\mu_{h+\phi}(\A_{1,2} \cap \mathcal Q) &= M \times \mu_h(\A_{1,2} \cap \mathcal Q) \times  \frac{C_2}{C_1} \qquad \mbox{(by Weyl scaling)} \\
		&\geq M \times C_2 \qquad \mbox{($\mu_h(\A_{1,2} \cap \mathcal Q) \geq C_1$ by the event)} \\
		&\geq M \times SG^h_{3,6}  \qquad \mbox{($ C_2 \geq SG^h_{3,6}$ by the event \eqref{eq:l-and-r-bounds})} \\
		&= M \times SG^{h+\phi}_{3,6} \qquad \mbox{($\phi \equiv 0$ on $\A_{3, 6}$)}.
		\end{align*}	
		Since the laws of $h$ and $h+\phi$ are mutually absolutely continuous viewed modulo additive constant~\cite[Proposition 2.9]{miller2017imaginary} this implies
		\[
		\P[\tilde G_1(0)] := p_M > 0,
		\]		
		completing the proof of Step 1.

		{\it Step 2: $\boldsymbol{\alpha}_0 \in (-\infty, Q)$.} \\
		Write $r := 2^{-n}$.
		In the general case, we fix $z$ and take $n \geq N(z)$ sufficiently large so that 
		\[
		\dist(\A_{r, 6 r}(z), \{0\}) > r/100.
		\]
		By Weyl scaling, Fact \ref{fact:lqg-measure}, 
		\[
		\frac{\mu_h(\A_{r,2r}(z) \cap \mathcal Q)}{SG^h_{3r,6r}(z)} \geq C \frac{\mu_{h^\C}(\A_{r,2r}(z) \cap \mathcal Q)}{SG^{h^\C}_{3r,6r}(z)}, \quad \mbox{for a universal constant $C > 0$}. 
		\]
		By Step 1, with probability $p_{M/C}$, 
		\[
		\frac{\mu_{h^\C}(\A_{r,2r} \cap \mathcal Q)}{SG^{h^\C}_{3r,6r}} \geq M/C,  
		\]
		completing the proof. 
	\end{proof}
	
	This leads to a proof of Lemma \ref{lemma:mu_h-bad-and-harnack}.
	\begin{proof}[Proof of Lemma \ref{lemma:mu_h-bad-and-harnack}]
		The proof is nearly identical to that of Lemma \ref{lemma:mu_h-doubling-and-harnack}, the only difference
		being the event under consideration only has positive probability meaning we get $\delta N$ instead of $(1-\zeta) N$ `good' scales.
		
		Specifically, we may carry out the proof of Lemma \ref{lemma:doubling-alpha} and substitute Lemma \ref{lemma:positive-probability}
		as the bound in \eqref{eq:doubling-and-harnack-prob}. 
		This shows that there exists $\delta = \delta(M) \in (0,1)$ such that it holds for each large enough $N\in \N$ 
		\begin{equation} \label{eq:log-singularity-event}
		\#\left\{ n \in [N+1,2N]\cap \Z   : \mbox{$\tilde G_n(0; M, \mathcal Q)$ occurs} \right\}  \geq \delta N.
		\end{equation}  
		Then the argument in the proof of Lemma \ref{lemma:mu_h-doubling-and-harnack} together with 
		\eqref{eq:log-singularity-event} implies a.s.\ the lemma statement holds for $\mu_h$-a.e.\ $z\in B_1$ and then by scaling
		for $\mu_h$-a.e.\ $z\in \C$.
	\end{proof}

	We use Lemma \ref{lemma:mu_h-bad-and-harnack} to show that `$\mu_h$-typical points' on the boundaries of clusters do not satisfy the cone condition. 
	
	\begin{proof}[Proof of Lemma \ref{lemma:no-exterior-cone-condition}]
		Take $M = 2 C$ where $C$ is the universal constant from Lemma \ref{lemma:harmonic-comparison-chain}.
		Condition on $h$ and sample $z$ from $\mu_h$. 
		
		By Lemma \ref{lemma:mu_h-bad-and-harnack}, a.s.\ for every cone $\mathcal Q$ with rational extremal directions $v,w$ there exist arbitrarily large $n \in \N$  (depending on $z$ and $\mathcal Q$) for which the event $\tilde G_n(z; M, \mathcal{Q})$ occurs.  
		Since every cone not equal to all of $\C$ is contained within a cone with rational extremal directions, we get that a.s.\ for \emph{every} cone $\mathcal Q$ there exist arbitrarily large $n \in \N$  (depending on $z$ and $\mathcal Q$) for which the event $\tilde G_n(z; M, \mathcal{Q})$ occurs.  
		
		Suppose $t>0$ is such that $z \in \partial {\regcluster{}{t}}$.
		and let $\mathcal Q(z)$ be a cone with apex at $z$. Let $n$ be such that $\tilde G_n(z; M, \mathcal{Q})$ holds
		and $B_{10 \times 2^{-n} }(z) \cap \{0\} = \emptyset$. Write $r = 2^{-n}$. By the contrapositive of Lemma \ref{lemma:harmonic-comparison-chain}, 
		\begin{equation} \label{eq:large-mass-implies-full}
		\mu_h(A_{r, 2r}(z) \cap {\regcluster{}{t}}) > C \times \SGrho{3r,6r}{z} \implies \A_{4r, 5r}(z) \cap {{\regcluster{}{t}}}^c = \emptyset.
		\end{equation}
		Since the event $\tilde G_n(z; M, \mathcal{Q})$ occurs with $M = 2C$, 
		\begin{equation} \label{eq:mu_h-is-bad}
		\mu_h(A_{3r, 5r}(z) \cap \mathcal{Q}(z)) \geq M \times \SGrho{3r,6r}{z} >  C \times \SGrho{3r,6r}{z}.
		\end{equation}
		
		We, however, also have the following implications of the cone condition: 
		\begin{equation} \label{eq:interior-cone-implies-full}
		B_R(z) \cap \mathcal{Q}(z) \subset {\regcluster{}{t}}    \implies \mu_h(A_{r, 2r}(z) \cap {\regcluster{}{t}}) \geq 		\mu_h(A_{r, 2r}(z) \cap \mathcal{Q}(z)), \quad \forall R > 2 r
		\end{equation}
		and for any cone $\mathcal{Q}'$,
		\begin{equation} \label{eq:exterior-cone-implies-empty}
		B_R(z) \cap \mathcal{Q}'(z) \subset {{\regcluster{}{t}}}^c    \implies \A_{4 r, 5 r}(z) \cap {\regcluster{}{t}}^c \neq \emptyset, \quad \forall R > 5 r.
		\end{equation}
		For any $R > 0$, we can take $n$ sufficiently large so that $R > 5\times 2^{-n}=  5 r$. Thus, the  interior cone condition \eqref{eq:interior-cone-implies-full} together with \eqref{eq:mu_h-is-bad} implies by 
		\eqref{eq:large-mass-implies-full} that  $\A_{4r, 5r}(z) \cap {{\regcluster{}{t}}}^c = \emptyset$ which is incompatible
		with the exterior cone condition \eqref{eq:exterior-cone-implies-empty}.
	\end{proof}

	We use this to show that for Lebesgue a.e.\ $t$, ${\regcluster{}{t}}$ is not a Lipschitz domain. 
	
	\begin{proof}[Proof of Proposition \ref{prop:not-balls}]
		The set ${\regcluster{}{t}}$ is parameterized so that $\mu_h({\regcluster{}{t}})  = t$, so if $A$ is a Lebesgue measurable subset of $[0, \infty)$, then
		\begin{equation}\label{eq:mass-times}
		\mu_h(\{z : z \in \partial {\regcluster{}{t}} \mbox{ for some $t \in A$}\}) = \mbox{Leb}(A) ,
		\end{equation} 
		where $\mbox{Leb}$ denotes one-dimensional Lebesgue measure. This follows from the standard machine. Indeed, \eqref{eq:mass-times} holds for intervals,  $A = [a,b]$,
		\[
		\mu_h(\{z : z \in \partial {\regcluster{}{t}} \mbox{ for some $t \in [a,b]$}\}) = \mu_h( \overline{\regcluster{}{b}} \backslash 
		\regcluster{}{a}) = 
		(b-a),
		\]
		by Theorem \ref{theorem:full-theorem-minus-uniqueness}.
		By approximation, this implies \eqref{eq:mass-times} holds for all Lebesgue measurable subsets of $[0,\infty)$.

		By~\eqref{eq:mass-times} applied to the set 
		\[
		A = \{t > 0 : \mbox{ ${\regcluster{}{t}}$ satisfies the cone condition at each $z\in \partial {\regcluster{}{t}}$}\}
		\]
		together with Lemma \ref{lemma:no-exterior-cone-condition}, we get that a.s.\ the Lebesgue measure of the set of $t>0$ for which ${\regcluster{}{t}}$ satisfies the cone condition at each boundary point is zero. One easily gets from Definition~\ref{def:lipschitz} that every Lipschitz domain satisfies the cone condition at each of its boundary points. Hence, a.s.\ ${\regcluster{}{t}}$ is not a Lipschitz domain for a.e.\ $t>0$.
	\end{proof}
	
	\subsection{Small diameter and large LQG mass} \label{subsec:small-diamter-large-lqg-mass}
	
	Recall from Section \ref{subsec:lqg} that $D_h$ denotes the $\gamma$-LQG metric associated with $h$. 
	We will eventually show that ${\regcluster{}{t}}$ is not an LQG metric ball for a.e.\ $t>0$ by showing that LQG metric balls do not satisfy the Harnack-type condition of Section~\ref{sec:harnack-type-estimate}. For this purpose we will need to force an LQG metric ball to contain certain sets of large $\mu_h$-mass. In order to do this, we will need the following proposition, which we prove in this subsection. 
	
	\begin{prop} \label{prop:mass-diam-gen}
		Let $U \subset V \subset W\subset  \C$ be bounded, connected open sets such that $\overline V\subset W$ and $W$ does not intersect the unit circle $\partial  B_1(0)$.
		For each $C>\epsilon > 0$, it holds with positive probability (depending on $U,V,W,C,\epsilon$) that
		\begin{equation} \label{eq:mass-diam-gen}
		\sup_{u,v\in V} D_h\left( u , v ; W \right) \leq \epsilon \quad \text{and} \quad \mu_h(U) \geq C ,
		\end{equation}
		where here we use the notation for the internal metric from~\eqref{eq:internal-metric}.
	\end{prop}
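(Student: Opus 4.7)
My plan is to use a Cameron-Martin (Girsanov) argument. By a standard absolute continuity result for the GFF (see e.g.~\cite[Proposition~2.9]{miller2017imaginary}), if $f$ is any smooth function compactly supported in an open neighborhood of $\overline W$, then the restriction of $h+f$ to $W$ is mutually absolutely continuous with $h|_W$. Since the event in~\eqref{eq:mass-diam-gen} is measurable with respect to $h|_W$ (using the locality parts of Facts~\ref{fact:lqg-measure} and~\ref{fact:lqg-metric}), it suffices to exhibit one such $f$ for which the event holds with positive probability under the shifted field.

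I would construct $f$ as a combination of two smooth bumps, exploiting the fact that the Weyl-scaling exponent $\gamma$ for the measure is strictly larger than the corresponding exponent $\xi = \gamma/d_\gamma$ for the metric (since $d_\gamma > 2$). Choose Euclidean balls $B \Subset B^+ \Subset U$, with $B^+$ small enough that $V \setminus \overline{B^+}$ is nonempty and path-connected, together with an open set $V^+$ satisfying $\overline V \Subset V^+ \Subset W$. Pick smooth bumps $\psi_1 \in C_c^\infty(B^+)$ with $\psi_1 \equiv 1$ on $\overline B$, and $\psi_2 \in C_c^\infty(V^+)$ with $\psi_2 \equiv 1$ on $\overline V$. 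For parameters $L, N > 0$ to be chosen, let $f := (L+N)\psi_1 - N\psi_2$. Applying Weyl scaling (Facts~\ref{fact:lqg-measure} and \ref{fact:lqg-metric}) gives
\[
\mu_{h+f}(U) \geq \mu_{h+f}(B) = e^{\gamma L}\mu_h(B),
\]
and, by decomposing any path between $u,v \in V$ into a segment inside $B^+$ (on which $f \leq L$) and a segment inside $V \setminus \overline{B^+}$ (on which $f \equiv -N$),
\[
\sup_{u,v \in V} D_{h+f}(u,v;W) \leq e^{\xi L} R_1 + e^{-\xi N} R_2 ,
\]
where $R_1 := \operatorname{diam}(B^+; D_h)$ and $R_2 := \sup_{u,v \in V} D_h(u,v; V \setminus \overline{B^+})$; both are a.s.\ finite.

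It remains to exhibit $L, N > 0$ such that the three-part event
\[
\bigl\{\mu_h(B) \geq C e^{-\gamma L}\bigr\}
\cap \bigl\{R_1 \leq (\epsilon/2) e^{-\xi L}\bigr\}
\cap \bigl\{R_2 \leq (\epsilon/2) e^{\xi N}\bigr\}
\]
has positive probability. Since $R_2 < \infty$ a.s., the third condition has probability tending to $1$ as $N \to \infty$. For the first two, I would apply a further Cameron-Martin shift $h \mapsto h + K \rho$ with $\rho \in C_c^\infty(U)$ and $\rho \equiv 1$ on $\overline{B^+}$: by Weyl scaling this rescales $(\mu_h(B), R_1)$ to $(e^{\gamma K}\mu_h(B), e^{\xi K} R_1)$, and by absolute continuity the positivity of the resulting joint event is unchanged. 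Choosing $K = -L$ thus reduces the problem to showing
\[
\P\bigl(\mu_h(B) \geq C,\ R_1 \leq \epsilon/2\bigr) > 0 .
\]

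The main obstacle will be this final joint positivity. The scaling computation above shows that $\mu_h(B)^\xi / R_1^\gamma$ (equivalently $\mu_h(B) / R_1^{d_\gamma}$) is invariant under the one-parameter family of Cameron-Martin shifts $K \rho$, so the positivity of $\{\mu_h(B) \geq x_1\} \cap \{R_1 \leq x_2\}$ depends only on the orbit, i.e.\ on the scale-invariant ratio $x_1 / x_2^{d_\gamma}$. I expect to verify positivity by choosing an extreme point on the orbit (for instance, taking the rescaled thresholds $(x_1 e^{\gamma K}, x_2 e^{\xi K})$ with $K \to -\infty$, so that both become large) and performing a tail comparison: the upper tail of $\mu_h(B)$ (a log-normal-type tail with rate $\gamma$ coming from GMC) decays strictly slower than the upper tail of $R_1$ (rate $\xi < \gamma$), so that $\P(\mu_h(B) \geq x_1) - \P(R_1 > x_2)$ remains positive along the orbit. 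Combined with the two absolute continuity reductions above, this would complete the proof.
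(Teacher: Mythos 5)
Your reduction to showing $\P(\mu_h(B) \geq x_1,\ R_1 \leq x_2) > 0$ via Cameron--Martin shifts is clean and correct as far as it goes, and it is genuinely different from the paper's route: the paper first proves the statement for dyadic squares (Lemma~\ref{lemma:mass-diam}) by decomposing $\mathsf S_1$ into four well-separated families of subsquares, extracting one family carrying at least a quarter of the (already pumped-up) mass, and then \emph{subtracting} a large bump on the complement $G$ of those subsquares --- so the mass lives on the untouched subsquares while the $D_h$-diameter collapses on $G$. That argument only uses that the relevant random variables are finite and positive; it requires no quantitative tail estimates.

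Your final step, however, has a real gap. First, a direction error: with $K \to -\infty$ the rescaled thresholds $(x_1 e^{\gamma K}, x_2 e^{\xi K})$ tend to $(0,0)$, not to infinity; the useful direction is $K \to +\infty$. Second, the description of the mass tail is wrong: the upper tail of $\mu_h(B)$ is \emph{polynomial} (moments exist only up to order $4/\gamma^2$), not log-normal --- the left tail is the log-normal one. Third, and most importantly, the heuristic ``rate $\gamma$ vs.\ rate $\xi$'' does not justify the comparison. Reading ``rate'' literally as the decay exponent would give $\P(\mu_h(B) > M)\approx M^{-1/\gamma}$ vs.\ $\P(R_1 > M)\approx M^{-1/\xi}$ with $\xi < \gamma$, which makes the \emph{diameter} tail decay more slowly and the comparison fail. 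What actually makes the $K\to+\infty$ comparison work is that $R_1$ has finite moments of all orders (so $\P(R_1 > x_2 e^{\xi K})$ decays faster than $e^{-p\xi K}$ for every $p$), while one needs a \emph{lower} bound $\P(\mu_h(B) > M) \gtrsim M^{-4/\gamma^2-\delta}$ for the mass. Both are known in the literature but are nontrivial inputs you have not invoked, and the lower tail estimate for GMC mass in particular is not a soft fact. Unless you add these inputs and run the comparison for $K\to+\infty$ with the correct tail exponents, the proof does not close.
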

	It is not obvious how to apply the `adding a bump function' technique used in Section \ref{subsec:not-lipschitz} to prove Proposition~\ref{prop:mass-diam-gen} since if the bump function $\phi$ is positive, then adding $\phi$ tends to increase both $D_h$ and $\mu_h$, and the reverse is true if $\phi$ is negative. So, some work is needed to simultaneously make the $D_h$-diameter small and the $\mu_h$-mass large. 
	
	We first prove a version of Proposition~\ref{prop:mass-diam-gen} for dyadic squares.
	\begin{figure}[ht!]
		\begin{center}
			\includegraphics[width=.4\textwidth]{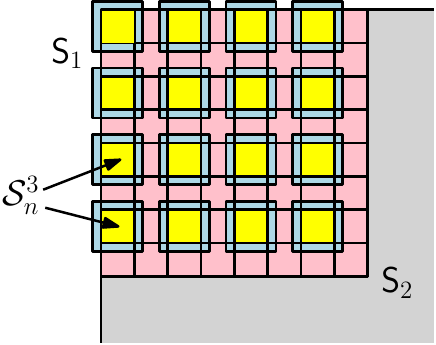}  
			\caption{\label{fig-mass-diam} Illustration of the proof of Lemma~\ref{lemma:mass-diam}. The squares $\mathsf S_1 \subset \mathsf S_2$ are shown in pink and gray, respectively (only part of $\mathsf S_2$ is shown). The smaller dyadic squares $S\in\mathcal  S_n^3$ are shown in yellow and the corresponding larger squares $\hat S$ are shown in light blue. The set $G$ is equal to $\mathsf S_2$ minus the squares $\hat S$ for $S\in \mathcal  S_N^i$, for an appropriate deterministic choice of $N \geq n_0$ and $i\in \{1,2,3,4\}$. 
			}
		\end{center}
		\vspace{-3ex}
	\end{figure}
	\begin{lemma} \label{lemma:mass-diam}
		Fix two closed dyadic squares $\mathsf S_1 \subset \mathsf S_2$ such that $\mathsf S_2$ does not intersect the Euclidean unit circle $\partial  B_1(0)$.
		For each $C >\epsilon > 0$, it holds with positive probability (depending on $\mathsf S_1, \mathsf S_2, C,\epsilon$) that
		\begin{equation} \label{eq:mass-diam}
		\sup_{u,v\in \mathsf S_2 } D_h\left( u , v ; \mathsf S_2 \right) \leq \epsilon \quad \text{and} \quad \mu_h(\mathsf S_1) \geq C .
		\end{equation}
	\end{lemma}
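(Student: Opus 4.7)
My plan is to use a Cameron-Martin absolute continuity argument, shifting the field $h$ by a carefully chosen deterministic smooth function $\phi$ supported in a neighborhood of $\overline{\mathsf S_2}$. Since $\phi$ lies in the Cameron-Martin space of the GFF on this neighborhood (with care taken around the log-singularity at $0$), the laws of $h$ and $h-\phi$ restricted there are mutually absolutely continuous; hence it suffices to prove that the two events in~\eqref{eq:mass-diam} hold simultaneously with positive probability for the shifted field $h+\phi$. By Weyl scaling (Facts~\ref{fact:lqg-measure},~\ref{fact:lqg-metric}), this transforms the measure as $\mu_{h+\phi} = e^{\gamma\phi}\mu_h$ and the internal metric as
\[
D_{h+\phi}(u,v;\mathsf S_2) = \inf_P \int_0^{\mathrm{len}(P;D_h)} e^{\xi\phi(P(t))}\,dt.
\]

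To design $\phi$, I partition $\mathsf S_1$ into dyadic sub-squares of side length $2^{-N}$ for a large integer $N$ and 4-color them so that same-colored squares are Euclidean distance at least $2^{-N}$ apart; for a fixed color $i$, let $\{S_k\}_{k=1}^K$ be the color-$i$ squares contained in $\mathsf S_1$. Let $\hat S_k$ denote the concentric enlargement of $S_k$ by factor $3$ (pairwise disjoint by the separation), and set $G := \mathsf S_2 \setminus \bigcup_k \hat S_k$, a connected corridor region containing a neighborhood of $\partial \mathsf S_2$ in $\mathsf S_2$. Choose $\phi$ to equal $+A$ on each $S_k$, $-A'$ on $G$, and to interpolate smoothly in $\hat S_k\setminus S_k$, with parameters $A,A'>0$ to be chosen. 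Then $\mu_{h+\phi}(\mathsf S_1)\geq e^{\gamma A}\sum_k \mu_h(S_k)$, and by the lower volume bound of Lemma~\ref{lemma:volume-growth} applied to each $S_k$, one has $\sum_k \mu_h(S_k)\geq K\cdot 2^{-N\beta^-}$ with polynomially high probability, so taking $A$ large (as a function of $N$, $C$) forces $\mu_{h+\phi}(\mathsf S_1)\geq C$. For the diameter, any $u,v\in\mathsf S_2$ can be connected by a path of $D_{h+\phi}$-length at most
\[
2\,\max_k \mathrm{diam}_{D_{h+\phi}}(\hat S_k) \;+\; e^{-\xi A'}\sup_{u,v\in G}D_h(u,v;G),
\]
with the first term bounded by $e^{\xi A}\max_k\mathrm{diam}_{D_h}(\hat S_k)$ since $\phi\leq A$ on $\hat S_k$. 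Taking $A'$ large makes the corridor term smaller than $\epsilon/2$.

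The main obstacle is resolving the tension between $A$ large (needed for large mass) and $e^{\xi A}\mathrm{diam}_{D_h}(\hat S_k)$ small (needed for small diameter): since $A$ must grow essentially linearly in $N$ to keep the mass above $C$, one requires $\mathrm{diam}_{D_h}(\hat S_k)$ to shrink polynomially in $2^{-N}$ at a rate strictly beating $e^{\xi A}$. This is achieved by invoking a quantitative upper bound on LQG diameters of dyadic squares (of polynomially-high-probability type analogous to Lemma~\ref{lemma:volume-growth}, which exists in the LQG metric literature), choosing $\beta^-$ close to its infimum $(2+\gamma)^2/2$, and if necessary refining $\phi$ to vary inside the $S_k$ rather than being piecewise constant (e.g., by making $\phi$ depend on $k$ so that some squares contribute most of the mass). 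A union bound over the polynomially-high-probability events (mass lower bounds on each $S_k$, metric upper bounds on each $\hat S_k$, and the corridor-diameter bound) then produces a positive-probability event on which both conditions in~\eqref{eq:mass-diam} hold, and absolute continuity transfers this to the original field $h$.
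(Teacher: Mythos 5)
Your decomposition (4-coloring of dyadic sub-squares, enlarged squares $\hat S_k$, connected corridor $G$, bump functions, Weyl scaling, absolute continuity) is the same skeleton the paper uses. However, there is a genuine gap in the way you try to boost the mass, and your acknowledged ``main obstacle'' is in fact created by that mistake rather than being intrinsic.

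You bound $\sum_k \mu_h(S_k)$ from below using the per-box volume estimate $\mu_h(S_k) \gtrsim 2^{-N\beta^-}$ from Lemma~\ref{lemma:volume-growth}, which gives $\sum_k \mu_h(S_k)\gtrsim 2^{(2-\beta^-)N}$, and since $\beta^->2$ this decays in $N$, forcing the amplitude $A$ to grow linearly in $N$. You then hope that $e^{\xi A}\max_k\operatorname{diam}_{D_h}(\hat S_k)$ can still be made small. But running the exponents shows this comparison fails for most $\gamma$: the requirement becomes roughly $(\beta^--2)/\gamma < Q$, and since Lemma~\ref{lemma:volume-growth} forces $\beta^- > (2+\gamma)^2/2$, one has $(\beta^--2)/\gamma > 2 + \gamma/2 = Q$, so no admissible $A$ exists; the situation only worsens once one uses the correct (uniform-over-boxes, polynomially-high-probability) diameter upper bound rather than the typical-box estimate. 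Your proposed escape (``choosing $\beta^-$ close to its infimum'' and ``refining $\phi$'') does not close this gap and is not carried out.

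The fix — and what the paper actually does — is to decouple the two constraints rather than try to satisfy them with a single tuned-in-$N$ bump. One first observes that $\sum_k \mu_h(S_k) = \mu_h\bigl(\bigcup_k S_k\bigr)$, and since the four color classes partition $\mathsf S_1$, pigeonhole gives a \emph{deterministic} choice of color with $\mu_h\bigl(\bigcup_k S_k\bigr)\geq \tfrac14\mu_h(\mathsf S_1)$, with no $N$-dependence at all. So one first adds a bump of \emph{fixed} amplitude $\gamma^{-1}\log(4C/C_1)$ on $\mathsf S_1$ (where $C_1$ is a deterministic lower bound for $\mu_h(\mathsf S_1)$ holding with probability at least $1/2$) to arrange $\mu_h(\mathsf S_1)\geq 4C$ with positive probability; only then does one choose $N$ large enough (citing the uniform small-square diameter estimate, e.g.\ \cite[Lemma 3.19]{dubedat2020weak}) so that $\max_k \operatorname{diam}_{D_h}(\hat S_k\cap\mathsf S_2)\leq\epsilon/2$ simultaneously; and finally one \emph{subtracts} a bump supported on $G$, vanishing on the color-$i$ squares, to shrink the corridor diameter. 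The last step preserves the mass on $\bigcup_k S_k$ (since the bump vanishes there) and only decreases the small-square diameters (since subtracting a nonnegative function decreases $D_h$), so neither earlier estimate is disturbed. If you insist on a single combined bump, you can still make the argument go through, but only after replacing the per-box volume bound by the pigeonhole bound so that $A$ stays bounded as $N\to\infty$, and after noting that the color must be chosen deterministically (hence the factor $p/8$ in the paper).
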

	\begin{proof}
		See Figure~\ref{fig-mass-diam} for an illustration. 
		Let $n_0 \in \mathbb Z$ be chosen so that the side length of $\mathsf S_1$ is $2^{-n_0}$. 
		For $n \geq n_0$, let $\mathcal  S_n$ be the set of closed $2^{-n  } \times 2^{-n }$ squares $S$ which are contained in $\mathsf S_1$. 
		Since $\mathsf S_1$ is dyadic, $\mathsf S_1$ is the union of the squares in $\mathsf S_1$. 
		Furthermore, $\mathcal  S_n$ is the disjoint union of the following four sets of squares:
		\begin{align*}
		\mathcal  S_n^1 &:= \left\{[(k-1) 2^{-n} ,k 2^{-n}] \times [(m-1) 2^{-n} , m 2^{-n}] \in \mathcal  S_n : \text{$k$ is even, $m$ is even} \right\} \\
		\mathcal  S_n^2 &:= \left\{[(k-1) 2^{-n} ,k 2^{-n}] \times [(m-1) 2^{-n} , m 2^{-n}] \in \mathcal  S_n : \text{$k$ is even, $m$ is odd} \right\} \\
		\mathcal  S_n^3 &:= \left\{[(k-1) 2^{-n} ,k 2^{-n}] \times [(m-1) 2^{-n} , m 2^{-n}] \in \mathcal  S_n : \text{$k$ is odd, $m$ is even} \right\} \\
		\mathcal  S_n^4 &:= \left\{[(k-1) 2^{-n} ,k 2^{-n}] \times [(m-1) 2^{-n} , m 2^{-n}] \in \mathcal  S_n : \text{$k$ is odd, $m$ is odd} \right\} . 
		\end{align*}
		
		As $\mu_h(\mathsf S_1)$ is a strictly positive random variable, there exists a constant $C_1 > 0$ so that $\P[\mu_h(\mathsf S_1) \geq C_1] \geq 1/2$. 
		Let $\phi : [0,1] \to \mathbb C$ be a smooth bump function which is identically equal to 1 on $\mathsf S_1$ and which is identically equal to zero outside 
		a neighborhood of $\overline{\mathsf S_1}$ and let $\tilde h := h + \frac{\phi}{\gamma} \log(4 C/C_1)$. By the Weyl scaling property of $\mu_h$, Fact \ref{fact:lqg-measure}, on the positive-probability event $\{\mu_h(\mathsf S_1) \geq C_1\}$, we have
		$\{\mu_{\tilde h}(\mathsf S_1) \geq 4 C\}$. Therefore, by absolute continuity, there is $p= p(\mathsf S_1, C) > 0$ such that with probability at least $p$,
		\begin{equation} \label{eqn-square-mass-lower}
		\mu_h(\mathsf S_1) \geq 4C .
		\end{equation}

		For $S\in\mathcal  S_n$, define the larger square
		\begin{equation}
		\hat S := \left( \text{square of side length $2^{-n}  + 2^{-n-2}$ with same center as $S$} \right). 
		\end{equation}
		By, \eg,~\cite[Lemma 3.19]{dubedat2020weak} and a union bound over all $S\in\mathcal  S_n$, the supremum over all $S\in \mathcal  S_n$ of the $D_h(\cdot,\cdot ; \hat S \cap \mathsf S_2 )$ diameter of $\hat S \cap \mathsf S_2$ tends to zero in probability as $n\to\infty$. Consequently, we can find a deterministic $N \geq n_0$ such that with probability at least $p/2$, \eqref{eqn-square-mass-lower} holds and also 
		\begin{equation} \label{eqn-small-square-diam}
		\sup_{S\in\mathcal  S_N} \sup_{u,v\in  \hat S \cap \mathsf S_2  } D_h(u,v ; \hat S \cap \mathsf S_2  ) \leq \frac{\epsilon}{2} .
		\end{equation}
		Since $\mathsf S_1$ is the union of $\bigcup_{S\in \mathcal  S_n^i} S$ for $i=1,2,3,4$, on the event that~\eqref{eqn-square-mass-lower} holds there exists $i\in \{1,2,3,4\}$ such that $\mu_h\left(  \bigcup_{S\in \mathcal  S_n^i} S \right) \geq C$. Since~\eqref{eqn-square-mass-lower} and~\eqref{eqn-small-square-diam} hold simultaneously with probability at least $p/2$, we can find a deterministic choice of $i\in \{1,2,3,4\}$ such that with probability at least $p/8$, 
		\begin{equation} \label{eqn-small-square-mass-diam}
		\mu_h\left(  \bigcup_{S\in \mathcal  S_N^i} S \right) \geq C \quad \text{and} \quad \sup_{S\in\mathcal  S_N^i} \sup_{u,v\in  \hat S \cap \mathsf S_2 } D_h(u,v ; \hat S \cap \mathsf S_2 ) \leq \frac{\epsilon}{2}  .
		\end{equation}
		Any two squares in $S_N^i$ lie at Euclidean distance at least $2^{-n}$ from each other, so the set 
		\begin{equation}
		G := \mathsf S_2 \setminus \bigcup_{S\in\mathcal  S_N^i} \hat S
		\end{equation}
		is connected. Since $\overline G$ is a finite union of closed Euclidean squares, it follows from, for example, Lemma 3.9 in \cite{dubedat2020weak} that a.s.\ the $D_h(\cdot,\cdot;\overline G)$ diameter of $\overline G$ is finite. Hence, we can find a deterministic $A > 0$ such that with probability at least $p/16$, \eqref{eqn-small-square-mass-diam} holds and also
		\begin{equation} \label{eqn-exterior-diam} 
		\sup_{u,v\in \overline G} D_h(u,v;\overline G) \leq A .
		\end{equation}
		
		Let $\phi : \mathbb C\to [0,1]$ be a smooth compactly supported bump function which is identically equal to 1 on $\overline G$ and which is identically equal to zero on $\partial  B_1(0) \cup \bigcup_{S \in \mathcal  S_N^i} S$. Let
		\begin{equation}
		\tilde h := h - \frac{\phi}{\xi} \log(2 A /  \epsilon). 
		\end{equation}
		By the Weyl scaling properties of $\mu_h$ and $D_h$, if~\eqref{eqn-small-square-mass-diam} and~\eqref{eqn-exterior-diam} hold (which happens with probability at least $p/16$), then 
		\begin{equation} \label{eq:use-weyl}
		\mu_{\tilde h}\left(  \bigcup_{S\in \mathcal  S_N^i} S \right) \geq C ,\quad 
		\sup_{S\in\mathcal  S_N^i} \sup_{u,v\in  \hat S \cap \mathsf S_2  } D_{\tilde h}(u,v ; \hat S \cap \mathsf S_2   ) \leq \frac{\epsilon}{2} , \quad \text{and}\quad 
		\sup_{u,v\in \overline G} D_{\tilde h}(u,v;\overline G) \leq \frac{\epsilon}{2} .
		\end{equation}
		The second and third conditions in~\eqref{eq:use-weyl} together with the triangle inequality imply that
		\begin{equation} \label{eqn-shifted-field-weyl}
		\sup_{u,v\in \mathsf S_2 } D_{\tilde h}\left( u , v ;\mathsf S_2 \right) \leq \epsilon .
		\end{equation}
		Furthermore, the first condition in~\eqref{eq:use-weyl} implies that $\mu_{\tilde h}(\mathsf S_1) \geq C$. 
		
		Since $\phi\equiv 0$ on $\partial  B_1(0)$, the average of $\tilde h$ over $\partial  B_1(0)$ is zero. By standard absolute continuity results for the GFF (see, e.g.,~\cite[Proposition 2.9]{miller2017imaginary}), the laws of $h$ and $\tilde h$ are mutually absolutely continuous. The previous paragraph tells us that with probability at least $p/16$, \eqref{eq:mass-diam} holds with $\tilde h$ in place of $h$. Therefore, \eqref{eq:mass-diam} holds with positive probability for $h$.
	\end{proof}

	\begin{proof}[Proof of Proposition~\ref{prop:mass-diam-gen}]
		Since $U$ is open, we can find deterministic closed dyadic squares $\mathsf S_1 \subset \mathsf S_2 \subset U$ with the property that $\mathsf S_1$ is contained in the interior of $\mathsf S_2$. By Lemma~\ref{lemma:mass-diam}, it holds with positive probability that
		\begin{equation} \label{eqn-use-mass-diam}
		\sup_{u,v\in \mathsf S_2 } D_h\left( u , v ; \mathsf S_2 \right) \leq \frac{\epsilon}{2}  \quad \text{and} \quad \mu_h(\mathsf S_1) \geq C .
		\end{equation}
		
		Our hypotheses on $V,W$ and $\mathsf S_1,\mathsf S_2$ imply that the closure of $V\setminus \mathsf S_2$ is contained in the interior of $W\setminus \mathsf S_1$. Hence, we can find an intermediate open set $O$ such that 
		\begin{equation}
		\overline{V \setminus \mathsf S_2} \subset  O \quad \text{and} \quad \overline O \subset  W\setminus \mathsf S_1 .
		\end{equation}
		Since $D_h$ induces the Euclidean topology and $V\setminus \mathsf S_2$ is connected, there exists a deterministic $A > 0$ such that with positive probability,~\eqref{eqn-use-mass-diam} holds and also
		\begin{equation} \label{eqn-exterior-diam-gen}
		\sup_{u,v\in V\setminus \mathsf S_2} D_h(u,v; O) \leq A .
		\end{equation}
		
		We now use a `subtracting a bump function' argument similar to the one at the end of the proof of Lemma~\ref{lemma:mass-diam}. 
		Let $\phi : [0,1] \to \mathbb C$ be a smooth bump function which is identically equal to 1 on $O$ and which is identically equal to zero outside of $W\setminus \mathsf S_1$. 
		Let
		\begin{equation}
		\tilde h := h - \frac{\phi}{\xi} \log(2 A / \epsilon) .
		\end{equation}
		By the Weyl scaling properties of $\mu_h$ and $D_h$, on the positive-probability event that~\eqref{eqn-use-mass-diam} and~\eqref{eqn-exterior-diam-gen} hold,
		\begin{equation} \label{eq:use-weyl-gen}
		\sup_{u,v\in \mathsf S_2 } D_{\tilde h}\left( u , v ; \mathsf S_2 \right) \leq \frac{\epsilon}{2} ,
		\quad  \mu_{\tilde h}(\mathsf S_1) \geq C  ,   \quad \text{and} \quad
		\sup_{u,v\in V\setminus \mathsf S_2} D_{\tilde h}(u,v; O) \leq \frac{\epsilon}{2} .
		\end{equation}
		By the triangle inequality, \eqref{eq:use-weyl-gen} implies~\eqref{eq:mass-diam-gen} with $\tilde h$ in place of $h$. Since the laws of $h$ and $\tilde h$ are mutually absolutely continuous~\cite[Proposition 2.9]{miller2017imaginary}, we conclude the proof. 
	\end{proof}

	\subsection{Not LQG metric balls} \label{subsec:not-lqg-metric-ball}

	In this subsection we prove the following. 
	
	\begin{prop} \label{prop:not-metric-balls}
		Almost surely, ${\regcluster{}{t}}$ is not an LQG-metric ball for Lebesgue-a.e.\ $t > 0$. 
	\end{prop}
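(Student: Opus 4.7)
The strategy is to exploit the Harnack-type estimate of Proposition~\ref{prop:harnack-type-stronger-property}: any connected component of a subharmonic ball that crosses a very good annulus must contain at least an $\alpha$ fraction of the annulus's $\mu_h$-mass. I would show that, with positive probability, LQG metric balls exhibit the opposite behavior — a thin crossing of a very good annulus containing arbitrarily little mass — ruling out their being harmonic balls.

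First I would construct the bad configuration. Fix deterministic $x_0 \in \C \setminus \{0\}$ and $\rho > 0$ with $\overline{B}_{2\rho}(x_0) \cap \{0\} = \emptyset$. By Lemma~\ref{lemma:very-good-annuli-hit}, the very good annulus event $\overline{E}_\rho(x_0)$ with the parameters of Proposition~\ref{prop:harnack-type-stronger-property} holds with positive probability, and depends only on $h|_{\A_{\rho/4, 2\rho}(x_0)}$ modulo additive constant. Separately, Proposition~\ref{prop:mass-diam-gen} applied to nested sets $U \subset V \subset W \subset B_{\rho/4}(x_0)$ gives positive probability that the $D_h$-internal diameter of $V$ in $W$ is smaller than any prescribed $\epsilon > 0$ while $\mu_h(U)$ exceeds any prescribed $C$. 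Combining these via the bump function / absolute continuity technique used in Lemmas~\ref{lemma:annulus-hit} and~\ref{lemma:positive-probability}, together with a further negative bump function supported on a thin Euclidean tube crossing $\A_{\rho/2,\rho}(x_0)$ to create a short $D_h$-corridor of negligible $\mu_h$-mass from $\partial B_\rho(x_0)$ into $V$, yields a positive-probability event $\mathcal{E}$ on which all four features coexist.

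Next I would derive a contradiction. On $\mathcal{E}$, let $z_0$ be any point outside $B_\rho(x_0)$ — in particular $z_0$ near the origin. As $s$ increases past $D_h(z_0, V)$, the ball $B^{D_h}_s(z_0)$ reaches $V$ through the thin corridor and, within an additional radius $\epsilon$, swallows all of $V \supset U$ by the small internal diameter of $V$, thereby gaining mass at least $C$. During the same window the mass accumulated inside $\A_{\rho/2,\rho}(x_0)$ grows at most by the $\mu_h$-mass of a narrow $D_h$-tube of width $\epsilon$ around the corridor, which by Lemma~\ref{lemma:volume-growth} can be forced below $\alpha \mu_h(\A_{\rho/2,\rho}(x_0))$ by choosing $\epsilon$ small and $C$ large. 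Hence there is a random interval of radii $s$ for which the connected component of $B^{D_h}_s(z_0) \cap B_\rho(x_0)$ reaching $\overline{B}_{\rho/2}(x_0)$ has annular mass strictly below $\alpha \mu_h(\A_{\rho/2,\rho}(x_0))$, directly contradicting Proposition~\ref{prop:harnack-type-stronger-property} applied to any subharmonic ball of mass $\mu_h(B^{D_h}_s(z_0))$. In particular, $B^{D_h}_s(z_0)$ cannot coincide with any ${\regcluster{}{t}}$ for $s$ in this window.

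Finally, to upgrade from a positive-probability/positive-radius-interval conclusion to Lebesgue-a.e.\ $t$, I would combine the mass-parameterization identity $\mu_h(\regcluster{}{b} \setminus \regcluster{}{a}) = b - a$ used in the proof of Proposition~\ref{prop:not-balls} with scale and translation invariance (Lemma~\ref{lemma:scale-invariance-flow}) applied over a countable dense collection of pairs $(x_0, \rho)$, together with a Borel-Cantelli argument across dyadic scales. This produces, almost surely, bad configurations at arbitrarily small scales near $\mu_h$-a.e.\ point of $\bigcup_t \partial {\regcluster{}{t}}$, so that the set of $t$ for which ${\regcluster{}{t}}$ is a metric ball has Lebesgue measure zero. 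The main difficulty is the simultaneous construction underlying $\mathcal{E}$: the very good annulus event requires typical LQG behavior on $\A_{\rho/4, 2\rho}(x_0)$, whereas the small-$D_h$-diameter configuration and the thin low-mass corridor are highly atypical. The near-independence of the GFF on disjoint annuli makes compatibility plausible, but the corridor bump function must be carefully chosen so that it indeed realizes the dominant $D_h$-geodesic across the annulus, rather than being bypassed by another path of comparable length.
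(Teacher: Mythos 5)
Your overall strategy is right in spirit---combine the Harnack-type estimate with Proposition~\ref{prop:mass-diam-gen} via bump-function/absolute-continuity arguments, localize to a $\mu_h$-typical point, and transfer via the parameterization identity---but the construction of your bad configuration $\mathcal E$ has genuine gaps, and the mechanism is meaningfully different from the one the paper actually uses.

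The central problem is the ``thin low-mass corridor.'' You want to add a negative bump function on a Euclidean tube crossing $\A_{\rho/2,\rho}(x_0)$ so that the corridor has both short $D_h$-length and negligible $\mu_h$-mass. But that bump is supported inside $\A_{\rho/4,2\rho}(x_0)$, which is exactly the support of $\overline E_\rho(x_0)$ (conditions \ref{enum:very-good-criteria-1}--\ref{enum:very-good-criteria-3} are functionals of $h|_{\A_{\rho/4,2\rho}(x_0)}$). So the bump function does not leave the very good annulus event untouched, and you cannot simply ``combine'' the two via absolute continuity; you would need to reverify all three VG conditions after adding the bump, which your proposal does not address. Second, the claim that ``the mass accumulated inside $\A_{\rho/2,\rho}(x_0)$ grows at most by the $\mu_h$-mass of a narrow $D_h$-tube of width $\epsilon$ around the corridor, which by Lemma~\ref{lemma:volume-growth} can be forced below $\alpha\mu_h(\A_{\rho/2,\rho}(x_0))$'' does not follow: Lemma~\ref{lemma:volume-growth} bounds $\mu_h$ of \emph{Euclidean} balls, not $D_h$-tubes, and there is no general link between small $D_h$-diameter and small $\mu_h$-mass---Proposition~\ref{prop:mass-diam-gen} itself is proof that the opposite happens with positive probability. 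So your estimate for the crossing mass is unjustified, and without it the contradiction with Proposition~\ref{prop:harnack-type-stronger-property} does not arise.

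The paper sidesteps both problems by not attempting a small-mass crossing at all. It constructs the pinch-point event $\hat G_r(z)$ of \eqref{eq:pinch-point-event}: a ``keyhole'' region $T_r(z)$ (which contains a ball $U_r(z)$ of large $\mu_h$-mass, guaranteed by Proposition~\ref{prop:mass-diam-gen}) and a second region $Q'_r(z)$ in the outer annulus $\A_{4r,5r}(z)$. The two $D_h$-distance conditions in $\hat G_r(z)$ are chosen so that, by a purely deterministic geodesic-decomposition argument (Lemma~\ref{lemma:geometric-event-consequence}), any metric ball with $z$ on its boundary must contain all of $T_r(z)$ and none of $Q'_r(z)$. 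The contradiction then comes from Lemma~\ref{lemma:harmonic-comparison-chain} rather than Proposition~\ref{prop:harnack-type-stronger-property}: large cluster mass in $\A_{r,2r}(z)$ forces $\A_{4r,5r}(z)$ to be entirely inside the cluster, yet $Q'_r(z)$ is a nonempty subset of $\A_{4r,5r}(z)$ outside it. No mass bound on any corridor is needed, so there is no conflict with the VG event (which indeed does not appear in the paper's argument for this proposition at all). Finally, the upgrade to Lebesgue-a.e.\ $t$ uses the Birkhoff-ergodic-theorem argument of Lemma~\ref{lemma:mu_h-pinch-bad} (patterned after Lemma~\ref{lemma:mu_h-doubling-and-harnack}), rather than a Borel-Cantelli argument over a countable net of $(x_0,\rho)$; your Borel-Cantelli sketch would at minimum need the near-independence machinery of Lemma~\ref{lemma:annulus-iterate}, which you do not invoke.
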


	We follow a strategy similar to that of Section \ref{subsec:not-lipschitz}
	although the arguments are slightly more complicated. To that end, 
	let $U \Subset T$ and $Q' \Subset Q$ be connected open sets as shown in Figure \ref{fig:pinch-point}. We require that $U$ is a ball contained in $\A_{1,2}$, that $Q'$ and $Q$ are contained in $\A_{4,5} \setminus \overline T$, and that $T$ is a keyhole-shaped region contained in $\A_{1,5}$ with $\partial B_{5} \subset \partial T$, as shown in the figure.
	For a set $A$, $r>0$, and $z\in\C$, we write
	\[
	A_r(z) := r A +  z.
	\]

	\begin{figure}
		\begin{center}
			\includegraphics[width=0.75\textwidth]{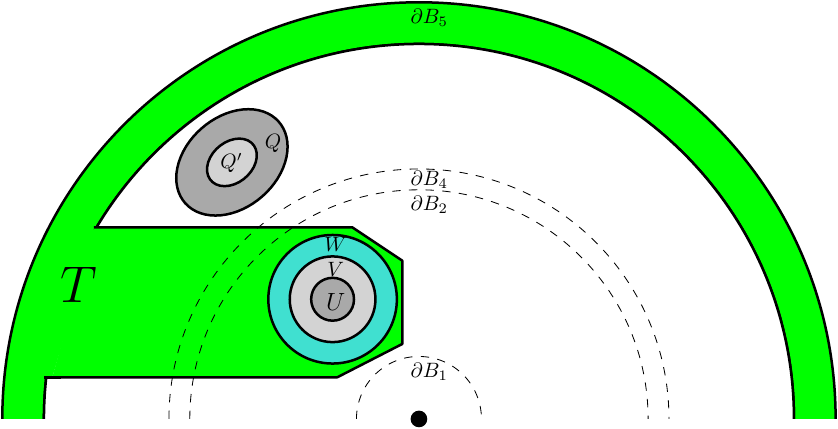}  
			\caption{Sets $U,V,W,T,Q, Q'$ used in the proof of Lemma \ref{lemma:mu_h-pinch-positive-probability} and to define the event \eqref{eq:pinch-point-event}.
				The origin is the black dot; the set $T$ is the green key-hole shaped set;
				$Q' \Subset Q$ are gray ellipses; and $U \Subset V \Subset W$ are gray concentric balls.
				The picture is not drawn to scale and only part of the set $T$ is shown. 
				The sets $V$ and $W$ are only used in the proof of Lemma \ref{lemma:mu_h-pinch-positive-probability}. 
			}
			\label{fig:pinch-point}
		\end{center}
	\end{figure}

	For $z\in\C$ and $M>0$, let $\hat G_r(z) = \hat G_r(z; M) $ denote the event that the following holds:
	\begin{equation}\label{eq:pinch-point-event}
	\begin{aligned}
	D_h(Q'_r(z), \partial Q_r(z)) &> \sup_{x \in \partial B_{5 r}(z)} D_h(z, x) \\
	D_h(z, \partial B_{5 r}(z)) &>  \sup_{x, y \in T_r(z)} D_h(x, y) \\
	\mu_h(U_r(z)) &> M \times \SGrho{3r,6r}{z} .
	\end{aligned}
	\end{equation}
	Write 
	\[
	\hat G_n(z) := \hat G_{2^{-n}}(z) .
	\]
	Showing that this event occurs with positive probability is somewhat technical and hence the proof of the following will be postponed to the end of this subsection.  
	\begin{lemma} \label{lemma:mu_h-pinch-positive-probability}
		In the case $\boldsymbol{\alpha}_0 = 0$, as in \eqref{eq:gff}, $\P[\tilde G_1(0)] > 0$. 
	\end{lemma}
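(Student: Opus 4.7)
The plan is to establish $\P[\hat G_1(0)] > 0$ (reading the statement as referring to $\hat G_1(0)$ from~\eqref{eq:pinch-point-event}) by combining Proposition~\ref{prop:mass-diam-gen} with a bump-function argument in the spirit of Lemma~\ref{lemma:positive-probability}. Each of the five random quantities appearing in~\eqref{eq:pinch-point-event} is a.s.\ finite and positive, so it suffices to produce a positive-probability event enforcing all three inequalities simultaneously. First I would choose open sets $U \subset V \subset W$ with $\overline V \subset W$ and $\overline W \subset T$, where $U$ is the ball from~\eqref{eq:pinch-point-event}, and apply Proposition~\ref{prop:mass-diam-gen} to produce, for any prescribed $\epsilon, C_1 > 0$, a positive-probability event on which $\sup_{u,v \in V} D_h(u,v; W) \leq \epsilon$ and $\mu_h(U) \geq C_1$. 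Intersecting with further positive-probability events, I may also assume that $\SGrho{3,6}{0}$, $\sup_{x \in \partial B_5} D_h(0,x)$, and $\sup_{x, y \in T} D_h(x,y)$ each lie in a bounded deterministic range.

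Next I would pick three bump functions with disjoint supports: $\phi_1$ smooth and non-negative, supported in $T \setminus \overline V$ and vanishing on $\overline V$; $\phi_2$ smooth and non-negative, supported in $\A_{1/2, 1}$, which is disjoint from $T \supset U$ (since $T \subset \A_{1,5}$) and from $\A_{3,6}$; $\phi_3$ smooth and non-negative, supported in a small neighborhood of $\overline Q$ inside $\A_{4,5} \setminus \overline T$. For large constants $K_1, K_2, K_3 > 0$, set
\[
\tilde h := h - \tfrac{1}{\xi} \phi_1 \log K_1 + \tfrac{1}{\xi} \phi_2 \log K_2 + \tfrac{1}{\xi} \phi_3 \log K_3 .
\]
By Weyl scaling for $D_h$ (Fact~\ref{fact:lqg-metric}), $\phi_1$ shrinks $D_{\tilde h}$-distances across $T \setminus V$, which combined with the $\epsilon$-diameter bound inside $V$ makes $\sup_{x,y \in T} D_{\tilde h}(x,y)$ arbitrarily small; $\phi_2$ inflates $D_{\tilde h}$-distances by a factor $\sim K_2$ across $\A_{1/2,1}$, and since every path from $0 \in B_{1/2}$ to $\partial B_5$ must cross $\A_{1/2,1}$, this forces $D_{\tilde h}(0, \partial B_5) \gtrsim K_2$; $\phi_3$ forces $D_{\tilde h}(Q', \partial Q) \gtrsim K_3$ while inflating $\sup_{x \in \partial B_5} D_{\tilde h}(0, x)$ by a factor of at most $K_2$. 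Taking $K_3 \gg K_2 \gg 1$ yields inequalities (i) and (ii) of~\eqref{eq:pinch-point-event} for $\tilde h$.

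For inequality (iii), by Weyl scaling for $\mu_h$ (Fact~\ref{fact:lqg-measure}), $\mu_{\tilde h}(U) = \mu_h(U) \geq C_1$ since all three bumps vanish on $U$, while $\SGrho{3,6}{0}$ under $\tilde h$ is at most $K_3^{\gamma/\xi}$ times its value under $h$ (as $\phi_1$ only decreases mass in $\A_{3,6}$ and $\phi_2$ is supported away from $\A_{3,6}$), so taking $C_1 \gg M K_3^{\gamma/\xi}$ gives (iii). Finally, since $\tilde h - h$ is a deterministic, smooth, compactly supported function, the laws of $h$ and $\tilde h$ modulo additive constant are mutually absolutely continuous~\cite[Proposition 2.9]{miller2017imaginary}, giving $\P[\hat G_1(0)] > 0$. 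The main obstacle will be the bookkeeping around the disjoint supports: in particular, one must verify that the shortened $T$-diameter induced by $\phi_1$ does not create a shortcut from $0$ to $\partial B_5$ through $T$ that bypasses the $\phi_2$ inflation. Since any path from $0$ into $T$ must cross $\A_{1/2, 1}$ (as $0 \in B_{1/2}$ and $T \subset \A_{1,5}$), the $\sim K_2$ crossing cost is incurred regardless, and choosing $K_2 \gg K_1$ ensures it dominates any shortcut through $T$.
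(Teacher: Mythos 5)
Your overall strategy mirrors the paper's (combine a Proposition~\ref{prop:mass-diam-gen}-type event with bump functions and invoke absolute continuity), but as written the proposal has two genuine gaps which the paper's three-event decomposition $\tilde E_1 \cap \tilde E_2 \cap \tilde E_3$ is specifically designed to avoid. First, the claim that $\phi_3$ inflates $\sup_{x\in\partial B_5} D_{\tilde h}(0,x)$ ``by a factor of at most $K_2$'' is not justified: you only bound $\sup_{x\in\partial B_5}D_h(0,x)$, but the $D_{\tilde h}$-geodesic from $0$ to a point $x$ behind $Q$ must route around $\mathrm{supp}\,\phi_3$, so you actually need a bound on the internal metric $\sup_{x\in\partial B_5} D_h\bigl(0,x;(\mathrm{supp}\,\phi_3)^c\bigr)$, which does not follow from what you assume. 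The paper handles this by building the restriction directly into $\tilde E_1$ via $D_h\bigl(0,x;(B_{2\delta_0}(Q\cup W))^c\bigr)$, which by design is unaffected by either bump near $Q$ or near $W$ (and only mildly affected by the small bump near the origin). Adding the missing internal-metric bound to your event list is a local fix.

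The second gap is structural. Your inequality (iii) forces $C_1\gtrsim M K_3^{\gamma/\xi} C_2$, where $K_3$ depends on $K_2$, $C_5$, $C_6$; so $C_1$ must be chosen \emph{after} $C_2,\dots,C_6$. But the probability of the Proposition~\ref{prop:mass-diam-gen} event tends to $0$ as $C_1\to\infty$, while the probability of the complementary bounding events depends on $C_2,\dots,C_6$, which you need to push in the expensive direction. Simply intersecting positive-probability events does not let you conclude positivity of the joint probability; there is a circular dependency between the $p$ coming from Proposition~\ref{prop:mass-diam-gen} and the constants $C_2,\dots,C_6$ you need to fix for the other events. The paper breaks this circle with the domain Markov property (Lemma~\ref{lemma:second-event-proof}): it arranges $\tilde E_1$ (and the $\SGrho{3,6}{0}$ bound) to be measurable with respect to $h|_{W^c}$, then shows the conditional law of $h|_W$ is mutually absolutely continuous with its marginal, so the Lemma~\ref{lemma:mass-diam} event for the inside field holds with conditionally positive probability, with the required mass threshold allowed to depend on the harmonic part of $h$ across $\partial W$. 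Some version of this decoupling step is unavoidable; your one-shot bump-function argument would need to be restructured to incorporate it.
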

	The preceding lemma implies the following.

	\begin{lemma} \label{lemma:mu_h-pinch-bad}
		Let $M > 0$. There exists $\delta \in (0,1)$ such that a.s.\ for $\mu_h$-a.e.\ $z\in  \C$, it holds for each large enough $N\in \N$ (depending on $z$) that
		\begin{equation} \label{eq:mu_h-pinch-bad}
		\#\left\{ n \in [N+1,2N]\cap \Z  : \mbox{$\hat G_n(z; M)$ occurs} \right\}  \geq \delta N.
		\end{equation}  
	\end{lemma}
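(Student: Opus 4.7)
\textbf{Proof proposal for Lemma \ref{lemma:mu_h-pinch-bad}.} The plan is to mimic the three-stage argument used for Lemma \ref{lemma:mu_h-bad-and-harnack}, namely: (i) promote the single scale positive probability estimate Lemma \ref{lemma:mu_h-pinch-positive-probability} to a uniform-in-scale bound at a fixed point; (ii) apply the Birkhoff ergodic theorem together with tail triviality of the whole-plane GFF to convert a positive probability into a positive density of good scales; (iii) transfer the origin statement to a $\mu_h$-typical point via the absolute continuity argument of \cite[Lemma A.10]{duplantier2014liouville}.

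First I would verify the key measurability/invariance fact: the event $\hat G_r(z; M)$ defined by~\eqref{eq:pinch-point-event} is invariant under the substitution $h \mapsto h + c$ for $c \in \R$. Indeed, Weyl scaling gives $\mu_{h+c} = e^{\gamma c}\mu_h$ and $D_{h+c} = e^{\xi c}D_h$, so all three inequalities in~\eqref{eq:pinch-point-event} (the two $D_h$-inequalities and the ratio $\mu_h(U_r(z))/\SGrho{3r,6r}{z}$) are preserved. Consequently $\hat G_r(z;M)$ is measurable with respect to $h$ viewed modulo additive constant, and the coordinate-change identity~\eqref{eq:h-coordinate-change} then shows that the law of $1_{\hat G_{r}(0;M)}$ is independent of $r>0$. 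Combining this with Lemma \ref{lemma:mu_h-pinch-positive-probability} (for $\boldsymbol{\alpha}_0 = 0$) and the Weyl-scaling reduction used in Step 2 of the proof of Lemma \ref{lemma:positive-probability} to handle the $-\boldsymbol{\alpha}_0\log|\cdot|$ term, I would conclude that there exists $p_M \in (0,1)$ with $\P[\hat G_n(0;M)] \geq p_M$ for every $n \in \N$.

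Next I would invoke stationarity of the sequence $\{1_{\hat G_n(0;M)}\}_{n\in\N}$ under the scaling that preserves the law of $h$ modulo additive constant (as in the proof of Lemma \ref{lemma:doubling-alpha}). Birkhoff's ergodic theorem then gives that $\frac{1}{N}\sum_{n=1}^N 1_{\hat G_n(0;M)}$ converges a.s. to a limit which is measurable with respect to $\bigcap_{\epsilon>0}\sigma(h|_{B_\epsilon(0)})$; tail triviality of the whole-plane GFF (see the argument cited in the proof of Lemma \ref{lemma:doubling-alpha}) forces this limit to equal $p_M$ almost surely. Taking difference between the $2N$ and $N$ averages shows that for each $\delta < p_M/2$ and all $N$ large enough,
\begin{equation*}
\#\{n \in [N+1,2N]\cap\Z : \hat G_n(0;M) \text{ occurs}\} \geq \delta N ,
\end{equation*}
as required at the origin.

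Finally, to pass from $z=0$ to a $\mu_h$-typical point I would follow the argument in the proof of Lemma \ref{lemma:mu_h-doubling-and-harnack}: sample $Z$ from $|\cdot|^{\boldsymbol{\alpha}_0\gamma}\mu_h|_{B_1}$ normalized; appeal to \cite[Lemma A.10]{duplantier2014liouville} to replace $(h,Z)$ by the absolutely continuous pair $(\tilde h,\tilde Z)$ with $\tilde Z$ independent of the field and $\tilde h$ a whole-plane GFF (modulo additive constant) with a $\gamma$-log singularity at $\tilde Z$; apply the origin statement at $\tilde Z$ using translation invariance; and transfer back via absolute continuity to obtain~\eqref{eq:mu_h-pinch-bad} for $\mu_h$-a.e.\ $z \in B_1$. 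Extending from $B_1$ to $\C$ is handled by covering by countably many balls and using rescaling via~\eqref{eq:h-coordinate-change}.

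The main obstacle I anticipate is the verification that the event $\hat G_r(z;M)$ is truly local to $h$ in a bounded neighborhood of $\A_{r,6r}(z)$ modulo additive constant: the quantities $D_h(Q'_r(z),\partial Q_r(z))$ and $\sup_{x\in\partial B_{5r}(z)} D_h(z,x)$ are defined using the global metric $D_h$, whose geodesics may in principle exit any prescribed neighborhood. In practice this is not serious because only the scaling behavior under $h\mapsto h+c$ is needed for the stationarity argument in Step 2, and this scaling is automatic from Fact \ref{fact:lqg-metric}; one does not need true spatial locality, only invariance under constants and the distributional scale invariance of $h$. If locality were needed for the Step 3 absolute continuity argument, one would replace every $D_h$ appearing in~\eqref{eq:pinch-point-event} by its internal metric on a fixed enlargement of the support of $T$, which only strengthens the inequalities and preserves Lemma \ref{lemma:mu_h-pinch-positive-probability}.
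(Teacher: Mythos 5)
Your proposal follows the paper's proof precisely: the paper likewise combines Lemma \ref{lemma:mu_h-pinch-positive-probability} with the Birkhoff/tail-triviality argument of Lemma \ref{lemma:doubling-alpha} to get the density of good scales at $z=0$, then transfers to $\mu_h$-a.e.\ $z$ via the absolute-continuity argument of Lemma \ref{lemma:mu_h-doubling-and-harnack}. Your closing caveat about non-locality of the global $D_h$ quantities in \eqref{eq:pinch-point-event} is a genuine point the paper leaves implicit; your remedy is sound provided the localizing domain is taken large enough to contain $T_r(z)\cup Q_r(z)$ (e.g.\ $B_{7r}(z)$), since then the two lower-bound quantities $D_h(Q'_r(z),\partial Q_r(z))$ and $D_h(z,\partial B_{5r}(z))$ are unchanged (their minimizing paths already stay in $Q_r(z)$ and $B_{5r}(z)$ respectively) while the two suprema only increase, so the localized event is contained in $\hat G_r(z;M)$ and the positive-probability bound carries over.
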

	\begin{proof}
		Lemma \ref{lemma:mu_h-pinch-positive-probability} implies, by an argument similar to the proof of Lemma \ref{lemma:doubling-alpha},
		existence of a $\delta = \delta(M) \in (0,1)$ such that \eqref{eq:mu_h-pinch-bad} holds for $z = 0$ for each large enough $N\in \N$.
		The exact same argument outlined in the proof of Lemma \ref{lemma:mu_h-bad-and-harnack} then leads to the lemma statement. 
	\end{proof}

	We now observe a deterministic consequence of the event $\hat G_r(z)$ on the shape of an LQG metric ball whose boundary contains $z$. For the statement, we recall that $\mathcal B_u(0;D_h)$ denotes the open LQG metric ball of radius $u$ centered at 0.

	\begin{lemma} \label{lemma:geometric-event-consequence}
		The following holds a.s.\ for each $M > 0$, each $z \in \C$, and each $r > 0$ such that $B_{10r}(z)$ does not contain the origin.
		If $u > 0$ is such that $\hat G_r(z)$ occurs and $z \in \partial \mathcal{B}_u(0; D_h)$, then
		$Q'_r(z) \subset (\mathcal{B}_u(0; D_h))^c$ and $T_r(z) \subset \mathcal{B}_u(0; D_h)$. 
	\end{lemma}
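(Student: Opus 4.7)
The plan is to prove the two inclusions separately using $D_h$-geodesic arguments driven by the two distance inequalities in the definition \eqref{eq:pinch-point-event} of $\hat G_r(z)$ together with $D_h(0, z) = u$, which follows from $z \in \partial \mathcal B_u(0; D_h)$ and the fact that $D_h$ is a length metric inducing the Euclidean topology (Fact \ref{fact:lqg-metric}). The third (measure-theoretic) condition in \eqref{eq:pinch-point-event} plays no role here. The hypothesis $0 \notin B_{10r}(z)$ will be used only in the weaker form $0 \notin \overline{B_{5r}(z)}$, and $D_h$-geodesics are guaranteed to exist by the length-space property of $D_h$ (Fact \ref{fact:lqg-metric}).

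For the inclusion $T_r(z) \subset \mathcal B_u(0; D_h)$, I would fix $t \in T_r(z)$ and take a $D_h$-geodesic from $0$ to $z$. Since this geodesic starts outside $B_{5r}(z)$ and ends at $z$, it must cross $\partial B_{5r}(z)$ at some point $x^*$. The assumption $\partial B_5 \subset \partial T$ in the definition of $T$ gives $\partial B_{5r}(z) \subset \overline{T_r(z)}$, so $x^* \in \overline{T_r(z)}$ and hence $D_h(x^*, t) \leq \sup_{x,y \in \overline{T_r(z)}} D_h(x,y) = \sup_{x,y \in T_r(z)} D_h(x,y)$ by continuity of $D_h$ and density of $T_r(z)$ in its closure. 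The geodesic property gives $D_h(0, x^*) = u - D_h(x^*, z) \leq u - D_h(z, \partial B_{5r}(z))$, so by the triangle inequality and condition 2 of $\hat G_r(z)$,
\[
D_h(0, t) \leq D_h(0, x^*) + D_h(x^*, t) \leq \bigl(u - D_h(z, \partial B_{5r}(z))\bigr) + \sup_{x,y \in T_r(z)} D_h(x,y) < u.
\]

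For $Q'_r(z) \subset \mathcal B_u(0; D_h)^c$, I would fix $q \in Q'_r(z)$ and take a $D_h$-geodesic from $0$ to $q$. Since $Q \subset \A_{4,5}$ implies $0 \notin Q_r(z)$, the geodesic crosses $\partial Q_r(z)$ at some point $y$, yielding
\[
D_h(0, q) \geq D_h(0, y) + D_h(Q'_r(z), \partial Q_r(z)).
\]
To lower bound $D_h(0,y)$, I use that $y \in \overline{Q_r(z)} \subset \overline{B_{5r}(z)}$: a $D_h$-geodesic from $0$ to $y$ must itself cross $\partial B_{5r}(z)$ at some point $x^*$, giving
\[
D_h(0, y) \geq D_h(0, x^*) \geq D_h(0, z) - D_h(z, x^*) \geq u - \sup_{x \in \partial B_{5r}(z)} D_h(z, x).
\]
Combining with condition 1 of $\hat G_r(z)$ yields $D_h(0, q) > u$, as required.

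The argument is essentially a sequence of applications of the triangle inequality, so the main conceptual step is recognising that the $D_h$-geodesic from $0$ to $z$ (respectively from $0$ to $y$) must traverse $\partial B_{5r}(z)$, and that the crossing point automatically lies in $\overline{T_r(z)}$ because of the defining relation $\partial B_5 \subset \partial T$. Once these observations are in place, conditions 1 and 2 of $\hat G_r(z)$ slot in directly; no substantial obstacle is anticipated.
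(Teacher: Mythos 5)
Your proof is correct and takes essentially the same approach as the paper: both arguments decompose $D_h$-geodesics at their crossings of $\partial B_{5r}(z)$ (and, for the $Q'$ inclusion, of $\partial Q_r(z)$), then apply the triangle inequality together with the first two conditions in the definition of $\hat G_r(z)$ and the identity $D_h(0,z)=u$. The only cosmetic differences are the order of the two inclusions and that, for the $Q'$ inclusion, the paper decomposes a single geodesic from $0$ to $\overline{Q'_r(z)}$ into three pieces, whereas you decompose the geodesic from $0$ to $q$ into two pieces and then separately bound $D_h(0,y)$ via the triangle inequality at a crossing point of $\partial B_{5r}(z)$; these are logically interchangeable.
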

	\begin{proof}
		The reader is encouraged to refer to Figure \ref{fig:geodesic-decomp} as a visual aid during the proof. 
		For notational convenience, write $\mathcal{B}_u := \mathcal{B}_u(0; D_h)$.  
		Assume that $z\in \partial \mathcal B_u$ and $\hat G_r(z)$ occurs, where $r$ is sufficiently small so that $B_{10r}(z)$ does not contain the origin.
		Since $z \in \partial \mathcal{B}_u$, we have $D_h(0, z) = u$. 
		
		{\it Step 1: $Q'_r(z) \subset (\mathcal{B}_u)^c$} \\	 
		Recall the definition of LQG geodesics between compact sets from just after~\eqref{eq:set-dist}.
		As $B_{10 r}(z)$ does not contain the origin, any geodesic from $0$ to $\overline{Q'_r(z)}$ can be decomposed into 
		geodesics from $0 \to q_1 \in \partial B_{5 r}(z)$, from $q_1$ to $ q_2 \in \partial Q_r(z)$, and from $q_2$ to $q_3 \in \partial Q'_r(z)$.  
		From this and the definition~\eqref{eq:pinch-point-event} of $\hat G_r(z)$, we obtain
		\begin{align*}
		D_h(0, Q'_r(z))  
		&= D_h(0, q_1) + D_h(q_1, q_2) + D_h(q_2, q_3)  \\
		&> D_h(0, q_1) + D_h(q_2, q_3) \qquad \mbox{($D_h(q_1, q_2) > 0$)} \\
		&> D_h(0, q_1) + D_h(q_1, z) \qquad \mbox{($D_h(Q'_r(z), \partial Q_r(z)) > \sup_{x \in \partial B_{5 r}(z)} D_h(z, x)$)} \\
		& \geq D_h(0, z) \qquad \mbox{(triangle inequality)}.
		\end{align*}
		Since $D_h(0,z) = u$, this implies $Q'_r(z) \subset (\mathcal{B}_u)^c$. 
		
		{\it Step 2: $T_r(z) \subset \mathcal{B}_u$} \\
		The proof is similar to Step 1. Let $y \in T_r(z)$. Also let $z_1$ be a point of $\partial B_{5r}(z)$ which is hit by a geodesic from 0 to $z$ (such a point exists since $0\notin B_{10 r}(z)$). Then
		\begin{equation} \label{eq:geodesic3-decomp}
		D_h(0, z) = D_h(0, z_1) + D_h(z_1, z).
		\end{equation} 
		We now use the definition~\eqref{eq:pinch-point-event} of $\hat G_r(z)$ to get
		\begin{align*}
		D_h(0, z) &= D_h(0, z_1) + D_h(z_1, z) \quad \mbox{\eqref{eq:geodesic3-decomp}} \\
		&\geq D_h(0, z_1) + D_h(z, \partial B_{5 r}(z)) \qquad \mbox{($z_1 \in \partial B_{5 r}(z)$)} \\
		&> D_h(0, z_1) + D_h(z_1, y) \\
		&( D_h(z, \partial B_{5 r}(z)) >  \sup_{x, y \in T_r(z)} D_h(x, y) \mbox{ and }\partial T_r(z) \supset \partial B_{5 r}(z) )  \\
		& \geq D_h(0, y) \qquad \mbox{(triangle ineq.)}.
		\end{align*}
		Since $D_h(0,z) = u$, this implies $T_r(z) \subset \mathcal{B}_u$. 
	\end{proof}

	\begin{figure}
		\begin{center}
			\includegraphics[width=0.75\textwidth]{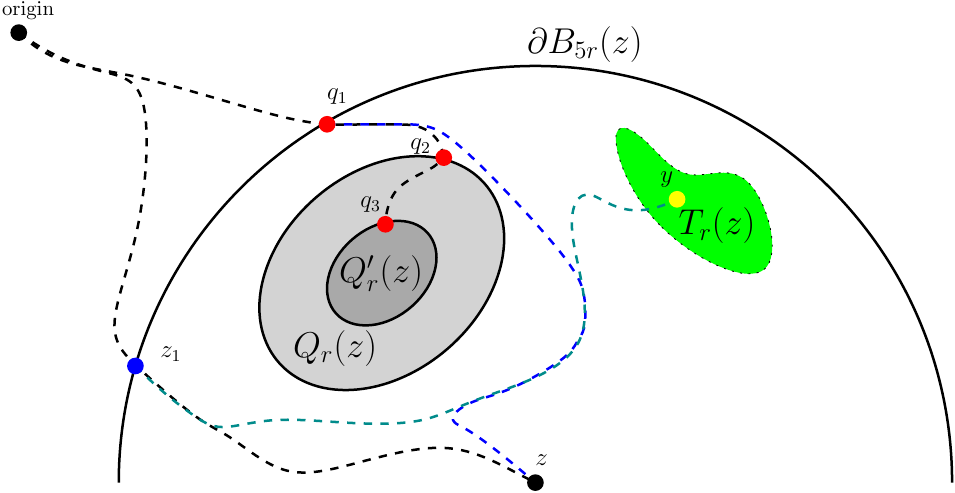}  
			\caption{Geodesic decomposition used in proof of Lemma \ref{lemma:geometric-event-consequence}. Geodesics are drawn as dashed lines. Sets are not drawn to scale
				and not all of $T_r(z)$ is shown. Note that various geodesics in the figure merge into each other. This property is called confluence of geodesics~\cite{gwynne2020confluence} and is not needed for our proofs. }
			\label{fig:geodesic-decomp}
		\end{center}
	\end{figure}
	
	Lemmas \ref{lemma:mu_h-pinch-bad} and \ref{lemma:geometric-event-consequence} immediately lead to a proof of Proposition~\ref{prop:not-metric-balls} 
	via a similar argument as the proof of Lemma \ref{lemma:no-exterior-cone-condition}.
	
	\begin{proof}[Proof of Proposition \ref{prop:not-metric-balls}]
		By the exact same argument as in the beginning of the proof of Proposition \ref{prop:not-balls}, it suffices to show that a.s.\ for $\mu_h$-a.e.\ $z$, no cluster ${\regcluster{}{t}}$
		coincides with a LQG-metric ball containing $z$ on its boundary. 
		
		Throughout the proof we take $M = C$, where $C > 0$ is the universal constant from Lemma \ref{lemma:harmonic-comparison-chain}. Almost surely, the conclusion of Lemma~\ref{lemma:mu_h-pinch-bad} (with this choice of $M$) holds for $\mu_h$-a.e.\ $z$. So, it suffices to consider a $z$ such that the conclusion of Lemma~\ref{lemma:mu_h-pinch-bad} holds and show that a.s.\ no cluster ${\regcluster{}{t}}$ coincides with an LQG metric ball which has $z$ on its boundary.	
		
		By Lemma \ref{lemma:mu_h-pinch-bad}, a.s.\ there exist arbitrarily large $n \in \N$  (depending on $z$) for which the event $\hat G_n(z; C)$ occurs. 
		Consider such an $n$ which is large enough so that $0\notin B_{10 \times 2^{-n}}(z)$ and write $r := 2^{-n}$. 
		
		Since $\hat G_n(z;C)$ occurs, Lemma \ref{lemma:geometric-event-consequence}
		implies that $Q_r'(z) \subset \mathcal{B}^c$ and $U_r (z) \subset T_r(z) \subset \mathcal{B}$. In particular, by the third inequality of the event \eqref{eq:pinch-point-event}, 
		\begin{equation} \label{eq:not-metric-balls-mass}
		\mu_h( \A_{r, 2r}(z) \cap \mathcal{B}) \geq \mu_h(U_r(z)) > C \times \SGrho{3r,6r}{z}.
		\end{equation}
		
		On the other hand, the contrapositive of Lemma \ref{lemma:harmonic-comparison-chain} shows that for each $t>0$, 
		\begin{equation} \label{eq:large-mass-implies-full-2}
		\mu_h(\A_{r, 2r}(z) \cap {\regcluster{}{t}}) > C \times \SGrho{3r,6r}{z} \implies \A_{4r, 5r}(z) \cap {{\regcluster{}{t}}}^c = \emptyset .
		\end{equation}
		Since $Q_r'(z) \Subset \A_{4r, 5r}(z)$, \eqref{eq:large-mass-implies-full-2} implies that it cannot be the case that $\mu_h(\A_{r, 2r}(z) \cap {\regcluster{}{t}}) >  C \times \SGrho{3r,6r}{z}$ and $Q_r'(z) \subset {{\regcluster{}{t}}}^c$. Since $Q_r'(z) \subset \mathcal B^c$ and by~\eqref{eq:not-metric-balls-mass}, we get that a.s.\ $\mathcal B$ is not equal to ${\regcluster{}{t}}$ for any $t>0$. 
	\end{proof}
	
	It remains to prove Lemma \ref{lemma:mu_h-pinch-positive-probability}, which we do in several steps. 
	In the remainder of the subsection, let $T,U,V,W, Q', Q$ be as in Figure \ref{fig:pinch-point}.
	In particular, $U \Subset V \Subset W \Subset T$ and $U,V,W$ are concentric Euclidean balls contained in $\A_{1,2}$.
	
	Fix $\delta_0 \in (0, 1/100)$ sufficiently small so that:
	\begin{align} \label{eq:metric-delta0-choice}
	B_{10 \delta_0}(T) \Subset \A_{1, 6} ,\quad
	B_{10 \delta_0}(U) \Subset V ,\quad
	B_{10 \delta_0}(V) \Subset W ,\notag\\
	B_{10 \delta_0}(W) \Subset \A_{1,2} ,\quad
	B_{10 \delta_0}(Q') \Subset Q ,\quad
	B_{10 \delta_0}(Q) \Subset \A_{4,5} \backslash B_{10 \delta_0}(T) .
	\end{align}  
	Recall the notation for the internal metric from~\eqref{eq:internal-metric}.
	To be succinct, we write, for a set $A$ and $\delta > 0$, 
	\begin{equation}
	\diam^\delta_h(A) := \sup_{x, y \in A} D_h(x,y; B_{\delta}(A)).
	\end{equation} 
	
	To prove Lemma \ref{lemma:mu_h-pinch-positive-probability}, we will show that several auxiliary events occur with positive probability. 
	We let $\tilde E_1$ be the event that
	\begin{equation} \label{eq:first-event}
	\begin{aligned}
	D_h(Q', \partial Q) &> \sup_{x \in \partial B_{5}} D_h(0, x; (B_{2 \delta_0}(Q \cup W))^c) \\
	D_h(0, \partial B_{\delta_0}) &> \diam^{\delta_0}_h(T \backslash B_{3 \delta_0}(W)) .
	\end{aligned}
	\end{equation}
	We let $\tilde E_2 := \tilde E_2(M)$ be the event that
	\begin{equation} \label{eq:second-event}
	\begin{aligned}
	D_h(\partial B_{\delta_0}, \partial B_{2 \delta_0}) &> \diam^{\delta_0}_h(V)  \\
	\mu_h(U) &> M \times \SGrho{3,6}{0} .
	\end{aligned}
	\end{equation} 
	We let $\tilde E_3$ be the event that
	\begin{equation} \label{eq:third-event}
	D_h(\partial B_{2 \delta_0}, \partial B_{3 \delta_0}) > \diam_h^{\delta_0}(B_{3\delta_0}(W) \backslash V).
	\end{equation}
	
	We will successively prove the following lemmas. 
	
	\begin{lemma} \label{lemma:first-event-proof}
		We have $\P[\tilde E_1] > 0$. 
	\end{lemma}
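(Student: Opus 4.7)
The strategy is the \emph{add a bump function} argument, as in the proof of Lemma~\ref{lemma:mass-diam}: first control the four random variables in~\eqref{eq:first-event} by positive, finite constants on an event of positive probability; then perturb $h$ by deterministic smooth bumps chosen so that the Weyl scaling formula (Fact~\ref{fact:lqg-metric}) boosts the two left-hand sides past the two right-hand sides; finally transfer the conclusion back to $h$ via mutual absolute continuity of the law of the whole-plane GFF and the law of the GFF plus a smooth compactly supported function (see e.g.~\cite[Proposition~2.9]{miller2017imaginary}).

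First I would observe that each of $D_h(Q',\partial Q)$, $\sup_{x\in\partial B_5} D_h(0,x;O)$ with $O := (B_{2\delta_0}(Q\cup W))^c$, $D_h(0,\partial B_{\delta_0})$, and $\diam^{\delta_0}_h(T\setminus B_{3\delta_0}(W))$ is a.s.\ finite and positive. Positivity of the first and third follows from the fact that $D_h$ induces the Euclidean topology (Fact~\ref{fact:lqg-metric}). Finiteness of the second and fourth is standard for internal $D_h$-diameters of bounded open sets (see e.g.~\cite[Lemma~3.9]{dubedat2020weak}), after using~\eqref{eq:metric-delta0-choice} to check that $O$ is a connected open set containing both $0$ and $\partial B_5$ and that $B_{\delta_0}(T\setminus B_{3\delta_0}(W))$ is bounded. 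Hence there is an event $F$ of positive probability and constants $c_1,\dots,c_4,\ell_1,\ell_2>0$ on which: the four quantities above are bounded by the $c_i$'s in the appropriate direction; the $D_h$-distance between $\partial B_{2\delta}(Q')$ and $\partial B_{3\delta}(Q')$ (for some small deterministic $\delta>0$ chosen so $\overline{B_{3\delta}(Q')}\Subset Q$) is at least $\ell_1$; and the $D_h$-distance between $\partial B_{\delta_0/3}$ and $\partial B_{2\delta_0/3}$ is at least $\ell_2$.

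Next, let $\phi_1,\phi_2$ be smooth nonnegative bump functions with $\phi_1\equiv 1$ on the shell $S_1 := \overline{B_{3\delta}(Q')}\setminus B_{2\delta}(Q')$, $\operatorname{supp}\phi_1 \Subset Q\setminus\overline{Q'}$, $\phi_2\equiv 1$ on $\overline{\A_{\delta_0/3,2\delta_0/3}}$, and $\operatorname{supp}\phi_2\Subset \A_{\delta_0/4,3\delta_0/4}$. Using~\eqref{eq:metric-delta0-choice}, one verifies: $\operatorname{supp}\phi_1 \cap \operatorname{supp}\phi_2 = \emptyset$; $\operatorname{supp}\phi_1 \subset Q \subset B_{2\delta_0}(Q\cup W)$, so $\phi_1\equiv 0$ on $O$; both supports are disjoint from $B_{\delta_0}(T\setminus B_{3\delta_0}(W))$; and neither support meets $\partial B_1(0)$, so the normalization $h_1(0)=0$ is preserved. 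Set $\tilde h := h + A_1\phi_1 + A_2\phi_2$ with $A_1,A_2>0$ to be chosen. Using Fact~\ref{fact:lqg-metric} and the support disjointness, on $F$ we will have
\begin{align*}
D_{\tilde h}(Q',\partial Q) &\geq e^{\xi A_1}\ell_1, &\sup_{x\in\partial B_5} D_{\tilde h}(0,x;O) &\leq e^{\xi A_2} c_2,\\
D_{\tilde h}(0,\partial B_{\delta_0}) &\geq e^{\xi A_2}\ell_2, &\diam^{\delta_0}_{\tilde h}(T\setminus B_{3\delta_0}(W)) &\leq c_4,
\end{align*}
where the lower bounds come from the fact that any path from $Q'$ to $\partial Q$ must traverse $S_1$, and any path from $0$ to $\partial B_{\delta_0}$ must traverse $\overline{\A_{\delta_0/3,2\delta_0/3}}$. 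Choosing $A_2$ large so $e^{\xi A_2}\ell_2 > c_4$, then $A_1$ large so $e^{\xi A_1}\ell_1 > e^{\xi A_2} c_2$, both inequalities of~\eqref{eq:first-event} hold for $\tilde h$ on $F$; hence $\P[\tilde h\in\tilde E_1]\geq \P[F]>0$, and absolute continuity yields $\P[\tilde E_1]>0$.

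The only delicate point is that $\operatorname{supp}\phi_2 \subset B_{\delta_0}(0) \subset O$, so adding $A_2\phi_2$ inflates the right-hand side of the first inequality by a factor of up to $e^{\xi A_2}$; one cannot simply assign one bump per inequality. The fix is the order of parameter choice: pick $A_2$ first to win the second inequality, then take $A_1 \gg A_2$ to win the first. This is consistent precisely because $\operatorname{supp}\phi_1$ is disjoint from $O$, $\operatorname{supp}\phi_2$, and $B_{\delta_0}(T\setminus B_{3\delta_0}(W))$, so $\phi_1$ contributes only to $D_{\tilde h}(Q',\partial Q)$.
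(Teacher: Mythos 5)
Your proof is correct and follows essentially the same approach as the paper: both add two smooth bump functions (one near $Q$, one near the origin), use Weyl scaling to inflate the two left-hand sides in \eqref{eq:first-event}, handle the interaction (adding $\phi_2$ also inflates $\sup_{x\in\partial B_5} D_h(0,x;O)$) by choosing the bump heights in the right order, and transfer back via absolute continuity. The only cosmetic difference is that you support your bumps on shells across which every relevant path must cross, whereas the paper sets $\phi_1\equiv 1$ on all of $Q$ and $\phi_2\equiv 1$ on $B_{\delta_0/2}$ and uses that the minimizing path from $Q'$ to $\partial Q$ (resp.\ from $0$ to $\partial B_{\delta_0/2}$) stays inside the region where the bump equals $1$; both variants are valid.
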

	
	\begin{lemma} \label{lemma:second-event-proof}
		For each $M  > 0$, $\P[\tilde E_1 \cap \tilde E_2(M)] > 0$. 
	\end{lemma}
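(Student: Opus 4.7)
The plan is to apply Proposition~\ref{prop:mass-diam-gen} conditionally on a refined version of $\tilde E_1$, exploiting the fact that each full $D_h$-distance entering $\tilde E_1$, together with the ancillary variables $D_h(\partial B_{\delta_0}, \partial B_{2\delta_0})$ and $\SGrho{3,6}{0}$, is determined by $h$ restricted to the complement of $B_{2\delta_0}(W)$. The first step will be a first-exit observation: for any path from $Q'$ to $\partial Q$, the initial subpath up to the first hit of $\partial Q$ stays inside $\overline Q$, which together with the reverse inequality $D_h \leq D_h(\,\cdot\,;\overline Q)$ gives $D_h(Q', \partial Q) = D_h(Q', \partial Q; \overline Q)$. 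The same argument yields $D_h(0, \partial B_{\delta_0}) = D_h(0, \partial B_{\delta_0}; \overline B_{\delta_0})$ and $D_h(\partial B_{\delta_0}, \partial B_{2\delta_0}) = D_h(\partial B_{\delta_0}, \partial B_{2\delta_0}; \overline B_{2\delta_0})$. By~\eqref{eq:metric-delta0-choice}, each of $\overline Q$, $\overline B_{2\delta_0}$, $(B_{2\delta_0}(Q\cup W))^c$, $B_{\delta_0}(T\setminus B_{3\delta_0}(W))$, and $\A_{3,6}$ lies in the complement of $B_{2\delta_0}(W)$, so $\tilde E_1$ and the random variables $D_h(\partial B_{\delta_0}, \partial B_{2\delta_0})$ and $\SGrho{3,6}{0}$ are all measurable with respect to $h|_{B_{2\delta_0}(W)^c}$.

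Next, using Lemma~\ref{lemma:first-event-proof} together with the a.s.\ positivity of $D_h(\partial B_{\delta_0}, \partial B_{2\delta_0})$ and finiteness of $\SGrho{3,6}{0}$, I will pick deterministic constants $c > 0$ and $C < \infty$ so that the event
\[
F_1 := \tilde E_1 \cap \left\{ D_h(\partial B_{\delta_0}, \partial B_{2\delta_0}) \geq c \right\} \cap \left\{ \SGrho{3,6}{0} \leq C \right\}
\]
has $\P[F_1] > 0$. By the first paragraph $F_1$ is $h|_{B_{2\delta_0}(W)^c}$-measurable. It will then suffice to show that conditional on $F_1$ the event $\{\diam^{\delta_0}_h(V) < c\} \cap \{\mu_h(U) > MC\}$ has positive probability, since this together with $F_1$ is contained in $\tilde E_1 \cap \tilde E_2(M)$. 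Note that this residual event depends only on $h|_{B_{\delta_0}(V)}$ and $h|_U$, both inside $B_{2\delta_0}(W)$.

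To decouple the two conditions I will use the Markov decomposition $h = h^{\mathrm{in}} + h^{\mathrm{out}}$ on $B_{2\delta_0}(W)$, where $h^{\mathrm{in}}$ is a zero-boundary GFF in $B_{2\delta_0}(W)$, $h^{\mathrm{out}}$ is an independent random harmonic function there agreeing with $h$ outside, and both are well-defined since $B_{2\delta_0}(W)$ avoids the origin. Then $F_1$ is $h^{\mathrm{out}}$-measurable, and conditional on $h^{\mathrm{out}}$ the restriction of $h$ to any compact subset of $B_{2\delta_0}(W)$ is a zero-boundary GFF plus a deterministic continuous function. The bump-function argument underlying Proposition~\ref{prop:mass-diam-gen} --- shifting by a smooth $\phi$ supported in $B_{\delta_0}(V)$ that is positive on $U$ (to inflate $\mu_h(U)$) and sufficiently negative on $V \setminus U$ (to contract $D_h$-distances inside $V$ via Weyl scaling), then invoking absolute continuity of the GFF under smooth shifts --- carries through in this conditional setting, giving $\P\big[\diam^{\delta_0}_h(V) < c,\ \mu_h(U) > MC \,\big|\, h^{\mathrm{out}}\big] > 0$ a.s.\ on $F_1$. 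Integrating over $h^{\mathrm{out}}$ then delivers the joint positivity. The main obstacle will be the measurability reduction of the first paragraph: it is precisely to enable this reduction that the internal-metric cutoffs $B_{2\delta_0}(Q \cup W)$ and $B_{3\delta_0}(W)$ appear in the definition of $\tilde E_1$.
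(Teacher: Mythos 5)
Your overall structure — a measurability reduction of $\tilde E_1$ together with the two ancillary quantities to a compact set disjoint from $W$, then a domain Markov decomposition, then a conditional application of the mass/diameter result via absolute continuity — is essentially the same as the paper's. The paper decomposes $h = h_0 + \varphi$ across $W$ rather than $B_{2\delta_0}(W)$ and packages the thresholds as the $\varphi$-dependent event~\eqref{eq:large-mass-low-diam}, but that is a cosmetic difference. Your first paragraph is actually \emph{more} explicit than the paper on the key measurability reduction: the paper simply asserts that $\tilde E_1$ is determined by $h$ restricted to a compact subset of $W^c$ by ``locality,'' whereas $D_h(Q',\partial Q)$ and $D_h(0,\partial B_{\delta_0})$ are full (not internal) LQG distances, so one does need the first-exit argument $D_h(Q',\partial Q) = D_h(Q',\partial Q;\overline Q)$ etc.\ that you supply. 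That part is a genuine improvement in exposition.

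The one thing that would go wrong if you carried out your plan as written is the parenthetical sketch of how Proposition~\ref{prop:mass-diam-gen} works. Adding $\phi$ ``positive on $U$ and sufficiently negative on $V\setminus U$'' does \emph{not} simultaneously inflate $\mu_h(U)$ and shrink $\diam_h^{\delta_0}(V)$: any path between two interior points of $U$ must traverse $U$ (or at least a neighborhood of $\partial U$), and on $U$ the Weyl factor $e^{\xi\phi}$ is large, so $\diam_h^{\delta_0}(V)$ is pushed \emph{up}, not down, by such a $\phi$. This is exactly the obstruction the paper flags right before Proposition~\ref{prop:mass-diam-gen}: a single signed bump moves $\mu_h$ and $D_h$ in the same direction. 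The actual argument (Lemma~\ref{lemma:mass-diam}) is a checkerboard construction: first inflate the mass, then find a scale at which the mass is carried by a sparse subcollection of small squares with individually small $D_h$-diameter, and shrink only the connecting region $G$ while leaving those squares untouched. Since you are only invoking Proposition~\ref{prop:mass-diam-gen} as a black box via mutual absolute continuity of the conditional law on $\overline{B_{\delta_0}(V)}$, the misdescription does not break your proof; but you should drop the sketch, as that particular bump function fails, and simply cite the proposition together with the GFF absolute continuity statement (as the paper does with Lemma~\ref{lemma:mass-diam} and \cite[Proposition 3.4]{miller2016imaginary}).
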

	
	\begin{lemma} \label{lemma:third-event-proof}
		For each $M  > 0$, $\P[\tilde E_1 \cap \tilde E_2(M) \cap \tilde E_3] > 0$. 
	\end{lemma}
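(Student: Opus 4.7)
The proof follows the bump-function template used in Lemmas~\ref{lemma:first-event-proof} and~\ref{lemma:second-event-proof}: start from the positive-probability event $\tilde E_1 \cap \tilde E_2(M)$ provided by Lemma~\ref{lemma:second-event-proof}, construct a smooth bump $\phi$ that forces $\tilde E_3$ to hold under $\tilde h := h + \phi$ while preserving $\tilde E_1 \cap \tilde E_2(M)$, and then conclude by mutual absolute continuity of the laws of $\tilde h$ and $h$ modulo additive constant~\cite[Proposition 2.9]{miller2017imaginary}.

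The key geometric input is that the two quantities in $\tilde E_3$ are controlled by disjoint regions: $D_h(\partial B_{2\delta_0},\partial B_{3\delta_0})$ depends on the field near the origin, while $\diam^{\delta_0}_h(B_{3\delta_0}(W)\setminus V)$ is determined by $h$ on $B_{\delta_0}(B_{3\delta_0}(W)\setminus V) \subset B_{5\delta_0}(W) \subset \A_{1,2}$. By~\eqref{eq:metric-delta0-choice} the set $B_{5\delta_0}(W)$ is Euclidean-separated from the near-origin annulus $B_{3\delta_0}\setminus B_{2\delta_0}$, from $U$ (since $B_{10\delta_0}(U)\subset V$), from the annulus $\A_{3,6}$ (which controls $\SGrho{3,6}{0}$), and from $B_{10\delta_0}(Q) \supset \overline Q$.

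The main step is then as follows. Since the relevant random variables are almost surely positive and finite, one can pick deterministic constants $0 < a < A$ for which
\begin{equation*}
E^\star := \tilde E_1 \cap \tilde E_2(M) \cap \{D_h(\partial B_{2\delta_0},\partial B_{3\delta_0}) \geq a\} \cap \{\diam^{\delta_0}_h(B_{3\delta_0}(W)\setminus V) \leq A\}
\end{equation*}
has positive probability. Fix a deterministic $C > 0$ with $e^{-\xi C}A < a$, let $\phi: \C \to (-\infty,0]$ be a smooth bump with $\phi \equiv -C$ on $B_{\delta_0}(B_{3\delta_0}(W)\setminus V)$ and $\phi \equiv 0$ outside $B_{2\delta_0}(B_{3\delta_0}(W)\setminus V)$, and set $\tilde h := h + \phi$. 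Weyl scaling (Fact~\ref{fact:lqg-metric}) gives $\diam^{\delta_0}_{\tilde h}(B_{3\delta_0}(W)\setminus V) = e^{-\xi C}\diam^{\delta_0}_h(B_{3\delta_0}(W)\setminus V) \leq e^{-\xi C}A < a$, while any path from $\partial B_{2\delta_0}$ to $\partial B_{3\delta_0}$ contains a sub-arc in the closed annulus $\overline{B_{3\delta_0}}\setminus B_{2\delta_0}^\circ$, on which $\phi \equiv 0$, so $D_{\tilde h}(\partial B_{2\delta_0},\partial B_{3\delta_0}) \geq D_h(\partial B_{2\delta_0},\partial B_{3\delta_0}) \geq a$. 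Thus $\tilde E_3$ holds for $\tilde h$ on $E^\star$. Because $\phi \leq 0$ can only decrease internal $D_h$-distances, the inequalities in $\tilde E_1$ and $\tilde E_2$ whose right-hand sides are internal distances are preserved under $\tilde h$; by locality of $\phi$, one also has $\mu_{\tilde h}(U) = \mu_h(U)$ and $\SGrho{3,6}{0}$ unchanged. Absolute continuity then transfers $\tilde E_1 \cap \tilde E_2(M) \cap \tilde E_3$ back to the original field $h$.

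The principal obstacle is that the first inequality in $\tilde E_1$ has an \emph{unrestricted} distance $D_h(Q',\partial Q)$ on its left-hand side, which could in principle decrease under the perturbation if $D_{\tilde h}$-geodesics from $Q'$ to $\partial Q$ take a shortcut through $B_{5\delta_0}(W)$, where $\phi = -C$ is very negative. I would resolve this either by (a) strengthening $\tilde E_1$ from the outset to the internal-metric statement $D_h(Q',\partial Q; B_{10\delta_0}(Q)) > \sup_{x\in \partial B_5} D_h(0,x;(B_{2\delta_0}(Q\cup W))^c)$, which should have positive probability by the same bump-function argument applied locally on $B_{10\delta_0}(Q)$; or (b) further restricting $E^\star$ to a positive-probability sub-event on which every approximate $D_h$-geodesic from $Q'$ to $\partial Q$ stays inside $B_{10\delta_0}(Q)$, which is plausible since~\eqref{eq:metric-delta0-choice} gives $\dist(B_{5\delta_0}(W), B_{10\delta_0}(Q)) > 0$.
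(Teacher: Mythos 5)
Your construction works and follows essentially the same route as the paper: Lemma~\ref{lemma:second-event-proof} gives $\P[\tilde E_1 \cap \tilde E_2(M)]>0$, you freeze deterministic bounds on $D_h(\partial B_{2\delta_0},\partial B_{3\delta_0})$ and $\diam^{\delta_0}_h(B_{3\delta_0}(W)\setminus V)$ on a positive-probability sub-event, subtract a bump supported near $B_{3\delta_0}(W)\setminus V$ but vanishing on $U$, on $\overline{B_{3\delta_0}}$, on $\overline Q$, and on $\A_{3,6}$, check that $\tilde E_3$ holds for $\tilde h$, verify $\tilde E_1\cap\tilde E_2(M)$ is preserved because a nonpositive perturbation only decreases the internal distances appearing on the right-hand sides, and transfer back by mutual absolute continuity. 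The paper uses a ring-shaped bump on $B_{3\delta_0}(W)\setminus B_{\delta_0}(U)$ rather than your $B_{2\delta_0}(B_{3\delta_0}(W)\setminus V)$, but this is a cosmetic difference; both vanish on the relevant sets.

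However, the ``principal obstacle'' you identify in the last paragraph is not an obstacle, and your two proposed workarounds are unnecessary (and workaround (a) would require re-proving Lemmas~\ref{lemma:first-event-proof} and~\ref{lemma:second-event-proof} for the modified event). The quantity $D_h(Q',\partial Q)$ is \emph{automatically} an internal distance in $\overline Q$: since $Q'\Subset Q$, any path from $Q'$ to $\partial Q$ has an initial sub-path, up to its first exit from $Q$, that stays in $\overline Q$ and connects $Q'$ to $\partial Q$; thus $D_h(Q',\partial Q)=D_h(Q',\partial Q;\overline Q)$ deterministically, for any field. Since your bump vanishes on $\overline Q$ (because $B_{5\delta_0}(W)$ is disjoint from $B_{10\delta_0}(Q)$ by~\eqref{eq:metric-delta0-choice}), it follows that $D_{\tilde h}(Q',\partial Q)=D_h(Q',\partial Q)$ exactly, so the left-hand side of the first inequality of $\tilde E_1$ cannot decrease. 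The same sub-path observation applies to $D_h(0,\partial B_{\delta_0})$, $D_h(\partial B_{\delta_0},\partial B_{2\delta_0})$, and $D_h(\partial B_{2\delta_0},\partial B_{3\delta_0})$, all of which are equal to internal distances in regions where your bump vanishes. This is precisely the (unstated) justification behind the paper's one-line step ``$D_{\tilde h}(Q',\partial Q)=D_h(Q',\partial Q)$ since $\phi\equiv 0$ on $\overline Q$.''
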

	
	Before we prove these lemmas, we show that Lemma~\ref{lemma:third-event-proof} implies Lemma \ref{lemma:mu_h-pinch-positive-probability}.
	
	\begin{proof}[Proof of Lemma \ref{lemma:mu_h-pinch-positive-probability}]
		Let $M > 0$ be given.
		By Lemma \ref{lemma:third-event-proof}, 
		it then suffices to show that 
		\begin{equation} \label{eq:events-inclusion}
		\{\tilde E_1 \cap \tilde E_2(M) \cap \tilde E_3\} \subset \hat G_1(0; M).
		\end{equation}
		
		Henceforth assume that $\tilde E_1\cap \tilde E_2(M) \cap \tilde E_3$ occurs.
		The first inequality in the definition~\eqref{eq:first-event} of $\tilde E_1$ implies
		\[
		D_h(Q', \partial Q) > \sup_{x \in \partial B_{5}} D_h(0, x; (B_{2 \delta_0}(Q \cup W))^c) \geq \sup_{x \in \partial B_{5}} D_h(0, x)
		\]
		which is the first inequality in the definition~\eqref{eq:pinch-point-event} of the event $\hat G_1(0; M)$.
		We have
		\begin{equation} \label{eq:ball-inclusion}
		T = (T \backslash B_{3 \delta_0}(W)) \cup V \cup  (B_{3 \delta_0}(W) \backslash V).
		\end{equation}
		Thus, the second inequality in the definition~\eqref{eq:first-event} of  $\tilde E_1$, the first inequality in the definition~\eqref{eq:second-event} of $\tilde E_2$, and the definition~\eqref{eq:third-event} of $\tilde E_3$ imply 
		\begin{align*}
		D_h(0, \partial B_{5}) &> D_h(0, \partial B_{\delta_0}) + D_h(\partial B_{\delta_0}, \partial B_{2 \delta_0})
		+ D_h(\partial B_{2 \delta_0}, \partial B_{3 \delta_0}) \quad \text{(triangle ineq.)}
		\\
		&\geq \diam^{\delta_0}_h(T \backslash B_{3 \delta_0}(W))
		+  \diam^{\delta_0}_h(V)
		+  \diam^{\delta_0}_h(B_{3 \delta_0}(W)) 
		\\ 
		&\geq \sup_{x, y \in T} D_h(x,y) \quad \mbox{(\eqref{eq:ball-inclusion} and triangle ineq.)} 
		\end{align*}
		which is the second inequality in the definition of $\hat G_1(0; M)$. The second inequality in the definition~\eqref{eq:second-event} of $\tilde E_2$ is the third and final inequality of $\hat G_1(0; M)$. 
	\end{proof}

	We will now show that $\P[\tilde E_1] > 0$ by adding an appropriate bump function to $h$. 
	
	\begin{proof}[Proof of Lemma \ref{lemma:first-event-proof}]
		Since the random variables involved are strictly positive and finite, 
		there exists positive finite constants $C_1, C_2, C_3, C_4$ 
		so that the event
		\begin{equation} \label{eq:stuff-finite}
		\begin{aligned}
		D_h(Q', \partial Q) &\geq C_1 \\
		\sup_{x \in \partial B_{5}} D_h(0, x; (B_{2 \delta_0}(Q \cup W))^c) &\leq C_2 \\
		D_h(0, \partial B_{\delta_0/2}) &\geq C_3  \\
		\diam^{\delta_0}_h(T \backslash B_{3 \delta_0}(W)) &\leq C_4
		\end{aligned}
		\end{equation}
		satisfies	$\P[\eqref{eq:stuff-finite}] > 0$. Henceforth assume that the event in~\eqref{eq:stuff-finite} occurs.
		
		Take smooth compactly supported bump functions $\phi_1, \phi_2 : \mathbb C\to [0,1]$
		so that
		\[
		\phi_1 \equiv \begin{cases}
		1 &\mbox{on $Q$} \\
		0 &\mbox{on $(B_{\delta_0}(Q))^c$}
		\end{cases}
		\]
		and
		\[
		\phi_2 \equiv \begin{cases}
		1 &\mbox{on  $B_{\delta_0/2}$} \\
		0 &\mbox{on  $B_{\delta_0}^c$}.
		\end{cases}
		\]
		With $\xi = \xi(\gamma)$ as in~\eqref{eq:xi}, let
		\begin{equation}
		\tilde h := h + \frac{\phi_1}{\xi} \log\left( (\frac{2 C_2}{C_1} \vee 1) \times (\frac{2 C_4}{C_3} \vee 1)\right) + \frac{\phi_2}{\xi} \log\left(\frac{2 C_4}{C_3} \vee 1\right).
		\end{equation}
		Suppose \eqref{eq:stuff-finite} holds.
		Then 
		\begin{align} \label{eq:first-event-pf1}
		D_{\tilde h}(0, B_{\delta_0}) &\geq D_{\tilde h}(0, B_{\delta_0/2}) \qquad \mbox{(positivity of length)} \notag \\
		&\geq \frac{2 C_4}{C_3} D_{h}(0, B_{\delta_0/2}) \qquad \mbox{(Weyl scaling, Fact~\ref{fact:lqg-metric})}  \notag\\
		&> \diam^{\delta_0}_h(T \backslash B_{3 \delta_0}(W)) \qquad \mbox{(the event \eqref{eq:stuff-finite})}  \notag \\ 
		&= \diam^{\delta_0}_{\tilde h}(T \backslash B_{3 \delta_0}(W)) \qquad \mbox{($\phi_1 + \phi_2 \equiv 0$ on $B_{\delta_0}(T)$)}.
		\end{align}
		
		Also note that by Weyl scaling and since the bump function $\phi_2$ has support contained in $(B_{2 \delta_0}(Q \cup W))^c$,
		\begin{equation} \label{eq:weyl-scaling-leak}
		\begin{aligned}
		&\sup_{x \in \partial B_{5}} D_{\tilde h}(0, x; (B_{2 \delta_0}(Q \cup W))^c)  \\
		&\qquad\qquad\leq (\frac{2 C_4}{C_3} \vee 1) \times  \sup_{x \in \partial B_{5}} D_{h}(0, x; (B_{2 \delta_0}(Q \cup W))^c).
		\end{aligned}
		\end{equation}
		Thus, 
		\begin{align}  \label{eq:first-event-pf2}
		D_{\tilde h}(Q', \partial Q) &\geq \frac{2 C_2}{C_1} \times (\frac{2 C_4}{C_3} \vee 1) \times D_{h}(Q', \partial Q) \qquad \mbox{(Weyl scaling)}  \notag \\
		&> (\frac{2 C_4}{C_3} \vee 1) \times \sup_{x \in \partial B_{5}} D_{h}(0, x; (B_{2 \delta_0}(Q \cup W))^c)  \qquad \mbox{(the event \eqref{eq:stuff-finite})}  \notag \\
		&\geq \sup_{x \in \partial B_{5}} D_{\tilde h}(0, x; (B_{2 \delta_0}(Q \cup W))^c) \qquad \mbox{(by \eqref{eq:weyl-scaling-leak})}.
		\end{align}
		
		By~\eqref{eq:first-event-pf1} and~\eqref{eq:first-event-pf2} and since $\P[\eqref{eq:stuff-finite}] > 0$, the event $\tilde E_1$ occurs with positive probability with $\tilde h$ in place of $h$. 
		By~\cite[Proposition 2.9]{miller2017imaginary}, the laws of $h$ and $\tilde h$ are mutually absolutely continuous, viewed modulo additive constant. By Weyl scaling the occurrence of $\tilde E_1$ is unaffected by adding a constant to $h$.
		Thus, the fact that $\tilde E_1$ occurs with positive probability with $\tilde h$ instead of $h$ implies that $\P[\tilde E_1] > 0$.   
	\end{proof}

	We next show that $\P[\tilde E_1] > 0 \implies \P[\tilde E_1 \cap \tilde E_2(M)] > 0$ using the domain Markov property
	with a set selected to be disjoint from the domain of dependence of $\tilde E_1$. 
	
	\begin{proof}[Proof of Lemma \ref{lemma:second-event-proof}]
		By the domain Markov property of the GFF, \cite[Proposition 2.8]{miller2017imaginary}, we can decompose
		\begin{equation} \label{eq:use-gff-markov}
		h = h_0 + \varphi
		\end{equation}
		where $h_0$ is a zero-boundary GFF on $W$, $\varphi$ is harmonic on $W$,
		and $h_0$ and  $\varphi$ are independent. 
		
		Since the random variables involved are strictly positive and finite and $\P[\tilde E_1] > 0$, 
		there exists positive finite constants $C_1, C_2$ so that the event
		\begin{equation} \label{eq:stuff-finite-2}
		\begin{aligned}
		D_h(\partial B_{\delta_0}, \partial B_{2 \delta_0}) &\geq C_1 \\
		M \times \SGrho{3,6}{0} &\leq C_2
		\end{aligned}
		\end{equation}
		satisfies	$\P[\tilde E_1 \cap \eqref{eq:stuff-finite-2}] > 0$.
		
		We will also need to consider the event
		\begin{equation} \label{eq:large-mass-low-diam}
		\begin{aligned}
		\diam_{h_0}^{\delta_0}(V) &\leq e^{-\xi \sup_{W} \varphi} C_1 \\
		\mu_{h_0}(U) &\geq e^{-\gamma \inf_{U} \varphi} C_2 .
		\end{aligned}
		\end{equation} 
		By the locality properties of $\mu_h$ and $D_h$ (Facts~\ref{fact:lqg-measure} and~\ref{fact:lqg-metric}), the events \eqref{eq:stuff-finite-2} and $\tilde E_1$ are both measurable with respect to the restriction of $h$ to
		\[
		Q \cup B_{2 \delta_0}(Q \cup W)^c \cup B_{2 \delta_0} \cup B_{2 \delta_0}(W)^c  \cup \A_{3,6} ,
		\] 
		which is a compact subset of $W^c$. 
		Furthermore, the event \eqref{eq:large-mass-low-diam} is measurable with respect to the restriction of $h$ to $B_{\delta_0}(V)$ and the function $\phi$ (which is measurable with respect to $h|_{W^c}$).

		By standard absolute continuity results 
		for the GFF (see, \eg, \cite[Proposition 3.4]{miller2016imaginary}) together with~\eqref{eq:use-gff-markov},
		the conditional law of $h|_{B_{\delta_0}(V)}$ given $h|_{W^c}$
		is mutually absolutely continuous with respect to its marginal law. 
		From this and Lemma~\ref{lemma:mass-diam}, we obtain
		\[
		\P[\eqref{eq:large-mass-low-diam} \,|\,  \eqref{eq:stuff-finite-2} \cap \tilde E_1] > 0 .
		\]
		Since $\P[\tilde E_1 \cap \eqref{eq:stuff-finite-2}] > 0$, we thus have
		\[
		\P[\eqref{eq:large-mass-low-diam} \cap \eqref{eq:stuff-finite-2} \cap \tilde E_1] > 0.
		\]
		
		We will now conclude the proof by showing that $\eqref{eq:large-mass-low-diam} \cap \eqref{eq:stuff-finite-2} \subset \tilde E_2$. 
		Assume that $\eqref{eq:large-mass-low-diam} \cap \eqref{eq:stuff-finite-2}$ occurs. Then
		\begin{align*}
		\diam^{\delta_0}_h(V) &\leq   e^{\xi \sup_{B_{\delta_0}(V)} \varphi}  \diam^{\delta_0}_{h_0}(V) \quad \mbox{(Weyl scaling)} \\
		&\leq D_h(\partial B_{\delta_0}, \partial B_{2 \delta_0}) \quad \mbox{(the events \eqref{eq:large-mass-low-diam} and \eqref{eq:stuff-finite-2})}
		\end{align*}
		and
		\begin{align*}
		\mu_h(U)&\geq  e^{\gamma \inf_{U} \varphi}  \mu_{h_0}(U) \quad \mbox{(Weyl scaling)} \\
		&\geq M \times \SGrho{3,6}{0} \quad \mbox{(the events \eqref{eq:large-mass-low-diam} and \eqref{eq:stuff-finite-2})}
		\end{align*}
		which is exactly the event $\tilde E_2$. 
	\end{proof}
	
	We finally show that $\P[\tilde E_1 \cap \tilde E_2(M) ] > 0 \implies \P[\tilde E_1 \cap \tilde E_2(M) \cap \tilde E_3] > 0$. 
	The proof involves adding a bump function to make $\tilde E_3$ to occur and then checking that the events
	$\tilde E_1 \cap \tilde E_2(M)$ still occur after adding the bump function.

	\begin{proof}[Proof of Lemma \ref{lemma:third-event-proof}]
		Let $\phi : \mathbb C\to [0,1]$ be a smooth compactly supported bump function such that
		\[
		\phi \equiv \begin{cases}
		1 &\mbox{on $B_{3 \delta_0}(W) \backslash B_{\delta_0}(U)$  } \\
		0 &\mbox{on $(U \cup B_{4 \delta_0}(W))^c$}.
		\end{cases}
		\]
		Since we know that $\P[\tilde E_1\cap \tilde E_2(M)] > 0$ (Lemma~\ref{lemma:second-event-proof}) and the quantities involved are a.s.\ finite and positive, we can find finite positive constants $C_1, C_2$ (depending on $M$) such that the event
		\begin{equation} \label{eq:stuff-finite-3}
		\begin{aligned}
		D_h(\partial B_{2 \delta_0}, \partial B_{3 \delta_0}) &\geq C_1 \\
		\diam^{\delta_0}_h(B_{3 \delta_0}(W) \backslash V) &\leq C_2
		\end{aligned}
		\end{equation}
		satisfies	$\P[\tilde E_1 \cap \tilde E_2 \cap \eqref{eq:stuff-finite-3}] > 0$.
		
		Let 
		\begin{equation}
		\tilde h = h + \frac{\phi}{\xi} \log(\frac{C_2}{C_1} \wedge 1) .
		\end{equation}
		Recall from~\eqref{eq:metric-delta0-choice} that $B_{\delta_0}(U) \subset V$. Hence, on $\tilde E_1 \cap \tilde E_2 \cap \eqref{eq:stuff-finite-3}$,
		\begin{align*}
		\diam^{\delta_0}_{\tilde h}(B_{3 \delta_0}(W) \backslash V) &\leq \frac{C_2}{C_1} \diam^{\delta_0}_{h}(B_{3 \delta_0}(W) \backslash V) \quad \mbox{(Weyl scaling)} \\
		&\leq D_h(\partial B_{2 \delta_0}, \partial B_{3\delta_0}) \quad \mbox{(the event \eqref{eq:stuff-finite-3})} \\
		&= D_{\tilde h}(\partial B_{2 \delta_0}, \partial B_{3\delta_0}) \quad \mbox{($\phi \equiv 0$ on $\overline B_{3 \delta_0}$)},
		\end{align*}
		which is the event $\tilde E_3$ with $\tilde h$ in place of $h$. Hence, this event has positive probability.
		
		As we will see below, by Weyl scaling,  the fact $\log(\frac{C_2}{C_1} \wedge 1) \leq 0$, and since $\phi \equiv 0$ on $U \cup \A_{3,6} \cup B_{3 \delta_0}$,
		on the event $\tilde E_1 \cap \tilde E_2(M) \cap \eqref{eq:stuff-finite-3}$ the event $\tilde E_1 \cap \tilde E_2(M)$ occurs with $\tilde h$ in place of $h$. By~\cite[Proposition 2.9]{miller2017imaginary}, the laws of $h$ and $\tilde h$, viewed modulo additive constant, are mutually absolutely continuous. Since the occurrence of the event $\tilde E_1 \cap \tilde E_2(M) \cap \tilde E_3$ is unaffected by adding a constant to $h$, we conclude that $\P[\tilde E_1\cap \tilde E_2(M) \cap \tilde E_3] > 0$, as required. 
		
		that adding the bump function did not change the occurrence of the events $\tilde E_1$ and $\tilde E_2$,
		defined in \eqref{eq:first-event} and \eqref{eq:second-event} respectively.
		The first inequality in $\tilde E_1$ is 
		\begin{align*}
		D_{\tilde h}(Q', \partial Q) &= D_h(Q', \partial Q) \quad \mbox{($\phi \equiv 0$ on $\overline Q$)} \\
		&> \sup_{x \in \partial B_{5}} D_h(0, x; (B_{2 \delta_0}(Q \cup W))^c) \quad \mbox{(event $\tilde E_1$ for $h$)} \\
		&\geq e^{- \log(\frac{C_2}{C_1} \wedge 1)} \sup_{x \in \partial B_{5}} D_{\tilde h}(0, x; (B_{2 \delta_0}(Q \cup W))^c) \quad \mbox{(Weyl scaling)} \\
		&\geq  \sup_{x \in \partial B_{5}} D_{\tilde h}(0, x; (B_{2 \delta_0}(Q \cup W))^c) \quad \mbox{($\log(\frac{C_2}{C_1} \wedge 1) \leq 0$)}.
		\end{align*}
		The second inequality in $\tilde E_1$ and the first inequality in $\tilde E_2$ are checked in a similar fashion, using 
		$\log(\frac{C_2}{C_1} \wedge 1) \leq 0$, and $\phi \equiv 0$ on $\overline{B_{2 \delta_0}}$.
		Since $\phi \equiv 0$ on $U \cup \A_{3,6}$, the last inequality in $\tilde E_2$ is preserved. 
		Hence, on the event $\tilde E_1 \cap \tilde E_2(M) \cap \eqref{eq:stuff-finite-3}$ the event $\tilde E_1 \cap \tilde E_2(M)$ occurs with $\tilde h$ in place of $h$.
	\end{proof}

	\appendix
	
	\section{Obstacle problem for Radon measures} \label{sec:obstacle-appendix}
	
	In this appendix we provide the proofs which were omitted in Section \ref{subsec:basic-properties}. 
	For clarity, we prove these results for any Radon measure $\mu$ satisfying, for some $R > 0$,
	\begin{equation} \label{eq:volume-growth}
	r^{\beta^-} \leq \mu(B_r(z)) \leq r^{\beta^-} \quad \mbox{for all $z \in B_R$},
	\end{equation}
	for some exponents $\beta^+, \beta^- > 0$, for all $r$ sufficiently small. This implies the results in Section \ref{subsec:basic-properties} as the Liouville measure is a.s.\ a Radon measure which satisfies \eqref{eq:volume-growth}. Indeed, this follows from Lemma~\ref{lemma:volume-growth} and the scaling properties of $h$ and $\mu_h$, namely~\eqref{eq:h-coordinate-change} and~\eqref{eq:measure-coord}.
	
	For $R > 0$, let 
	\begin{equation} \label{eq:expected-exit-time}
	q_{B_R}(\cdot) = \int_{B_R} G_{B_R}(y,\cdot) \mu(dy),
	\end{equation} 
	where $G_{B_R}$ is the Green's function for $B_R$. 
	Under the condition \eqref{eq:volume-growth}, the function $q_{B_R}$ satisfies the following properties: 
	\begin{enumerate}
		\item Continuous: $q_{B_R}$ is H\"{o}lder continuous in $\overline{B_R}$ and finite;
		\item Potential: $q_{B_R}$ is superharmonic and $\Delta q_{B_R} = -\mu$  in $B_R$;
		\item Zero boundary: $q_{B_R}(z) = 0$ for $z \in \partial B_R$;
		\item Positive: $q_{B_R}(x) > 0$ for $x \in B_R$.
	\end{enumerate}
	The first property follows by the same argument outlined in the proof of Proposition \ref{prop:lbm-exit-time}; 
	the second, by, for example, ~\cite[Theorem 4.3.8]{armitage2000classical}; the third as $G_{B_R}(0, \cdot) = 0$ on $\partial B_R$;
	and the fourth by the strong maximum principle. 
	
	For notational simplicity, we consider $R = 1$ in all but the last subsection. We will also only consider the case where~$z$ is the origin.
	
	\subsection{Definition} \label{subsec:obstacle-problem}
	For $t > 0$,  denote the {\it obstacle} $\beta_t: \overline{B_1} \to \R \cup \{\infty\}$ as 
	\begin{equation} \label{eq:obstacle}
	\beta_t(x) = -t G_{B_1}(0,x) + q_{B_1}(x).
	\end{equation}
	The set of  {\it supersolutions} 
	is
	\begin{equation} \label{eq:shifted-supersolutions}
	\mclSshift{}{t} = \{ w \in C(\overline{B_1}) :  \Delta w \leq 0 \mbox{ in $B_1$}  \mbox{ and }  w \geq   \beta_t \mbox{ in $\overline{B_1}$}\},
	\end{equation}
	where $C(\overline{B_1})$ denotes the set of continuous functions on the closed unit ball.
	
	Consider the {\it least supersolution} or {\it least superharmonic majorant} 
	as the pointwise infimum of all functions in $\mclSshift{}{t}$
	\begin{equation} \label{eq:appendix-least-supersolution}
	\lssshift{}{t} = \inf\{ w \in \mclSshift{}{t} \} 
	\end{equation}
	and the {\it odometer}
	\begin{equation} \label{eq:appendix-limit-odometer}
	v_t = \lssshift{}{t}  - \beta_t.
	\end{equation}
	Note that $\mclSshift{}{t}$ is non-empty as it contains $q_{B_1}$ --- thus $\lssshift{}{t}$ always exists.
	Denote the {\it non-coincidence set} by
	\begin{equation} \label{eq:appendix-non-co-set}
	\mathit{\Lambda_t} = \{ x \in B_1 : \lssshift{}{t} > \beta_t \}.
	\end{equation}
	
	Note that the least supersolution in \eqref{eq:appendix-least-supersolution} is related
	to \eqref{eq:least-super-solution} by $\lss{}{t}= \lssshift{}{t} - q_{B_1}$. In particular, $\mathit{\Lambda_t}$
	coincides with ${\cluster{}{t}}$ and $v_t$ with $\odometer{}{t}$  --- each of the lemmas in Section \ref{subsec:basic-properties} will follow via this substitution.
	We choose to work with $\mclSshift{}{t}$ as this allows us to directly cite results concerning superharmonic functions.

	\subsection{Existence} 
	
	We first verify that the solution to the obstacle problem is non-degenerate in the following sense, this implies
	Lemma \ref{lemma:basic-properties}. 
	\begin{lemma}\label{lemma:appendix-basic-properties}
		For all $t > 0$, $\lssshift{}{t}$ is finite, continuous, and an element of $\mclSshift{}{t}$. 
	\end{lemma}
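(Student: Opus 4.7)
The plan is to prove finiteness, continuity, and membership in $\mclSshift{}{t}$ in that order, exploiting the specific structure $\beta_t = -tG_{B_1}(0,\cdot) + q_{B_1}$.

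First, I will check that $q_{B_1} \in \mclSshift{}{t}$: by the four properties of $q_{B_1}$ recalled above, $q_{B_1}$ is continuous on $\overline{B_1}$, superharmonic in $B_1$, and $q_{B_1} \geq \beta_t$ since $G_{B_1}(0,\cdot) \geq 0$. So $\mclSshift{}{t}$ is non-empty. Applying the minimum principle for superharmonic functions to an arbitrary $w \in \mclSshift{}{t}$, which is superharmonic in $B_1$, continuous on $\overline{B_1}$, and satisfies $w \geq \beta_t \equiv 0$ on $\partial B_1$, yields $w \geq 0$ on $\overline{B_1}$. Hence $0 \leq \lssshift{}{t} \leq q_{B_1}$, which proves finiteness and uniform boundedness.

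The bulk of the work is continuity. The family $\mclSshift{}{t}$ is closed under pairwise minima (the pointwise minimum of two continuous superharmonic functions dominating $\beta_t$ retains all three properties), so a Choquet-type argument produces a decreasing sequence $\{w_n\} \subset \mclSshift{}{t}$ with $w_n \downarrow \lssshift{}{t}$ pointwise, making $\lssshift{}{t}$ upper semicontinuous as a decreasing pointwise limit of continuous functions. Away from the origin, $\beta_t$ is H\"older continuous (since $q_{B_1}$ is continuous as noted in the appendix preamble and $-tG_{B_1}(0,\cdot)$ is smooth there), so a standard obstacle-problem dichotomy applies at each $x \in B_1 \setminus \{0\}$: if $\lssshift{}{t}(x) = \beta_t(x)$, the sandwich $\beta_t \leq \lssshift{}{t} \leq w_n$ combined with continuity of $\beta_t$ and $w_n$ at $x$ forces continuity of $\lssshift{}{t}$ at $x$; if $\lssshift{}{t}(x) > \beta_t(x)$, a Perron-style replacement of $\lssshift{}{t}$ on a small ball around $x$ by the harmonic extension of its boundary values produces an element of $\mclSshift{}{t}$ no larger than $\lssshift{}{t}$, forcing $\lssshift{}{t}$ to be harmonic — and in particular continuous — in a neighborhood of $x$. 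Continuity up to $\partial B_1$ is immediate from $0 \leq \lssshift{}{t} \leq q_{B_1}$ and $q_{B_1}|_{\partial B_1} = 0$.

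The main obstacle is continuity at the origin, where $\beta_t(0) = -\infty$ makes the obstacle discontinuous and places the problem outside the scope of classical regularity theory. The saving observation is that this singularity actually \emph{removes} the constraint locally: since $\beta_t(y) \to -\infty$ while $\lssshift{}{t}$ is bounded above by $q_{B_1}$, for all sufficiently small $\epsilon > 0$ the Perron-type replacement of $\lssshift{}{t}$ on $B_\epsilon$ by the solution of the Dirichlet problem with boundary data $\lssshift{}{t}|_{\partial B_\epsilon}$ (continuous by the previous paragraph) still dominates $\beta_t$ throughout $B_\epsilon$ and hence lies in $\mclSshift{}{t}$; minimality forces this replacement to coincide with $\lssshift{}{t}$ on $B_\epsilon$, so $\lssshift{}{t}$ is harmonic, in particular continuous, near $0$. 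Once continuity of $\lssshift{}{t}$ on $\overline{B_1}$ is established, the classical fact that the lower-semicontinuous envelope of a pointwise infimum of superharmonic functions is superharmonic — combined with continuity, which makes $\lssshift{}{t}$ coincide with its own envelope — yields superharmonicity of $\lssshift{}{t}$ in $B_1$; together with $\lssshift{}{t} \geq \beta_t$ by definition, this gives $\lssshift{}{t} \in \mclSshift{}{t}$.
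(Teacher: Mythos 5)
Your overall strategy---prove continuity first via a dichotomy at each point (sandwich at coincidence points, harmonic replacement at non-coincidence points), then deduce superharmonicity at the end---is a natural and genuinely different ordering from the paper's, which proves superharmonicity \emph{first} and continuity \emph{second}. But the proposal contains a circularity that the paper's ordering, together with one auxiliary device, is specifically designed to avoid.

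The crux is the harmonic (Perron) replacement in your Case~2 and at the origin. You replace $\lssshift{}{t}$ on a small ball by the harmonic extension of its boundary values and claim the pasted function is ``an element of $\mclSshift{}{t}$''; but membership in $\mclSshift{}{t}$ requires continuity on all of $\overline{B_1}$, and the pasted function equals $\lssshift{}{t}$ outside the ball---the very function whose continuity you are in the middle of proving. Comparison with $\lssshift{}{t}$ therefore cannot go through. The paper avoids this by introducing the auxiliary infimum $f_t$ in \eqref{eq:lsc-leastsuperharmonic}, taken over the strictly larger family of merely lower-semicontinuous superharmonic majorants of $\beta_t$ (no continuity requirement at all). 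The pasted function $g$ in the paper's Step~3 is admissible for $f_t$ precisely because it only needs to be superharmonic and $\geq \beta_t$, so the comparison $f_t\leq g$ is legitimate even before any continuity is known; only after $f_t$ is proved continuous is it identified with $\lssshift{}{t}$. Without this widening move, the replacement step does not close, and this is the substantive gap in your proposal.

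There is a secondary soft spot: the assertion that min-closure of $\mclSshift{}{t}$ yields, via a Choquet-type argument, a decreasing sequence $w_n\downarrow\lssshift{}{t}$ \emph{pointwise everywhere} is not automatic. What one obtains in general is a countable subfamily whose infimum has the same lower-semicontinuous regularization as $\lssshift{}{t}$, or pointwise agreement on a dense set; promoting that to pointwise agreement everywhere is essentially what the superharmonicity machinery (the paper's citation of \cite[Theorem~3.7.5]{armitage2000classical}) is for, and it cannot be invoked before superharmonicity is on the table.

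One thing you identify correctly, and which the paper is a bit cavalier about, is the origin: $\beta_t(0)=-\infty$, so the paper's display ``$\beta_t(x)+\epsilon/2\geq\beta_t(y)$ for all $x,y\in B_\delta(x_0)$'' fails literally at $x_0=0$. Your observation---that $\beta_t$ drops below any fixed level near $0$ while the candidate supersolutions are bounded below by $0$, so the obstacle constraint is slack there and the harmonic replacement trivially dominates $\beta_t$---is exactly the right patch; it just needs to be deployed inside the widened framework to be logically sound.
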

	\begin{proof}
		Let 
		\begin{equation} \label{eq:lsc-leastsuperharmonic}
		f_t  := \inf \{ g: g \mbox{ is superharmonic in $B_1$ and $g \geq \beta_t$ in $\overline{B}_1$} \},
		\end{equation}
		where, as before, the infimum is pointwise. 
		Note that this definition differs from $\lssshift{}{t}$ in that admissible superharmonic functions need only be lower semicontinuous. 
		It suffices to show that $f_t$ satisfies the desired properties. Indeed, by definition $f_t \leq  \lssshift{}{t}$
		and the reverse inequality follows from $f_t \in \mclSshift{}{t}$.
		
		{\it Step 1: Finiteness.} \\
		If $g \geq \beta_t$ is superharmonic, then, by the minimum principle, $g \geq 0$ on $B_1$ as $g \geq \beta_t = 0$ on $\partial B_1$. 
		As this holds for all such $g$, $f_t \geq 0$. This together with $\infty > q_{B_1} \geq f_t$ ($q_{B_1}$ is admissible in \eqref{eq:lsc-leastsuperharmonic}) shows finiteness.
		
		{\it Step 2: Superharmonicity.} \\
		We use ~\cite[Theorem 3.7.5]{armitage2000classical}
		which we recall for the reader's convenience. Let $O$ be a bounded open set and let $f:O \to [-\infty, \infty]$.
		The {\it lower semicontinuous regularization} of $f$ is defined by 
		\begin{equation} \label{eq:lower-semicontinuous}
		\hat{f}(x) = \min \{ f(x), \liminf_{y \to x} f(y) \}.
		\end{equation}
		\cite[Theorem 3.7.5]{armitage2000classical} states that if $f > -\infty$ is the infimum of a family of superharmonic functions on $O$
		then $\hat{f}$ is superharmonic on $O$ and $\hat{f}(x) = \liminf_{y \to x} f(y)$. 
		
		Since $f_t$ is finite, we may use this to see that its lower semicontinuous regularization, $\hat{f}_t$, is superharmonic on $B_1$ and satisfies $\hat{f}_t(x) = \liminf_{y \to x} f_t(y) \leq f_t(x)$.
		In fact, \eqref{eq:lsc-leastsuperharmonic} implies $f_t$ is equal to its lower semicontinuous regularization. Indeed, $\hat{f}_t$ is superharmonic and 
		\[
		\hat{f}_t(x) = \liminf_{y \to x} f_t(y) \geq \liminf_{ y \to x} \beta_t(y) =  \beta_t(x)
		\]
		as the obstacle, $\beta_t$, is continuous, implying that $\hat{f}_t \geq f_t$ by \eqref{eq:lsc-leastsuperharmonic}.
		
		{\it Step 3: Continuity.} \\
		By Step 2, $f_t$ is lower semicontinuous. It remains to verify upper semicontinuity. 
		Let $\epsilon > 0$ and $x_0 \in B_1$ be given. By continuity of $\beta_t$, there exists $\delta > 0$ sufficiently small so that
		\[
		\beta_t(x) +\epsilon/2 \geq \beta_t(y), \quad \forall x,y \in B_{\delta}(x_0).
		\]
		and hence, since $f_t(x) \geq \beta_t(x)$,
		\begin{equation} \label{eq:lower-bound-continuity}
		\epsilon + f_t(x) \geq \beta_t(x_0) + \epsilon/2 \geq \beta_t(x), \quad \forall x \in B_{\delta}(x_0).
		\end{equation}
		
		Let $g_1$ be the unique function which is harmonic in $B_{\delta}(x_0)$ and coincides with $\epsilon + f_t$ on $\partial B_{\delta}(x_0)$. 
		Note that since $\epsilon + f_t - g_1$ is superharmonic in $B_{\delta}(x_0)$ and equal to 0 on $\partial B_{\delta}(x_0)$, 
		\begin{equation} \label{eq:superharmonic-poisson-inequality}
		\epsilon + f_t \geq g_1, \quad \mbox{on $B_{\delta}(x_0)$}.
		\end{equation}
		Define the function $g$ to be $g_1$ on $B_{\delta}(x_0)$ and $\epsilon + f_t$ on $B_1 \backslash B_{\delta}(x_0)$.
		One may check, using the super-mean-value property and \eqref{eq:superharmonic-poisson-inequality}, that $g$ is superharmonic in $B_1$. Hence, by \eqref{eq:lower-bound-continuity}, $g$ is admissible in \eqref{eq:lsc-leastsuperharmonic} and, in turn, $g \geq f_t$.
		Therefore, 
		\begin{align*}
		\limsup_{x \to x_0} f_t(x) &\leq \limsup_{x \to x_0} g(x) \quad \mbox{($g \geq f_t$ on $B_1$)} \\
		&= g(x_0)\quad \mbox{($g$ is harmonic and hence continuous in a neighborhood of $x_0$)} \\
		&\leq \epsilon + f_t(x_0) \quad \mbox{(by \eqref{eq:superharmonic-poisson-inequality} and definition of $g$)}.
		\end{align*}
		We conclude by observing the prior inequality holds for any $\epsilon > 0$.
	\end{proof}
	
	We next check that the least supersolution is harmonic on the non-coincidence set, this together with Lemma \ref{lemma:appendix-basic-properties} implies
	Lemma \ref{lemma:non-coincidence-open-harmonic}. 
	\begin{lemma} \label{lemma:appendix-non-coincedence-open-harmonic}
		The non-coincidence set, ${\cluster{}{t}}$, is open and connected and
		\[
		\Delta \lssshift{}{t} = 0 \quad \mbox{on ${\cluster{}{t}}$}.
		\]
	\end{lemma}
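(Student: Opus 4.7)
The plan is to prove the three assertions—openness, harmonicity, and connectedness of ${\cluster{}{t}}$—in that order, so that the connectedness step can invoke the harmonicity. Openness follows immediately from the fact that $\lssshift{}{t}$ is continuous on $\overline{B_1}$ (Lemma \ref{lemma:appendix-basic-properties}) while $\beta_t$ is continuous on $B_1\setminus\{0\}$ with $\beta_t(0)=-\infty$: the former gives openness of $\{\lssshift{}{t}>\beta_t\}$ away from the origin, and the singularity of $\beta_t$ at $0$ places a full Euclidean neighborhood of $0$ inside ${\cluster{}{t}}$.

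The heart of the proof is harmonicity, which I will establish by a Perron-type harmonic replacement argument. Given $x_0\in{\cluster{}{t}}$, the key preparatory step is to choose a ball $B=B_r(x_0)$ with $\overline{B}\subset B_1$ small enough that
\[
\min_{\partial B}\lssshift{}{t}\;>\;\sup_{\overline{B}}\beta_t.
\]
For $x_0\neq 0$ this follows by continuity of both functions at $x_0$ together with $\lssshift{}{t}(x_0)>\beta_t(x_0)$. For $x_0=0$ it is even easier: $\beta_t\to-\infty$ at $0$, so $\sup_{\overline{B_r(0)}}\beta_t\to-\infty$ as $r\to 0$, while $\min_{\partial B_r(0)}\lssshift{}{t}\to\lssshift{}{t}(0)$ stays finite. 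Let $\tilde w$ equal the Poisson extension of $\lssshift{}{t}|_{\partial B}$ inside $B$ and $\lssshift{}{t}$ outside. Since $\lssshift{}{t}$ is superharmonic, $\tilde w\leq\lssshift{}{t}$ on $B$, and combining this inequality with the super-mean-value property for $\lssshift{}{t}$ shows that $\tilde w$ is continuous and superharmonic on $B_1$. The harmonic minimum principle inside $B$ then gives $\tilde w\geq\min_{\partial B}\lssshift{}{t}>\sup_{\overline{B}}\beta_t\geq\beta_t$ in $B$, and $\tilde w=\lssshift{}{t}\geq\beta_t$ outside, so $\tilde w\in\mclSshift{}{t}$. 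Leastness of $\lssshift{}{t}$ then forces $\tilde w=\lssshift{}{t}$, and hence $\lssshift{}{t}$ is harmonic on $B$.

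For connectedness I will argue by contradiction. First, the inequalities $0\leq\lssshift{}{t}\leq q_{B_1}$ and the fact that $q_{B_1}$ vanishes on $\partial B_1$ together with $\beta_t|_{\partial B_1}=0$ imply $\lssshift{}{t}|_{\partial B_1}=\beta_t|_{\partial B_1}$, so ${\cluster{}{t}}\cap\partial B_1=\emptyset$. Suppose now that $C$ is a connected component of ${\cluster{}{t}}$ not containing $0$; since components of an open set are open and $0$ lies in some other component, a neighborhood of $0$ is disjoint from $C$, so $0\notin\overline{C}$. By continuity of $\lssshift{}{t}$ and the definition of ${\cluster{}{t}}$, $\lssshift{}{t}=\beta_t$ on $\partial C$. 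Setting $f:=\lssshift{}{t}-\beta_t$, the function $f$ is continuous on $\overline{C}$, vanishes on $\partial C$, is strictly positive on $C$, and satisfies $\Delta f=0-(-\mu)=\mu\geq 0$ on $C$ by the harmonicity just proved. So $f$ is subharmonic on $C$ with zero boundary data, and the maximum principle forces $f\leq 0$ on $C$, a contradiction. I expect the main technical care to lie in the harmonicity step—specifically in verifying that the glued function $\tilde w$ retains the super-mean-value property across $\partial B$ and in handling the $-\infty$ singularity of $\beta_t$ at the origin when $x_0=0$.
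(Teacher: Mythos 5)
Your proof is correct and takes essentially the same approach as the paper: both openness arguments use continuity (yours is slightly more careful about the $-\infty$ singularity of $\beta_t$ at the origin), the harmonicity argument in both cases hinges on the key observation that one can choose a small ball $B\subset{\cluster{}{t}}$ with $\min_{\partial B}\lssshift{}{t}>\sup_{\overline B}\beta_t$ and then harmonically replace $\lssshift{}{t}$ inside $B$ to get another element of $\mclSshift{}{t}$, and the connectedness argument is identical (a component of ${\cluster{}{t}}$ avoiding $0$ would support a nontrivial nonnegative subharmonic function with zero boundary values). The only cosmetic difference is in the harmonicity step: you run the Perron replacement directly and conclude $\tilde w=\lssshift{}{t}$ by minimality, while the paper argues by contradiction, assuming failure of subharmonicity and invoking the characterization in \cite[Theorem 3.2.2]{armitage2000classical} to produce the competing supersolution $\psi=\min(H,\lssshift{}{t})$; your direct version is a touch more self-contained.
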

	
	\begin{proof}
		{\it Step 1: ${\cluster{}{t}}$ is open.} \\
		As $\lssshift{}{t}$ and $\beta_t$ are continuous and the disk $B_1$ is open, the set ${\cluster{}{t}}$
		is open (this is the topological definition of a continuous function). 
		
		{\it Step 2: $\lssshift{}{t}$ is harmonic on ${\cluster{}{t}}$.} \\
		If ${\cluster{}{t}}$ is empty, we are done, so suppose not. (This never happens but is proved later in Proposition \ref{prop:lower-bound}.) 
		Further suppose for sake of contradiction that $\lssshift{}{t}$ is not harmonic on ${\cluster{}{t}}$. Since we know that $\lssshift{}{t}$ is superharmonic (Lemma~\ref{lemma:appendix-basic-properties}), this means that $\lssshift{}{t}$ is not subharmonic on ${\cluster{}{t}}$.  
		
		The idea of the rest of the proof is the following. Since $\lssshift{}{t} > \beta_t$ on ${\cluster{}{t}}$, there is some extra room to `lower' $\lssshift{}{t}$. If $\Delta \lssshift{}{t}(z) < 0$ at some $z \in {\cluster{}{t}}$, then we can decrease $\lssshift{}{t}$ around $z$ by bending it up just enough to not break superharmonicity. This contradicts the minimality of $\lssshift{}{t}$. 
		We can't carry out this strategy literally since $\lssshift{}{t}$ is a priori not differentiable, so we instead use one of the equivalent definitions of subharmonic. 
		
		Here are the details. Since we are assuming that $\lssshift{}{t}$ is not subharmonic, by, \eg,
		~\cite[Theorem 3.2.2]{armitage2000classical}, there is some $z \in {\cluster{}{t}}$ such that for every $R > 0$, there is a closed ball 
		$\overline{B_r}(z) \subset {\cluster{}{t}}$ of radius $r < R$ and a function $H:\overline{B_r}(z) \to \R$ which is continuous in $\overline{B_r}(z)$ and harmonic in $B_r(z)$ such that 
		\begin{equation}  \label{eq:smaller}
		H \geq \lssshift{}{t} \quad \mbox{on $\partial B_r(z)$}  
		\end{equation}
		but
		\begin{equation} \label{eq:below-w}
		\lssshift{}{t}(x_0) > H(x_0)
		\end{equation}
		for some $x_0 \in B_r(z)$. 
		
		Next, since $\lssshift{}{t}$ and $\beta_t$ are continuous and $\overline{B_R}(z) \subset {\cluster{}{t}}$, for $R$ sufficiently small,
		\begin{equation} \label{eq:lies-way-above}
		\inf_{x \in \partial B_r(z)} \lssshift{}{t}(x) > \sup_{y \in \overline{B_r}(z)} \beta_t(y), \quad\forall r < R .
		\end{equation} 
		Fix $r > 0$ small so that \eqref{eq:smaller}, \eqref{eq:below-w}, and \eqref{eq:lies-way-above} hold.  As $H$ is harmonic, 
		\[
		\inf_{x \in B_r(z)} H(x) \geq \inf_{y \in \partial B_r(z)} H(y) \geq  \inf_{x \in \partial B_r(z)} \lssshift{}{t}(x) > \sup_{y \in \overline{B_r}(z)} \beta_t(y),
		\]
		in particular,
		\begin{equation} \label{eq:above-obstacle}
		H > \beta_t \quad \mbox{in $\overline{B_r}(z)$}. 
		\end{equation}
		The above inequalities allow us to `lower' $\lssshift{}{t}$ using $H$.  Indeed, take the function $\psi: \overline{B_r}(z) \to \R$ defined by
		\begin{equation}
		\psi := \min(H, \lssshift{}{t}) 
		\end{equation}
		and note that by continuity and \eqref{eq:smaller}, $\psi = \lssshift{}{t}$ in a neighborhood of $\partial B_r(z)$. In particular, 
		we may continuously extend $\psi$ to all of $\overline{B_1}$ by defining $\psi = \lssshift{}{t}$ on $\overline{B_1} \backslash B_r(z)$.
		As $H$ is harmonic in $B_r(z)$ and $\lssshift{}{t}$ is superharmonic, this extension $\psi$ is superharmonic. Also, by \eqref{eq:above-obstacle} $\psi \geq \beta_t$. 
		This shows that $\psi \in \mclSshift{}{t}$. However, by \eqref{eq:below-w},  $\lssshift{}{t}(x_0) > \psi(x_0)$, 
		contradicting the minimality of $\lssshift{}{t}$. 
		
		{\it Step 3: ${\cluster{}{t}}$ is connected.} \\
		Otherwise there is a connected component of ${\cluster{}{t}}$ not containing the origin
		upon which $\odometer{}{t}$ is non-zero, subharmonic, and 0 on its boundary --- this violates the strong maximum principle.
	\end{proof}

	\subsection{Monotonicity}
	We now check monotonicity, this proves Lemma \ref{lemma:monotonicity}.
	\begin{lemma} \label{lemma:appendix-monotonicity}
		If $t_1 \leq t_2$, then $ \cluster{}{t_1} \subseteq  \cluster{}{t_2}$.
	\end{lemma}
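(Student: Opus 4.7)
\textbf{Proof plan for Lemma \ref{lemma:appendix-monotonicity}.}

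The plan is to show monotonicity of the odometer $v_t = \lssshift{}{t} - \beta_t$ in $t$, from which the claim follows immediately since $\cluster{}{t} = \{v_t > 0\}$ and $v_t \geq 0$ everywhere. The key algebraic observation is that the obstacles are related by
\[
\beta_{t_2} = \beta_{t_1} - (t_2 - t_1) G_{B_1}(0, \cdot) ,
\]
so that for $t_1 \leq t_2$, $\beta_{t_2} \leq \beta_{t_1}$, while the difference is a superharmonic function.

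The main step is to prove the comparison
\[
\lssshift{}{t_1} \leq \lssshift{}{t_2} + (t_2 - t_1) G_{B_1}(0, \cdot) .
\]
The idea is to exhibit the right-hand side as a (lower semicontinuous) superharmonic majorant of $\beta_{t_1}$. Indeed, it is superharmonic as the sum of two superharmonic functions (using Lemma \ref{lemma:appendix-basic-properties} for $\lssshift{}{t_2}$ and the fact that $G_{B_1}(0,\cdot)$ is superharmonic), and
\[
\lssshift{}{t_2} + (t_2 - t_1) G_{B_1}(0, \cdot) \geq \beta_{t_2} + (t_2 - t_1) G_{B_1}(0, \cdot) = \beta_{t_1} .
\]
A small subtlety is that $G_{B_1}(0,\cdot)$ blows up at the origin, so this candidate is not continuous on $\overline{B_1}$ and thus not a priori an element of $\mclSshift{}{t_1}$. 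This is handled by appealing to the auxiliary characterization established inside the proof of Lemma \ref{lemma:appendix-basic-properties}, where $\lssshift{}{t_1}$ is shown to equal $f_{t_1}$, the pointwise infimum over all (merely lower semicontinuous) superharmonic majorants of $\beta_{t_1}$. This is the step I expect to need the most care: without it, one cannot plug the $G$-singular function into the defining family of supersolutions.

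Having established the comparison, subtracting $\beta_{t_2}$ from both sides and using $\beta_{t_2} = \beta_{t_1} - (t_2-t_1)G_{B_1}(0,\cdot)$ yields
\[
v_{t_1} = \lssshift{}{t_1} - \beta_{t_1} \leq \lssshift{}{t_2} - \beta_{t_2} = v_{t_2} \quad \text{on } \overline{B_1}.
\]
Therefore $\{v_{t_1} > 0\} \subseteq \{v_{t_2} > 0\}$, which is exactly $\cluster{}{t_1} \subseteq \cluster{}{t_2}$, completing the proof.
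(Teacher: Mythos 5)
Your argument is essentially the paper's proof: the candidate $\tilde s := \lssshift{}{t_2} + (t_2-t_1)G_{B_1}(0,\cdot)$, the inequality $\tilde s \geq \beta_{t_1}$ deduced from $\lssshift{}{t_2}\geq\beta_{t_2}$, and the conclusion by minimality of the least supersolution. You also correctly flag and resolve a point the paper glosses over: when $t_2>t_1$ the function $\tilde s$ equals $+\infty$ at the origin, so it is not in $\mclSshift{}{t_1}$ (which requires continuity on $\overline{B_1}$) as the paper asserts; as you note, the right fix is the identity $\lssshift{}{t_1}=f_{t_1}$ established in the proof of Lemma~\ref{lemma:appendix-basic-properties}, where the infimum is taken over the broader class of merely lower semicontinuous superharmonic majorants, which does contain $\tilde s$.
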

	\begin{proof}
		Recall that the odometer can be expressed as $\odometer{}{t} = \lssshift{}{t} - \beta_t$. 
		Showing monotonicity is equivalent to verifying $\odometer{}{t_1} \leq \odometer{}{t_2}$. 
		Unpack the difference to see that
		\begin{align*}
		\odometer{}{t_2} -  \odometer{}{t_1} &= \lssshift{}{t_2} - \lssshift{}{t_1} + \beta_{t_1} - \beta_{t_2} \\
		&= \lssshift{}{t_2} - \lssshift{}{t_1} + (-t_1+t_2) G_{B_1}(0,\cdot).
		\end{align*}
		This motivates considering the superharmonic function 
		\[
		\tilde{s} := \lssshift{}{t_2} + (t_2 - t_1) G_{B_1}(0,\cdot).
		\]
		In particular, we have 
		\[
		\odometer{}{t_2} - \odometer{}{t_1} = \tilde{s}-\lssshift{}{t_1}, 
		\]
		thus it suffices to show 
		\begin{equation} \label{eq:lower-bound}
		\tilde{s} \geq \lssshift{}{t_1}. 
		\end{equation}
		This inequality follows from the obstacle problem. Indeed, 
		as
		\[
		\lssshift{}{t_2} \geq \beta_{t_2} =  q_{B_1} - t_2 G_{B_1}(0,\cdot)
		\]
		we have, after plugging in the definition of $\tilde{s}$, 
		\[
		\tilde{s} \geq q_{B_1} - t_1 G_{B_1}(0,\cdot) = \beta_{t_1}.
		\]
		Therefore, $\tilde{s} \in \mclSshift{}{t_1}$ and by minimality of $\lssshift{}{t_1}$
		we have \eqref{eq:lower-bound}. 
	\end{proof}
	
	\subsection{Conservation of mass}
	In this section we prove that no mass comes in from the boundary, that is, $\mu_h({\cluster{}{t}}) \leq t$. 
	This establishes Lemma \ref{lemma:conservation-of-mass}. 
	To that end, we observe that the odometer is 0 on the boundary of the domain. 
	
	\begin{lemma} \label{lemma:appendix-zero-boundary}
		For all $t > 0$, $\odometer{}{t} = 0$ on $\partial B_1$.
	\end{lemma}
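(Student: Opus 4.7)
The plan is to show that $\lssshift{}{t}$ itself vanishes on $\partial B_1$; since $\beta_t$ also vanishes there, this immediately gives $\odometer{}{t} = \lssshift{}{t} - \beta_t = 0$ on $\partial B_1$. Recall that on $\partial B_1$ we have $G_{B_1}(0,\cdot) = 0$ (Proposition~\ref{prop:green-properties}) and $q_{B_1} = 0$ (by its listed properties after~\eqref{eq:expected-exit-time}), and so $\beta_t = -t G_{B_1}(0,\cdot) + q_{B_1}$ vanishes on $\partial B_1$.

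The lower bound $\lssshift{}{t} \geq 0$ on $\partial B_1$ is automatic: every $w \in \mclSshift{}{t}$ satisfies $w \geq \beta_t = 0$ on $\partial B_1$, so the pointwise infimum does as well.

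For the matching upper bound, I will show that $q_{B_1}$ is itself an admissible supersolution, i.e.\ $q_{B_1} \in \mclSshift{}{t}$. Continuity of $q_{B_1}$ on $\overline{B_1}$ and the distributional identity $\Delta q_{B_1} = -\mu \leq 0$ are both among the listed properties of $q_{B_1}$, so $q_{B_1}$ is superharmonic in $B_1$. Moreover, since $G_{B_1}(0,\cdot) \geq 0$ on $\overline{B_1}$, we have
\[
q_{B_1} - \beta_t = t\, G_{B_1}(0,\cdot) \geq 0,
\]
so $q_{B_1} \geq \beta_t$ in $\overline{B_1}$. Hence $q_{B_1} \in \mclSshift{}{t}$, and by the definition of $\lssshift{}{t}$ as the pointwise infimum of such functions we obtain $\lssshift{}{t} \leq q_{B_1}$ throughout $\overline{B_1}$. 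In particular, $\lssshift{}{t} \leq 0$ on $\partial B_1$.

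Combining the two inequalities gives $\lssshift{}{t} = 0$ on $\partial B_1$, and therefore $\odometer{}{t} = 0$ there. There is no real obstacle: the key observation is simply that the exit-time potential $q_{B_1}$ is itself a superharmonic majorant of the obstacle, a fact that is built into the set-up.
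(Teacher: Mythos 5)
Your proof is correct and matches the paper's argument: the paper's one-line proof is precisely the observation that $q_{B_1} \in \mclSshift{}{t}$ and that $q_{B_1}$, $G_{B_1}(0,\cdot)$, hence $\beta_t$ all vanish on $\partial B_1$, which is exactly what you spell out. You have simply written out the two inequalities (the lower bound from $\lssshift{}{t}\geq\beta_t$ and the upper bound from the admissible majorant $q_{B_1}$) that the paper leaves implicit.
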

	
	\begin{proof}
		This is immediate from $q_{B_1} \in  \mclSshift{}{t}$ and $q_{B_1} = G_{B_1}(0,\cdot) = 0$ on $\partial B_1$
	\end{proof}

	We then use this together with the definition of weak normal derivative to prove the desired claim. 
	\begin{lemma} \label{lemma:appendix-conservation-of-mass}
		For all $t > 0$, $\mu({\cluster{}{t}}) \leq t$. Moreover, if $\overline{{{\cluster{}{t}}}} \subset B_1$ and $\mu(\partial {\cluster{}{t}}) = 0$, then $\mu_h({\cluster{}{t}}) = t$.
	\end{lemma}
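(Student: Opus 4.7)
The plan is to translate the problem into potential theory and use the Riesz representation of superharmonic functions. First I would define $u := -\lss{}{t} = q_{B_1} - \lssshift{}{t}$. Since $q_{B_1}\in\mclSshift{}{t}$, the minimality in~\eqref{eq:appendix-least-supersolution} gives $\lssshift{}{t} \leq q_{B_1}$, so $u$ is a continuous non-negative function on $\overline{B_1}$ that vanishes on $\partial B_1$. By Lemma~\ref{lemma:appendix-non-coincedence-open-harmonic}, $-\Delta u = \sigma$ with $\sigma := \mu|_{\cluster{}{t}} + \nu|_{\partial \cluster{}{t}}$ a non-negative Radon measure satisfying $\nu \leq \mu$; hence $u$ is superharmonic, and the Riesz decomposition for continuous non-negative superharmonic functions vanishing on $\partial B_1$ yields $u(x) = \int_{B_1} G_{B_1}(x,y)\, d\sigma(y)$.

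The heart of the argument is the obstacle constraint $\lss{}{t}\geq -t G_{B_1}(0,\cdot)$, which is equivalent to the pointwise domination $u(x)\le t G_{B_1}(x,0)$ on $B_1\setminus\{0\}$. I would then divide by $1-|x|$ and let $x\to x_0\in\partial B_1$: writing $P(x_0,y) := \lim_{x\to x_0} G_{B_1}(x,y)/(1-|x|)$ for the Poisson kernel of $B_1$, Fatou on the left and pointwise convergence on the right give $\int P(x_0,y)\,d\sigma(y) \le t P(x_0,0)$ for each $x_0\in\partial B_1$. Integrating over $\partial B_1$, swapping integrals by Tonelli, and invoking the normalization $\int_{\partial B_1} P(x_0,y)\, dS(x_0) = 1$ for each $y\in B_1$ would collapse the left side to $\sigma(B_1)$ and the right to $t$, producing the first inequality $\mu(\cluster{}{t}) \le \sigma(B_1) \le t$.

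For the equality statement, the hypothesis $\overline{\cluster{}{t}}\subset B_1$ furnishes an open neighborhood $U$ of $\partial B_1$ disjoint from $\overline{\cluster{}{t}}$; there $\lssshift{}{t} = \beta_t$ and so $u(x) = t G_{B_1}(x,0)$, turning the domination into an equality throughout $U$. Since $\sigma$ is then compactly supported in $B_1\setminus U$, the boundary limit upgrades from Fatou to dominated convergence and becomes an equality, yielding $\int P(x_0,y)\,d\sigma(y) = t P(x_0,0)$ and hence $\sigma(B_1) = t$ after integration. Combined with $\mu(\partial\cluster{}{t})=0$ and $0\le\nu\le\mu$, this forces $\nu(\partial\cluster{}{t})=0$, and therefore $\mu(\cluster{}{t}) = t$. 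The main technical hurdle will be justifying the interchange of the $x\to\partial B_1$ limit with the integral against $\sigma$: Fatou's lemma bypasses all integrability issues in the inequality direction, while in the equality direction dominated convergence applies precisely because the assumption $\overline{\cluster{}{t}}\subset B_1$ keeps $\sigma$ bounded away from $\partial B_1$.
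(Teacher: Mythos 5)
Your argument is correct and takes a genuinely different route from the paper. The paper sets up the Dirichlet problem for the odometer $\odometer{}{t}$ with $\Delta\odometer{}{t} = -t\delta_0 + \mu|_{\cluster{}{t}} + \nu|_{\partial\cluster{}{t}}$, cites a result from Ponce's book to produce a weak normal derivative on $\partial B_1$, applies the sign lemma $\tfrac{\partial\odometer{}{t}}{\partial n}\leq 0$ (and $=0$ when $\overline{\cluster{}{t}}\Subset B_1$), and integrates the Laplacian over $B_1$ against $1$ to read off the mass inequalities. You instead work directly with $u=-\lss{}{t}$, which has no Dirac mass in its Laplacian, apply the classical Riesz representation (the greatest harmonic minorant vanishes because $u\geq 0$, is continuous on $\overline{B_1}$, and $u=0$ on $\partial B_1$), and then pass to the radial Poisson-kernel limit of the obstacle inequality $u\leq tG_{B_1}(\cdot,0)$, integrating over $\partial B_1$ and using the normalization of the Poisson kernel. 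Your route replaces the Ponce machinery with classical potential theory; it is more self-contained in spirit, and the Fatou/dominated-convergence split cleanly explains why the inequality is an equality precisely when $\overline{\cluster{}{t}}\Subset B_1$. Both approaches are sound.

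Two small points worth tightening. First, the limit $P(x_0,y)=\lim G_{B_1}(x,y)/(1-|x|)$ should be taken radially (set $x=rx_0$, $r\to 1^-$); you only need one path of approach, so this costs nothing, but stated as an unrestricted limit $x\to x_0$ the identity need not hold along tangential paths. Second, depending on the paper's implicit normalization of $G_{B_1}$, the identity $\int_{\partial B_1}P(x_0,y)\,dS(x_0)=1$ will pick up a constant factor (e.g.\ $2\pi$); this cancels between the two sides of the inequality, so it is harmless, but should be noted so the final identification $\sigma(B_1)\leq t$ is visibly normalization-independent.
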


	\begin{proof}
		Fix $t > 0$ and recall that ${\cluster{}{t}}$ is an open set. By Lemma \ref{lemma:appendix-zero-boundary}, Lemma \ref{lemma:appendix-non-coincedence-open-harmonic},
		the superharmonicity of $\lssshift{}{t}$, and the definition of $\odometer{}{t}$, 
		\begin{equation} \label{eq:laplacian-of-v}
		\begin{cases}
		\odometer{}{t} = 0  &\mbox{on $\partial B_1$} \\
		\Delta \odometer{}{t} = -t \delta_0 + \mu |_{_{{{\cluster{}{t}}}}} + \nu |_{_{\partial {\cluster{}{t}}}} &\mbox{on $B_1$}
		\end{cases}
		\end{equation}
		where $0 \leq \nu \leq \mu$ is a Radon measure which is absolutely continuous with respect to $\mu$. Hence, 
		$\odometer{}{t}$ solves a linear Dirichlet problem on $B_1$, and so, by ~\cite[Proposition 7.3]{ponce2016elliptic},
		there exists a weak normal derivative $\frac{ \partial \odometer{}{t}}{\partial n}$ so that 
		\begin{equation} \label{eq:weak-integration-by-parts}
		-t + \mu({\cluster{}{t}}) + \nu(\partial {\cluster{}{t}}) = \int_{\partial B_1} \frac{ \partial \odometer{}{t}}{\partial n} d \sigma,
		\end{equation}
		where $d \sigma$ denotes integration with respect to surface measure. 
		Since $\odometer{}{t} \geq 0$ on $B_1$ and $\odometer{}{t} = 0$ on $\partial B_1$, by ~\cite[Lemma 12.15]{ponce2016elliptic},
		\begin{equation}
		\frac{ \partial \odometer{}{t}}{\partial n} \leq 0
		\end{equation}
		almost everywhere with respect to the surface measure. Moreover,
		\begin{equation}
		\overline{{{\cluster{}{t}}}} \subset B_1 \implies \frac{ \partial \odometer{}{t}}{\partial n} = 0, 
		\end{equation}
		as the weak normal derivative coincides with the classical normal derivative if it exists. 
		In particular, by \eqref{eq:weak-integration-by-parts}, 
		\begin{equation}
		-t + \mu({\cluster{}{t}}) + \nu(\partial {\cluster{}{t}}) \leq 0,
		\end{equation}
		and
		\begin{equation}
		\overline{{{\cluster{}{t}}}} \subset B_1 \implies -t + \mu({\cluster{}{t}}) + \nu(\partial {\cluster{}{t}}) = 0, 
		\end{equation}
		completing the proof as $0 \leq \nu \leq \mu$.
	\end{proof}

	\subsection{Compatibility}
	We prove Lemma \ref{lemma:cluster-compatibility} in this section.
	As previously mentioned, the results proved so far in this appendix apply to $\lss{B_R}{t}, \cluster{B_R}{t}, \odometer{B_R}{t}$ as long as \eqref{eq:volume-growth}
	is satisfied for $\mu$ in $B_R$. That is, if \eqref{eq:volume-growth} is satisfied, then $\lss{B_R}{t} \in \mclS{B_R}{t}$ and
	\begin{equation} \label{eq:laplacian-of-v-general}
	\begin{cases}
	\odometer{B_R}{t} = 0  &\mbox{on $\partial B_R$} \\
	\Delta \odometer{B_R}{t} = -t \delta_0 + \mu |_{_{\cluster{B_R}{t}}} + \nu |_{\partial \cluster{B_R}{t}} &\mbox{on $B_R$}
	\end{cases}
	\end{equation}
	for a Radon measure $0 \leq \nu \leq \mu$ which is absolutely continuous with respect to $\mu$.

	\begin{lemma} \label{lemma:appendix-cluster-compatibility}
		Suppose \eqref{eq:volume-growth} is satisfied for fixed $R > 0$. 
		For all $s_1 \leq R$, if, for some $s_2 \in [s_1, R]$, we have $\overline{\cluster{B_{s_2}}{t}} \subset B_{s_1}$, 
		then  $\cluster{B_{s}}{t} = \cluster{B_{s_2}}{t}$ for all $s \in [s_1, R]$.
	\end{lemma}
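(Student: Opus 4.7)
The plan is to prove $\cluster{B_s}{t} = \cluster{B_{s_2}}{t}$ for every $s\in [s_1,R]$ by combining two comparisons of least supersolutions across different balls: a \emph{domain monotonicity} inequality, and an \emph{extension-by-zero} construction. The latter is where the compactness hypothesis $\overline{\cluster{B_{s_2}}{t}} \subset B_{s_1}$ enters in an essential way.

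For monotonicity, for any $0 < r < r' \leq R$ I would observe that $h := G_{B_{r'}}(0,\cdot) - G_{B_r}(0,\cdot)$ is harmonic on $B_r$ (the Dirac singularities at the origin cancel) and non-negative there (by the maximum principle, since $h|_{\partial B_r} = G_{B_{r'}}(0,\cdot)|_{\partial B_r} \geq 0$). Then $w := \lss{B_{r'}}{t}|_{\overline{B_r}} + th$ is an admissible competitor in $\mclS{B_r}{t}$: one has $\Delta w = \Delta \lss{B_{r'}}{t} \leq \mu$ on $B_r$, and $w \geq -tG_{B_{r'}}(0,\cdot) + th = -tG_{B_r}(0,\cdot)$. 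Minimality of $\lss{B_r}{t}$ gives $\lss{B_r}{t} \leq w$, which after adding $tG_{B_r}(0,\cdot)$ becomes $\odometer{B_r}{t} \leq \odometer{B_{r'}}{t}$ on $B_r$, and hence $\cluster{B_r}{t} \subset \cluster{B_{r'}}{t}$.

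For the reverse inclusion, suppose now that $r \leq r'$ and $\overline{\cluster{B_r}{t}} \subset B_r$ strictly. Then $\odometer{B_r}{t}$ vanishes on an open neighborhood of $\partial B_r$, so its extension by zero $\tilde u$ to $\overline{B_{r'}}$ is continuous, and as a distribution on $B_{r'}$, $\Delta \tilde u$ equals $\Delta \odometer{B_r}{t}$ on $B_r$ and zero on $B_{r'}\setminus \overline{B_r}$, with \emph{no} singular contribution on $\partial B_r$ precisely because $\tilde u$ vanishes identically in a neighborhood of that sphere. Using \eqref{eq:laplacian-of-v-general}, this yields $\Delta \tilde u \leq -t\delta_0 + \mu$ on $B_{r'}$, so $\tilde u - tG_{B_{r'}}(0,\cdot) \in \mclS{B_{r'}}{t}$; minimality of $\lss{B_{r'}}{t}$ then gives $\odometer{B_{r'}}{t} \leq \tilde u$ on $B_{r'}$, and therefore $\cluster{B_{r'}}{t} \subset \cluster{B_r}{t}$.

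To assemble the lemma I would split on the location of $s$. For $s \in [s_2, R]$, the hypothesis gives $\overline{\cluster{B_{s_2}}{t}} \subset B_{s_1} \subset B_{s_2}$, so the extension construction applies with $(r,r') = (s_2, s)$; combined with monotonicity, this yields $\cluster{B_s}{t} = \cluster{B_{s_2}}{t}$. For $s \in [s_1, s_2)$, monotonicity first gives $\cluster{B_s}{t} \subset \cluster{B_{s_2}}{t} \subset B_{s_1} \subset B_s$, so $\overline{\cluster{B_s}{t}} \subset B_s$ strictly, and then the extension step with $(r,r') = (s, s_2)$ gives the reverse inclusion. The delicate point throughout is the zero-extension Laplacian calculation: strict compact containment $\overline{\cluster{B_r}{t}} \subset B_r$ is precisely what ensures no unwanted singular measure appears on the intermediate sphere $\partial B_r$, which is why the hypothesis is phrased as containment in the \emph{open} ball $B_{s_1}$.
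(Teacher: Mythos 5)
Your proof is correct and follows essentially the same route as the paper: both establish domain monotonicity via a competitor constructed from the larger-domain least supersolution plus the harmonic difference of Green's functions, and both close the loop by extending the odometer of the compactly contained cluster by zero across the intermediate sphere. The one genuine (but minor) implementation difference is in the reverse inclusion: you show directly that $\tilde u - t G_{B_{r'}}(0,\cdot)$ is an admissible competitor in $\mclS{B_{r'}}{t}$ and invoke minimality, whereas the paper works instead with the difference $\odometer{B_R}{t} - \odometer{B_{s_1}}{t}$, shows it is subharmonic on $\cluster{B_R}{t}$ with zero boundary data (this requires the additional observation $\cluster{B_{s_1}}{t} \subset \cluster{B_R}{t}$), and applies the maximum principle; it then avoids your case split by directly sandwiching $\odometer{B_R}{t} = \odometer{B_{s_1}}{t} \leq \odometer{B_s}{t} \leq \odometer{B_R}{t}$. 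Your competitor argument is, if anything, slightly more streamlined for this step, though the paper's sandwich assembly is tidier than your case split on $s$.
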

	
	\begin{proof}
		Fix $R > 0$. Note that if \eqref{eq:volume-growth} is satisfied for $R$, then it is satisfied for all $s \leq R$.
		meaning $\lss{B_s}{t} \in \mclS{B_s}{t}$ and \eqref{eq:laplacian-of-v-general} holds for $\odometer{B_s}{t}$ for all $s \leq R$.  Let $s_1 \leq R$ be given and fix $s_2 \in [s_1, R]$ for which we have $\overline{\cluster{B_{s_2}}{t}} \subset B_{s_1}$.
		
		We first claim that
		\begin{equation} \label{eq:monotonicity-domains}
		\odometer{B_{r_1}}{t} \leq \odometer{B_{r_2}}{t} \quad \mbox{in $B_{r_1}$}, \quad \forall r_1 \leq r_2 \leq R.
		\end{equation}
		To prove~\eqref{eq:monotonicity-domains}, fix $r_1 \leq r_2 \leq R$ and write
		\begin{equation} \label{eq:intermediate-inequality-monotonicity}
		\odometer{B_{r_1}}{t} - \odometer{B_{r_2}}{t} =  \lss{B_{r_1}}{t} -  \lss{B_{r_2}}{t} + t G_{B_{r_1}}(0,\cdot) - t G_{B_{r_2}}(0,\cdot) = \lss{B_{r_1}}{t} - \hat{w}_t
		\end{equation}
		where
		\begin{equation}
		\hat{w}_t := \lss{B_{r_2}}{t} - t (G_{B_{r_1}}(0,\cdot) - G_{B_{r_2}}(0,\cdot))  .
		\end{equation}
		Since $\lss{B_{r_2}}{t} \in \mclS{B_{r_2}}{t}$ and $\Delta G_{B}(0,\cdot) = -\delta_0$ in $B$ for any ball $B$, 
		\begin{equation}
		\Delta \hat{w}_t = \Delta \lss{B_{r_2}}{t} - t (\Delta G_{B_{r_1}}(0,\cdot) - \Delta G_{B_{r_2}}(0,\cdot)) \leq \mu  \quad \mbox{in $B_{r_1}$}
		\end{equation}
		and 
		\begin{equation}
		\hat{w}_t = (\lss{B_{r_2}}{t}  + t G_{B_{r_2}}(0,\cdot)) - t G_{B_{r_1}}(0,\cdot)  \geq - t G_{B_{r_1}}(0,\cdot) \quad \mbox{in $B_{r_1}$}.
		\end{equation}
		Therefore, $\hat{w}_t \in \mclS{B_{r_1}}{t}$, which shows $\lss{B_{r_1}}{t} \leq \hat{w}_t$ and hence \eqref{eq:monotonicity-domains} by \eqref{eq:intermediate-inequality-monotonicity}.
		
		For the other direction, we use the hypothesis $\overline{\cluster{B_{s_2}}{t}} \subset B_{s_1}$. This together with \eqref{eq:monotonicity-domains}
		implies $\odometer{B_{s_1}}{t}$ is identically zero in a neighborhood of $\partial B_{s_1}$ and so can be extended
		by 0 to be harmonic in $B_R \backslash B_{s_1}$. Also, observe that \eqref{eq:monotonicity-domains} implies $\cluster{B_{s_1}}{t} \subset \cluster{B_R}{t}$ by definition.  Therefore, by \eqref{eq:laplacian-of-v-general} for $B_{s_1}$ and $B_R$, $\odometer{B_R}{t} - \odometer{B_{s_1}}{t}$ is subharmonic in $\cluster{B_R}{t}$ and 0 on its boundary
		which shows 
		\begin{equation} \label{eq:monotonicity-domains-two}
		\odometer{B_R}{t} \leq \odometer{B_{s_1}}{t}.
		\end{equation}
		Combining \eqref{eq:monotonicity-domains} and \eqref{eq:monotonicity-domains-two} shows that 
		\begin{equation}
		\odometer{B_{R}}{t} = \odometer{B_{s_1}}{t} \leq \odometer{B_{s}}{t} \leq \odometer{B_{r}}{t} \leq \odometer{B_R}{t}, \quad \forall s_1 \leq s \leq r \leq R,
		\end{equation}
		completing the proof by the definition of ${\cluster{}{t}}$. 
	\end{proof}

	\bibliographystyle{alpha}
	\bibliography{refs.bib}

\newcommand{\etalchar}[1]{$^{#1}$}
\begin{thebibliography}{DCLYY13}

\bibitem[AG01]{armitage2000classical}
David~H. Armitage and Stephen~J. Gardiner.
\newblock {\em Classical potential theory}.
\newblock Springer Monographs in Mathematics. Springer-Verlag London, Ltd.,
  London, 2001.

\bibitem[AG13a]{asselah2013logarithmic}
Amine Asselah and Alexandre Gaudilli\`ere.
\newblock From logarithmic to subdiffusive polynomial fluctuations for internal
  {DLA} and related growth models.
\newblock {\em Ann. Probab.}, 41(3A):1115--1159, 2013.

\bibitem[AG13b]{asselah2013sublogarithmic}
Amine Asselah and Alexandre Gaudilli\`ere.
\newblock Sublogarithmic fluctuations for internal {DLA}.
\newblock {\em Ann. Probab.}, 41(3A):1160--1179, 2013.

\bibitem[AK16]{andres2016continuity}
Sebastian Andres and Naotaka Kajino.
\newblock Continuity and estimates of the {L}iouville heat kernel with
  applications to spectral dimensions.
\newblock {\em Probab. Theory Related Fields}, 166(3-4):713--752, 2016.

\bibitem[AR16]{asselah2016fluctuations}
Amine Asselah and Houda Rahmani.
\newblock Fluctuations for internal {DLA} on the comb.
\newblock {\em Ann. Inst. Henri Poincar\'{e} Probab. Stat.}, 52(1):58--83,
  2016.

\bibitem[BB07]{blachere2007internal}
S\'{e}bastien Blach\`ere and Sara Brofferio.
\newblock Internal diffusion limited aggregation on discrete groups having
  exponential growth.
\newblock {\em Probab. Theory Related Fields}, 137(3-4):323--343, 2007.

\bibitem[BBI01]{bbi-metric-geometry}
Dmitri Burago, Yuri Burago, and Sergei Ivanov.
\newblock {\em A course in metric geometry}, volume~33 of {\em Graduate Studies
  in Mathematics}.
\newblock American Mathematical Society, Providence, RI, 2001.

\bibitem[Ber15]{berestycki2015diffusion}
Nathana\"{e}l Berestycki.
\newblock Diffusion in planar {L}iouville quantum gravity.
\newblock {\em Ann. Inst. Henri Poincar\'{e} Probab. Stat.}, 51(3):947--964,
  2015.

\bibitem[BG20]{berestycki2020random}
Nathana{\"e}l {Berestycki} and Ewain {Gwynne}.
\newblock {Random walks on mated-CRT planar maps and Liouville Brownian
  motion}.
\newblock {\em {C}ommunications in {M}athematical {P}hysics}, to appear, 2020.

\bibitem[Bla04]{blachere2001internal}
S\'{e}bastien Blach\`ere.
\newblock Internal diffusion limited aggregation on discrete groups of
  polynomial growth.
\newblock In {\em Random walks and geometry}, pages 377--391. Walter de
  Gruyter, Berlin, 2004.

\bibitem[BP21]{berestycki2021gaussian}
Nathana{\"e}l Berestycki and Ellen Powell.
\newblock Gaussian free field, {L}iouville quantum gravity and {G}aussian
  multiplicative chaos.
\newblock {\em Lecture notes}, 2021.

\bibitem[BRG22]{bou2022idla}
Ahmed Bou-Rabee and Ewain Gwynne.
\newblock Internal {DLA} on mated-{CRT} maps.
\newblock {\em arXiv preprint arXiv:2211.04891}, 2022.

\bibitem[BSS14]{berestycki2014equivalence}
Nathana{\"e}l Berestycki, Scott Sheffield, and Xin Sun.
\newblock Equivalence of {L}iouville measure and {G}aussian free field.
\newblock {\em arXiv preprint arXiv:1410.5407}, 2014.

\bibitem[Caf98]{caffarelli1998obstacle}
L.~A. Caffarelli.
\newblock The obstacle problem revisited.
\newblock {\em J. Fourier Anal. Appl.}, 4(4-5):383--402, 1998.

\bibitem[CHSHT20]{chen2017internal}
Joe~P. Chen, Wilfried Huss, Ecaterina Sava-Huss, and Alexander Teplyaev.
\newblock Internal {DLA} on {S}ierpinski gasket graphs.
\newblock In {\em Analysis and geometry on graphs and manifolds}, volume 461 of
  {\em London Math. Soc. Lecture Note Ser.}, pages 126--155. Cambridge Univ.
  Press, Cambridge, 2020.

\bibitem[DCLYY13]{duminil2013containing}
Hugo Duminil-Copin, Cyrille Lucas, Ariel Yadin, and Amir Yehudayoff.
\newblock Containing internal diffusion limited aggregation.
\newblock {\em Electron. Commun. Probab.}, 18:no. 50, 8, 2013.

\bibitem[DDDF20]{ding2020tightness}
Jian Ding, Julien Dub\'{e}dat, Alexander Dunlap, and Hugo Falconet.
\newblock Tightness of {L}iouville first passage percolation for {$\gamma \in
  (0,2)$}.
\newblock {\em Publ. Math. Inst. Hautes \'{E}tudes Sci.}, 132:353--403, 2020.

\bibitem[DDG21]{ding2021introduction}
Jian Ding, Julien Dubedat, and Ewain Gwynne.
\newblock Introduction to the {L}iouville quantum gravity metric.
\newblock {\em arXiv preprint arXiv:2109.01252}, 2021.

\bibitem[DF91]{diaconis1991growth}
P.~Diaconis and W.~Fulton.
\newblock A growth model, a game, an algebra, {L}agrange inversion, and
  characteristic classes.
\newblock volume~49, pages 95--119 (1993). 1991.
\newblock Commutative algebra and algebraic geometry, II (Italian) (Turin,
  1990).

\bibitem[DFG{\etalchar{+}}20]{dubedat2020weak}
Julien Dub\'{e}dat, Hugo Falconet, Ewain Gwynne, Joshua Pfeffer, and Xin Sun.
\newblock Weak {LQG} metrics and {L}iouville first passage percolation.
\newblock {\em Probab. Theory Related Fields}, 178(1-2):369--436, 2020.

\bibitem[DMS21]{duplantier2014liouville}
Bertrand Duplantier, Jason Miller, and Scott Sheffield.
\newblock Liouville quantum gravity as a mating of trees.
\newblock {\em Ast\'{e}risque}, (427):viii+257, 2021.

\bibitem[Doo01]{doob1984classical}
Joseph~L. Doob.
\newblock {\em Classical potential theory and its probabilistic counterpart}.
\newblock Classics in Mathematics. Springer-Verlag, Berlin, 2001.
\newblock Reprint of the 1984 edition.

\bibitem[DS11]{duplantier2011liouville}
Bertrand Duplantier and Scott Sheffield.
\newblock Liouville quantum gravity and {KPZ}.
\newblock {\em Invent. Math.}, 185(2):333--393, 2011.

\bibitem[Fri88]{friedman1988free}
Avner Friedman.
\newblock {\em Variational principles and free-boundary problems}.
\newblock Robert E. Krieger Publishing Co., Inc., Malabar, FL, second edition,
  1988.

\bibitem[GHS19]{ghs-mating-survey}
Ewain {Gwynne}, Nina {Holden}, and Xin {Sun}.
\newblock {Mating of trees for random planar maps and Liouville quantum
  gravity: a survey}.
\newblock {\em {Panoramas et Syntheses}}, to appear, 2019.

\bibitem[GM20a]{gwynne2020confluence}
Ewain Gwynne and Jason Miller.
\newblock Confluence of geodesics in {L}iouville quantum gravity for {$\gamma
  \in (0,2)$}.
\newblock {\em Ann. Probab.}, 48(4):1861--1901, 2020.

\bibitem[GM20b]{gwynne2020local}
Ewain Gwynne and Jason Miller.
\newblock Local metrics of the {G}aussian free field.
\newblock {\em Ann. Inst. Fourier (Grenoble)}, 70(5):2049--2075, 2020.

\bibitem[GM21]{gwynne2021existence}
Ewain Gwynne and Jason Miller.
\newblock Existence and uniqueness of the {L}iouville quantum gravity metric
  for {$\gamma\in(0,2)$}.
\newblock {\em Invent. Math.}, 223(1):213--333, 2021.

\bibitem[GMS21]{gms-tutte}
Ewain Gwynne, Jason Miller, and Scott Sheffield.
\newblock The {T}utte embedding of the mated-{CRT} map converges to {L}iouville
  quantum gravity.
\newblock {\em Ann. Probab.}, 49(4):1677--1717, 2021.

\bibitem[GP19]{gp-kpz}
Ewain {Gwynne} and Joshua {Pfeffer}.
\newblock {KPZ formulas for the Liouville quantum gravity metric}.
\newblock {\em {T}ransactions of the {A}merican {M}athematical {S}ociety}, to
  appear, 2019.

\bibitem[GPS22]{MR4408506}
Ewain Gwynne, Joshua Pfeffer, and Scott Sheffield.
\newblock Geodesics and metric ball boundaries in {L}iouville quantum gravity.
\newblock {\em Probab. Theory Related Fields}, 182(3-4):905--954, 2022.

\bibitem[GR18]{gustafsson2018partial}
Bj\"{o}rn Gustafsson and Joakim Roos.
\newblock Partial balayage on {R}iemannian manifolds.
\newblock {\em J. Math. Pures Appl. (9)}, 118:82--127, 2018.

\bibitem[GRV16]{garban2016liouville}
Christophe Garban, R\'{e}mi Rhodes, and Vincent Vargas.
\newblock Liouville {B}rownian motion.
\newblock {\em Ann. Probab.}, 44(4):3076--3110, 2016.

\bibitem[GS05]{gustafsson2005quadrature}
Bj\"{o}rn Gustafsson and Harold~S. Shapiro.
\newblock What is a quadrature domain?
\newblock In {\em Quadrature domains and their applications}, volume 156 of
  {\em Oper. Theory Adv. Appl.}, pages 1--25. Birkh\"{a}user, Basel, 2005.

\bibitem[Gus90]{gustafsson1990onquadraturedomains}
Bj\"{o}rn Gustafsson.
\newblock On quadrature domains and an inverse problem in potential theory.
\newblock {\em J. Analyse Math.}, 55:172--216, 1990.

\bibitem[Gus04]{gustafsson2004lectures}
Bj\"{o}rn Gustafsson.
\newblock Lectures on balayage.
\newblock In {\em Clifford algebras and potential theory}, volume~7 of {\em
  Univ. Joensuu Dept. Math. Rep. Ser.}, pages 17--63. Univ. Joensuu, Joensuu,
  2004.

\bibitem[Gut16]{gutierrez}
Cristian~E. Guti\'{e}rrez.
\newblock {\em The {M}onge-{A}mp\`ere equation}, volume~89 of {\em Progress in
  Nonlinear Differential Equations and their Applications}.
\newblock Birkh\"{a}user/Springer, [Cham], 2016.
\newblock Second edition [of MR1829162].

\bibitem[GV06]{gustafsson2006conformal}
Bj\"{o}rn Gustafsson and Alexander Vasil'ev.
\newblock {\em Conformal and potential analysis in {H}ele-{S}haw cells}.
\newblock Advances in Mathematical Fluid Mechanics. Birkh\"{a}user Verlag,
  Basel, 2006.

\bibitem[Gwy20a]{gwynne-dimension-metric-ball}
Ewain Gwynne.
\newblock The dimension of the boundary of a {L}iouville quantum gravity metric
  ball.
\newblock {\em Comm. Math. Phys.}, 378(1):625--689, 2020.

\bibitem[Gwy20b]{gwynne2020random}
Ewain Gwynne.
\newblock Random surfaces and {L}iouville quantum gravity.
\newblock {\em Notices Amer. Math. Soc.}, 67(4):484--491, 2020.

\bibitem[HMP10]{hu2010thick}
Xiaoyu Hu, Jason Miller, and Yuval Peres.
\newblock Thick points of the {G}aussian free field.
\newblock {\em Ann. Probab.}, 38(2):896--926, 2010.

\bibitem[HS02]{hedenmalm2002hele}
H\aa~kan Hedenmalm and Sergei Shimorin.
\newblock Hele-{S}haw flow on hyperbolic surfaces.
\newblock {\em J. Math. Pures Appl. (9)}, 81(3):187--222, 2002.

\bibitem[HS12]{huss-sava-comb}
Wilfried Huss and Ecaterina Sava.
\newblock Internal aggregation models on comb lattices.
\newblock {\em Electron. J. Probab.}, 17:no. 30, 21, 2012.

\bibitem[HS19]{hs-cardy-embedding}
Nina {Holden} and Xin {Sun}.
\newblock {Convergence of uniform triangulations under the Cardy embedding}.
\newblock {\em {Acta Mathematica}}, to appear, 2019.

\bibitem[Hus08]{huss2008internal}
Wilfried Huss.
\newblock Internal diffusion-limited aggregation on non-amenable graphs.
\newblock {\em Electron. Commun. Probab.}, 13:272--279, 2008.

\bibitem[JLS12]{jerison2012logarithmic}
David Jerison, Lionel Levine, and Scott Sheffield.
\newblock Logarithmic fluctuations for internal {DLA}.
\newblock {\em J. Amer. Math. Soc.}, 25(1):271--301, 2012.

\bibitem[JLS13]{jerison2013internal}
David Jerison, Lionel Levine, and Scott Sheffield.
\newblock Internal {DLA} in higher dimensions.
\newblock {\em Electron. J. Probab.}, 18:No. 98, 14, 2013.

\bibitem[JLS14a]{jerison2014internal}
David Jerison, Lionel Levine, and Scott Sheffield.
\newblock Internal {DLA} and the {G}aussian free field.
\newblock {\em Duke Math. J.}, 163(2):267--308, 2014.

\bibitem[JLS14b]{jerison2014chapter}
David Jerison, Lionel Levine, and Scott Sheffield.
\newblock Internal {DLA} for cylinders.
\newblock In {\em Advances in analysis: the legacy of {E}lias {M}. {S}tein},
  volume~50 of {\em Princeton Math. Ser.}, pages 189--214. Princeton Univ.
  Press, Princeton, NJ, 2014.

\bibitem[Kah85]{kahane1985chaos}
Jean-Pierre Kahane.
\newblock Sur le chaos multiplicatif.
\newblock {\em Ann. Sci. Math. Qu\'{e}bec}, 9(2):105--150, 1985.

\bibitem[Kig01]{kigami2001analysis}
Jun Kigami.
\newblock {\em Analysis on fractals}, volume 143 of {\em Cambridge Tracts in
  Mathematics}.
\newblock Cambridge University Press, Cambridge, 2001.

\bibitem[Kig19]{kigami2019}
Jun Kigami.
\newblock Time changes of the {B}rownian motion: {P}oincar\'{e} inequality,
  heat kernel estimate and protodistance.
\newblock {\em Mem. Amer. Math. Soc.}, 259(1250):v+118, 2019.

\bibitem[Kul97]{poincaremiranda}
Wladyslaw Kulpa.
\newblock The {P}oincar\'{e}-{M}iranda theorem.
\newblock {\em Amer. Math. Monthly}, 104(6):545--550, 1997.

\bibitem[Kur72]{kuran1972mean}
\"{U}. Kuran.
\newblock On the mean-value property of harmonic functions.
\newblock {\em Bull. London Math. Soc.}, 4:311--312, 1972.

\bibitem[Law95]{lawler1995subdiffusive}
Gregory~F. Lawler.
\newblock Subdiffusive fluctuations for internal diffusion limited aggregation.
\newblock {\em Ann. Probab.}, 23(1):71--86, 1995.

\bibitem[LBG92]{lawler1992internal}
Gregory~F. Lawler, Maury Bramson, and David Griffeath.
\newblock Internal diffusion limited aggregation.
\newblock {\em Ann. Probab.}, 20(4):2117--2140, 1992.

\bibitem[{Le }13]{legall-uniqueness}
Jean-Fran{\c{c}}ois {Le Gall}.
\newblock Uniqueness and universality of the {B}rownian map.
\newblock {\em Ann. Probab.}, 41(4):2880--2960, 2013.

\bibitem[LG07]{le2007topological}
Jean-Fran{\c{c}}ois Le~Gall.
\newblock The topological structure of scaling limits of large planar maps.
\newblock {\em Inventiones mathematicae}, 169:621--670, 2007.

\bibitem[LP10]{levine2010scaling}
Lionel Levine and Yuval Peres.
\newblock Scaling limits for internal aggregation models with multiple sources.
\newblock {\em J. Anal. Math.}, 111:151--219, 2010.

\bibitem[LP17]{levine2017laplacian}
Lionel Levine and Yuval Peres.
\newblock Laplacian growth, sandpiles, and scaling limits.
\newblock {\em Bull. Amer. Math. Soc. (N.S.)}, 54(3):355--382, 2017.

\bibitem[LS19]{levine2019long}
Lionel Levine and Vittoria Silvestri.
\newblock How long does it take for internal {DLA} to forget its initial
  profile?
\newblock {\em Probab. Theory Related Fields}, 174(3-4):1219--1271, 2019.

\bibitem[Luc14]{lucas2014limiting}
Cyrille Lucas.
\newblock The limiting shape for drifted internal diffusion limited aggregation
  is a true heat ball.
\newblock {\em Probab. Theory Related Fields}, 159(1-2):197--235, 2014.

\bibitem[Mat95]{mattila1999geometry}
Pertti Mattila.
\newblock {\em Geometry of sets and measures in {E}uclidean spaces}, volume~44
  of {\em Cambridge Studies in Advanced Mathematics}.
\newblock Cambridge University Press, Cambridge, 1995.
\newblock Fractals and rectifiability.

\bibitem[MD86]{meakin1986formation}
Paul Meakin and John~M Deutch.
\newblock The formation of surfaces by diffusion limited annihilation.
\newblock {\em The Journal of chemical physics}, 85(4):2320--2325, 1986.

\bibitem[Mie13]{miermont-brownian-map}
Gr{\'e}gory Miermont.
\newblock The {B}rownian map is the scaling limit of uniform random plane
  quadrangulations.
\newblock {\em Acta Math.}, 210(2):319--401, 2013.

\bibitem[MRVZ16]{maillard2016liouville}
P.~Maillard, R.~Rhodes, V.~Vargas, and O.~Zeitouni.
\newblock Liouville heat kernel: regularity and bounds.
\newblock {\em Ann. Inst. Henri Poincar\'{e} Probab. Stat.}, 52(3):1281--1320,
  2016.

\bibitem[MS16]{miller2016imaginary}
Jason Miller and Scott Sheffield.
\newblock Imaginary geometry {I}: interacting {SLE}s.
\newblock {\em Probab. Theory Related Fields}, 164(3-4):553--705, 2016.

\bibitem[MS17]{miller2017imaginary}
Jason Miller and Scott Sheffield.
\newblock Imaginary geometry {IV}: interior rays, whole-plane reversibility,
  and space-filling trees.
\newblock {\em Probab. Theory Related Fields}, 169(3-4):729--869, 2017.

\bibitem[MS20]{lqg-tbm1}
Jason Miller and Scott Sheffield.
\newblock Liouville quantum gravity and the {B}rownian map {I}: the {${\mathrm
  QLE}(8/3,0)$} metric.
\newblock {\em Invent. Math.}, 219(1):75--152, 2020.

\bibitem[MS21]{lqg-tbm2}
Jason Miller and Scott Sheffield.
\newblock Liouville quantum gravity and the {B}rownian map {II}: {G}eodesics
  and continuity of the embedding.
\newblock {\em Ann. Probab.}, 49(6):2732--2829, 2021.

\bibitem[Pol81]{polyakov-qg1}
A.~M. Polyakov.
\newblock Quantum geometry of bosonic strings.
\newblock {\em Phys. Lett. B}, 103(3):207--210, 1981.

\bibitem[Pom92]{pommerenke2013boundary}
Ch. Pommerenke.
\newblock {\em Boundary behaviour of conformal maps}, volume 299 of {\em
  Grundlehren der mathematischen Wissenschaften [Fundamental Principles of
  Mathematical Sciences]}.
\newblock Springer-Verlag, Berlin, 1992.

\bibitem[Pon16]{ponce2016elliptic}
Augusto~C. Ponce.
\newblock {\em Elliptic {PDE}s, measures and capacities}, volume~23 of {\em EMS
  Tracts in Mathematics}.
\newblock European Mathematical Society (EMS), Z\"{u}rich, 2016.
\newblock From the Poisson equations to nonlinear Thomas-Fermi problems.

\bibitem[Roo16]{roos2016partial}
Joakim Roos.
\newblock {\em Partial Balayage and Related Concepts in Potential Theory}.
\newblock PhD thesis, KTH Royal Institute of Technology, 2016.

\bibitem[RV10]{robert2010gaussian}
Raoul Robert and Vincent Vargas.
\newblock Gaussian multiplicative chaos revisited.
\newblock {\em Ann. Probab.}, 38(2):605--631, 2010.

\bibitem[RV14]{rhodes2014gaussian}
R\'{e}mi Rhodes and Vincent Vargas.
\newblock Gaussian multiplicative chaos and applications: a review.
\newblock {\em Probab. Surv.}, 11:315--392, 2014.

\bibitem[Sak82]{sakai2006quadrature}
Makoto Sakai.
\newblock {\em Quadrature domains}, volume 934 of {\em Lecture Notes in
  Mathematics}.
\newblock Springer-Verlag, Berlin-New York, 1982.

\bibitem[Sak91]{sakai1991regularity}
Makoto Sakai.
\newblock Regularity of a boundary having a {S}chwarz function.
\newblock {\em Acta Math.}, 166(3-4):263--297, 1991.

\bibitem[Sak85]{sakai1984solutions}
Makoto Sakai.
\newblock Solutions to the obstacle problem as {G}reen potentials.
\newblock {\em J. Analyse Math.}, 44:97--116, 1984/85.

\bibitem[SH21]{sava2021fractals}
Ecaterina Sava-Huss.
\newblock From fractals in external {DLA} to internal {DLA} on fractals.
\newblock In {\em Fractal geometry and stochastics {VI}}, volume~76 of {\em
  Progr. Probab.}, pages 273--298. Birkh\"{a}user/Springer, Cham, [2021]
  \copyright 2021.

\bibitem[She10]{shellef2010idla}
Eric Shellef.
\newblock I{DLA} on the supercritical percolation cluster.
\newblock {\em Electron. J. Probab.}, 15:no. 24, 723--740, 2010.

\bibitem[{She}22]{sheffield-icm}
Scott {Sheffield}.
\newblock {What is a random surface?}
\newblock {\em ArXiv e-prints}, March 2022.

\bibitem[Sil20]{silvestri2020internal}
Vittoria Silvestri.
\newblock Internal {DLA} on cylinder graphs: fluctuations and mixing.
\newblock {\em Electron. Commun. Probab.}, 25:Paper No. 61, 14, 2020.

\bibitem[Sj{\"o}05]{sjodin2005topics}
Tomas Sj{\"o}din.
\newblock {\em Topics in Potential Theory: Quadrature Domains, Balayage and
  Harmonic Measure.}
\newblock PhD thesis, KTH, 2005.

\bibitem[SS13a]{ss-contour}
Oded Schramm and Scott Sheffield.
\newblock A contour line of the continuum {G}aussian free field.
\newblock {\em Probab. Theory Related Fields}, 157(1-2):47--80, 2013.

\bibitem[SS13b]{shahgholian2013harmonic}
Henrik Shahgholian and Tomas Sj\"{o}din.
\newblock Harmonic balls and the two-phase {S}chwarz function.
\newblock {\em Complex Var. Elliptic Equ.}, 58(6):837--852, 2013.

\bibitem[Str06]{strichartz2006differential}
Robert~S. Strichartz.
\newblock {\em Differential equations on fractals}.
\newblock Princeton University Press, Princeton, NJ, 2006.
\newblock A tutorial.

\bibitem[Str20]{strichartz2020analysis}
Robert~S Strichartz.
\newblock {\em Analysis, Probability and Mathematical Physics on Fractals},
  volume~5.
\newblock World Scientific, 2020.

\bibitem[SW16]{sheffield2016field}
Scott Sheffield and Menglu Wang.
\newblock Field-measure correspondence in {L}iouville quantum gravity almost
  surely commutes with all conformal maps simultaneously.
\newblock {\em arXiv preprint arXiv:1605.06171}, 2016.

\bibitem[Var17]{vargas2017lecture}
Vincent Vargas.
\newblock Lecture notes on {L}iouville theory and the {DOZZ} formula.
\newblock {\em arXiv preprint arXiv:1712.00829}, 2017.

\bibitem[Zid90]{zidarov1990inverse}
Dimitur~Petkov Zidarov.
\newblock Inverse gravimetric problem in geoprospecting and geodesy.
\newblock {\em Amsterdam; New York: Elsevier; New York}, 1990.

\end{thebibliography}

\end{document}